\newtheorem{thm}{Theorem}[section]
\newtheorem{lemma}{Lemma}[section]
\newtheorem{col}{Corollary}[section]
\newtheorem{prop}{Proposition}[section]
\theoremstyle{definition}
\newtheorem{defn}{Definition}[section]
\newtheorem{ex}{Example}[section]
\newtheorem{rmk}{Remark}[section]
\newtheorem{prob}{Problem}[section]
\begin{document}

\author{Weichen Gao\\\\With an Appendix by Marius Junge, David Gao}
\title{Relative Embeddability of von Neumann Algebras and Amalgamated Free Products}
\date{}
\maketitle

\leftskip 0.5in \rightskip 0.5in
\noindent\textsc{Abstract.} In this paper we study conditions under which, for an inclusion of finite von Neumann algebras $N \subseteq M$, we have the reduced amalgamated free product $\ast_N M$ is embeddable into $(R \bar{\otimes} N_1)^\omega$ for some other finite von Neumann algebra $N_1$, where $R$ is the hyperfinite $\textrm{II}_1$ factor.\par
\leftskip 0in \rightskip 0in

\medskip

\section{\sc Introduction}

\medskip

Connes, in a paper from 1976 [Con76], formulated the now-famous embedding problem: Whether, given any finite von Neumann algebra $N$ with a separable predual, it can be embedded into the ultrapower $R^\omega$ of the hyperfinite $\textrm{II}_1$ factor $R$. This problem was recently answered negatively, in a paper using tools from quantum computational complexity theory [JNV+20]. Therefore, it is now an important task to identify conditions implying that a finite von Neumann algebra $N$ is embeddable in $R^\omega$.

In this paper we study embeddability of amalgamated free products. For example, it seems to be well-known that given $N \subseteq M \hookrightarrow R^\omega$ and $N$ is hyperfinite, then $\ast_N M$ remains embeddable. (See, for example, [Jun05, Theorem 7.15], which shows that this holds when $N = \mathbb{C}$.) Since it is hard to trace in the literature, we include a proof of this fact as Lemma 2.3. However, the problem of given $M$ and $N$ embeddable, whether $\ast_N M$ is embeddable as well remains open, and we, in this paper, investigate some new conditions under which this would hold.

In this investigation, it turns out that the ``position" of $N$ inside $M$ is important. As an example, suppose $N \subseteq M$ is an inclusion of finite von Neumann algebras and we also have an embedding $\pi: M \hookrightarrow \mathbb{M}_n \otimes N$. (This would happen, for example, when $M$ is a factor and $N$ is a subfactor of finite index. See [Tak03, Chapter XIX, Section 2].) If we also have $\pi\restriction_N: N \rightarrow \mathbb{M}_n \otimes N$ is the canonical inclusion, then it would follow that $\ast_N M \hookrightarrow (R \bar{\otimes} N)^\omega$ by using the following argument:
\begin{equation*}
    \ast_N M \hookrightarrow \ast_N (\mathbb{M}_n \otimes N) \hookrightarrow (\ast_{\mathbb{C}} \mathbb{M}_n) \bar{\otimes} N \hookrightarrow R^\omega \bar{\otimes} N \hookrightarrow (R \bar{\otimes} N)^\omega
\end{equation*}
where the third embedding uses $\ast_{\mathbb{C}} \mathbb{M}_n \hookrightarrow R^\omega$, which follows from [Jun05, Theorem 7.15]. It follows that if $N$ is embeddable, then $\ast_N M$ is embeddable as well. However, in general, it is not necessarily true that $\pi\restriction_N$ is the canonical inclusion, in which case the second algebra in the chain of inclusions above should be $\ast_{\pi(N)} \mathbb{M}_n \otimes N$ instead of $\ast_N (\mathbb{M}_n \otimes N)$, so the argument above will not apply.

Let us indicate a natural scenario where $\pi$ restricted to $N$ is not the canonical inclusion. Consider discrete groups $H \triangleleft G$ s.t. $|G:H| = n < \infty$. Then,
\begin{equation*}
l_2(G) = l_2(G/H) \otimes l_2(H) = l_2^n \otimes l_2(H)    
\end{equation*}

So $\mathbb{B}(l_2(G)) = \mathbb{M}_n \otimes \mathbb{B}(l_2(H))$. Under this identification, it is not hard to verify that $L(G)$ is sent into $\mathbb{M}_n \otimes L(H)$. (When $G$ is an i.c.c. group, this is an example of a subfactor of finite index. See [Tak03, Chapter XIX, Example 2.4].) However, the composite inclusion $L(H) \hookrightarrow L(G) \hookrightarrow \mathbb{M}_n \otimes L(H)$ is not necessarily the canonical inclusion. Indeed, any given $h \in L(H)$ is sent into $l_\infty^n \otimes L(H)$, with entries on the diagonal of the form $\widetilde{g}^{-1}h\widetilde{g}$, where $\widetilde{g}$ are representatives of cosets in $G/H$. $\widetilde{g}^{-1}h\widetilde{g}$ is not necessarily $h$ itself, so the inclusion is not canonical.

However, in this case we actually would have $L(G)$ embeds into $(R \bar{\otimes} L(H))^\omega$, but for a different reason. This is due to the following diagram being a commuting square,

\begin{center}
\begin{tikzcd}[node distance = 1.8cm]
    {\mathbb{M}_n \otimes L(H)} \arrow[hookleftarrow]{r}\arrow[hookleftarrow]{d}
        & {\pi(L(G))} \arrow[hookleftarrow]{d} \\
   {l_\infty^n \otimes L(H)}  \arrow[hookleftarrow]{r}
        & {\pi(L(H))}
\end{tikzcd}
\end{center}

So,
\begin{equation*}
    \ast_{L(H)} L(G) \hookrightarrow \ast_{l_\infty^n \otimes L(H)} \mathbb{M}_n \otimes L(H) \hookrightarrow (\ast_{l_\infty^n} \mathbb{M}_n) \bar{\otimes} L(H) \hookrightarrow R^\omega \bar{\otimes} L(H) \hookrightarrow (R \bar{\otimes} L(H))^\omega
\end{equation*}

(For the first inclusion, see Lemma 2.1.) These demonstrate the importance of the ``position" of $N$ in $M$, in the sense of some commuting square condition like the above diagram. This leads to the following definition:

\begin{defn}
Let $M$ be a finite von Neumann algebra, $N \subseteq M$ its weakly closed subalgebra, we say $N \subseteq M$ is \textit{relatively embeddable} with respect to $N_1$ (written as $N \subseteq M$ is RE/$N_1$) if there exists an ultrafilter $\omega$ and a tracial embedding $\pi: M \rightarrow \prod_{\omega} A_k \bar{\otimes} N_1$ s.t. the following diagram is a commuting square,

\begin{center}
\begin{tikzcd}[node distance = 1.8cm]
    {\prod_{\omega} A_k \bar{\otimes} N_1} \arrow[hookleftarrow]{r}\arrow[hookleftarrow]{d}
        & \pi(M) \arrow[hookleftarrow]{d} \\
   {\prod_{\omega} B_k \bar{\otimes} N_1}  \arrow[hookleftarrow]{r}
        & \pi(N)
\end{tikzcd}
\end{center}
where $A_k$ are QWEP finite von Neumann algebras and $B_k \subseteq A_k$ are their hyperfinite subalgebras.
\end{defn}

Our main result is as follows:

\begin{thm}
If $N \subseteq M$ is RE/$N_1$, then $\ast_N M$ tracially embeds into an ultrapower of $R \bar{\otimes} N_1$.
\end{thm}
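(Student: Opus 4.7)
The plan is to extend the heuristic chain of embeddings sketched in the introduction for $L(H)\subseteq L(G)$ to the abstract RE/$N_1$ setting, compensating for the ultraproduct that appears in the definition. I would construct a four-step composition of tracial embeddings ending in an ultrapower of $R\bar\otimes N_1$.

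First, apply Lemma 2.1 to the commuting square guaranteed by the RE/$N_1$ hypothesis to pass from $\ast_N M$ to the amalgamated free product of $\prod_\omega A_k \bar\otimes N_1$ over $\prod_\omega B_k \bar\otimes N_1$:
\[\ast_N M \hookrightarrow \ast_{\prod_\omega B_k \bar\otimes N_1}\bigl(\prod_\omega A_k \bar\otimes N_1\bigr).\]
Second, interchange the reduced amalgamated free product with the ultraproduct. Represent every element of $\prod_\omega A_k\bar\otimes N_1$ by a sequence $(a^{(k)})$, subtracting off the conditional expectation onto $B_k\bar\otimes N_1$ in each coordinate so that centering over $\prod_\omega B_k\bar\otimes N_1$ is realized by centering in each coordinate. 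A moment computation using the normality of the coordinate-wise conditional expectations then yields
\[\ast_{\prod_\omega B_k \bar\otimes N_1}\bigl(\prod_\omega A_k \bar\otimes N_1\bigr) \hookrightarrow \prod_\omega \ast_{B_k \bar\otimes N_1}(A_k \bar\otimes N_1).\]

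Third, exploit the common tensor factor $N_1$. Since the free factors and the amalgamation base differ only by tensoring with $N_1$, there is a natural identification
\[\ast_{B_k \bar\otimes N_1}(A_k \bar\otimes N_1) \cong (\ast_{B_k} A_k) \bar\otimes N_1,\]
which one verifies by matching the $B_k\bar\otimes N_1$-valued conditional expectations on alternating words. Fourth, since each $A_k$ is QWEP and each $B_k\subseteq A_k$ is hyperfinite, I would invoke the amalgamated generalization of Junge's Theorem 7.15 (presumably the content of the appendix by Junge and D. Gao) to conclude that $\ast_{B_k} A_k$ is QWEP and hence tracially embeds into $R^\omega$. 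Tensoring with $N_1$, passing to the outer ultraproduct, and collapsing nested ultrapowers by the standard iterated-ultrafilter trick then produces
\[\prod_\omega \bigl((\ast_{B_k} A_k)\bar\otimes N_1\bigr) \hookrightarrow \prod_\omega (R\bar\otimes N_1)^\omega \hookrightarrow (R\bar\otimes N_1)^\eta\]
for a suitable ultrafilter $\eta$. Composing the four embeddings gives the conclusion.

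The main obstacle is the fourth step: the preservation of QWEP under reduced amalgamated free products over hyperfinite subalgebras. Junge's original result handles amalgamation only over $\mathbb{C}$, and a genuine amalgamated version requires new input, which is exactly what I expect the appendix to furnish. A secondary technical hurdle is step two: while the unamalgamated analogue is standard, the amalgamated case requires choosing representative sequences that simultaneously lift elements of $\ker E_{\prod_\omega B_k\bar\otimes N_1}$ to elements of $\ker E_{B_k\bar\otimes N_1}$ coordinate-wise, and checking that alternating moments computed in the big amalgamated free product match the $\omega$-limit of alternating moments in the coordinate amalgamated free products; this uses the normality of $E_{B_k}\otimes \mathrm{id}_{N_1}$ together with the fact that $\prod_\omega (B_k\bar\otimes N_1)\subseteq \prod_\omega (A_k\bar\otimes N_1)$ inherits its conditional expectation from the coordinate-wise ones.
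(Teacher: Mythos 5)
Your proposal follows essentially the same route as the paper: Lemma 2.1 applied to the RE commuting square, then the coordinate-wise centering to pass the amalgamated free product inside the ultraproduct, then factoring out the common tensor factor $N_1$, and finally the QWEP-ness of $\ast_{B_k} A_k$ (which the paper establishes as Lemma 2.3 in Section 2, not in the appendix — the appendix instead proves a converse to the theorem) combined with the iterated-ultrafilter collapse of Proposition 2.1. The paper additionally verifies a commuting square to get the stronger conclusion that $N \subseteq \ast_N M$ is again RE/$N_1$, but for the statement as given your chain of embeddings suffices.
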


In the Appendix, we shall show that with some further assumptions or if $\ast_N M$ is slightly enlarged, then a converse to this theorem is true.

We shall also identify some concrete examples of the RE condition related to discrete groups. In particular, we are interested in inclusions of the form $N \rtimes_{vN} H \subseteq N \rtimes_{vN} G$ where $N$ is a finite von Neumann algebra, $G$ is a discrete group acting through trace-preserving automorphisms on $N$, and $H < G$. We will prove:

\begin{thm}
Let $N$ be a finite von Neumann algebra, $G$ be a group acting through trace-preserving automorphisms on $N$, and $H \triangleleft G$.

\medskip

1. If $H$ is co-amenable in $G$, then $N \rtimes_{vN} H \subseteq N \rtimes_{vN} G$ is RE/$N \rtimes_{vN} H$;

\medskip

2. If $G/H$ is hyperlinear, then $N \rtimes_{vN} H \subseteq N \rtimes_{vN} G$ is RE/$N \rtimes_{vN} G$.
\end{thm}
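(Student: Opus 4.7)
My plan is to handle the two parts with separate constructions, both starting from the decomposition $\ell^2(N \rtimes G) = \ell^2(G/H) \otimes \ell^2(N \rtimes H)$ afforded by a choice of section $s : G/H \to G$. Under this identification each $u_g \in N \rtimes G$ is realized as the column-finite matrix
\[
    u_g = \sum_{tH \in G/H} e_{gtH,\, tH} \otimes u_{s(gtH)^{-1} g\, s(tH)},
\]
with entries in $N \rtimes H$, and each $x \in N$ is diagonal with entries $\alpha_{s(tH)^{-1}}(x)$. Normality of $H$ forces $s(htH) = s(tH)$ for $h \in H$, so $u_h$ is also diagonal; hence $N \rtimes H$ maps into $\ell_\infty(G/H) \bar\otimes (N \rtimes H)$. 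This diagonal structure will be the source of the commuting square in both parts.

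For Part 1, I would use that $G/H$ amenable supplies a F\o lner sequence $F_n \subseteq G/H$. Setting $A_n := \mathbb{M}_{F_n}(\mathbb{C})$ (finite dimensional, hence QWEP) with hyperfinite subalgebra $B_n := \ell_\infty^{F_n}$, I define the ucp compression $\pi_n : N \rtimes G \to A_n \bar\otimes (N \rtimes H)$ by $\pi_n(y) := (p_{F_n} \otimes 1)\, y\, (p_{F_n} \otimes 1)$. Normality of $H$ makes $\pi_n$ trace-preserving, since diagonal matrix units $e_{gtH, tH}$ with $gtH = tH$ arise only for $g \in H$ and then reduce to the trace on $N \rtimes H$. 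Asymptotic multiplicativity on the generators $u_g$ reduces to $|F_n \cap g^{-1} F_n|/|F_n| \to 1$, which is precisely the F\o lner condition; passing to a free ultrafilter $\omega$ then yields a tracial embedding $\pi := (\pi_n)_\omega$. The commuting square follows because the conditional expectation onto $\ell_\infty^{F_n}$ kills $\pi_n(u_g)$ for $g \notin H$ and fixes it for $g \in H$, matching the expectation $E_{N \rtimes H}$.

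For Part 2, I would use the tracial embedding $\rho : L(G/H) \hookrightarrow R^\omega$ supplied by hyperlinearity; write $\tilde u_{gH} := \rho(\lambda_{G/H}(gH))$. Define $\Phi : N \rtimes G \to R^\omega \bar\otimes (N \rtimes G)$ on generators by $\Phi(x) := 1 \otimes x$ for $x \in N$ and $\Phi(u_g) := \tilde u_{gH} \otimes u_g$ for $g \in G$. The group law $\Phi(u_g)\Phi(u_{g'}) = \tilde u_{gH}\tilde u_{g'H} \otimes u_g u_{g'} = \Phi(u_{gg'})$ and the covariance $\Phi(u_g)\Phi(x)\Phi(u_g)^* = 1 \otimes \alpha_g(x) = \Phi(\alpha_g(x))$ give a well-defined $*$-homomorphism on the crossed product; it is trace-preserving because $\tau_{R^\omega}(\tilde u_{gH}) = \delta_{g \in H}$ and $\tau(xu_g) = \tau_N(x)\delta_{g = e}$ conspire correctly, hence injective. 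Since $\tilde u_{hH} = 1$ for $h \in H$, we get $\Phi(N \rtimes H) \subseteq 1 \otimes (N \rtimes G)$. Using the standard inclusion $R^\omega \bar\otimes M \hookrightarrow (R \bar\otimes M)^\omega$, this places us in the RE framework with $A_k = R$ and $B_k = \mathbb{C}$; the commuting square is immediate from $(\tau_{R^\omega} \otimes \mathrm{id})(\tilde u_{gH} \otimes x u_g) = \delta_{g \in H}(1 \otimes x u_g) = \Phi(E_{N \rtimes H}(x u_g))$.

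The main technical point is Part 1's asymptotic multiplicativity: although it is purely a F\o lner-type bookkeeping argument, it must be carried out uniformly for $N$-valued matrix coefficients across all generators simultaneously, and one has to be a bit careful that the compressions really converge to a $*$-homomorphism in the ultraproduct. Part 2, by contrast, is essentially a formal check once the hyperlinear lift of $G/H$ is in hand.
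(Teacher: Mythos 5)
Your proposal is correct and follows essentially the same route as the paper: Part 1 is exactly the F{\o}lner-set compression argument of the paper's Theorem 3.1 (with the twisted diagonal image of $N \rtimes_{vN} H$ supplying the commuting square), and Part 2 is the comultiplication map $nu_g \mapsto \lambda(gH) \otimes nu_g$ of the paper's Theorem 4.2 specialized to $G_i = H_i = H$, differing only in that you pre-compose with the hyperlinear embedding $L(G/H) \hookrightarrow R^\omega$ rather than keeping $L(G/H)$ itself as the QWEP algebra $A_k$. Both constructions and both commuting-square verifications match the paper's.
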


Using the concept of subgroup separability applied to free groups, it will also be shown that,

\begin{prop}
$\ast_H G$ is a hyperlinear group whenever $G$ is a free group and $H < G$.
\end{prop}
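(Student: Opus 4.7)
The plan is to reduce to finitely generated $H$ and then apply Marshall Hall's theorem for free groups, whose geometric content is precisely the subgroup separability of $G$. This will let me deduce that $\ast_H G$ is residually finite, and hyperlinearity will follow via the standard chain residually finite $\Rightarrow$ sofic $\Rightarrow$ hyperlinear.

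For the reduction, write $H = \bigcup_n H_n$ as an increasing union of finitely generated subgroups. There is a natural surjection $\ast_{H_n} G \twoheadrightarrow \ast_H G$ obtained by imposing the further amalgamation relations $h^{(1)} = h^{(2)}$ for $h \in H \setminus H_n$. Any finite configuration in $\ast_H G$ (a finite set together with a bounded product closure) involves only finitely many $H$-relations in its equalities, so it lifts to a matching configuration in $\ast_{H_n} G$ for $n$ large enough. Thus hyperlinearity of each $\ast_{H_n} G$ transfers to $\ast_H G$, and it suffices to treat finitely generated $H$.

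Assuming $H$ is finitely generated, Marshall Hall's theorem, proved via Stallings foldings on the Schreier graph of $H$, furnishes a finite-index subgroup $K \leq G$ with $H \leq K$ and a free-product decomposition $K = H * L$ for some (necessarily free) subgroup $L$. Consequently
\[
K *_H K \;=\; (H * L) *_H (H * L) \;\cong\; H * L * L',
\]
where $L'$ is a second copy of $L$; this is a free group of finite rank, and by uniqueness of normal forms in amalgamated free products, $K *_H K$ embeds naturally in $\ast_H G$.

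To conclude residual finiteness, given a nontrivial $w \in \ast_H G$ in reduced normal form, I would use LERF of $G$ to find finite-index subgroups separating each non-$H$ syllable of $w$ from $H$, then combine these with the Marshall Hall decomposition $K = H * L$ to construct two homomorphisms $\phi_1, \phi_2 \colon G \to Q$ into a finite group $Q$ that agree on $H$ but differ on $L$ enough to distinguish the two copies of $G$. These induce a map $\ast_H G \to Q *_{\overline H} Q$ into an amalgamated free product of finite groups over a finite subgroup, a group that is virtually free (by Bass--Serre/Dunwoody) and in particular residually finite, and in which the image of $w$ is nontrivial.

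The main obstacle is precisely this final construction. Since $K$ need not be normal in $G$, the subgroup $K *_H K$ has infinite index in $\ast_H G$ in general, so one cannot simply realize $\ast_H G$ as virtually free. Instead, the homomorphisms $\phi_1$ and $\phi_2$ must be chosen so that they induce a well-defined map out of the amalgamated product — forcing agreement on $H$ — while still being informative enough on $L$ (and on translates of $w$'s syllables) to keep $w$ nontrivial. It is here that subgroup separability of the free group $G$, in its strong Marshall Hall form, is used in the most essential way.
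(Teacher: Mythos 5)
Your route is genuinely different from the paper's. The paper never discusses residual finiteness of the group: it shows that $L(H) \subseteq L(G)$ is RE/$\mathbb{C}$ (Hall's theorem enters through $\sigma$-co-hyperlinearity via Lemma 4.1 and Example 4.2, and the passage from RE/$L(G)$ to RE/$\mathbb{C}$ uses that $L(\mathbb{F}_r) \simeq \ast_{\mathbb{C}} L(\mathbb{Z})$ is QWEP), and then applies Corollary 2.1 to embed $L(\ast_H G) = \ast_{L(H)} L(G)$ into $R^\omega$. Your plan --- prove $\ast_{H_n} G$ is residually finite for finitely generated $H_n$ and recover $\ast_H G$ as a limit of marked groups, under which hyperlinearity (though not residual finiteness) passes --- is a legitimate, purely group-theoretic alternative, and in the finitely generated case it proves something stronger than hyperlinearity. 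The reduction step and the appeal to Hall's theorem are sound in outline.

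The gap is the final step, which you yourself flag as the main obstacle; the good news is that the difficulty you describe there is illusory. For a double (or any number of copies of the \emph{same} group amalgamated over the \emph{same} subgroup) you do not need two homomorphisms that "differ on $L$," nor does it matter that $K \ast_H K$ has infinite index, nor is the decomposition $K = H \ast L$ needed at all --- only separability of $H$. Take a single $\phi$: given $1 \neq w = g_1 g_2 \cdots g_n$ in normal form with each syllable $g_j$ lying in some copy of $G$ minus $H$ and consecutive syllables in distinct copies, Hall's theorem (in its separability form) yields a finite quotient $\phi: G \rightarrow Q$ with $\phi(g_j) \notin \phi(H)$ for every $j$, by intersecting the finitely many finite-index subgroups witnessing $g_j \notin H$ in the profinite topology. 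Using this same $\phi$ on every copy induces a homomorphism $\ast_H G \rightarrow \ast_{\phi(H)} Q$ (agreement on $H$ is automatic), and the image $\phi(g_1)\cdots\phi(g_n)$ is again an alternating product of syllables outside $\phi(H)$, hence nontrivial by the normal form theorem: the amalgamated product structure of the target distinguishes the copies for you, so the homomorphisms need not. Since $w$ involves only finitely many copies and $\ast_H G$ retracts onto the sub-amalgam on those copies, the target may be taken to be an amalgam of finitely many finite groups over a common finite subgroup, which is virtually free and residually finite; the case $w \in H \setminus \{e\}$ is handled by folding all copies onto one copy of $G$. With this the argument closes, but as written your proposal stops short of it and points in a direction (distinct $\phi_1, \phi_2$) that is not the one that works.
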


In the paper, we shall prove a more general result of which the above two are special cases.

In the second item of Theorem 1.2 above, we could only obtain RE over the larger algebra $N \rtimes_{vN} G$ instead of over the smaller algebra $N \rtimes_{vN} H$, as opposed to the situation with amenable quotients in the first item of Theorem 1.2. This is due to the fact that $G$ acts non-trivially on $H$ and $N$ and these actions are not easily approximated through matrices without the aid of Følner sequences. More generally, when we consider the question of embeddability, in the trivial case where $\mathbb{C} = N \subseteq M$, $N$ commutes with everything in $M$, and so the remainder of $M$ does not ``act" on $N$ and the embedding is, in some sense, only an issue of using finite-dimensional algebras to approximate the ``quotient structure" of $M$ over $N$. And indeed in this case we have $\ast_{\mathbb{C}} M$ is QWEP whenever $M$ is. But in the nontrivial case, there is no reason for a general subalgebra $N$ to commute with everything else in $M$, so the embedding has to ``encode", approximately by finite-dimensional algebras, the ``action" of $M$ on $N$ in addition to the ``quotient structure". This presents surprising complexities. For a specific example that shows how a naïve approach can fail, see Example 5.6. In this paper, we shall introduce in the specific case of $N \rtimes_{vN} H \subseteq N \rtimes_{vN} G$, several concepts characterizing the actions of $G$ on $H$ and $N$ so that, with these extra conditions on the action, we could obtain RE over the smaller algebra $N \rtimes_{vN} H$. We will apply these results to obtain, among other things:

\begin{prop}
$L^\infty(X, \mu) \subseteq L^\infty(X, \mu) \rtimes_{vN} G$ is RE/$\mathbb{C}$ whenever $(X, \mu)$ is a standard probability space, $G$ is a hyperlinear group, and the p.m.p. action $G \curvearrowright X$ is profinite.
\end{prop}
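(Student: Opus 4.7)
The plan is to exploit the profinite structure of $G \curvearrowright X$ to approximate $M := L^\infty(X) \rtimes_{vN} G$ by crossed products over finite $G$-sets, and then to use hyperlinearity of $G$ to embed each finite-stage crossed product into a matrix algebra in a way that respects the commuting-square structure required by RE/$\mathbb{C}$.

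Profiniteness of the action yields an increasing chain of finite $G$-invariant measurable partitions, equivalently an increasing chain of finite-dimensional $G$-invariant subalgebras $L^\infty(X_n) \subseteq L^\infty(X)$, where each $X_n$ is a finite $G$-set and $\bigcup_n L^\infty(X_n)$ is weakly dense in $L^\infty(X)$. Setting $M_n := L^\infty(X_n) \rtimes_{vN} G$, the union $\bigcup_n M_n$ is weakly dense in $M$. Decompose each $X_n$ into $G$-orbits $X_n = \sqcup_j O_{n,j}$; the point stabilizers $K_{n,j}$ are finite-index subgroups of $G$, hence inherit hyperlinearity. Standard structure theory for crossed products by finite transitive actions gives $L^\infty(O_{n,j}) \rtimes_{vN} G \cong \mathbb{M}_{|O_{n,j}|} \otimes L(K_{n,j})$ as tracial von Neumann algebras, with $L^\infty(O_{n,j})$ sitting inside as $\ell_\infty^{|O_{n,j}|} \otimes 1$.

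For each $(n,j)$, hyperlinearity of $K_{n,j}$ supplies unital approximate homomorphisms $\sigma_{n,j,k} : K_{n,j} \to U(m_{n,j,k})$ with $\tau(\sigma_{n,j,k}(h)) \to \delta_{h,e}$, hence a tracial embedding $L(K_{n,j}) \hookrightarrow \prod_\omega \mathbb{M}_{m_{n,j,k}}$. Tensoring with the identity on $\mathbb{M}_{|O_{n,j}|}$ and summing over $j$ yields a tracial embedding
\begin{equation*}
\rho_n : M_n \hookrightarrow \prod_\omega A_{n,k}, \qquad A_{n,k} := \bigoplus_j \mathbb{M}_{|O_{n,j}|} \otimes \mathbb{M}_{m_{n,j,k}},
\end{equation*}
with $A_{n,k}$ finite-dimensional (so QWEP). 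Set $B_{n,k} := \bigoplus_j \ell_\infty^{|O_{n,j}|} \otimes 1_{m_{n,j,k}}$, an abelian (hence hyperfinite) subalgebra of $A_{n,k}$. Then $\rho_n(L^\infty(X_n)) \subseteq \prod_\omega B_{n,k}$, and the conditional expectation $E_{B_{n,k}}$ acts on the $j$-th summand as $(a \otimes b) \mapsto E_{\ell_\infty^{|O_{n,j}|}}(a) \cdot \tau(b)$; using the asymptotic trace-vanishing $\tau(\sigma_{n,j,k}(h)) \to \delta_{h,e}$, this agrees in the ultraproduct with the canonical conditional expectation $E_{L^\infty(X_n)} : M_n \to L^\infty(X_n)$, and the commuting square required by RE/$\mathbb{C}$ holds at finite stage $n$.

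Finally, to assemble a single embedding for all of $M$, I use a diagonal construction. Enumerate a countable $\|\cdot\|_2$-dense sequence in $\bigcup_n M_n$, with the $i$-th element lying in $M_{n_i}$; choose $k_i$ large enough that $\rho_{n_i, k_i}$ agrees with $\rho_{n_i}$ to precision $1/i$ on the first $i$ sequence elements, together with their products, adjoints, and the conditional expectations appearing in the commuting square. Taking the ultraproduct of $(A_{n_i, k_i})_i$ along any free ultrafilter on $\mathbb{N}$ gives a tracial embedding $M \hookrightarrow \prod_\omega A_{n_i, k_i}$ realizing RE/$\mathbb{C}$, since $\|\cdot\|_2$-continuity of the conditional expectations transports the stage-$n$ commuting squares to the limit. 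The main obstacle I anticipate is precisely this diagonalization: one must simultaneously track the asymptotic $*$-homomorphism, trace, and conditional-expectation estimates while both the partition refinement $n$ and the hyperlinear index $k$ tend to infinity. The conceptual point making everything go through is that once $X_n$ is finite, the $G$-action on $L^\infty(X_n)$ becomes an honest permutation action, and all remaining complexity is encoded by the finite-index stabilizer subgroups $K_{n,j}$, which are themselves hyperlinear.
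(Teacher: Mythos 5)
Your argument is correct, but it takes a genuinely different route from the paper. The paper obtains this proposition as a corollary of its general machinery: Proposition 5.2 shows that a profinite p.m.p.\ action induces an amenably approximable action on $L^\infty(X,\mu)$ (using non-atomicity to identify $L^\infty(X,\mu)\cong L^\infty(X_i)\bar{\otimes}L^\infty([0,1])$ and extend the finite-quotient actions), Corollary 5.1 (resting on the comultiplication $ng\mapsto ng\otimes g$, Theorem 4.2, and Lemmas 5.2--5.3) then gives RE/$L^\infty(X,\mu)$, and Proposition 2.2 downgrades this to RE/$\mathbb{C}$ since $L^\infty(X,\mu)$ is QWEP. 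You instead work directly at the finite stages: the orbit decomposition of $X_n$ together with the identification $L^\infty(G/K)\rtimes_{vN}G\cong\mathbb{M}_{[G:K]}\otimes L(K)$ (with $L^\infty(G/K)$ as the diagonal) reduces everything to hyperlinearity of the finite-index stabilizers, the commuting square is checked by hand from $\tau(\sigma_{n,j,k}(h))\to\delta_{h,e}$ on the non-trivial diagonal entries, and the finite stages are assembled by a diagonal argument --- which is exactly the content of the paper's Upward Limit Approximation Theorem (Theorem 2.2), so you could cite that instead of re-deriving it and thereby discharge the bookkeeping you flag as the main obstacle. Your approach is more elementary and self-contained (it bypasses the approximable-action formalism entirely and does not use standardness of $(X,\mu)$), while the paper's approach is less work given its infrastructure and yields the statement as a special case of results (Theorem 5.1, Corollary 5.1) covering far more general actions. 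Two small points to tidy up: the traces on the summands of $A_{n,k}=\oplus_j\mathbb{M}_{|O_{n,j}|}\otimes\mathbb{M}_{m_{n,j,k}}$ must be weighted by $\mu_n(O_{n,j})$ for $\rho_n$ to be tracial (this does not affect the QWEP/hyperfinite requirements or the form of $E_{B_{n,k}}$), and one should note that $E_{M,L^\infty(X)}$ restricted to $M_n$ coincides with $E_{M_n,L^\infty(X_n)}$, which is what transports the stage-$n$ commuting squares to the limit.
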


\begin{prop}
$N^{\bar{\otimes} X} \subseteq N^{\bar{\otimes} X} \rtimes_{vN} G$ is RE/$N^{\bar{\otimes}\infty}$ whenever $X$ is a countable discrete set, $G$ is a free group, and the action of $G$ on $N^{\bar{\otimes} X}$ is induced by an action $G \curvearrowright X$.
\end{prop}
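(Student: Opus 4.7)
My plan is to realise Proposition 1.4 as an instance of the general sufficient condition for $N \rtimes_{vN} H \subseteq N \rtimes_{vN} G$ to be RE/$N_1$ hinted at in the introduction, specialised to $H = \{e\}$, ambient algebra $N^{\bar{\otimes} X}$, and $N_1 = N^{\bar{\otimes}\infty}$. Two structural features make this specialisation feasible: the free group $G$ is LERF (Marshall Hall), so the action $G \curvearrowright X$ admits arbitrarily fine finite approximations through finite quotients of $G$ acting on finite quotients of $X$; and $N^{\bar{\otimes}\infty}$ absorbs countable tensor powers of itself and of $N$, leaving room to store $N^{\bar{\otimes} X}$ together with the fresh tensor slots needed to implement the permutation action.

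First I would decompose $X = \bigsqcup_\alpha X_\alpha$ into $G$-orbits, each of the form $G/H_\alpha$, and reduce to a single orbit $X = G/H$; the orbit-wise RE data reassembles using the absorption property of $N^{\bar{\otimes}\infty}$. For each finite $F \subseteq G$ and finite $X_0 \subseteq G/H$, subgroup separability yields a finite-index normal subgroup $K \triangleleft G$ containing $H$ such that the quotient $G/H \to G/K$ is injective on $X_0$ and $F \setminus H$ injects into $Q := G/K$. I would take $A_{F, X_0} = L(Q) \bar{\otimes} \ell^\infty(G/K)$ (finite-dimensional, hence hyperfinite and QWEP) with abelian subalgebra $B_{F, X_0} = \ell^\infty(Q) \bar{\otimes} \ell^\infty(G/K)$, and define a partial embedding into $A_{F, X_0} \bar{\otimes} N^{\bar{\otimes}\infty}$ by placing the $N$-factors indexed by $X_0$ into a prescribed finite block of $N^{\bar{\otimes}\infty}$ labelled by $G/K$, and sending $u_g$ (for $g \in F \setminus H$) to $\lambda_{\bar g} \otimes W_g$, where $\lambda_{\bar g} \in L(Q)$ is the group unitary and $W_g \in N^{\bar{\otimes}\infty}$ is a ``tail-swap'' unitary that implements the permutation of the finite block by $\bar g$ by swapping with an ancillary disjoint block in the infinite tail. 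An ultrafilter limit over an exhausting sequence $(F_k, X_{0,k})$ would produce the sought tracial embedding $\pi: N^{\bar{\otimes} X} \rtimes_{vN} G \to \prod_\omega A_{F_k, X_{0,k}} \bar{\otimes} N^{\bar{\otimes}\infty}$. The commuting square condition holds at each finite stage since $\pi(N^{\bar{\otimes} X_0}) \subseteq \ell^\infty(G/K) \bar{\otimes} N^{\bar{\otimes}\infty} \subseteq B_{F, X_0} \bar{\otimes} N^{\bar{\otimes}\infty}$ and $E_{\ell^\infty(Q)}^{L(Q)}(\lambda_{\bar g}) = 0$ whenever $\bar g \neq e$ in $Q$, and this survives the ultrapower limit.

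The main obstacle is the construction of the tail-swap unitary $W_g \in N^{\bar{\otimes}\infty}$ implementing a prescribed finite permutation. Since an arbitrary finite permutation of tensor factors of $N^{\bar{\otimes}\infty}$ is in general an outer automorphism, no exact single-block implementing unitary exists; one must instead use the reindexing $N^{\bar{\otimes}\infty} \cong N^{\bar{\otimes}\infty} \bar{\otimes} N^{\bar{\otimes}\infty}$ to import an infinite disjoint ancilla, perform an honest unitary swap between the designated finite block and a fresh copy of it in the ancilla, and absorb the resulting discrepancy in the ultrapower. Balancing these tail swaps across different group elements so that the cocycle relations $u_g u_h = u_{gh}$ hold asymptotically (rather than merely the group-algebra projections $\lambda_{\bar g} \lambda_{\bar h} = \lambda_{\overline{gh}}$) is the delicate bookkeeping I expect to occupy most of the argument; here the freeness of $G$ and residual finiteness should let the cocycle defects be made small on arbitrarily large finite subsets, which is precisely what the ultrafilter limit requires.
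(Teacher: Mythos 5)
There are two genuine gaps here, and they are independent of each other.

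First, the reduction to finite quotients via subgroup separability does not work for an arbitrary action $G \curvearrowright X$. After decomposing into orbits you need, for each finite $X_0 \subseteq G/H$, a finite-index subgroup $K$ with $H \le K \le G$ such that $G/H \to G/K$ is injective on $X_0$; this is precisely the statement that the stabilizer $H$ is separable (closed in the profinite topology of $G$). Marshall Hall's theorem gives this only for \emph{finitely generated} $H$ -- a limitation the paper itself flags in Example 4.2 -- and point stabilizers of an arbitrary action of a free group on a countable set can be infinitely generated and non-separable. For instance, if $H$ is a maximal subgroup of infinite index in $\mathbb{F}_2$ (such subgroups exist), the only finite-index subgroup containing $H$ is $\mathbb{F}_2$ itself, so $G/K$ collapses to a point and your construction separates nothing, while $X = \mathbb{F}_2/H$ is a perfectly legitimate instance of the proposition. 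Second, the ``tail-swap'' unitary $W_g \in N^{\bar{\otimes}\infty}$ cannot exist: since $\lambda_{\bar g} \otimes W_g$ acts on $\ell^\infty(G/K) \bar{\otimes} N^{\bar{\otimes}\infty}$ as $\mathrm{id} \otimes \mathrm{Ad}(W_g)$, the whole permutation of tensor slots must be implemented by an inner automorphism of $N^{\bar{\otimes}\infty}$, but a nontrivial finite permutation of tensor factors is in general outer, and for non-hyperfinite $N$ it is not even approximately inner (the flip on $N \bar{\otimes} N$ being approximately inner forces $N$ hyperfinite). Importing an ancilla does not help, because conjugation by any unitary, ancilla or not, is still an inner automorphism. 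So the cocycle bookkeeping you defer to the end is not merely delicate -- the objects it is supposed to balance do not exist.

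The paper's route avoids both problems. Proposition 5.6 approximates the action not by finite quotients of $G$ but by the finite symmetric groups $Sym(X_i)$ of an arbitrary exhaustion $X_i \nearrow X$: one defines $\beta_i(g) \in Sym(X_i)$ for each free generator $g$ to be any permutation agreeing with $x \mapsto gx$ wherever both $x, gx \in X_i$, and extends to a homomorphism $G \to Sym(X_i)$ using freeness alone -- no separability of stabilizers enters. The resulting actions of the finite (hence amenable) groups $Sym(X_i)$ on $N^{\bar{\otimes}X}$ make the $G$-action amenably approximable, and Corollary 5.1 (via the comultiplication trick of Theorem 5.1 and the F\o lner-set matrix models of Lemma 5.3) then yields RE/$N^{\bar{\otimes}X} = $ RE/$N^{\bar{\otimes}\infty}$. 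Crucially, in that construction the unitaries implementing the permutations live in the matrix factor $\mathbb{M}_{n}$ of $\mathbb{M}_{n} \otimes N^{\bar{\otimes}X}$, acting on a twisted diagonal copy of $N^{\bar{\otimes}X}$, rather than inside $N^{\bar{\otimes}\infty}$ itself -- which is exactly what dissolves the innerness obstruction your proposal runs into.
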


This discussion of the actions of $G$ on $N$ raises several interesting questions that are still open. In particular, while we obtained a characterization of when $R \rtimes_{vN} G$ is QWEP given a hyperlinear group $G$ acting on $R$, it is not immediate whether certain natural actions satisfy this condition. Specifically, apart from the case where $G$ is either amenable or a free group, it is still open whether $R^{\bar{\otimes} G} \rtimes_{vN} G$ is QWEP or not. (Here, the action of $G$ on $R^{\bar{\otimes} G}$ is induced by the left multiplication action of $G$ on itself.) More research is needed to answer these questions, which might shine more light on the nature of the QWEP property.

\medskip

\section{\sc Definition of Relative Embeddability and Preliminaries}

\medskip

We shall now define the notion of relative embeddability mentioned in the introduction and establish some relevant preliminary results.

\begin{rmk}
In the following we shall adopt the following notations and conventions. We shall use $\tau$ to denote the tracial state on a finite von Neumann algebra. When we have an inclusion of finite von Neumann algebras, $N \subseteq M$, $E_{M, N}$ shall be the trace-preserving conditional expectation of $M$ onto $N$. Unless otherwise indicated, all groups $G$ mentioned in this paper shall be assumed discrete and countable. Unless given some other name and when it does not cause confusion, the action of a group $G$ on a von Neumann algebra $N$ shall be referred to by $\alpha$. When $G$ acts on $N$, $N \rtimes_{vN} G$ shall mean the von Neumann algebra crossed product. When $N$ is finite, we shall use $Aut_\tau(N)$ to denote the group of trace-preserving automorphisms of $N$. The tracial amalgamated free product of finite von Neumann algebras $A_i$ over their shared subalgebra $B$ shall be denoted by $\ast_B^{i \in I} A_i$. When all $A_i$ equal $A$ and when it does not cause confusion, we shall use $\ast_B A$ as a shorthand for $\ast_B^{i \in I} A_i$. When we use this shorthand, unless otherwise indicated, it shall be assumed that the index set $I$ can be any countable set.
\end{rmk}

\begin{defn}
Let $\hat{M}$ be a finite von Neumann algebra. Let $N$, $\hat{N}$, and $M$ be its weakly closed subalgebras. A \textit{commuting square} is a commutative diagram

\begin{center}
\begin{tikzcd}[node distance = 1.8cm]
    {\hat{M}} \arrow[hookleftarrow]{r}\arrow[hookleftarrow]{d}
        & M \arrow[hookleftarrow]{d} \\
    {\hat{N}} \arrow[hookleftarrow]{r}
        & N
\end{tikzcd}
\end{center}
where all the arrows are the inclusion maps. Furthermore, we require $E_{\hat{M}, N} = E_{\hat{M}, \hat{N}} E_{\hat{M}, M}$; or, equivalently, $E_{\hat{M}, N} = E_{\hat{M}, M} E_{\hat{M}, \hat{N}}$.
\end{defn}

\begin{defn}
Let $M$ be a finite von Neumann algebra, $N \subseteq M$ its weakly closed subalgebra, we say $N \subseteq M$ is \textit{relatively embeddable} with respect to $N_1$ (written as $N \subseteq M$ is RE/$N_1$) if there exists an ultrafilter $\omega$ and a tracial embedding $\pi: M \rightarrow \prod_{\omega} A_k \bar{\otimes} N_1$ s.t. the following diagram is a commuting square,

\begin{center}
\begin{tikzcd}[node distance = 1.8cm]
    {\prod_{\omega} A_k \bar{\otimes} N_1} \arrow[hookleftarrow]{r}\arrow[hookleftarrow]{d}
        & \pi(M) \arrow[hookleftarrow]{d} \\
   {\prod_{\omega} B_k \bar{\otimes} N_1}  \arrow[hookleftarrow]{r}
        & \pi(N)
\end{tikzcd}
\end{center}
where $A_k$ are QWEP finite von Neumann algebras and $B_k \subseteq A_k$ are their hyperfinite subalgebras.
\end{defn}

We first note the following easy fact:

\begin{prop}
If $N \subseteq M$ is RE/$N_1$, then $M$ tracially embeds into $(R \bar{\otimes} N_1)^\omega$, where $\omega$ is an ultrafilter and $R$ is the hyperfinite $\textrm{II}_1$ factor.
\end{prop}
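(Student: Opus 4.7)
The only ingredient of Definition 2.2 that is actually needed here is the tracial embedding $\pi: M \hookrightarrow \prod_\omega A_k \bar{\otimes} N_1$ with each $A_k$ QWEP; the commuting square condition is irrelevant to the statement. So the task reduces to showing that an ultraproduct of the form $\prod_\omega A_k \bar{\otimes} N_1$, with each $A_k$ QWEP, tracially embeds into an ultrapower of $R \bar{\otimes} N_1$. My plan is then to compose $\pi$ with such an embedding.

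For each index $k$, QWEP of $A_k$ provides a free ultrafilter $\nu_k$ and a tracial embedding $\iota_k : A_k \hookrightarrow R^{\nu_k}$. Tensoring with the identity on $N_1$ is a trace-preserving operation on the (spatial/tracial) tensor product, so we get tracial embeddings
\[
\iota_k \otimes \mathrm{id}_{N_1} : A_k \bar{\otimes} N_1 \hookrightarrow R^{\nu_k} \bar{\otimes} N_1.
\]
Next I would invoke the standard canonical tracial embedding $P^\nu \bar{\otimes} Q \hookrightarrow (P \bar{\otimes} Q)^\nu$ sending $[x_i]_\nu \otimes y$ to $[x_i \otimes y]_\nu$; injectivity is automatic because the map is trace-preserving and the domain is a finite von Neumann algebra. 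Applying this to $P=R$, $Q=N_1$ and composing yields tracial embeddings $\phi_k : A_k \bar{\otimes} N_1 \hookrightarrow (R \bar{\otimes} N_1)^{\nu_k}$.

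Now I take the $\omega$-ultraproduct over $k$: the family $(\phi_k)_k$ induces a tracial embedding
\[
\prod_{\omega} A_k \bar{\otimes} N_1 \hookrightarrow \prod_{\omega} (R \bar{\otimes} N_1)^{\nu_k}.
\]
The right-hand side is an iterated/mixed ultraproduct of ultrapowers of a single finite von Neumann algebra $S := R \bar{\otimes} N_1$; by the standard collapsing argument for iterated tracial ultrapowers, it is itself isomorphic to an ultrapower $S^{\omega'}$ for a suitably chosen ultrafilter $\omega'$ on the product of the index sets of $\omega$ and the $\nu_k$'s. Composing this identification with $\pi$ and the previous embedding produces the tracial embedding $M \hookrightarrow (R \bar{\otimes} N_1)^{\omega'}$ required by the proposition.

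The only subtle points are the two standard lemmas used above—injectivity of the canonical map $P^\nu \bar{\otimes} Q \hookrightarrow (P \bar{\otimes} Q)^\nu$, and the realization of a mixed ultraproduct of tracial ultrapowers as a single tracial ultrapower—both of which are well known; I do not expect any genuine obstacle, and the proposition follows essentially by bookkeeping.
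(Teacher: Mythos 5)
Your proposal is correct and follows essentially the same route as the paper: embed each QWEP $A_k$ into an ultrapower of $R$, tensor with $N_1$, use the canonical map $R^{\nu}\bar{\otimes}N_1\hookrightarrow(R\bar{\otimes}N_1)^{\nu}$, take the ultraproduct over $k$, and collapse the iterated ultraproduct into a single ultrapower. The only (immaterial) difference is that the paper fixes one ultrafilter $\omega'$ for all $k$, so the final collapse is the cleaner identification $((R\bar{\otimes}N_1)^{\omega'})^{\omega'}=(R\bar{\otimes}N_1)^{(\omega')^2}$.
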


\begin{proof} By definition of RE/$N_1$, we have a tracial embedding $M$ into $\prod_{\omega'} A_k \bar{\otimes} N_1$ where $\omega'$ is some ultrafilter and $A_k$ are QWEP. Hence, $A_k$ tracially embeds into $R^{\omega'}$. We thus have an embedding,
\begin{equation*}
    \prod_{k \rightarrow \omega'} A_k \bar{\otimes} N_1 \hookrightarrow \prod_{k \rightarrow \omega'} R^{\omega'} \bar{\otimes} N_1
\end{equation*}

Note that $R^{\omega'} \bar{\otimes} N_1$ naturally embeds into $(R \bar{\otimes} N_1)^{\omega'}$ by sending $(r_k)^\circ \otimes x$ to $(r_k \otimes x)^\circ \in (R \bar{\otimes} N_1)^{\omega'}$. As such, we have an embedding of $M$ into $((R \bar{\otimes} N_1)^{\omega'})^{\omega'}$. To conclude the proof, we observe that
\begin{equation*}
    ((R \bar{\otimes} N_1)^{\omega'})^{\omega'} = (R \bar{\otimes} N)^\omega
\end{equation*}
using the ultrafilter $\omega$ s.t. $A \subseteq \mathbb{N}^2$ satisfies $A \in \omega = (\omega')^2$ iff $\{j \in \mathbb{N}: \{k \in \mathbb{N}: (j, k) \in A\} \in \omega'\} \in \omega'$.
\end{proof}

In the following we shall keep the notation $(\omega')^2$ for the iterated ultraproduct defined above. The definition of the RE condition is inspired by the following fact:

\begin{thm}
If $N \subseteq M$ is RE/$N_1$, then $N \subseteq \ast_N M$ is RE/$N_1$.
\end{thm}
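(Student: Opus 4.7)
The plan is to take the amalgamated free product of many copies of the given RE embedding of $M$ and then massage the ambient algebra into the required form $\prod_\omega A'_k \bar{\otimes} N_1$. First, I would start from the given tracial embedding $\pi : M \hookrightarrow \prod_\omega A_k \bar{\otimes} N_1$ and the commuting square of Definition 2.2. Applying the standard amalgamated free-product functoriality (Lemma 2.1, applied to the common subalgebra $\pi(N)$ of $I$ copies of $\pi(M)$) yields a trace-preserving embedding
\[
\ast_N M \hookrightarrow \ast_{\prod_\omega B_k \bar{\otimes} N_1}^{i \in I} \prod_\omega A_k \bar{\otimes} N_1.
\]

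Next I would pass the amalgamated free product through the ultraproduct and then pull the common tensor factor $N_1$ out of the amalgam. For the first step, since each $B_k$ is hyperfinite, the amalgamated free product should commute with the ultraproduct, giving
\[
\ast_{\prod_\omega B_k \bar{\otimes} N_1}^{i \in I} \prod_\omega A_k \bar{\otimes} N_1 \;\hookrightarrow\; \prod_\omega \left(\, \ast_{B_k \bar{\otimes} N_1}^{i \in I} A_k \bar{\otimes} N_1 \,\right).
\]
For the second step, a direct moment computation using freeness of the distinct copies of $A_k$ over $B_k$ shows that the corresponding copies of $A_k \bar{\otimes} N_1$ inside $(\ast_{B_k}^{i \in I} A_k) \bar{\otimes} N_1$ are free over $B_k \bar{\otimes} N_1$, giving the identity
\[
\ast_{B_k \bar{\otimes} N_1}^{i \in I} A_k \bar{\otimes} N_1 \;=\; \bigl( \ast_{B_k}^{i \in I} A_k \bigr) \bar{\otimes} N_1.
\]
Setting $A'_k := \ast_{B_k}^{i \in I} A_k$ and $B'_k := B_k$ (sitting inside $A'_k$ as the amalgam base) produces a tracial embedding $\pi' : \ast_N M \hookrightarrow \prod_\omega A'_k \bar{\otimes} N_1$.

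It then remains to verify the three RE/$N_1$ conditions for $N \subseteq \ast_N M$. Hyperfiniteness of $B'_k = B_k$ is immediate. For QWEP of $A'_k$, I would invoke the fact (essentially Lemma 2.3, following [Jun05, Theorem 7.15]) that amalgamated free products over hyperfinite subalgebras preserve QWEP. For the commuting square, I would reduce by linearity and $\ast$-strong density to alternating words $m_1 m_2 \cdots m_n$ with $m_j \in M \ominus N$ and consecutive indices distinct: the original commuting square says each $\pi(m_j)$ has vanishing expectation onto $\prod_\omega B_k \bar{\otimes} N_1$, and after replacing each representative $\pi(m_j)_k$ by $\pi(m_j)_k - E_{A_k \bar{\otimes} N_1,\, B_k \bar{\otimes} N_1}(\pi(m_j)_k)$ (a null perturbation in the ultraproduct), freeness of the copies of $A_k$ over $B_k$ forces the conditional expectation of the alternating word to vanish $k$-coordinatewise, hence in the ultraproduct. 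Since $\pi'(N) = \pi(N) \subseteq \prod_\omega B_k \bar{\otimes} N_1$, the commuting square follows.

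The main obstacle is the ultraproduct-versus-amalgamated-free-product compatibility step. In full generality this would touch on delicate matters near Connes embedding, but the hyperfiniteness of $B_k$ lets one approximate elements of $\prod_\omega (B_k \bar{\otimes} N_1)$ by lifts factoring through the $B_k$ layer, which is exactly what allows alternating words in $\ast_{\prod_\omega B_k \bar{\otimes} N_1} \prod_\omega A_k \bar{\otimes} N_1$ to be moment-matched by coordinatewise alternating words and thus embedded into $\prod_\omega \ast_{B_k \bar{\otimes} N_1} A_k \bar{\otimes} N_1$. I would expect the paper to isolate this compatibility as a separate preparatory lemma, likely adjacent to Lemma 2.3 or as a mild variant of it.
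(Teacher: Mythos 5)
Your proposal is correct and follows essentially the same route as the paper: Lemma 2.1 to embed $\ast_N M$ into $\ast_{\hat{N}} \hat{M}$ with $\hat{M} = \prod_\omega A_k \bar{\otimes} N_1$, a coordinatewise map (after normalizing representatives to have vanishing expectation onto $B_k \bar{\otimes} N_1$) into $\prod_\omega \ast_{B_k \bar{\otimes} N_1} (A_k \bar{\otimes} N_1)$, pulling the $N_1$ factor out of the amalgam, QWEP of $\ast_{B_k} A_k$ via Lemma 2.3, and the same commuting-square verification on alternating words. The only cosmetic differences are that the ultraproduct-compatibility step does not actually need hyperfiniteness of $B_k$ (only that the conditional expectation onto $\prod_\omega B_k \bar{\otimes} N_1$ is computed coordinatewise), and the paper carries out this step inline in the proof of Theorem 2.1 rather than isolating it as a separate lemma.
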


Combining this theorem with Proposition 2.1, we immediately obtain,

\begin{col}
If $N \subseteq M$ is RE/$N_1$, then $\ast_N M$ tracially embeds into $(R \bar{\otimes} N_1)^\omega$, where $\omega$ is an ultrafilter.
\end{col}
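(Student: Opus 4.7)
The corollary is an immediate combination of Theorem 2.1 and Proposition 2.1. Applying Theorem 2.1 to the hypothesis that $N \subseteq M$ is RE/$N_1$ promotes relative embeddability to the pair $N \subseteq \ast_N M$, and applying Proposition 2.1 with $\ast_N M$ in the role of $M$ produces a tracial embedding $\ast_N M \hookrightarrow (R \bar{\otimes} N_1)^\omega$. So the proof of the corollary itself is a one-line deduction; the substance is entirely packaged into Theorem 2.1, which is where I would really spend effort.

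For Theorem 2.1, my plan would be to start from the RE/$N_1$ data $\pi: M \hookrightarrow \prod_{\omega'} A_k \bar{\otimes} N_1$ together with the commuting square over $\prod_{\omega'} B_k \bar{\otimes} N_1$. Since $N \subseteq \ast_N M$ is built by amalgamating countably many copies of $M$ over $N$, the natural strategy is to make countably many copies of $\pi$ and glue them: the commuting square is precisely what guarantees that the relevant conditional expectations are compatible, yielding a tracial embedding
\begin{equation*}
\ast_N M \hookrightarrow \ast_{\prod_{\omega'} B_k \bar{\otimes} N_1}^{I} \prod_{\omega'} A_k \bar{\otimes} N_1.
\end{equation*}

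Next I would push the amalgamated free product inside the ultraproduct using an inclusion of the form
\begin{equation*}
\ast_{\prod_{\omega'} B_k \bar{\otimes} N_1}^{I} \prod_{\omega'} A_k \bar{\otimes} N_1 \hookrightarrow \prod_{\omega'} \bigl( \ast_{B_k \bar{\otimes} N_1}^{I} A_k \bar{\otimes} N_1 \bigr),
\end{equation*}
then invoke Lemma 2.1 to rewrite the right-hand side as $\prod_{\omega'} (\ast_{B_k}^{I} A_k) \bar{\otimes} N_1$. Setting $C_k := \ast_{B_k}^{I} A_k$ and taking $B_k$ itself as the distinguished hyperfinite subalgebra of $C_k$, the QWEP property of $C_k$ would follow from the QWEP-preservation fact for amalgamated free products over hyperfinite subalgebras (essentially Lemma 2.3). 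A final commuting-square check at the ultraproduct level, using the original commuting square together with the natural inclusion $B_k \hookrightarrow \ast_{B_k}^{I} A_k$, would then give the required RE/$N_1$ structure for $N \subseteq \ast_N M$.

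The main obstacle I anticipate is the interchange step: making precise the inclusion of an amalgamated free product of ultraproducts into the ultraproduct of amalgamated free products. This typically requires careful moment-matching or u.c.p.\ dilation arguments to ensure the tracial and algebraic relations pass through correctly, and is the only step that is not essentially formal. A secondary but essential subtlety is ensuring that the QWEP-preservation lemma applies to $\ast_{B_k}^{I} A_k$; here the hyperfiniteness of the amalgamating subalgebra is exactly what makes the framework go through, and is the reason the commuting square in the definition of RE/$N_1$ is formulated with a hyperfinite second row.
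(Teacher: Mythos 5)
Your proposal is correct and takes essentially the same route as the paper: the corollary is obtained there verbatim as the immediate combination of Theorem 2.1 and Proposition 2.1. Your sketch of Theorem 2.1 also mirrors the paper's actual proof step for step (the Lemma 2.1 gluing via the commuting square, the interchange $\ast_{\hat{N}} \hat{M} \hookrightarrow \prod_{\omega} \ast_{B_k \bar{\otimes} N_1}(A_k \bar{\otimes} N_1)$ by choosing representatives with vanishing conditional expectation, the passage to $(\ast_{B_k} A_k) \bar{\otimes} N_1$, QWEP of $\ast_{B_k} A_k$ from Lemma 2.3, and the final commuting-square verification).
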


To prove Theorem 2.1, we shall need the following lemmas:

\begin{lemma}
Let $\hat{M}$ be a finite von Neumann algebra. Let $N$, $\hat{N}$, and $M$ be its weakly closed subalgebras. If the following diagram is a commuting square,

\begin{center}
\begin{tikzcd}[node distance = 1.8cm]
    {\hat{M}} \arrow[hookleftarrow]{r}\arrow[hookleftarrow]{d}
        & M \arrow[hookleftarrow]{d} \\
    {\hat{N}} \arrow[hookleftarrow]{r}
        & N
\end{tikzcd}
\end{center}

Then there exists a unique inclusion map $\iota: \ast_{N} M \rightarrow \ast_{\hat{N}} \hat{M}$ where the index sets for the amalgamated free products shall be the same countable set $I$, s.t. $\iota(m_1^{i_1} m_2^{i_2} \cdots m_n^{i_n}) = m_1^{i_1} m_2^{i_2} \cdots m_n^{i_n}$, where $i_k \in I$, $m_k^i \in M_i = M \subseteq \ast_{N} M$, $E_{M, N}(m_k^{i_k}) = 0$, and $i_1 \neq i_2 \neq \cdots \neq i_n$, and where we interpret $m_k^i$ on the right-hand side as in $\hat{M}_i = \hat{M} \subseteq \ast_{\hat{N}} \hat{M}$.
\end{lemma}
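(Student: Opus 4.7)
The natural strategy is to construct $\iota$ on the dense $*$-subalgebra of alternating words via the universal property of the amalgamated free product, and then verify trace preservation, from which injectivity and extension to a normal embedding follow automatically.

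First I would invoke the universal property of $\ast_N M$. For each $i \in I$, the composition $M_i = M \hookrightarrow \hat{M} = \hat{M}_i \hookrightarrow \ast_{\hat{N}} \hat{M}$ is a unital $*$-homomorphism, and all of these maps agree on $N$ via the inclusions $N \subseteq \hat{N} \subseteq \ast_{\hat{N}} \hat{M}$. By the universal property of the algebraic amalgamated free product, they assemble into a unital $*$-homomorphism on the dense $*$-subalgebra of alternating words in $\ast_N M$, sending $m_1^{i_1} \cdots m_n^{i_n}$ to the identical word in $\ast_{\hat{N}} \hat{M}$. Uniqueness of $\iota$ is forced by the prescribed action on alternating words.

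The key step is trace preservation. By the standard moment formula for tracial amalgamated free products, a word $m_1^{i_1} \cdots m_n^{i_n}$ with $E_{M,N}(m_k) = 0$ and $i_1 \neq i_2 \neq \cdots \neq i_n$ has trace zero in $\ast_N M$; it therefore suffices to verify that the image word still has trace zero in $\ast_{\hat{N}} \hat{M}$. The commuting square hypothesis, applied to $m \in M$, gives
\begin{equation*}
    E_{\hat{M}, \hat{N}}(m) \;=\; E_{\hat{M}, \hat{N}} E_{\hat{M}, M}(m) \;=\; E_{\hat{M}, N}(m) \;=\; E_{M, N}(m),
\end{equation*}
so the centeredness condition $E_{M,N}(m_k) = 0$ translates to $E_{\hat{M}, \hat{N}}(m_k) = 0$. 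Hence the image word is alternating with centered letters in $\ast_{\hat{N}} \hat{M}$ and has trace zero there as well. Trace preservation on $N$ is immediate since $N \hookrightarrow \hat{N}$ is already trace-preserving, and linearity extends this to the entire dense $*$-subalgebra.

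Finally, a trace-preserving unital $*$-homomorphism on a weakly dense unital $*$-subalgebra of a finite von Neumann algebra extends uniquely, via the GNS construction with respect to the trace, to a trace-preserving normal embedding of the ambient algebra. The conceptual obstacle, and the reason the commuting square hypothesis is essential rather than merely convenient, is exactly the step translating $N$-centeredness into $\hat{N}$-centeredness: without the identity $E_{\hat{M}, \hat{N}}\restriction_M = E_{M,N}$, the algebraic map of alternating words, though still a well-defined $*$-homomorphism, could fail to be trace-preserving, and one would recover only a homomorphism rather than an embedding.
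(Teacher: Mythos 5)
Your proof is correct and takes essentially the same approach as the paper's: both hinge on the commuting square identity $E_{\hat{M}, \hat{N}}\restriction_M = E_{M,N}$ to convert $N$-centered alternating words into $\hat{N}$-centered ones, deduce trace preservation, and then conclude injectivity and the normal extension. The only cosmetic difference is that the paper factors the map through the universal amalgamated free product and its trace-null ideal, while you extend from the dense span of alternating words via the GNS construction.
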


\begin{proof} Uniqueness is clear. Now, recall that $\ast_{N} M$ is the tracial amalgamated free product, and we shall use $\check{\ast}_{N} M$ to denote the universal amalgamated free product of countable copies of $M$ over $N$.

Let $\varphi_i: M_i \rightarrow \ast_{\hat{N}} \hat{M}$ be the composition of the inclusion maps $M_i \subseteq \hat{M}_i$ and $\hat{M}_i \subseteq \ast_{\hat{N}} \hat{M}$. By definition of the universal amalgamated free product, we may define the *-homomorphism $\varphi = \ast_N^{i \in I} \varphi_i: \check{\ast}_{N} M \rightarrow \ast_{\hat{N}} \hat{M}$ s.t. $\varphi(m_1^{i_1} m_2^{i_2} \cdots m_n^{i_n}) = \varphi_{i_1}(m_1^{i_1}) \varphi_{i_1}(m_2^{i_2}) \cdots \varphi_{i_1}(m_n^{i_n})$, where $i_k \in I$, $m_k^{i_k} \in M_{i_k}$, $E_{M, N}(m_k^{i_k}) = 0$, and $i_1 \neq i_2 \neq \cdots \neq i_n$.

We now demonstrate that $\varphi$ is trace-preserving. To be precise, the trace on $\check{\ast}_{N} M$, $\widetilde{\tau_1}$, is the composition of the canonical quotient map $\pi: \check{\ast}_{N} M \rightarrow \ast_{N} M$ and the canonical trace $\tau_1$ on $\ast_{N} M$. The trace on $\ast_{\hat{N}} \hat{M}$ is the canonical trace $\tau_2$. For any $n \in N \subseteq \check{\ast}_{N} M$, by definition $\varphi(n) = n \in N \subseteq \hat{N} \subseteq \ast_{\hat{N}} \hat{M}$, so,
\begin{equation*}
    \tau_2(\varphi(n)) = \tau_{\hat{N}}(n) = \tau_N(n) = \widetilde{\tau_1}(n)
\end{equation*}

Now, the commuting square condition implies that $E_{\hat{M}, \hat{N}} E_{\hat{M}, M} = E_{\hat{M}, N}$. Note that for any $m \in M_i = M$, $i \in I$, $E_{M, N}(m) = 0$, we have,
\begin{equation*}
    E_{\hat{M}, \hat{N}}(m) = E_{\hat{M}, \hat{N}}(E_{\hat{M}, M}(m)) = E_{\hat{M}, N}(m) = E_{M, N}(m) = 0
\end{equation*}

This demonstrates that, given $i_k \in I$, $m_k^{i_k} \in M_{i_k}$, $E_{M, N}(m_k^{i_k}) = 0$, and $i_1 \neq i_2 \neq \cdots \neq i_n$, for $\varphi(m_1^{i_1} m_2^{i_2} \cdots m_n^{i_n}) = \varphi_{i_1}(m_1^{i_1}) \varphi_{i_2}(m_2^{i_2}) \cdots \varphi_{i_n}(m_n^{i_n})$, each term $\varphi_{i_k}(m_k^{i_k})$ satisfies $E_{\hat{M}, \hat{N}}(\varphi_{i_k}(m_k^{i_k})) = E_{\hat{M}, \hat{N}}(a^k_{i_k}) = 0$, so by definition,
\begin{equation*}
    \tau_2(\varphi(m_1^{i_1} m_2^{i_2} \cdots m_n^{i_n})) = 0 = \widetilde{\tau_1}(m_1^{i_1} m_2^{i_2} \cdots m_n^{i_n})
\end{equation*}

Taking linear combinations and norm limits now conclude the proof that $\varphi$ is trace-preserving.

Finally, as we have noted $\tau_1 = \tau_N \circ E$ is a faithful tracial state on $\ast_{N} M$. Hence, the kernel of the canonical quotient map $\pi: \check{\ast}_{N} M \rightarrow \ast_{N} M$ is simply $\{\xi \in \check{\ast}_{N} M: \tau_1(\pi(\xi)^*\pi(\xi)) = 0\} = \{\xi \in \check{\ast}_{N} M: \widetilde{\tau_1}(\xi^*\xi) = 0\}$. But now for any $\xi \in \check{\ast}_{N} M$ such that $\widetilde{\tau_1}(\xi^*\xi) = 0$, we have,
\begin{equation*}
\begin{split}
    \tau_2(\varphi(\xi)^*\varphi(\xi)) & = \tau_2(\varphi(\xi^*\xi))\\
    & = \widetilde{\tau_1}(\xi^*\xi)\\
    & = 0
\end{split}
\end{equation*}
where we have used the fact that $\varphi$ is trace-preserving. This shows that $\varphi(\textrm{ker}(\pi)) = 0$, so $\varphi$ factors through $\ast_{N} M$ and we may define $\iota: \ast_{N} M \rightarrow \ast_{\hat{N}} \hat{M}$ s.t. $\iota \circ \pi = \varphi$. It is then easy to verify that $\iota$, being trace-preserving, is the desired inclusion map.
\end{proof}

\begin{rmk}
The argument in this proof is presented with an abundance of details here so as to make it as clear as possible. Similar arguments will be used repeatedly throughout this paper and we shall omit most details when doing so.
\end{rmk}

\begin{lemma}
For any finite von Neumann algebra $N$, $N^{\bar{\otimes} \infty} \rtimes_{vN} \Sigma_\infty$ is a factor. Here, $\Sigma_\infty$ is the group of all finitely supported permutations of $\mathbb{N}$, which acts naturally on $N^{\bar{\otimes} \infty}$ via permutation.
\end{lemma}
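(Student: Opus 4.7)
The plan is to show $Z(M) = \mathbb{C}$ for $M = N^{\bar{\otimes}\infty} \rtimes_{vN} \Sigma_\infty$ via Fourier analysis along $\Sigma_\infty$. Given $z \in Z(M)$, expand it in $L^2(M) = \bigoplus_{g \in \Sigma_\infty} L^2(N^{\bar{\otimes}\infty}) u_g$ as $z = \sum_g x_g u_g$, with $x_g \in L^2(N^{\bar{\otimes}\infty})$ satisfying $\sum_g \|x_g\|_2^2 = \|z\|_2^2 < \infty$. Centrality of $z$ translates into commutation constraints against the group unitaries $u_h$ and against elements $y \in N^{\bar{\otimes}\infty}$, which I would exploit one at a time.

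First, I unpack $u_h z u_h^* = z$, which yields $\alpha_h(x_g) = x_{hgh^{-1}}$ for every $h, g \in \Sigma_\infty$, where $\alpha$ denotes the permutation action. Since $\alpha_h$ is trace preserving, $g \mapsto \|x_g\|_2$ is constant on $\Sigma_\infty$-conjugacy classes. The key combinatorial observation is that every non-identity element of $\Sigma_\infty$ has an infinite conjugacy class: a finitely supported permutation can be conjugated by a suitable finite permutation to move its support to any other finite subset of $\mathbb{N}$ of the same cardinality, producing infinitely many distinct conjugates. Combined with $L^2$-summability, this forces $x_g = 0$ for all $g \neq e$, so $z = x_e$, which lies in $N^{\bar{\otimes}\infty}$ because $z$ is bounded.

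Next, the identity $yz = zy$ for $y \in N^{\bar{\otimes}\infty}$, read off the $g = e$ Fourier coefficient, gives $x_e \in Z(N^{\bar{\otimes}\infty}) = Z(N)^{\bar{\otimes}\infty}$. The previously derived $\alpha_h(x_e) = x_e$ says $x_e$ is $\Sigma_\infty$-invariant inside this abelian subalgebra. Identifying $Z(N) \cong L^\infty(Y, \nu)$ tracially, we get $Z(N)^{\bar{\otimes}\infty} \cong L^\infty(Y^\mathbb{N}, \nu^\mathbb{N})$ with $\Sigma_\infty$ permuting coordinates. The Hewitt--Savage zero-one law then says that functions in $L^\infty(Y^\mathbb{N}, \nu^\mathbb{N})$ invariant under all finite permutations are a.e.\ constant, so $x_e$ is a scalar and $Z(M) = \mathbb{C}$.

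The only substantive ingredient beyond routine crossed-product bookkeeping is this last step, which requires care precisely because $N$ is allowed to be non-factorial, so that $Z(N)$ can be a genuinely nontrivial commutative algebra; without invoking Hewitt--Savage (or an equivalent ergodicity statement for Bernoulli-type shifts), the $\Sigma_\infty$-fixed points of the infinite tensor power of $Z(N)$ need not reduce to scalars. Everything else hinges only on the infinite-conjugacy-class property of $\Sigma_\infty$, so no further input on the structure of $N$ is required.
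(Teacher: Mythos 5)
Your argument is correct, and its first half (the Fourier expansion along $\Sigma_\infty$ and the use of the i.c.c.\ property to kill all coefficients $x_g$ with $g \neq e$) coincides exactly with the paper's. Where you diverge is in handling the resulting element $x_e \in N^{\bar{\otimes}\infty}$: you pass to the center, using $Z(N^{\bar{\otimes}\infty}) = Z(N)^{\bar{\otimes}\infty}$ (standard, via the tensor-product commutation theorem plus a limiting argument over finite subproducts), identify $Z(N)^{\bar{\otimes}\infty}$ with $L^\infty(Y^{\mathbb{N}}, \nu^{\mathbb{N}})$, and invoke the Hewitt--Savage zero-one law to conclude that $\Sigma_\infty$-invariant elements there are scalar. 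The paper instead proves the stronger statement that the \emph{entire} fixed-point algebra $\langle \Sigma_\infty \rangle' \cap N^{\bar{\otimes}\infty}$ is trivial, by an explicit mean-ergodic computation: it shows $\frac{1}{n!}\sum_{g \in \Sigma_n} g y g^{-1} \to \tau(y)$ in $L^2$ for all $y$, via a counting estimate on the orthonormal basis of finite elementary tensors. Your route is shorter and outsources the ergodicity to a classical probabilistic fact, at the cost of needing the (true but not entirely free) identification of the center of the infinite tensor product and of only controlling central elements; the paper's averaging argument is self-contained, purely operator-algebraic, and yields the triviality of the full fixed-point algebra, which is a genuinely stronger conclusion. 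Both are valid proofs of the lemma as stated; one small point worth making explicit in your write-up is why $Z(N^{\bar{\otimes}\infty}) = Z(N)^{\bar{\otimes}\infty}$, since for the infinite tensor product this requires identifying $\bigcap_n \bigl(Z(N)^{\bar{\otimes} n} \bar{\otimes} N^{\bar{\otimes}[n+1,\infty)}\bigr)$ with $Z(N)^{\bar{\otimes}\infty}$, e.g.\ via convergence of the associated conditional expectations.
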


\begin{proof} Let $x$ be any element of $\mathcal{Z}(N^{\bar{\otimes} \infty} \rtimes_{vN} \Sigma_\infty)$, $x$ has finite $L^2$ norm and hence belongs to $L^2(N^{\bar{\otimes} \infty} \rtimes_{vN} \Sigma_\infty)$, which is canonically isomorphic to $L^2(N^{\bar{\otimes} \infty}) \otimes l_2(\Sigma_\infty)$. The Fourier coefficient of $x$ associated with any non-unity $g \in \Sigma_\infty$ must be zero. This follows from the following consideration. We may write $x = \sum_{g \in \Sigma_\infty} x_g g$ where $x_g \in N^{\bar{\otimes} \infty}$ and the convergence is in $L^2$. Then, for any $g' \in \Sigma_\infty$,
\begin{equation*}
\begin{split}
    g'xg'^{-1} &= \sum_{g \in \Sigma_\infty} g' x_g g g'^{-1}\\
    &= \sum_{g \in \Sigma_\infty} \alpha_{g'}(x_g) g' g g'^{-1}\\
    &= \sum_{g \in \Sigma_\infty} \alpha_{g'}(x_{g'^{-1} g g'}) g
\end{split}
\end{equation*}

But $x \in \mathcal{Z}(N^{\bar{\otimes} \infty} \rtimes_{vN} \Sigma_\infty)$, so $g'xg'^{-1} = x$. In particular, $\alpha_{g'}(x_{g'^{-1} g g'}) = x_g$. But as the action is trace-preserving, we have,
\begin{equation*}
    \|x_{g'^{-1} g g'}\|_2 = \|x_g\|_2
\end{equation*}

Thus, if there is any non-unity $g \in \Sigma_\infty$ for which $x_g \neq 0$, $x_{g'^{-1} g g'}$ would have the same $L^2$ norm $\|x_g\|_2 > 0$ for all $g' \in \Sigma_\infty$. But $\Sigma_\infty$ is an i.c.c. group, so infinitely many Fourier coefficients of $x$ would have $L^2$ norm $\|x_g\|_2$, contradicting the finiteness of $\|x\|_2$.

Now, we know $x$ belongs to $N^{\bar{\otimes} \infty}$. Consider $\hat{N} = \langle\Sigma_\infty\rangle' \cap N^{\bar{\otimes} \infty}$, which we may now see contains $\mathcal{Z}(N^{\bar{\otimes} \infty} \rtimes_{vN} \Sigma_\infty)$. Note that the finite symmetric groups $\Sigma_n$ form an increasing sequence of subgroups of $\Sigma_\infty$ whose union is the whole group. Then we have, for all $y \in N^{\bar{\otimes} \infty}$.
\begin{equation*}
    E_{\hat{N}}(y) = \lim_{n \rightarrow \infty} \frac{1}{n!} \sum_{g \in \Sigma_n} gyg^{-1}
\end{equation*}
where the convergence is in $L^2$. In order to justify this, we first show the limit exists. To do so, we let $\{e_i\}_{i \in \mathbb{N}}$ be an orthonormal basis of $L^2(N)$ and we may assume $e_1 = \hat{1}$. Then finite tensors of them form an orthonormal basis $\mathcal{S}$ of $L^2(N^{\bar{\otimes} \infty})$. The conjugation action by any $g \in \Sigma_\infty$ simply permutes this basis. Let $y = e_{i_1} \otimes e_{i_2} \otimes \cdots \otimes e_{i_m} \otimes 1 \otimes 1 \otimes \cdots$ be an element of $\mathcal{S}$ and we define $\mathrm{supp}(y)$ be the number of $1 \leq k \leq m$ s.t. $e_{i_k} \neq \hat{1}$. Note that this is invariant under the action of $\Sigma_\infty$. Now, assume $y \neq \hat{1}$, and we may assume WLOG that $e_{i_m} \neq \hat{1}$. For any $n \geq m$, $g \in \Sigma_n$, we consider when $gyg^{-1} = y'$, where $y'$ is any element of $\mathcal{S}$. First, we have $\mathrm{supp}(y) = \mathrm{supp}(y')$, and we may disregard any $y'$ with a different $\mathrm{supp}$. This means there are only $\mathrm{supp}(y)$ many tensor components of $y'$ that are non-unity. If $gyg^{-1} = y'$, then as $e_{i_m} \neq \hat{1}$, $g$ must send $m$ to one of these $\mathrm{supp}(y)$ many components. The proportion of elements of $\Sigma_n$ that do this is $\frac{\mathrm{supp}(y)}{n}$, which is less than or equal to $\frac{m}{n}$. Regarding $\frac{1}{n!} \sum_{g \in \Sigma_n} gyg^{-1}$ as a function on $\mathcal{S}$ (since it is a finite linear combination of elements of $\mathcal{S}$), we have,
\begin{equation*}
\begin{split}
    \|\frac{1}{n!} \sum_{g \in \Sigma_n} gyg^{-1}\|_{l_\infty(\mathcal{S})} &= \frac{1}{n!} \max \{\#\textrm{ of }g \in \Sigma_n\textrm{ s.t. }gyg^{-1} = y': y' \in \mathcal{S}\}\\
    &= \max \{\textrm{the proportion of }g \in \Sigma_n\textrm{ s.t. }gyg^{-1} = y': y' \in \mathcal{S}\}\\
    &\leq \frac{m}{n}
\end{split}
\end{equation*}
while it is clear that $\|\frac{1}{n!} \sum_{g \in \Sigma_n} gyg^{-1}\|_{l_1(\mathcal{S})} = 1$. Therefore,
\begin{equation*}
\begin{split}
    \|\frac{1}{n!} \sum_{g \in \Sigma_n} gyg^{-1}\|^2_2 &= \|\frac{1}{n!} \sum_{g \in \Sigma_n} gyg^{-1}\|^2_{l_2(\mathcal{S})}\\
    &\leq \|\frac{1}{n!} \sum_{g \in \Sigma_n} gyg^{-1}\|_{l_1(\mathcal{S})}\|\frac{1}{n!} \sum_{g \in \Sigma_n} gyg^{-1}\|_{l_\infty(\mathcal{S})}\\
    &\leq 1 \cdot \frac{m}{n}\\
    &= \frac{m}{n}
\end{split}
\end{equation*}

Hence, $\lim_{n \rightarrow \infty} \sum_{g \in \Sigma_n} \frac{1}{n!} gyg^{-1} = 0$. If $y = \hat{1}$, then clearly $\lim_{n \rightarrow \infty} \sum_{g \in \Sigma_n} \frac{1}{n!} gyg^{-1} = \hat{1}$. Thus, the limit exists for all finite sums of members of $\mathcal{S}$. One may easily see that $\|\sum_{g \in \Sigma_n} \frac{1}{n!} gyg^{-1}\|_2 \leq \|y\|_2$, so an easy approximation argument shows that the limit exists for all $y \in L^2(N^{\bar{\otimes} \infty})$. When $y \in N^{\bar{\otimes} \infty}$, as $\|\sum_{g \in \Sigma_n} \frac{1}{n!} gyg^{-1}\|_\infty \leq \|y\|_\infty$, a subnet of $\sum_{g \in \Sigma_n} \frac{1}{n!} gyg^{-1}$ necessarily converges $\sigma$-weakly to an element within the algebra. But $\sum_{g \in \Sigma_n} \frac{1}{n!} gyg^{-1}$ converges in $L^2$, so the two limits must coincide and therefore the $L^2$ limit is in fact contained in the algebra whenever $y \in N^{\bar{\otimes} \infty}$. The limit is clearly invariant under conjugation by elements of $\Sigma_\infty$, so it is in $\hat{N} = \langle\Sigma_\infty\rangle' \cap N^{\bar{\otimes} \infty}$. One may then easily verify that the limit must indeed be $E_{\hat{N}}(y)$.

We observe that in the above argument we have also shown for $y \in \mathcal{S}$, $\lim_{n \rightarrow \infty} \sum_{g \in \Sigma_n} \frac{1}{n!} gyg^{-1} = \tau(y)$. Taking linear combinations and limits demonstrates that this holds for all $y \in N^{\bar{\otimes} \infty}$. In particular, $\hat{N} = \mathbb{C}$. Since $\mathcal{Z}(N^{\bar{\otimes} \infty} \rtimes_{vN} \Sigma_\infty)$ is contained in $\hat{N}$, this concludes the proof of the lemma.
\end{proof}

\begin{lemma}
For any QWEP finite von Neumann algebra $A$ and its hyperfinite subalgebra $B$, $\ast_B A$ is QWEP.
\end{lemma}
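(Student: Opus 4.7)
The plan is to embed $\ast_B A$ into an ultrapower of $R$ in three stages: first realize the inclusion $B \subseteq A$ as a commuting square inside an ultraproduct of matrix algebras; second apply Lemma 2.1 to transfer the amalgamated free product through this square; third invoke Junge's embedding $\ast_{\mathbb{C}} M_n \hookrightarrow R^\omega$ from [Jun05, Theorem 7.15], suitably extended to finite-dimensional amalgamation, at each matrix level.

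For the first stage, since $A$ is QWEP I would fix a trace-preserving embedding $\pi\colon A \hookrightarrow \prod_{\omega} M_{n_k}$. Using that $B$ is hyperfinite, i.e., the weak closure of an increasing chain of finite-dimensional subalgebras, one can arrange (after reindexing inside the ultraproduct) that $\pi(B) \subseteq \prod_\omega B_k$ for finite-dimensional $B_k \subseteq M_{n_k}$, and moreover that the square
\begin{center}
\begin{tikzcd}[node distance = 1.8cm]
    \prod_\omega M_{n_k} \arrow[hookleftarrow]{r} \arrow[hookleftarrow]{d}
        & \pi(A) \arrow[hookleftarrow]{d} \\
    \prod_\omega B_k \arrow[hookleftarrow]{r}
        & \pi(B)
\end{tikzcd}
\end{center}
is a commuting square. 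One produces this by first approximating the commuting-square condition in $\|\cdot\|_2$-norm, using density of finite-dimensional subalgebras in the hyperfinite $B$, and then passing to the ultrafilter to make the approximation exact.

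For the second stage, Lemma 2.1 then yields $\ast_B A \hookrightarrow \ast_{\prod_\omega B_k} \prod_\omega M_{n_k}$. A direct reduced-word analysis shows that this target embeds tracially into $\prod_\omega(\ast_{B_k} M_{n_k})$: a reduced word on the left corresponds componentwise to a reduced word at each level $k$, and the commuting-square condition ensures that the tracial and $B_k$-valued expectation data match. For the third stage, for each fixed $k$ the algebra $\ast_{B_k} M_{n_k}$ embeds into $R^{\omega_k}$: the case $B_k = \mathbb{C}$ is exactly [Jun05, Theorem 7.15], and the general finite-dimensional amalgamation case reduces to it by a matrix-inflation trick (amplify $M_{n_k}$ so that $B_k$ becomes a corner of the diagonal). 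Iterating ultraproducts as in the proof of Proposition 2.1 then places $\prod_\omega (\ast_{B_k} M_{n_k})$ inside a single ultrapower $R^\Omega$, and chaining the embeddings proves $\ast_B A$ is QWEP.

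The main obstacle is the first stage: while the ultraproduct $\prod_\omega M_{n_k}$ provides ample flexibility for matrix models of $A$, the on-the-nose commuting-square condition $E_{\prod_\omega M_{n_k},\prod_\omega B_k}\circ \pi = \pi\circ E_{A,B}$ is not automatic for an arbitrary matrix model. Enforcing it requires either a careful iterative perturbation using Dixmier-type averaging inside the hyperfinite algebra $B$, or an appeal to a Popa-style commuting-square result for hyperfinite subalgebras of ultraproducts. Once this setup is achieved, the remaining steps are routine amalgamated-free-product and ultraproduct manipulations together with the cited input from [Jun05].
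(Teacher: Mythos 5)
Your overall architecture (commuting square $+$ Lemma 2.1 $+$ reduction to finite-dimensional amalgamation $+$ [Jun05]) is reasonable in outline, but the step you yourself flag as ``the main obstacle'' is a genuine gap, and it is the whole content of the lemma. Stage 1 asserts that for an \emph{arbitrary} hyperfinite $B \subseteq A$ one can produce a matrix model $\pi\colon A \hookrightarrow \prod_\omega \mathbb{M}_{n_k}$ together with finite-dimensional $B_k \subseteq \mathbb{M}_{n_k}$ forming a commuting square. This is precisely the assertion that $B \subseteq A$ is RE/$\mathbb{C}$ (with matricial $A_k$), which in this paper is obtained only \emph{as a consequence} of Lemma 2.3 together with the Appendix (see Problem 6.1 and Theorem 7.1); using it as the first step is circular unless you supply an independent proof. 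Neither of your proposed remedies does so: when $B$ is infinite-dimensional, $\prod_\omega B_k$ is necessarily much larger than $\pi(B)$, and the commuting-square condition demands that $\pi(A \ominus B)$ be $L^2$-orthogonal to \emph{all} of $\prod_\omega B_k$ --- a constraint on where the rest of $\pi(A)$ sits, not just on $\pi(B)$. Uniqueness-of-embedding results for hyperfinite algebras (Jung/Popa-type theorems) let you standardize $\pi|_B$, but say nothing about this orthogonality, and Dixmier averaging inside $B$ perturbs $B$, not the position of $A \ominus B$ relative to $\prod_\omega B_k$. (For $B$ finite-dimensional the square is automatic from trace-preservation, but that special case does not bootstrap to the general one by your route.) A secondary gap: your ``matrix-inflation trick'' in stage 3 only handles $B_k$ a full matrix algebra; for a multimatrix $B_k = \oplus_j \mathbb{M}_{m_j}$ there is no corner decomposition $\mathbb{M}_{n_k} \cong B_k \otimes \hat{A}$, so $\ast_{B_k}\mathbb{M}_{n_k}$ does not reduce to $\ast_{\mathbb{C}}$ of anything by inflation alone.

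For contrast, the paper's proof avoids ever putting $B$ in commuting-square position with finite-dimensional subalgebras of a matrix model of $A$. It bootstraps in three steps: (i) for $B = \mathbb{M}_n$ it uses the corner decomposition $A \cong \mathbb{M}_n \otimes e_{11}Ae_{11}$ to embed $\ast_{\mathbb{M}_n} A$ into $\mathbb{M}_n \otimes (\ast_{\mathbb{C}}\, e_{11}Ae_{11})$ and quotes [Jun05, Theorem 7.15]; (ii) for $B = R$ it approximates the amalgam from inside, $\ast_R A \hookrightarrow \prod_\omega \ast_{\mathbb{M}_{2^n}} A$; (iii) for general hyperfinite $B$ it uses Lemma 2.2 to form the commuting square of $B \subseteq A$ inside $B^{\bar{\otimes}\infty}\rtimes_{vN}\Sigma_\infty \subseteq A^{\bar{\otimes}\infty}\rtimes_{vN}\Sigma_\infty$, where $B^{\bar{\otimes}\infty}\rtimes_{vN}\Sigma_\infty \cong R$ is now a hyperfinite \emph{factor}, and then applies Lemma 2.1 and step (ii). That last device is exactly what handles the non-factor case that your proposal leaves open. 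If you want to salvage your approach, you would need to either prove stage 1 directly (which appears to be at least as hard as the lemma itself) or adopt the $\Sigma_\infty$ reduction.
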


\begin{proof} We first prove the lemma for the case where $B$ is a matrix algebra $\mathbb{M}_n$. Write the matrix units as $\{e_{ij}\}_{1 \leq i,j \leq n}$. As $\mathbb{M}_n = B$ is a subalgebra of $A$, we may consider the algebra $\hat{A} = e_{11}Ae_{11}$. Note that we then have an isomorphism $\mathbb{M}_n \otimes \hat{A} \simeq A$ defined by sending $(a_{ij})_{1 \leq i,j \leq n}$ to $\sum_{1 \leq i,j \leq n} e_{i1}a_{ij}e_{1j}$. Thus, $\ast_B A = \ast_{\mathbb{M}_n} \mathbb{M}_n \otimes \hat{A}$, where the embedding $\mathbb{M}_n \hookrightarrow \mathbb{M}_n \otimes \hat{A}$ is canonical. Now there is an embedding $\ast_{\mathbb{M}_n} \mathbb{M}_n \otimes \hat{A} \hookrightarrow \mathbb{M}_n \otimes \ast_{\mathbb{C}} \hat{A}$. Indeed, there is a natural embedding $\iota: \mathbb{M}_n \otimes \hat{A} \rightarrow \mathbb{M}_n \otimes \ast_{\mathbb{C}} \hat{A}$ by simply tensoring the canonical embedding $\hat{A} \hookrightarrow \ast_{\mathbb{C}} \hat{A}$ with $Id_{\mathbb{M}_n}$. Taking the free product of all such $\iota$ yields the desired embedding. We note that as $\hat{A}$ embeds into $\mathbb{M}_n \otimes \hat{A}$ which is isomorphic to $A$ and $A$ is a QWEP finite von Neumann algebra, $\hat{A}$ is QWEP and thus $\ast_{\mathbb{C}} \hat{A}$ is QWEP by [Jun05, Theorem 7.15]. Hence, $\mathbb{M}_n \otimes \ast_{\mathbb{C}} \hat{A}$ is QWEP and so is $\ast_B A = \ast_{\mathbb{M}_n} \mathbb{M}_n \otimes \hat{A}$.

We now consider the case where $B = R$ is the hyperfinite $\textrm{II}_1$ factor. Then $B$ is the weak closure of the union of an increasing sequence of matrix algebras $\mathbb{M}_{2^n}$. We then have an embedding $\ast_R A \hookrightarrow \prod_{\omega} \ast_{\mathbb{M}_{2^n}} A$ by taking the free product of canonical inclusions,
\begin{equation*}
    A \hookrightarrow \prod_{n \rightarrow \omega} A_{i_0} = \prod_{n \rightarrow \omega} \ast_{\mathbb{M}_{2^n}}^{i \in I} A_i = \prod_{n \rightarrow \omega} \ast_{\mathbb{M}_{2^n}} A
\end{equation*}

Now $\ast_{\mathbb{M}_{2^n}} A$ are QWEP for all $n$, so $\prod_{\omega} \ast_{\mathbb{M}_{2^n}} A$ embeds into $(R^\omega)^\omega = R^{\omega^2}$. This establishes that $\ast_B A$ is QWEP.

Finally, we extend this result to the case where $B$ is not necessarily a factor. The argument we shall sketch here is due to Haagerup. We apply Lemma 2.2 to $A$ and $B$. For the latter, as $B$ is hyperfinite, $B^{\bar{\otimes} \infty}$ is so as well. Since $\Sigma_\infty$ is amenable, $B^{\bar{\otimes} \infty} \rtimes_{vN} \Sigma_\infty$, being hyperfinite, a factor, and infinite-dimensional but having a faithful tracial state, is therefore isomorphic to $R$. Since $A$ is embeddable into $R^\omega$, $A^{\bar{\otimes} \infty}$ is easily seen to be so as well. Using the fact that $\Sigma_\infty$ is amenable, we may embed $A^{\bar{\otimes} \infty} \rtimes_{vN} \Sigma_\infty$ into $(R \bar{\otimes} A^{\bar{\otimes} \infty})^\omega$. (The argument is quite easy and uses the existence of a Følner sequence. For a detailed argument see the proof of Theorem 3.1.) Hence $A^{\bar{\otimes} \infty} \rtimes_{vN} \Sigma_\infty$ is QWEP. One may also verify that we have the following commuting square,

\begin{center}
\begin{tikzcd}[node distance = 1.8cm]
    {A^{\bar{\otimes} \infty} \rtimes_{vN} \Sigma_\infty} \arrow[hookleftarrow]{r}\arrow[hookleftarrow]{d}
        & A \arrow[hookleftarrow]{d} \\
    {B^{\bar{\otimes} \infty} \rtimes_{vN} \Sigma_\infty} \arrow[hookleftarrow]{r}
        & B
\end{tikzcd}
\end{center}

By Lemma 2.1, $\ast_B A$ embeds into $\ast_{B^{\bar{\otimes} \infty} \rtimes_{vN} \Sigma_\infty} A^{\bar{\otimes} \infty} \rtimes_{vN} \Sigma_\infty$. The latter is QWEP by our previous argument, so $\ast_B A$ is QWEP as well. This concludes the proof.
\end{proof}

\begin{rmk}
While the lemma only demonstrates the QWEP property for algebras of the form $\ast_B A$, a slight alteration to the argument shows a more general result: Algebras of the form $\ast_B^{i \in I} A_i$ are QWEP whenever all algebras $A_i$ are QWEP finite von Neumann algebras and $B$ is their shared hyperfinite subalgebra, even without the assumption that all $A_i$ are the same algebra.
\end{rmk}

\begin{proof}[Proof of Theorem 2.1] Let $\hat{M} = \prod_{\omega} A_k \bar{\otimes} N_1$, $\hat{N} = \prod_{\omega} B_k \bar{\otimes} N_1$, where $A_k$ are QWEP and $B_k$ are hyperfinite. By Lemma 2.1, there exists a tracial embedding $\iota: \ast_N M \rightarrow \ast_{\hat{N}} \hat{M}$. We now consider the algebra $\ast_{\hat{N}} \hat{M}$.

We note that, given $(a_k)^\circ \in \prod_{\omega} A_k \bar{\otimes} N_1 = \hat{M}$, $E_{\hat{M},\hat{N}}((a_k)^\circ) = (E_{A_k \bar{\otimes} N_1, B_k \bar{\otimes} N_1}(a_k))^\circ$. Therefore, for any $(a_k)^\circ \in \hat{M}$ s.t. $E_{\hat{M},\hat{N}}((a_k)^\circ) = 0$, we may assume $E_{A_k \bar{\otimes} N_1, B_k \bar{\otimes} N_1}(a_k) = 0$ for all $k$ through replacing $a_k$ by $a_k - E_{A_k \bar{\otimes} N_1, B_k \bar{\otimes} N_1}(a_k)$. This allows us to define a map $\ast_{\hat{N}} \hat{M} = \ast_{\prod_{\omega} B_k \bar{\otimes} N_1} \prod_{\omega} A_k \bar{\otimes} N_1 \rightarrow \prod_{\omega} \ast_{B_k \bar{\otimes} N_1} (A_k \bar{\otimes} N_1)$ by sending $(a^{i_1}_k)^\circ (a^{i_2}_k)^\circ \cdots (a^{i_n}_k)^\circ$ to $(a^{i_1}_k a^{i_2}_k \cdots a^{i_n}_k)^\circ$, where $i_l \in I$, the index set for the amalgamated free product, $(a^{i_l}_k)^\circ \in \hat{M}_{i_l} = \hat{M}$, $E_{A_k \bar{\otimes} N_1, B_k \bar{\otimes} N_1}(a_k) = 0$ for all $l$ and $k$, and $a^{i_l}_k$ are regarded as in $(A_k \bar{\otimes} N_1)_{i_l}$. It is easy to see that this is a tracial embedding.

Now, we consider the algebras $\ast_{B_k \bar{\otimes} N_1} (A_k \bar{\otimes} N_1)$. We may define a map $\ast_{B_k \bar{\otimes} N_1} (A_k \bar{\otimes} N_1) \rightarrow (\ast_{B_k} A_k) \bar{\otimes} N_1$ by taking free products of embedding maps $B_k \bar{\otimes} N_1 \hookrightarrow (\ast_{B_k} A_k) \bar{\otimes} N_1$ obtained by the natural inclusions $B_k \hookrightarrow (\ast_{B_k} A_k)$ tensoring with $Id_{N_1}$. Again, it is easy to see that this is a tracial embedding. Composing the embeddings we defined above, we obtain an embedding $\ast_N M \hookrightarrow \prod_{\omega} (\ast_{B_k} A_k) \bar{\otimes} N_1$. We note here that when restricted to any copy of $M$, the embedding simply reduces to the map $M \hookrightarrow \hat{M}$ in the definition of RE/$N_1$. In particular, when restricted to $N$, the embedding reduces to the given embedding $N \hookrightarrow \hat{N} = \prod_{\omega} B_k \bar{\otimes} N_1$. By Lemma 2.3, $\ast_{B_k} A_k$ are QWEP. As such, it suffices to prove the following commutative diagram is a commuting square,

\begin{center}
\begin{tikzcd}[node distance = 1.8cm]
    {\prod_{\omega} (\ast_{B_k} A_k) \bar{\otimes} N_1} \arrow[hookleftarrow]{r}\arrow[hookleftarrow]{d}
        & {\ast_N M} \arrow[hookleftarrow]{d} \\
    {\prod_{\omega} B_k \bar{\otimes} N_1} \arrow[hookleftarrow]{r}
        & N
\end{tikzcd}
\end{center}

To do so, note that the embedding $\ast_N M \hookrightarrow \prod_{\omega} (\ast_{B_k} A_k) \bar{\otimes} N_1$ is defined by first embedding $\ast_N M$ into $\ast_{\hat{N}} \hat{M}$ and then embedding $\ast_{\hat{N}} \hat{M}$ into $\prod_{\omega} (\ast_{B_k} A_k) \bar{\otimes} N_1$. Therefore, it suffices to prove the following is a commuting square,

\begin{center}
\begin{tikzcd}[node distance = 1.8cm]
    {\ast_{\hat{N}} \hat{M}} \arrow[hookleftarrow]{r}\arrow[hookleftarrow]{d}
        & {\ast_N M} \arrow[hookleftarrow]{d} \\
    {\hat{N}} \arrow[hookleftarrow]{r}
        & N
\end{tikzcd}
\end{center}

This is immediate from the commuting square condition in the definition of RE/$N_1$ and the definition of the embedding $\ast_N M \hookrightarrow \ast_{\hat{N}} \hat{M}$.
\end{proof}

\begin{rmk}
Corollary 2.1 demonstrates that the RE condition leads to embeddability of the amalgamated free product. If we require some further assumptions or if the amalgamated free product is enlarged to a slightly larger algebra, then the converse is also true. See Appendix.
\end{rmk}

Theorem 2.1 is the first stability property of RE/$N_1$ we shall prove here. We also observe the following easy facts:

\begin{prop}
Let $N \subseteq M$ be RE/$N_1$. If $N_1$ tracially embeds into $(R \bar{\otimes} N_2)^\omega$ for some ultrafilter $\omega$ and finite von Neumann algebra $N_2$, then $N \subseteq M$ is RE/$N_2$. In particular, if $N_1 \subseteq N_2$, then $N \subseteq M$ is RE/$N_2$. Furthermore, if $N_1$ is QWEP, then $N \subseteq M$ is RE/$\mathbb{C}$.
\end{prop}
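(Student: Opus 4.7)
The plan is to promote the given RE/$N_1$ witness to an RE/$N_2$ witness by absorbing an additional copy of $R$ into the first tensor factor. Let $\pi: M \hookrightarrow \prod_{\omega_1} A_k \bar{\otimes} N_1$ realize RE/$N_1$ (with $A_k$ QWEP, $B_k \subseteq A_k$ hyperfinite, commuting square), and let $\psi: N_1 \hookrightarrow (R \bar{\otimes} N_2)^{\omega_2}$ be the hypothesized tracial embedding. For each $k$ I would form
\[
\Phi_k: A_k \bar{\otimes} N_1 \xrightarrow{\mathrm{id}_{A_k} \otimes \psi} A_k \bar{\otimes} (R \bar{\otimes} N_2)^{\omega_2} \hookrightarrow (A_k \bar{\otimes} R \bar{\otimes} N_2)^{\omega_2},
\]
the second map being the natural embedding $(r_l)^\circ \otimes x \mapsto (r_l \otimes x)^\circ$ used in the proof of Proposition 2.1. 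Taking ultraproducts over $\omega_1$ produces a tracial embedding
\[
\Phi: \textstyle\prod_{\omega_1} A_k \bar{\otimes} N_1 \hookrightarrow \prod_{\omega_3} (A_k \bar{\otimes} R) \bar{\otimes} N_2,
\]
where $\omega_3$ is the iterated ultrafilter as in the paper's $(\omega')^2$ convention, and $\Phi$ restricts to an embedding $\prod_{\omega_1} B_k \bar{\otimes} N_1 \hookrightarrow \prod_{\omega_3} (B_k \bar{\otimes} R) \bar{\otimes} N_2$. Setting $A'_{k,l} = A_k \bar{\otimes} R$ and $B'_{k,l} = B_k \bar{\otimes} R$, I note that $A'_{k,l}$ is QWEP since $A_k \bar{\otimes} R \hookrightarrow R^{\omega_2} \bar{\otimes} R \hookrightarrow (R \bar{\otimes} R)^{\omega_2} \cong R^{\omega_2}$, while $B'_{k,l}$ is hyperfinite.

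The candidate RE/$N_2$ embedding is $\pi' = \Phi \circ \pi: M \hookrightarrow \prod_{\omega_3} A'_{k,l} \bar{\otimes} N_2$, which carries $N$ into $\prod_{\omega_3} B'_{k,l} \bar{\otimes} N_2$. The substantive task is to verify the commuting square for $\pi'$. My approach is to factor through $\hat{M} = \prod_{\omega_1} A_k \bar{\otimes} N_1$ and concatenate two commuting squares: the inner square is the given one for $\pi$, which transports along $\Phi$ since $\Phi$ is a trace-preserving embedding; the outer square
\[
\begin{array}{ccc}
\prod_{\omega_3} A'_{k,l} \bar{\otimes} N_2 & \hookleftarrow & \Phi(\hat{M}) \\
\uparrow & & \uparrow \\
\prod_{\omega_3} B'_{k,l} \bar{\otimes} N_2 & \hookleftarrow & \Phi(\hat{N})
\end{array}
\]
(with $\hat{N} = \prod_{\omega_1} B_k \bar{\otimes} N_1$) reduces to the key identity
\[
E_{\prod_{\omega_3} A' \bar{\otimes} N_2,\, \prod_{\omega_3} B' \bar{\otimes} N_2} \circ \Phi = \Phi \circ E_{\hat{M}, \hat{N}}.
\]
I would verify this on elementary tensors $a \otimes n \in A_k \bar{\otimes} N_1$: conditional expectation onto $B'_{k,l} \bar{\otimes} N_2$ acts coordinate-wise as $E_{A_k, B_k} \otimes \mathrm{id}_{R \bar{\otimes} N_2}$, so both sides produce $\Phi(E_{A_k, B_k}(a) \otimes n)$; linearity, $L^2$-density, and the usual ultraproduct argument then extend the identity to all of $\hat{M}$.

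The main technical obstacle is the bookkeeping for the iterated ultrafilter together with the tower of conditional expectations across three algebras ($\hat{N}$, $\hat{M}$, and the ambient ultraproduct). Once the identity above is established, the two commuting squares concatenate to give the commuting square for $\pi'$, so $N \subseteq M$ is RE/$N_2$. The two stated consequences are then immediate: if $N_1 \subseteq N_2$, the tower $N_1 \hookrightarrow N_2 \hookrightarrow R \bar{\otimes} N_2 \hookrightarrow (R \bar{\otimes} N_2)^\omega$ supplies the needed embedding; if $N_1$ is QWEP, then $N_1 \hookrightarrow R^\omega = (R \bar{\otimes} \mathbb{C})^\omega$ yields RE/$\mathbb{C}$.
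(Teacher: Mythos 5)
Your argument is correct. The paper states this proposition without proof (it is listed as an ``easy fact''), and what you write is the natural completion of the argument of Proposition 2.1: tensor the embedding $N_1 \hookrightarrow (R \bar{\otimes} N_2)^{\omega_2}$ into each coordinate, pass the ultrapower through the tensor product via $(r_l)^\circ \otimes x \mapsto (r_l \otimes x)^\circ$, absorb $R$ into the QWEP (resp.\ hyperfinite) factors, and check the commuting square coordinate-wise. The one point that deserves the care you give it is that the new conditional expectation acts as $E_{A_k,B_k} \otimes \mathrm{id}_{R \bar{\otimes} N_2}$ in each coordinate even though $\psi(n)$ is not a sequence of elementary tensors; your elementary-tensor computation plus $L^2$-density handles this, and the concatenation of the two commuting squares is exactly the composition argument the paper itself uses repeatedly.
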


\begin{rmk}
The above result clearly also holds if $N_1$ tracially embeds into $(A \bar{\otimes} N_2)^\omega$ where $A$ is a QWEP finite von Neumann algebra.
\end{rmk}

We also have the following:

\begin{prop} Let $I$ be a countable index set. Suppose $N_i \subseteq M_i$ is RE/$\hat{N}_i$ for all $i$, then,

\medskip

1. $\oplus_{i \in I} N_i \subseteq \oplus_{i \in I} M_i$ is RE/$\oplus_{i \in I} \hat{N}_i$. Here, the trace on $\oplus_{i \in I} N_i$ is of the form $\sum_{i \in I} a_i \tau_{N_i}$ where $a_i > 0$ and $\sum_{i \in I} a_i = 1$. The same holds for $\oplus_{i \in I} M_i$ and $\oplus_{i \in I} \hat{N}_i$ and the same $a_i$ shall be used for all three algebras;

\medskip

2. $\bar{\otimes}_{i \in I} N_i \subseteq \bar{\otimes}_{i \in I} M_i$ is RE/$\bar{\otimes}_{i \in I} \hat{N}_i$.
\end{prop}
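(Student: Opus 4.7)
The plan for both parts is to construct the witnessing families for the direct sum (resp.\ tensor product) of the inclusions $N_i \subseteq M_i$ by tensoring the individual witnesses together. For each $i \in I$, fix QWEP algebras $A_{i,k}$ with hyperfinite subalgebras $B_{i,k} \subseteq A_{i,k}$ and a tracial embedding $\pi_i : M_i \hookrightarrow \prod_\omega A_{i,k} \bar{\otimes} \hat{N}_i$ witnessing that $N_i \subseteq M_i$ is RE/$\hat{N}_i$; since $I$ is countable, an iterated-ultrafilter argument of the sort used in Proposition 2.1 lets us work with a single common $\omega$. I would set
\begin{equation*}
A_k := \bar{\otimes}_{i \in I}\, A_{i,k}, \qquad B_k := \bar{\otimes}_{i \in I}\, B_{i,k} \subseteq A_k,
\end{equation*}
equipped with the product traces. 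Then $B_k$ is hyperfinite, and $A_k$ is QWEP: each finite subtensor $\bar{\otimes}_{i=1}^{n} A_{i,k}$ is QWEP, and the full tensor product is the $W^*$-inductive limit, which embeds into an iterated ultrapower of $R$ that collapses to $R^{\omega}$. The tensor-factor inclusion $A_{i,k} \hookrightarrow A_k$ is trace-preserving, so composing with $\pi_i$ (tensored with $Id_{\hat{N}_i}$) produces tracial embeddings $\tilde{\pi}_i : M_i \hookrightarrow \prod_\omega A_k \bar{\otimes} \hat{N}_i$, each still satisfying the commuting square over $\prod_\omega B_k \bar{\otimes} \hat{N}_i$.

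For Part 1, I would use the identification $A_k \bar{\otimes} (\oplus_i \hat{N}_i) = \oplus_i (A_k \bar{\otimes} \hat{N}_i)$. Assembling the $\tilde{\pi}_i$ gives $\oplus_i \tilde{\pi}_i : \oplus_i M_i \hookrightarrow \oplus_i \prod_\omega (A_k \bar{\otimes} \hat{N}_i)$, which I follow by the canonical inclusion $\oplus_i \prod_\omega X_{i,k} \hookrightarrow \prod_\omega \oplus_i X_{i,k}$ sending $((y_{i,k})^\circ)_i$ to $((y_{i,k})_i)^\circ$; a dominated-convergence argument (summability of the $a_i$ against boundedness of $\tau(y_{i,k})$) shows this map is tracial. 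The trace accounting works because both the trace on $\oplus_j M_j$ and the direct-sum part of the trace on $A_k \bar{\otimes} (\oplus_j \hat{N}_j)$ weight the $i$-th summand by $a_i$, and these factors of $a_i$ appear exactly once each since $\tilde{\pi}_i$ is itself trace-preserving. The commuting square for the resulting embedding reduces componentwise to the squares already known for each $\tilde{\pi}_i$.

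For Part 2, reordering tensor factors yields $A_k \bar{\otimes} (\bar{\otimes}_i \hat{N}_i) = \bar{\otimes}_i (A_{i,k} \bar{\otimes} \hat{N}_i)$; the tensor product $\bar{\otimes}_i \pi_i$ followed by the natural ``tensor into ultraproduct'' inclusion produces the desired $\bar{\otimes}_i M_i \hookrightarrow \prod_\omega A_k \bar{\otimes} (\bar{\otimes}_i \hat{N}_i)$, with the commuting square transferring factor-by-factor. The main technical hurdles are: (i) verifying that countable tensor products of QWEP finite von Neumann algebras remain QWEP; and (ii) checking that the inclusions $\oplus_i \prod_\omega X_{i,k} \hookrightarrow \prod_\omega \oplus_i X_{i,k}$ and $\bar{\otimes}_i \prod_\omega X_{i,k} \hookrightarrow \prod_\omega \bar{\otimes}_i X_{i,k}$ are well-defined tracial embeddings in the countably infinite setting. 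The trace-scaling accounting of Part 1 --- confirming that the weights $a_i$ from the direct sums on $M_j$ and on $\hat{N}_j$ cancel correctly --- is the most delicate step.
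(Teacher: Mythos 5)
Your proposal is correct and follows essentially the same route as the paper: the paper likewise forms the infinite tensor product of the witnessing QWEP algebras (with the hyperfinite tensor product as the distinguished subalgebra), places each $\widetilde{\pi_i}(m_i)$ in the $i$-th tensor slot against identities elsewhere, and assembles via the direct sum (resp.\ tensor product) decomposition before passing to the ultraproduct. Your added care about distinguishing the indices $A_{i,k}$, reducing to a common ultrafilter, and the $a_i$-weighted trace accounting only makes explicit what the paper leaves to the reader.
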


\begin{proof} Let $\widetilde{\pi_i}: M_i \rightarrow \oplus_{k=1}^\infty A_k \bar{\otimes} \hat{N}_i$ be linear maps s.t. when composed with the canonical quotient maps $\oplus_{k=1}^\infty A_k \bar{\otimes} \hat{N}_i \rightarrow \prod_{\omega} A_k \bar{\otimes} \hat{N}_i$ become tracial embeddings as in the definition of RE. For the first part of the proposition, in order for trace to preserved, we consider $\widetilde{\pi}: \oplus_{i \in I} M_i \rightarrow \oplus_{k=1}^\infty (\bar{\otimes}_{k=1}^\infty A_k) \bar{\otimes} (\oplus_{i \in I} \hat{N}_i)$ defined by,
\begin{equation*}
    \widetilde{\pi}(m_1 \oplus m_2 \oplus \cdots) = \widetilde{\pi_1}(m_1) \otimes 1_{\bar{\otimes}_{k \neq 1} A_k} + \widetilde{\pi_2}(m_2) \otimes 1_{\bar{\otimes}_{k \neq 2} A_k} + \cdots
\end{equation*}

Composing this map with the canonical quotient map $\oplus_{k=1}^\infty (\bar{\otimes}_{k=1}^\infty A_k) \bar{\otimes} (\oplus_{i \in I} \hat{N}_i) \rightarrow \prod_\omega (\bar{\otimes}_{k=1}^\infty A_k) \bar{\otimes} (\oplus_{i \in I} \hat{N}_i)$ now gives an tracial embedding. We note that $\bar{\otimes}_{k=1}^\infty A_k$ is QWEP. On the other hand, $\oplus_{i \in I} N_i$ would be sent to $\prod_\omega (\bar{\otimes}_{k=1}^\infty B_k) \bar{\otimes} (\oplus_{i \in I} \hat{N}_i)$ and $\bar{\otimes}_{k=1}^\infty B_k$ is hyperfinite. We now have a commutative diagram,

\begin{center}
\begin{tikzcd}[node distance = 1.8cm]
    {\prod_\omega (\bar{\otimes}_{k=1}^\infty A_k) \bar{\otimes} (\oplus_{i \in I} \hat{N}_i)} \arrow[hookleftarrow]{r}\arrow[hookleftarrow]{d}
        & {\oplus_{i \in I} M_i} \arrow[hookleftarrow]{d} \\
    {\prod_\omega (\bar{\otimes}_{k=1}^\infty A_k) \bar{\otimes} (\oplus_{i \in I} \hat{N}_i)} \arrow[hookleftarrow]{r}
        & {\oplus_{i \in I} N_i}
\end{tikzcd}
\end{center}

The commuting square condition can be verified easily.

For the second part of the proposition, we consider $\widetilde{\pi}: \odot_{i \in I} M_i \rightarrow \oplus_{k=1}^\infty (\bar{\otimes}_{k=1}^\infty A_k) \bar{\otimes} (\bar{\otimes}_{i \in I} \hat{N}_i)$ defined by,

\begin{equation*}
    \widetilde{\pi}(m_1 \otimes m_2 \otimes \cdots m_n \otimes 1 \otimes 1 \otimes \cdots) = \widetilde{\pi_1}(m_1) \otimes \widetilde{\pi_2}(m_2) \otimes \cdots \otimes \widetilde{\pi_n}(m_n) \otimes 1 \otimes 1 \otimes \cdots
\end{equation*}
where $\odot_{i \in I} M_i$ indicates the algebraic tensor product. We then compose it with the canonical quotient map from $\oplus_{k=1}^\infty (\bar{\otimes}_{k=1}^\infty A_k) \bar{\otimes} (\bar{\otimes}_{i \in I} \hat{N}_i)$ to $\prod_\omega (\bar{\otimes}_{k=1}^\infty A_k) \bar{\otimes} (\bar{\otimes}_{i \in I} \hat{N}_i)$. It is now easy to see that the map is trace-preserving, allowing us to extend it to $\bar{\otimes}_{i \in I} M_i$. The commuting square condition can then be verified easily.
\end{proof}

The following are two limit approximation theorems, where we shall demonstrate that the RE condition is stable under taking the union (resp. intersection) of an upward-directed (resp. downward-directed) sequence of algebras. The proof follows essentially the same line as the part of the proof of Lemma 2.3 where we approximate algebras of the form $\ast_R A$ by $\ast_{\mathbb{M}_{2^n}} A$. While the proof is relatively easy, the results obtained will be quite useful later.

\begin{thm}[Upward Limit Approximation Theorem]
Let $M$ be a finite von Neumann algebra, $M_i$, $N_i$ be two increasing sequences of subalgebras s.t. $N_i \subseteq M_i$ is RE/$\hat{N}$ for all $i$. Suppose further that $\cup_i M_i$ is weakly dense in $M$ and the weak closure of $\cup_i N_i$ is $N$, then $N \subseteq M$ is RE/$\hat{N}$.
\end{thm}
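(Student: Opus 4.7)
The plan is to adapt the approximation step from the proof of Lemma 2.3 (where $\ast_R A$ was realized inside $\prod_\omega \ast_{\mathbb{M}_{2^n}} A$): first realize $M$ inside a single tracial ultraproduct of the $M_i$'s, then feed in the level-by-level RE data, and finally collapse two layers of ultrafilters using the trick from the proof of Proposition 2.1.

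First I would construct a trace-preserving $*$-homomorphism $\Phi: M \hookrightarrow \prod_{\omega'} M_i$ for some free ultrafilter $\omega'$ on $\mathbb{N}$, whose restriction to $N$ lands in $\prod_{\omega'} N_i$. On the $*$-subalgebra $\bigcup_i M_i$, which is weakly dense in $M$, there is a natural trace-preserving $*$-homomorphism sending $x \in M_{i_0}$ to the class of the eventually-constant sequence $(0,\ldots,0,x,x,\ldots)^\circ$ (well-defined because the $M_i$ are increasing). Being trace-preserving it is $L^2$-isometric, and a Kaplansky density argument shows that its $L^2$-extension to $M$ takes values in the operator algebra $\prod_{\omega'} M_i$ (not merely in its $L^2$) and remains a $*$-homomorphism; the same construction applied inside $N$ produces a compatible embedding $N \hookrightarrow \prod_{\omega'} N_i$.

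Second I would plug in the RE data at each level. By hypothesis, there is an ultrafilter $\omega$ and tracial embeddings $\pi_i: M_i \hookrightarrow \prod_\omega A_k^{(i)} \bar{\otimes} \hat{N}$ with $\pi_i(N_i) \subseteq \prod_\omega B_k^{(i)} \bar{\otimes} \hat{N}$ realising the RE commuting square, where $A_k^{(i)}$ are QWEP and $B_k^{(i)} \subseteq A_k^{(i)}$ hyperfinite. (Without loss of generality the ultrafilter $\omega$ may be chosen uniformly in $i$.) Composing with $\Phi$ yields
\begin{equation*}
M \hookrightarrow \prod_{\omega'} M_i \hookrightarrow \prod_{\omega'} \Bigl(\prod_\omega A_k^{(i)} \bar{\otimes} \hat{N}\Bigr).
\end{equation*}
Applying, for each $i$, the embedding $\bigl(\prod_\omega A_k^{(i)}\bigr) \bar{\otimes} \hat{N} \hookrightarrow \prod_\omega (A_k^{(i)} \bar{\otimes} \hat{N})$ from the proof of Proposition 2.1, and then collapsing the iterated ultraproduct $\prod_{\omega'}\prod_\omega$ into a single ultraproduct $\prod_{\omega''}$ over $\mathbb{N}^2$ with respect to the iterated ultrafilter $\omega'' = (\omega')\cdot\omega$ (again as in Proposition 2.1), I obtain a tracial embedding $M \hookrightarrow \prod_{\omega''} A_j \bar{\otimes} \hat{N}$ with $N \hookrightarrow \prod_{\omega''} B_j \bar{\otimes} \hat{N}$, where $\{A_j\}_{j \in \mathbb{N}^2}$ is a re-indexing of $\{A_k^{(i)}\}$ (all QWEP) and $\{B_j\}$ of $\{B_k^{(i)}\}$ (all hyperfinite).

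Finally I would verify the commuting square condition for the resulting diagram. Since conditional expectations onto subalgebras of the form $\prod_\omega B_k \bar{\otimes} \hat{N} \subseteq \prod_\omega A_k \bar{\otimes} \hat{N}$ are computed entrywise, and since the embedding $(\prod_\omega A_k)\bar{\otimes} \hat{N} \hookrightarrow \prod_\omega(A_k \bar{\otimes} \hat{N})$ intertwines the corresponding conditional expectations, the RE commuting square at level $i$ passes to the $\omega'$-ultraproduct. For $m \in \bigcup_i M_i$, choosing an index $i$ with $m \in M_i$ reduces the identity $E_{\hat{M},\pi(N)}(m) = E_{\hat{M},\pi(M)}E_{\hat{M},\prod_{\omega''} B_j \bar{\otimes} \hat{N}}(m)$ to the commuting square for $\pi_i$, and a 2-norm density argument extends it to all of $M$. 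The main technical obstacle is precisely this last bookkeeping step, namely tracking conditional expectations consistently across the two layers of ultrafilters and through the reshuffling $(\prod A_k)\bar{\otimes}\hat{N} \hookrightarrow \prod(A_k\bar{\otimes}\hat{N})$; this is of the same flavour as the computations in the proof of Theorem 2.1 and can be handled by the same routine approximations.
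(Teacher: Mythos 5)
Your proposal is correct and follows essentially the same route as the paper: embed $M$ into $\prod_{\omega'} M_i$ (carrying $N$ into $\prod_{\omega'} N_i$), compose with the level-wise RE embeddings, collapse the iterated ultraproduct as in Proposition 2.1, and verify the commuting square by factoring it through the intermediate square over $\prod_{\omega'} M_i \supseteq \prod_{\omega'} N_i$. The only cosmetic difference is that the paper defines the first embedding directly by $m \mapsto (E_{M,M_i}(m))^\circ$, which agrees with your eventually-constant-sequence construction on $\bigcup_i M_i$ and avoids the separate density-extension step.
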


\begin{proof} Note that we have a tracial embedding of $M$ into $\prod_\omega M_i$ by sending $m \in M$ to $(E_{M, M_i}(m))^\circ$. By definition of RE, each $M_i$ embeds into an algebra of the form $\prod_{k \rightarrow \omega} A_{ik} \bar{\otimes} \hat{N}$ where $A_{ik}$ are QWEP. So we obtain an embedding,
\begin{equation*}
    M \hookrightarrow \prod_{i \rightarrow \omega} \prod_{k \rightarrow \omega} A_{ik} \bar{\otimes} \hat{N}
\end{equation*}

We observe here that if $m$ belongs to $N$, then $(E_{M, M_i}(m))^\circ$ and $(E_{N, N_i}(m))^\circ$ are actually the same element in $\prod_\omega M_i$ as both sequences converge to $m$ in $L^2(M)$. Hence, the embedding $M \hookrightarrow \prod_\omega M_i$ sends $N$ into $\prod_\omega N_i$. Therefore, the composition embedding above sends $N$ into $\prod_{i \rightarrow \omega} \prod_{k \rightarrow \omega} B_{ik} \bar{\otimes} \hat{N}$. We thus have a commutative diagram,

\begin{center}
\begin{tikzcd}[node distance = 1.8cm]
    {\prod_{i \rightarrow \omega} \prod_{k \rightarrow \omega} A_{ik} \bar{\otimes} \hat{N}} \arrow[hookleftarrow]{r}\arrow[hookleftarrow]{d}
        & M \arrow[hookleftarrow]{d} \\
    {\prod_{i \rightarrow \omega} \prod_{k \rightarrow \omega} B_{ik} \bar{\otimes} \hat{N}} \arrow[hookleftarrow]{r}
        & N
\end{tikzcd}
\end{center}

To verify this is a commuting square, we note that this commutative diagram is in fact the composition of two commutative diagrams,

\begin{center}
\begin{tikzcd}[node distance = 1.8cm]
    {\prod_{i \rightarrow \omega} \prod_{k \rightarrow \omega} A_{ik} \bar{\otimes} \hat{N}} \arrow[hookleftarrow]{r}\arrow[hookleftarrow]{d} & {\prod_{i \rightarrow \omega} M_i} \arrow[hookleftarrow]{r}\arrow[hookleftarrow]{d}
        & M \arrow[hookleftarrow]{d} \\
    {\prod_{i \rightarrow \omega} \prod_{k \rightarrow \omega} B_{ik} \bar{\otimes} \hat{N}} \arrow[hookleftarrow]{r} & {\prod_{i \rightarrow \omega} N_i} \arrow[hookleftarrow]{r}
        & N
\end{tikzcd}
\end{center}

It is sufficient to show both small squares satisfy the commuting square condition. The commuting square condition on the left one follows from $N_i \subseteq M_i$ being RE/$\hat{N}$. The commuting square condition for the other small square follows from the following consideration. For any $m \in M$, the image of $E_{M, N}(m)$ under the embedding $N \hookrightarrow \prod_\omega N_i$ is $(E_{N, N_i}(E_{M, N}(m)))^\circ = (E_{M, N_i}(m))^\circ$. Since the image of $m$ under the embedding $M \hookrightarrow \prod_\omega M_i$ is $(E_{M, M_i}(m))^\circ$, we have,
\begin{equation*}
\begin{split}
    E_{\prod_\omega M_i, \prod_\omega N_i}(m) &= E_{\prod_\omega M_i, \prod_\omega N_i}((E_{M, M_i}(m))^\circ)\\
    &= (E_{M_i, N_i}(E_{M, M_i}(m)))^\circ\\
    &= (E_{M, N_i}(m))^\circ\\
    &= E_{M, N}(m)
\end{split}
\end{equation*}
\end{proof}

\begin{thm}[Downward Limit Approximation Theorem]
Let $M_i$, $N_i$ be two decreasing sequences of finite von Neumann algebras s.t. $N_i \subseteq M_i$ is RE/$\hat{N}$ for all $i$. Let $\cap_i M_i = M$ and $\cap_i N_i = N$, then $N \subseteq M$ is RE/$\hat{N}$.
\end{thm}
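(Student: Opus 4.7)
The plan is to mirror the proof of Theorem 2.3, with one key change: since the sequences are now decreasing and $M = \cap_i M_i$, $N = \cap_i N_i$, the natural embedding $M \hookrightarrow \prod_\omega M_i$ is simply the diagonal map $m \mapsto (m)^\circ$. This is trace-preserving because every $M_i$ restricts to the same trace on $M$, and it sends $N$ into $\prod_\omega N_i$ since $N \subseteq N_i$ for each $i$.

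Applying RE/$\hat{N}$ to each pair $N_i \subseteq M_i$ yields embeddings $M_i \hookrightarrow \prod_{k \to \omega} A_{ik} \bar{\otimes} \hat{N}$ with $A_{ik}$ QWEP, $B_{ik}$ hyperfinite subalgebras, and $N_i$ landing in $\prod_{k \to \omega} B_{ik} \bar{\otimes} \hat{N}$, compatible with the RE commuting squares. Taking $\prod_{i \to \omega}$ and composing with the diagonal gives
\[
M \hookrightarrow \prod_{i \to \omega} M_i \hookrightarrow \prod_{i \to \omega} \prod_{k \to \omega} A_{ik} \bar{\otimes} \hat{N},
\]
with $N$ sent into $\prod_{i \to \omega} \prod_{k \to \omega} B_{ik} \bar{\otimes} \hat{N}$. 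As in Proposition 2.1 and the proof of Theorem 2.3, the iterated ultraproduct embeds into a single ultraproduct over $\omega^2$ of the QWEP algebras $A_{ik} \bar{\otimes} \hat{N}$, which is of the form required by RE/$\hat{N}$.

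To verify the outer commuting square for $N \subseteq M$, I would decompose it vertically into two smaller squares with intermediate row $\prod_{i \to \omega} N_i \subseteq \prod_{i \to \omega} M_i$, exactly as in the Upward proof. The square between the iterated ultraproducts and this intermediate row is a commuting square because each $N_i \subseteq M_i$ is RE/$\hat{N}$ and the commuting square property is preserved when passing to $\prod_{i \to \omega}$.

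The main obstacle is the remaining square, which reduces to showing
\[
E_{\prod_\omega M_i,\, \prod_\omega N_i}\bigl((m)^\circ\bigr) = \bigl(E_{M,N}(m)\bigr)^\circ \qquad \text{for all } m \in M.
\]
Since the left side equals $\bigl(E_{M_i, N_i}(m)\bigr)^\circ$, it suffices to prove $\lim_{i \to \omega} \|E_{M_i, N_i}(m) - E_{M,N}(m)\|_2 = 0$. Fixing the ambient algebra $M_1$ (which contains every $M_i$ and hence $M$), uniqueness of the conditional expectation yields $E_{M_i, N_i}(m) = E_{M_1, N_i}(m)$ and $E_{M,N}(m) = E_{M_1, N}(m)$ for every $m \in M$. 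The orthogonal projections $E_{M_1, N_i}$ on $L^2(M_1)$ form a decreasing net with ranges $L^2(N_i)$, so they converge strongly to the projection onto $\cap_i L^2(N_i) = L^2(N)$, which is $E_{M_1, N}$. Hence $E_{M_i, N_i}(m) \to E_{M,N}(m)$ in $L^2$ as a genuine limit, and in particular along $\omega$. This reverse-martingale step is the only non-routine ingredient; once it is in hand, the rest of the verification is routine and the proof follows the Upward case line-by-line.
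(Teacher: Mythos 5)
Your proposal is correct and follows essentially the same route as the paper, which simply states that the argument is the same as in the Upward case. You have correctly identified the two necessary modifications (the diagonal embedding $m \mapsto (m)^\circ$ in place of $m \mapsto (E_{M,M_i}(m))^\circ$, and the reverse-martingale convergence $E_{M_1,N_i}(m) \to E_{M_1,N}(m)$ coming from the strong convergence of the decreasing projections onto $L^2(N_i)$ with $\cap_i L^2(N_i) = L^2(N)$), and the rest goes through line-by-line as you say.
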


\begin{proof} The argument is the same as that of the Upward Limit Approximation Theorem.
\end{proof}

\medskip

\section{\sc Facts on Group Algebras, Crossed Products, and Co-amenability}

\medskip

We shall now proceed to prove several facts regarding group algebras and crossed products which will be useful when demonstrating examples of the RE condition. We shall also provide some examples in this section involving co-amenability.

\begin{lemma}
Let $N$ be a finite von Neumann algebra, $G$ be a group acting through trace-preserving automorphisms on $N$, $H \triangleleft G$, and $|G:H| = n < \infty$, then $N \rtimes_{vN} H \subseteq N \rtimes_{vN} G$ is RE/$N \rtimes_{vN} H$.
\end{lemma}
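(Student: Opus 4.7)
The plan is to construct an explicit trace-preserving $*$-homomorphism $\pi: N \rtimes_{vN} G \hookrightarrow \mathbb{M}_n \bar{\otimes} (N \rtimes_{vN} H)$ modeled on the induced representation, and to take $A_k = \mathbb{M}_n$ with hyperfinite subalgebra $B_k = \ell^\infty_n \subseteq \mathbb{M}_n$ (the diagonal) for every $k$ in the definition of RE; the tracial ultrapower $\prod_\omega (\mathbb{M}_n \bar{\otimes} N_1)$ then contains $\mathbb{M}_n \bar{\otimes} (N \rtimes_{vN} H)$ diagonally, and since $\mathbb{M}_n$ is finite-dimensional hence QWEP and $\ell^\infty_n$ is hyperfinite, the task reduces to verifying (i) $\pi$ is a tracial embedding, (ii) $\pi(N \rtimes_{vN} H) \subseteq \ell^\infty_n \bar{\otimes} (N \rtimes_{vN} H)$, and (iii) the commuting square condition of Definition 2.2.

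To build $\pi$, fix coset representatives $e = g_1, g_2, \ldots, g_n$ with $G = \bigsqcup_i g_i H$. For each $g \in G$, write $g g_i = g_{\sigma_g(i)} h_{g,i}$ with $\sigma_g \in S_n$ and $h_{g,i} \in H$, and define
\[
    \pi(x) = \sum_i e_{ii} \otimes \alpha_{g_i^{-1}}(x) \text{ for } x \in N, \qquad \pi(u_g) = \sum_i e_{\sigma_g(i), i} \otimes u_{h_{g,i}}.
\]
I would check the cocycle relations $\sigma_{gg'} = \sigma_g \sigma_{g'}$ and $h_{gg', i} = h_{g, \sigma_{g'}(i)} h_{g',i}$, together with the identity $h_{g,i} g_i^{-1} = g_{\sigma_g(i)}^{-1} g$ that comes from rearranging $g g_i = g_{\sigma_g(i)} h_{g,i}$; these imply $\pi$ is a $*$-homomorphism satisfying the covariance $\pi(u_g) \pi(x) \pi(u_g)^* = \pi(\alpha_g(x))$. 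Trace-preservation on basis elements $x u_g$ reduces to the observation (next paragraph) that $\sigma_g$ has a fixed point if and only if $g \in H$, in which case $\sigma_g = \mathrm{id}$ and the diagonal entries recover the canonical trace $\tau_{N \rtimes G}(x u_g)$; since the traces are faithful, $\pi$ is automatically injective.

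The commuting square is the crux and follows from two consequences of normality of $H$. First, for $h \in H$ we have $h g_i = g_i (g_i^{-1} h g_i)$ with $g_i^{-1} h g_i \in H$, so $\sigma_h = \mathrm{id}$ and $\pi(u_h)$ is diagonal; hence $\pi(N \rtimes_{vN} H) \subseteq \ell^\infty_n \bar{\otimes} (N \rtimes_{vN} H)$. Second, for $g \in G \setminus H$, a fixed point $\sigma_g(i) = i$ would force $g \in g_i H g_i^{-1} = H$ by normality, a contradiction; so $\sigma_g$ is fixed-point-free and $\pi(u_g)$ has zero diagonal. Applying $E_{\mathbb{M}_n, \ell^\infty_n} \otimes \mathrm{id}$ to the Fourier expansion $\pi(x) = \sum_g \pi(x_g u_g)$ therefore kills exactly the terms supported on $g \notin H$ and outputs $\pi(E_{N \rtimes_{vN} G, N \rtimes_{vN} H}(x))$. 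The main (and rather minor) obstacle throughout is the careful bookkeeping needed to confirm the cocycle and covariance identities for $\pi$; once those are in hand, everything else follows directly from the dichotomy $\sigma_g = \mathrm{id}$ versus $\sigma_g$ fixed-point-free.
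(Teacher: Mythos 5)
Your proposal is correct and follows essentially the same route as the paper: both embed $N \rtimes_{vN} G$ into $\mathbb{M}_n \otimes (N \rtimes_{vN} H)$ via coset representatives (you write the induced-representation formulas and cocycle identities explicitly, the paper reads the same matrix entries off the decomposition $L^2(N \rtimes_{vN} G) \simeq l_2^n \otimes L^2(N \rtimes_{vN} H)$), check traciality by the same case analysis on $g$, and verify the commuting square from the same dichotomy that normality forces $\pi(u_g)$ to be diagonal for $g \in H$ and to have vanishing diagonal for $g \notin H$. The final step of passing to constant-sequence ultrapowers with $A_k = \mathbb{M}_n$, $B_k = l_\infty^n$ is also exactly what the paper does.
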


\begin{proof} Fix some representatives $\{\widetilde{g_1}, \widetilde{g_2}, \cdots, \widetilde{g_n}\}$ of cosets in $G/H$, then any element $g \in G$ can be uniquely written as $g = \widetilde{g_i}h$ for some $i$ and $h \in H$. In this way, $L^2(N \rtimes_{vN} G)$ can be identified with,
\begin{equation*}
\begin{split}
    L^2(N \rtimes_{vN} G) &\simeq L^2(N) \otimes l_2(G)\\
    &\simeq L^2(N) \otimes l_2(G/H) \otimes l_2(H)\\
    &\simeq L^2(N) \otimes l_2^n \otimes l_2(H)\\
    &\simeq l_2^n \otimes L^2(N \rtimes_{vN} H)
\end{split}
\end{equation*}

Now, for any $n, n' \in N$, $h \in H$, $g \in G$, and any $\widetilde{g_i}$, there exists unique $j$ and $h' \in H$ s.t. $g\widetilde{g_i} = \widetilde{g_j}h'$, so,
\begin{equation*}
\begin{split}
    gn\widetilde{g_i}hn' &= g\widetilde{g_i}\alpha_{\widetilde{g_i}^{-1}}(n)hn'\\
    &= \widetilde{g_j}h'\alpha_{\widetilde{g_i}^{-1}}(n)hn'
\end{split}
\end{equation*}
where $h'$ and $\alpha_{\widetilde{g_i}^{-1}}(n)$ does not depend on $hn'$. As elements of the form $gn$ span $N \rtimes_{vN} G$ and elements of the form $hn'$ span $N \rtimes_{vN} H$, this shows that when elements of $N \rtimes_{vN} G$ are written as a matrix in the algebra $\mathbb{B}(l_2^n \otimes L^2(N \rtimes_{vN} H)) \simeq \mathbb{M}_n \otimes \mathbb{B}(L^2(N \rtimes_{vN} H))$, every entry thereof is in $N \rtimes_{vN} H$. Thus, $N \rtimes_{vN} G$ embeds into $\mathbb{M}_n \otimes N \rtimes_{vN} H$ and we now need to show that this embedding is tracial. To do so, let $g \in G$, $n \in N$. If $g = e$, then it is easy to verify that $gn = n$ is sent to a diagonal matrix with entries $\alpha_{\widetilde{g_i}^{-1}}(n)$, which all have the same trace as $n$. This matrix therefore has the same trace as $n$. If $g \neq e$ but $g \in H$, then $gn\widetilde{g_i} = \widetilde{g_i}h'\alpha_{\widetilde{g_i}^{-1}}(n)$ where $h' = \widetilde{g_i}^{-1}g\widetilde{g_i} \in H$ by normality of $H$ in $G$. However, as $g \neq e$, $h' \neq e$ and thus $h'\alpha_{\widetilde{g_i}^{-1}}(n)$ has trace 0. The matrix of $gn$ therefore has trace 0 as well. Finally, when $g \notin H$, we have $\widetilde{g_i}^{-1}g\widetilde{g_i} \notin H$ for all $i$. As such, all entries of $gn$ as a matrix in $\mathbb{M}_n \otimes N \rtimes_{vN} H$ are off-diagonal and it hence has trace 0. This proves the embedding is tracial.

The above argument also shows the embedding sends $N \rtimes_{vN} H$ to $l_\infty^n \otimes N \rtimes_{vN} H$, so we have a commutative diagram,

\begin{center}
\begin{tikzcd}[node distance = 1.8cm]
    {\mathbb{M}_n \otimes N \rtimes_{vN} H} \arrow[hookleftarrow]{r}\arrow[hookleftarrow]{d}
        & {N \rtimes_{vN} G} \arrow[hookleftarrow]{d} \\
    {l_\infty^n \otimes N \rtimes_{vN} H} \arrow[hookleftarrow]{r}
        & {N \rtimes_{vN} H}
\end{tikzcd}
\end{center}

To prove this is a commuting square, we note that it is sufficient to verify that $\ring{(N \rtimes_{vN} G)} = \{x \in N \rtimes_{vN} G: E_{N \rtimes_{vN} G, N \rtimes_{vN} H}(x) = 0\}$ is sent by $E_{\mathbb{M}_n \otimes N \rtimes_{vN} H, l_\infty^n \otimes N \rtimes_{vN} H}$ to 0. But $\ring{(N \rtimes_{vN} G)}$ is the weak closure of the span of elements of the form $gn$, where $g \notin H$, so it suffices to verify $E_{\mathbb{M}_n \otimes N \rtimes_{vN} H, l_\infty^n \otimes N \rtimes_{vN} H}(gn) = 0$ for all $g \in G \backslash H$. This is now obvious as we have already seen in the above argument that such $gn$ is sent to a matrix with only off-diagonal entries through the embedding $N \rtimes_{vN} G \hookrightarrow \mathbb{M}_n \otimes N \rtimes_{vN} H$.

Finally, both $\mathbb{M}_n \otimes N \rtimes_{vN} H$ and $l_\infty^n \otimes N \rtimes_{vN} H$ embed canonically into their own ultrapowers. This demonstrates the RE condition.
\end{proof}

\begin{rmk}
A slight alteration of the first part of the above proof shows that $N \rtimes_{vN} G$ still tracially embeds into $\mathbb{M}_n \otimes N \rtimes_{vN} H$ without the normality assumption, i.e., as long as $H < G$ and $|G:H| = n < \infty$, $N \rtimes_{vN} G$ tracially embeds into $\mathbb{M}_n \otimes N \rtimes_{vN} H$. However, the above argument cannot be used to show the RE condition without the normality assumption, but we will use a different method to prove the general result later. We still present this argument here, nonetheless, as similar arguments and constructions will be used repeatedly throughout this paper, and isolating this argument in this relatively easy lemma makes it much clearer. We shall omit most details when using similar arguments later.
\end{rmk}

It is important to note here that while in Lemma 3.1, $N \rtimes_{vN} H \hookrightarrow N \rtimes_{vN} G \hookrightarrow \mathbb{M}_{|G:H|} \otimes N \rtimes_{vN} H$, the composition embedding $N \rtimes_{vN} H \hookrightarrow \mathbb{M}_{|G:H|} \otimes N \rtimes_{vN} H$ is not the canonical embedding $x \mapsto 1_{\mathbb{M}_{|G:H|}} \otimes x$. Indeed, one can easily see from the proof that given $h \in H$, $n \in N$, the matrix $hn \in \mathbb{M}_n \otimes N \rtimes_{vN} H$ is diagonal but the entries on the diagonal are $\widetilde{g_i}^{-1}h\widetilde{g_i}\alpha_{\widetilde{g_i}^{-1}}(n)$ instead of simply $hn$. We note here that the map $hn \mapsto \widetilde{g_i}^{-1}h\widetilde{g_i}\alpha_{\widetilde{g_i}^{-1}}(n)$ extends to an automorphism on $N \rtimes_{vN} H$. We shall slightly abuse the notation and write this automorphism as $N \rtimes_{vN} H \ni x \mapsto \widetilde{g_i}^{-1}x\widetilde{g_i} \in N \rtimes_{vN} H$. It is easy to see that this automorphism preserves the canonical trace on $N \rtimes_{vN} H$. Following this example, we shall generalize this situation and introduce the following concept:

\begin{defn}
Given a von Neumann algebra $N$, a collection of automorphisms $\{\alpha_1, \alpha_2, \cdots, \alpha_n\}$ on $N$, a \textit{twisted inclusion} of $N$ into $\mathbb{M}_n \otimes N$ is an injective *-homomorphism $\pi: N \rightarrow \mathbb{M}_n \otimes N$ s.t.
\begin{equation*}
    \pi(x) = 
\begin{pmatrix}
    \alpha_1(x) & & & \\
     & \alpha_2(x) & & \\
     & & \ddots & \\
     & & & \alpha_n(x)
\end{pmatrix}
\end{equation*}

More generally, if we are given a collection of countably infinitely many automorphisms $\{\alpha_1, \alpha_2, \cdots\}$, then a \textit{twisted inclusion} of $N$ into $\mathbb{B}(l_2(\mathbb{N})) \bar{\otimes} N$ is an injective *-homomorphism $\pi: N \rightarrow \mathbb{B}(l_2(\mathbb{N})) \bar{\otimes} N$ s.t.
\begin{equation*}
    \pi(x) = 
\begin{pmatrix}
    \alpha_1(x) & & \\
     & \alpha_2(x) & \\
     & & \ddots
\end{pmatrix}
\end{equation*}

Note that as an automorphism on a von Neumann algebra is automatically normal, any twisted inclusion is normal as well.
\end{defn}

From the definition, it is easy to verify the following two facts:

\begin{prop} $ $

\medskip

1. Suppose $\pi: N \rightarrow \mathbb{B}(l_2(\mathbb{N})) \bar{\otimes} N$ is a twisted inclusion, and $P \in \mathbb{B}(l_2(\mathbb{N}))$ is a projection whose range is of the form $\textrm{span}\{e_{i_1}, e_{i_2}, \cdots, e_{i_n}\}$ where $e_{i_j}$ are among the canonical basis vectors of $l_2(\mathbb{N})$, then $(P \otimes 1_N) \pi (P \otimes 1_N)$ is a twisted inclusion of $N$ into $\mathbb{M}_n \otimes N$;

\medskip

2. Suppose $\pi: N \rightarrow \mathbb{M}_n \otimes N$ is a twisted inclusion and the automorphisms $\alpha_1, \alpha_2, \cdots, \alpha_n$ preserves the trace, then the conditional expectation $E_{\mathbb{M}_n \otimes L(H), \pi(N)}$ is given by, 
\begin{equation*}
    E_{\mathbb{M}_n \otimes L(H), \pi(N)}((x_{ij})_{1 \leq i, j \leq n}) = \frac{1}{n} \sum_{i = 1}^n \alpha_i^{-1}(x_{ii})
\end{equation*}
\end{prop}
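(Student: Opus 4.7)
The plan is to treat the two parts separately; both are essentially routine once the twisted-inclusion formalism is unpacked, with part 2 reduced to the uniqueness of the trace-preserving conditional expectation.

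For part 1, I would just compute directly. Writing $\pi(x)$ as the infinite diagonal matrix with $\alpha_j(x)$ in the $(j,j)$ slot, sandwiching by $P \otimes 1_N$ simply kills all rows and columns whose index is not in $\{i_1, \ldots, i_n\}$. Identifying the corner $(P \otimes 1_N)(\mathbb{B}(l_2(\mathbb{N})) \bar{\otimes} N)(P \otimes 1_N)$ with $\mathbb{M}_n \otimes N$ via the basis $\{e_{i_1}, \ldots, e_{i_n}\}$, we see that the resulting map is $x \mapsto \mathrm{diag}(\alpha_{i_1}(x), \ldots, \alpha_{i_n}(x))$, which is by definition a twisted inclusion with associated automorphisms $\alpha_{i_1}, \ldots, \alpha_{i_n}$. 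Injectivity is inherited from any single $\alpha_{i_j}$. That's the entire argument.

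For part 2, set $\Phi\big((x_{ij})\big) := \pi\bigl(\tfrac{1}{n} \sum_{i=1}^n \alpha_i^{-1}(x_{ii})\bigr)$, interpreting the right-hand side as an element of $\pi(N) \subseteq \mathbb{M}_n \otimes N$. I would invoke the standard fact that a trace-preserving conditional expectation onto a weakly closed unital subalgebra is unique, so it suffices to check that $\Phi$ is (i) a normal unital completely positive linear projection onto $\pi(N)$ and (ii) preserves the trace. Normality, unitality, and complete positivity are clear since $\Phi$ is a convex combination of the completely positive maps $(x_{ij}) \mapsto \pi(\alpha_i^{-1}(x_{ii}))$. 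For the identity-on-$\pi(N)$ property: if $y \in N$ then $\pi(y)$ has $\alpha_i(y)$ on the diagonal and zeros off-diagonal, so $\Phi(\pi(y)) = \pi(\tfrac{1}{n} \sum_i \alpha_i^{-1} \alpha_i(y)) = \pi(y)$.

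The trace identity is the one spot that uses the trace-preserving hypothesis on the $\alpha_i$, and is where I'd be most careful. The induced trace on $\pi(N)$ from $\tau_{\mathbb{M}_n \otimes N}$ satisfies
\begin{equation*}
\tau_{\mathbb{M}_n \otimes N}(\pi(y)) = \tfrac{1}{n} \sum_{i=1}^n \tau_N(\alpha_i(y)) = \tau_N(y),
\end{equation*}
so $\pi$ identifies the trace on $N$ with the restricted trace on $\pi(N)$. Then for a general $(x_{ij}) \in \mathbb{M}_n \otimes N$,
\begin{equation*}
\tau_N\bigl(\tfrac{1}{n} \sum_i \alpha_i^{-1}(x_{ii})\bigr) = \tfrac{1}{n} \sum_i \tau_N(x_{ii}) = \tau_{\mathbb{M}_n \otimes N}\bigl((x_{ij})\bigr),
\end{equation*}
again using trace-preservation of each $\alpha_i$. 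By uniqueness, $\Phi = E_{\mathbb{M}_n \otimes N, \pi(N)}$. I do not foresee a serious obstacle: the whole content of the proposition is that the diagonal/twisted structure of $\pi$ forces the only trace-compatible projection to average the diagonal entries after undoing the twists.
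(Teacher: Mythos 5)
Your proof is correct and is exactly the routine verification the paper leaves to the reader (the paper states this proposition with no proof beyond ``From the definition, it is easy to verify''). Both parts are handled the standard way: part 1 by direct compression of the diagonal, and part 2 by exhibiting the averaging map as a trace-preserving unital completely positive projection onto $\pi(N)$ and invoking uniqueness (with Tomiyama's theorem silently upgrading your norm-one projection to a genuine conditional expectation).
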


Observe that, while Lemma 3.1 only concerns the case where $|G:H| < \infty$, when instead $|G:H| = \infty$, we can still fix $\{\widetilde{g_1}, \widetilde{g_2}, \cdots\}$, representatives of cosets in $G/H$, and similarly construct an embedding $\iota: N \rtimes_{vN} G \hookrightarrow \mathbb{B}(l_2(G/H)) \bar{\otimes} N \rtimes_{vN} H \simeq \mathbb{B}(l_2(\mathbb{N})) \bar{\otimes} N \rtimes_{vN} H$. In this case we still have the composition of $N \rtimes_{vN} H \hookrightarrow N \rtimes_{vN} G$ and $\iota$ is not the canonical embedding $x \mapsto 1_{\mathbb{B}(l_2(\mathbb{N}))} \otimes x$, but a twisted inclusion instead. We observe here that while we cannot say these embeddings are tracial, as no canonical trace exists on $\mathbb{B}(l_2(\mathbb{N})) \bar{\otimes} N \rtimes_{vN} H$, it is possible to consider its finite-dimensional contractions. More precisely, let $F$ be a finite subset of $G/H$, $P_F$ be the orthogonal projection from $l_2(G/H)$ onto $\textrm{span}(F)$. Then we can consider the ucp map $\varphi_F: N \rtimes_{vN} G \rightarrow \mathbb{M}_{|F|} \otimes N \rtimes_{vN} H$ defined by $\varphi_F(x) = (P_F \otimes 1_{N \rtimes_{vN} H}) \iota(x) (P_F \otimes 1_{N \rtimes_{vN} H})$, and we have the following:

\begin{lemma}
For simplicity, write $A = N \rtimes_{vN} G$ and $B = N \rtimes_{vN} H$, then $\varphi_F$ as defined above is a ucp $B-B$ bimodule map which preserves the conditional expectation onto $B$ and the trace. We note that $\varphi_F$ restricted to $B \subseteq A$ is a twisted inclusion of $B$ into $\mathbb{M}_{|F|} \otimes B$, and here $B-B$ bimodule map is understood so that the copy of $B$ in $\mathbb{M}_{|F|} \otimes B$ is the twisted copy $\varphi_F(B)$, i.e., $\varphi_F(xy) = \varphi_F(x)\varphi_F(y)$ whenever $x \in B$ and $y \in A$, or $x \in A$ and $y \in B$. Similarly, preserving the conditional expectation onto $B$ should be understood to mean $E_{\mathbb{M}_{|F|} \otimes B, \varphi_F(B)} \circ \varphi_F = E_{A, B}$.
\end{lemma}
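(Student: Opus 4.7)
The plan is to unpack the matrix form of the embedding $\iota$ in the basis indexed by $G/H$ and then read off each of the four claims more or less directly from it. First, for $g \in G$, $n \in N$, and any coset representative $\widetilde{g_i}$ there exist a unique $j(i)$ and $h_i \in H$ with $g\widetilde{g_i} = \widetilde{g_{j(i)}} h_i$, so
\begin{equation*}
(gn)(\widetilde{g_i} b) = \widetilde{g_{j(i)}}\, h_i\, \alpha_{\widetilde{g_i}^{-1}}(n)\, b, \qquad b \in N \rtimes_{vN} H.
\end{equation*}
Thus, in the coset basis, the matrix of $\iota(gn)$ has a single nonzero entry in each column, namely $h_i \alpha_{\widetilde{g_i}^{-1}}(n)$ placed in row $j(i)$. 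When $gn$ is replaced by an element $hn' \in B$, the fact that $h \in H$ forces $j(i) = i$, and the $i$-th diagonal entry is $\widetilde{g_i}^{-1}(hn')\widetilde{g_i}$; normality of $H$ in $G$ is what makes this entry lie back in $B$. Hence $\varphi_F\restriction_B$ is a twisted inclusion $B \hookrightarrow \mathbb{M}_{|F|} \otimes B$ with trace-preserving automorphisms $\alpha_i : x \mapsto \widetilde{g_i}^{-1} x \widetilde{g_i}$, $i \in F$.

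Complete positivity of $\varphi_F$ is immediate, as it is the compression of a $*$-homomorphism by a projection, and $\varphi_F(1) = P_F \otimes 1_B$ is the unit of the corner $\mathbb{M}_{|F|} \otimes B$. The crucial structural observation from the first paragraph is that $\iota(B)$ is diagonal in the coset basis, so every $\iota(b)$ commutes with $P_F \otimes 1_B$; consequently for $b \in B$ and $x \in A$,
\begin{equation*}
\varphi_F(bx) = (P_F \otimes 1)\iota(b)\iota(x)(P_F \otimes 1) = (P_F \otimes 1)\iota(b)(P_F \otimes 1)\iota(x)(P_F \otimes 1) = \varphi_F(b)\varphi_F(x),
\end{equation*}
and the right-handed identity is analogous; this is the $B$-$B$ bimodule property in the stated sense.

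For preservation of the conditional expectation I would apply Proposition 3.1(2) to the twisted inclusion $\varphi_F\restriction_B$ to obtain $E_{\mathbb{M}_{|F|} \otimes B, \varphi_F(B)}\bigl((y_{ij})\bigr) = \tfrac{1}{|F|} \sum_{i \in F} \alpha_i^{-1}(y_{ii})$, and then test on $\varphi_F(gn)$. Using the matrix form of $\iota(gn)$, the $(i,i)$-entry is nonzero exactly when $g\widetilde{g_i} \in \widetilde{g_i} H$, which by normality is equivalent to $g \in H$; in that case the diagonal entry is $\alpha_i(gn)$, $\alpha_i^{-1}$ undoes it, the average over $i \in F$ collapses to $gn$, and this matches $E_{A,B}(gn) = gn$. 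When $g \notin H$ the entire diagonal vanishes, matching $E_{A,B}(gn) = 0$. Linearity and $\sigma$-weak continuity finish the identity on all of $A$. Trace preservation then drops out: because the $\alpha_i$ are $\tau_B$-preserving, $\tau_{\mathbb{M}_{|F|}\otimes B}$ restricted to $\varphi_F(B)$ agrees with $\tau_B$ under $\varphi_F\restriction_B$, so $\tau_{\mathbb{M}_{|F|}\otimes B}(\varphi_F(x)) = \tau_B\bigl(E_{A,B}(x)\bigr) = \tau_A(x)$. The only place where anything nontrivial is needed is keeping the normality of $H$ visible at the two junctures above, namely that $\widetilde{g_i}^{-1}h\widetilde{g_i} \in H$ for $h \in H$ and that $g\widetilde{g_i} \in \widetilde{g_i}H$ forces $g \in H$; everything else is routine bookkeeping inside the matrix picture.
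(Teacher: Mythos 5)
Your proof is correct and follows essentially the same route as the paper: read off the coset-basis matrix form of $\iota$, note that $\iota(B)$ is diagonal (hence commutes with $P_F\otimes 1$) to get the bimodule property, check $E$-preservation on elements $gn$ by separating $g\in H$ from $g\notin H$, and deduce trace preservation from $\tau\circ E=\tau$. The only cosmetic difference is that you derive the twisted-inclusion and bimodule facts directly at the operator level where the paper cites Proposition 3.1(1) and does the matrix multiplication explicitly.
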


\begin{proof} That $\varphi_F$ restricted to $B \subseteq A$ is a twisted inclusion of $B$ into $\mathbb{M}_{|F|} \otimes B$ follows from part 1 of Proposition 3.1, and we also get $\varphi_F(B)$ is indeed a weakly closed subalgebra of $\mathbb{M}_{|F|} \otimes B$.

Now, given $x \in B$, $y \in A$, we write the matrix form of $y$ in $\mathbb{B}(l_2(G/H)) \bar{\otimes} B$ as $(y_{ij})_{i, j \in G/H}$. We also note the matrix form of $x$ in $\mathbb{B}(l_2(G/H)) \bar{\otimes} B$ would be $(x_{ij})_{i, j \in G/H}$ where $x_{ij} = \delta_{ij}\widetilde{g_i}^{-1}x\widetilde{g_i}$. Thus,
\begin{equation*}
\begin{split}
    \varphi_F(xy) &= \varphi_F((\widetilde{g_i}^{-1}x\widetilde{g_i} y_{ij})_{i, j \in G/H})\\
    &=(\widetilde{g_i}^{-1}x\widetilde{g_i} y_{ij})_{i, j \in F}\\
    &=(\delta_{ij}\widetilde{g_i}^{-1}x\widetilde{g_i})_{i, j \in F}(y_{ij})_{i, j \in F}\\
    &=\varphi_F(x)\varphi_F(y)
\end{split}
\end{equation*}

This holds similarly for $\varphi_F(yx)$, which concludes the proof that $\varphi_F$ is an $B-B$ bimodule map. Being trace-preserving is an immediate consequence of preserving the conditional expectation, and to prove the latter, we note that, for any $g \in G \backslash H$, $n \in N$, all entries of $\varphi_F(gn) \in \mathbb{M}_{|F|} \otimes B$ are off-diagonal and it hence has conditional expectation 0, as $\varphi(B) \subseteq l_\infty^{|F|} \otimes B$. Now, for $h \in H$, we directly calculate,
\begin{equation*}
    E_{\mathbb{M}_{|F|} \otimes B, \varphi_F(B)}(\varphi_F(hn)) = \frac{1}{|F|} \sum_{i \in F} \widetilde{g_i}\widetilde{g_i}^{-1}hn\widetilde{g_i}\widetilde{g_i}^{-1} = hn = E_{M, N}(hn)
\end{equation*}
\end{proof}

Equipped with the above results, we shall now generalize the arguments in the proof of Lemma 3.1 to the case of amenable groups using the existence of Følner sequences.

\begin{thm}
Let $N$ be a finite von Neumann algebra, $G$ be a group acting through trace-preserving automorphisms on $N$, $H \triangleleft G$, and $H$ is co-amenable in $G$, then $N \rtimes_{vN} H \subseteq N \rtimes_{vN} G$ is RE/$N \rtimes_{vN} H$.
\end{thm}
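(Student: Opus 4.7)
The plan is to promote the finite-dimensional compressions $\varphi_F$ of Lemma 3.2 to an honest tracial embedding by exploiting amenability of the quotient $G/H$. Since $H\triangleleft G$ is co-amenable, $G/H$ is an amenable discrete group, so fix a (left) F\o lner sequence $\{F_n\}_{n\ge 1}$ of finite subsets of $G/H$, characterized by $|F_n\triangle g\cdot F_n|/|F_n|\to 0$ for every $g\in G$ acting on $G/H$ by left translation. Write $A := N\rtimes_{vN}G$ and $B := N\rtimes_{vN}H$, and let $\varphi_n := \varphi_{F_n} : A\to \mathbb{M}_{|F_n|}\otimes B$ be the ucp $B$--$B$ bimodule maps of Lemma 3.2. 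Each $\varphi_n$ is trace-preserving, preserves the conditional expectation onto $B$, and restricts on $B$ to a twisted inclusion landing in $l_\infty^{|F_n|}\otimes B$. The goal is to show that $\pi(x) := (\varphi_n(x))^\circ$ defines a tracial embedding of $A$ into $\hat M := \prod_\omega \mathbb{M}_{|F_n|}\bar\otimes B$ with $\pi(B)\subseteq \hat N := \prod_\omega l_\infty^{|F_n|}\bar\otimes B$, and that this produces a commuting square; since each $\mathbb{M}_{|F_n|}$ is QWEP with hyperfinite subalgebra $l_\infty^{|F_n|}$, this yields RE/$B$.

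The principal obstacle is asymptotic multiplicativity: $\|\varphi_n(x_1 x_2) - \varphi_n(x_1)\varphi_n(x_2)\|_2\to 0$ along $\omega$ for all $x_1,x_2\in A$. Each $\varphi_n$ is an $L^2$-contraction (Kadison--Schwarz together with trace preservation), so by a Kaplansky-type density argument with uniform $\infty$-norm bounds it suffices to check asymptotic multiplicativity on a weakly dense $*$-subalgebra, e.g. elementary generators $gn$ with $g\in G$ and $n\in N$. Using the matrix description of $A$ inside $\mathbb{B}(l_2(G/H))\bar\otimes B$ from the discussion preceding Lemma 3.2, the matrix of $gn$ has at most one nonzero entry per column, situated at position $(g\cdot j, j)$ with value $(\widetilde{g_{g\cdot j}}^{-1}g\widetilde{g_j})\alpha_{\widetilde{g_j}^{-1}}(n)\in B$. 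A direct computation for $x_1 = g_1 n_1$, $x_2 = g_2 n_2$ then shows that the entries of $\varphi_n(x_1 x_2)$ and $\varphi_n(x_1)\varphi_n(x_2)$ agree at each $(i,j)\in F_n\times F_n$ for which the intermediate coset $g_2\cdot j$ also lies in $F_n$, so the defect is supported on the indices $j\in F_n$ with $g_2\cdot j\notin F_n$. Each nonzero entry has $L^2(B)$-norm bounded by $\|n_1\|_\infty\|n_2\|_\infty$, hence the F\o lner bound $|F_n\setminus g_2^{-1}F_n|/|F_n|\to 0$ delivers the required decay.

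With asymptotic multiplicativity and trace preservation, $\pi$ is a well-defined trace-preserving $*$-homomorphism, hence a tracial embedding; the inclusion $\pi(B)\subseteq \hat N$ is immediate from the twisted inclusion structure of each $\varphi_n|_B$. Trace preservation reduces the commuting square condition to the identity $E_{\hat M,\hat N}(\pi(x)) = \pi(E_{A,B}(x))$ for $x\in A$, which I verify on generators $x = gn$. If $g\in H$ then $\varphi_n(gn)$ is already diagonal by the twisted inclusion formula and equals $\varphi_n(E_{A,B}(gn))$. If $g\notin H$ then normality of $H$ forces $\widetilde{g_i}^{-1}g\widetilde{g_i}\notin H$ for every coset representative, so $g\cdot i\neq i$ for every $i$, every diagonal entry of the matrix of $gn$ vanishes, and both sides equal $0 = \pi(E_{A,B}(gn))$. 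This establishes the commuting square and completes the verification of RE/$(N\rtimes_{vN}H)$.
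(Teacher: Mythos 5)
Your proposal is correct and follows essentially the same route as the paper: the ucp compressions $\varphi_{F_k}$ of Lemma 3.2 along a F\o lner sequence, asymptotic multiplicativity via counting the defective columns $j$ with $g_2\cdot j\notin F_k$ against the F\o lner bound, and the commuting square from the off-diagonal structure of $\varphi_{F_k}(gn)$ for $g\notin H$. The only cosmetic difference is that the paper verifies multiplicativity by showing $N$ and the group elements lie in the multiplicative domain of the limit map, whereas you check generators $gn$ directly and invoke density; both reduce to the same estimate.
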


\begin{proof} Since the quotient group $G/H$ is amenable, we may choose a Følner sequence of finite subsets $F_k$ of $G/H$. Let $n(k) = |F_k|$. By definition of a Følner sequence, we have that for any $g \in G$, $gH \in F_k$ when $k$ is large enough and furthermore,
\begin{equation*}
    \lim_{k \rightarrow \infty} \frac{|F_k \bigtriangleup (gH)F_k|}{n(k)} = 0
\end{equation*}

Let $\varphi_{F_k}: N \rtimes_{vN} G \rightarrow \mathbb{M}_{n(k)} \otimes N \rtimes_{vN} H$ be the ucp map defined in Lemma 3.2. We can then consider the ucp map $\pi: N \rtimes_{vN} G \rightarrow \prod_\omega \mathbb{M}_{n(k)} \otimes N \rtimes_{vN} H$ given by $\pi(x) = (\varphi_{F_k}(x))^\circ$. By Lemma 3.2, $\varphi_{F_k}$ preserves the trace, so $\pi$ is tracial as well. To prove that it is a tracial embedding, it suffices to prove it is multiplicative. To do so, we only need to prove that both elements of the group $G$ and of $N$ are in its multiplicative domain. The latter is clear as $\varphi_{F_k}$ restricts to a twisted inclusion on $N \rtimes_{vN} H$, so $\pi$ is multiplicative on $N \rtimes_{vN} H$. For the former, we need to verify $\|\varphi_{F_k}(g_1g_2)-\varphi_{F_k}(g_1)\varphi_{F_k}(g_2)\|_2 \rightarrow 0$ for all $g_1, g_2 \in G$.

Fix $\{\widetilde{g_1}, \cdots, \widetilde{g_{n(k)}}\}$, representatives of cosets in $F_k$. Then, the matrix representation of $\varphi_{F_k}(g)$ for $g \in G$ is,
\begin{equation*}
    (g_{ij})_{1 \leq i, j \leq n(k)} = (1_{\widetilde{g_i}^{-1}g\widetilde{g_j} \in H} \widetilde{g_i}^{-1}g\widetilde{g_j})_{1 \leq i, j \leq n(k)}
\end{equation*}

Thus,
\begin{equation*}
\begin{split}
    (\varphi_{F_k}(g_1g_2)-\varphi_{F_k}(g_1)\varphi_{F_k}(g_2))_{ij} &= 1_{\widetilde{g_i}^{-1}g_1g_2\widetilde{g_j} \in H} \widetilde{g_i}^{-1}g_1g_2\widetilde{g_j} - \sum_{w=1}^{n(k)} 1_{\widetilde{g_i}^{-1}g_1\widetilde{g_w} \in H}1_{\widetilde{g_w}^{-1}g_2\widetilde{g_j} \in H} \widetilde{g_i}^{-1}g_1\widetilde{g_w}\widetilde{g_w}^{-1}g_2\widetilde{g_j}\\
    &= (1_{\widetilde{g_i}^{-1}g_1g_2\widetilde{g_j} \in H} - \sum_{w=1}^{n(k)} 1_{\widetilde{g_i}^{-1}g_1\widetilde{g_w} \in H}1_{\widetilde{g_w}^{-1}g_2\widetilde{g_j} \in H}) \widetilde{g_i}^{-1}g_1g_2\widetilde{g_j}
\end{split}
\end{equation*}

Note that, for fixed $i$, $j$, there is at most one $1 \leq w \leq n(k)$ s.t. $1_{\widetilde{g_i}^{-1}g_1\widetilde{g_w} \in H}1_{\widetilde{g_w}^{-1}g_2\widetilde{g_j} \in H} = 1$ and it would be zero otherwise. Furthermore, whenever such a $w$ exists, we must have $1_{\widetilde{g_i}^{-1}g_1g_2\widetilde{g_j} \in H} = 1$. Hence, the term $1_{\widetilde{g_i}^{-1}g_1g_2\widetilde{g_j} \in H} - \sum_{w=1}^{n(k)} 1_{\widetilde{g_i}^{-1}g_1\widetilde{g_w} \in H}1_{\widetilde{g_w}^{-1}g_2\widetilde{g_j} \in H}$ is zero when such $w$ exists, or when $1_{\widetilde{g_i}^{-1}g_1g_2\widetilde{g_j} \in H} = 0$, and one otherwise. Now, as all non-zero entries of $\varphi_{F_k}(g_1g_2)-\varphi_{F_k}(g_1)\varphi_{F_k}(g_2)$ are group elements, we have,
\begin{equation*}
    \|\varphi_{F_k}(g_1g_2)-\varphi_{F_k}(g_1)\varphi_{F_k}(g_2)\|_2 = \frac{\#\textrm{ of nonzero entries in }\varphi_{F_k}(g_1g_2)-\varphi_{F_k}(g_1)\varphi_{F_k}(g_2)}{n(k)}
\end{equation*}

As have been noted, for an entry of $\varphi_{F_k}(g_1g_2)-\varphi_{F_k}(g_1)\varphi_{F_k}(g_2)$ to be nonzero, the number $w$ as mentioned in the last paragraph must not exist. We must also have $1_{\widetilde{g_i}^{-1}g_1g_2\widetilde{g_j} \in H} = 1$. Note that for a fixed $1 \leq j \leq n(k)$, there is at most one such $i$. Also, if $1_{\widetilde{g_i}^{-1}g_1g_2\widetilde{g_j} \in H} = 1$, then for such a $w$ to not exist we must have $\widetilde{g_w}^{-1}g_2\widetilde{g_j} \notin H$ for all $1 \leq w \leq n(k)$, i.e., we must have $(g_2H)(\widetilde{g_j}H) \notin F_k$. Hence, $\widetilde{g_j}H \in F_k \backslash (g_2^{-1}H)F_k \subseteq F_k \bigtriangleup (g_2^{-1}H)F_k$. Therefore, the number of nonzero entries in $\varphi_{F_k}(g_1g_2)-\varphi_{F_k}(g_1)\varphi_{F_k}(g_2)$ is less than or equal to $|F_k \bigtriangleup (g_2^{-1}H)F_k|$, so,
\begin{equation*}
\begin{split}
    \|\varphi_{F_k}(g_1g_2)-\varphi_{F_k}(g_1)\varphi_{F_k}(g_2)\|_2 &= \frac{\#\textrm{ of nonzero entries in }\varphi_{F_k}(g_1g_2)-\varphi_{F_k}(g_1)\varphi_{F_k}(g_2)}{n(k)}\\
    &\leq \frac{|F_k \bigtriangleup (g_2^{-1}H)F_k|}{n(k)}\\
    &\rightarrow 0
\end{split}
\end{equation*}

This establishes that $\pi$ is a tracial embedding. Thanks to Lemma 3.2, $\varphi_{F_k}$ restricted to $N \rtimes_{vN} H$ is a twisted inclusion of $N \rtimes_{vN} H$ into $\mathbb{M}_{n(k)} \otimes N \rtimes_{vN} H$. This implies $\pi(N \rtimes_{vN} H) \subseteq \prod_{k \rightarrow \omega} l_\infty^{n(k)} \otimes N \rtimes_{vN} H$. To prove the commuting square condition, it suffices to prove that $E_{\prod_\omega \mathbb{M}_{n(k)} \otimes N \rtimes_{vN} H, \prod_\omega l_\infty^{n(k)} \otimes N \rtimes_{vN} H}(\pi(gn)) = 0$, where $g \in G \backslash H$ and $n \in N$. We observe that, as in the proof of Lemma 3.1, when $g \in G \backslash H$, all entries of $\varphi_{F_k}(gn)$ are off-diagonal, and this implies the desired condition.
\end{proof}

\begin{rmk}
For readers who are familiar with the concept of co-amenability in the context of von Neumann algebras, it is known that for a pair of groups $H < G$, $H$ is co-amenable in $G$ iff $L(H)$ is co-amenable in $L(G)$ [MP03, Corollary 7]. There is a Følner sequence characterization of co-amenability without assuming $H$ normal in $G$. Indeed, $H$ is co-amenable in $G$ iff the natural action of $G$ on the set of left cosets $G/H$ is amenable, and a Følner sequence characterization exists for amenable actions. See [KT08, Theorem 1.1]. A slight alteration of the above proof thus shows that $N \rtimes_{vN} G$ tracially embeds into $\prod_\omega \mathbb{M}_{n(k)} \otimes N \rtimes_{vN} H$ as long as $H < G$ and $H$ is co-amenable in $G$. Again, this cannot be used to show the RE condition without the normality assumption. It is open whether the RE condition holds in this general context.
\end{rmk}

\medskip

\section{\sc Comultiplication and Co-hyperlinearity}

\medskip

In the last section, we have used Følner sequences to demonstrate examples of the RE condition. In essence, Følner sequences are used to directly construct explicit matrix models, corresponding to these finite sets in the Følner sequences. While it is possible to extend this kind of arguments to other cases, for example to cases where residually finite groups are involved (in which case one may employ methods similar to those in the proof of Theorem 3.1 to show that given $H \triangleleft G$ and $G/H$ is residually finite, we have $N \rtimes_{vN} H \subseteq N \rtimes_{vN} G$ is RE/$N \rtimes_{vN} G$), this straightforward approach is hardly applicable to situations where no such explicit matrix models exist. In this section, we shall introduce a new and substantially different approach to proving the RE condition.

Given a group $G$, there is an operation known as comultiplication on it, which is defined by,
\begin{equation*}
    \Delta: G \rightarrow G \times G, \Delta(g) = (g, g)
\end{equation*}

It is easy to verify this map extends to a trace-preserving *-homomorphism on the group ring $\mathbb{C}[G]$ and therefore extends to a tracial embedding $L(G) \hookrightarrow L(G) \bar{\otimes} L(G)$. In a similar vein, there is a tracial embedding $N \rtimes_{vN} G \hookrightarrow N \rtimes_{vN} G \bar{\otimes} L(G)$ by sending $ng$ to $ng \otimes g$. By applying these maps repeatedly, we may also increase the number of copies of $L(G)$ on the RHS. While this seems like a trivial observation, it is in fact quite useful. We shall first demonstrate this with a relatively easy theorem:

\begin{thm}
Let $G$ be a group, $H_1, H_2, \cdots, H_n$ be its subgroups. If $L(H_1) \bar{\otimes} \cdots \bar{\otimes} L(H_n) \subseteq L(G) \bar{\otimes} \cdots \bar{\otimes} L(G)$ is RE/$N$ for some finite von Neumann algebra $N$, then $L(H_1 \cap \cdots \cap H_n) \subseteq L(G)$ is RE/$N$.
\end{thm}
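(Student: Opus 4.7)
The plan is to exploit iterated comultiplication to transfer the hypothesis. Let $\iota : L(G) \to L(G)^{\bar{\otimes} n}$ be the trace-preserving $*$-embedding defined on group generators by $\iota(g) = g \otimes g \otimes \cdots \otimes g$. Observe that $\iota(L(H_1 \cap \cdots \cap H_n)) \subseteq L(H_1) \bar{\otimes} \cdots \bar{\otimes} L(H_n)$, since for $g \in H_1 \cap \cdots \cap H_n$ each tensor factor lies in the corresponding $L(H_i)$. Let $\pi : L(G)^{\bar{\otimes} n} \to \prod_\omega A_k \bar{\otimes} N$ be the tracial embedding witnessing the RE/$N$ hypothesis, with hyperfinite $B_k \subseteq A_k$ QWEP. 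The candidate embedding for the theorem is then $\pi \circ \iota : L(G) \to \prod_\omega A_k \bar{\otimes} N$, using the same $A_k$ and $B_k$.

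The key technical step is to verify that $\iota$ produces a commuting square, with ambient $L(G)^{\bar{\otimes} n}$, subalgebras $\iota(L(G))$ and $L(H_1) \bar{\otimes} \cdots \bar{\otimes} L(H_n)$, and shared smaller subalgebra $\iota(L(H_1 \cap \cdots \cap H_n))$. This reduces to a short Fourier-type computation on elementary tensors $g_1 \otimes \cdots \otimes g_n$. The expectation onto $\iota(L(G))$ annihilates such a tensor unless $g_1 = \cdots = g_n$, by orthogonality of the diagonal group elements in $L^2(L(G)^{\bar{\otimes} n})$; the expectation onto $L(H_1) \bar{\otimes} \cdots \bar{\otimes} L(H_n)$ acts coordinate-wise by $1_{g_i \in H_i}$. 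Composing in either order yields the same output, namely $g_1 \otimes \cdots \otimes g_1$ if $g_1 = \cdots = g_n \in H_1 \cap \cdots \cap H_n$ and $0$ otherwise, which agrees with the expectation onto $\iota(L(H_1 \cap \cdots \cap H_n))$.

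With the comultiplication commuting square established, the theorem follows by vertical composition of commuting squares. Transporting the comultiplication square under the tracial embedding $\pi$ yields a commuting square of subalgebras of $\pi(L(G)^{\bar{\otimes} n})$; stacking this above the RE/$N$ commuting square and chasing the standard identity obtained by factoring conditional expectations through intermediate subalgebras produces a commuting square of $\pi\iota(L(G))$ and $\prod_\omega B_k \bar{\otimes} N$ inside $\prod_\omega A_k \bar{\otimes} N$ with shared subalgebra $\pi\iota(L(H_1 \cap \cdots \cap H_n))$. This is exactly the data witnessing that $L(H_1 \cap \cdots \cap H_n) \subseteq L(G)$ is RE/$N$.

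The main obstacle is essentially bookkeeping: the substantive content is the comultiplication identity computing the conditional expectations, and the rest is a direct assembly using the hypothesis via composition of commuting squares. One point to watch is ensuring that $\pi \circ \iota$ restricted to $L(H_1 \cap \cdots \cap H_n)$ actually lands inside $\prod_\omega B_k \bar{\otimes} N$, but this is automatic since $\iota$ sends $L(H_1 \cap \cdots \cap H_n)$ into $L(H_1) \bar{\otimes} \cdots \bar{\otimes} L(H_n)$, which $\pi$ embeds into $\prod_\omega B_k \bar{\otimes} N$ by the hypothesis.
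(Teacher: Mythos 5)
Your proposal is correct and follows essentially the same route as the paper: comultiplication $g \mapsto g^{\otimes n}$, the observation that $L(H_1 \cap \cdots \cap H_n)$ lands in $L(H_1) \bar{\otimes} \cdots \bar{\otimes} L(H_n)$, verification of the resulting commuting square by a Fourier computation on group elements, and composition with the commuting square from the RE/$N$ hypothesis. The only cosmetic difference is that the paper checks the square by noting $E_{L(G)^{\bar{\otimes} n}, L(H_1) \bar{\otimes} \cdots \bar{\otimes} L(H_n)}(g^{\otimes n}) = 0$ for $g \notin H_1 \cap \cdots \cap H_n$ rather than evaluating both compositions on general elementary tensors.
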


\begin{proof} Consider the comultiplication map $L(G) \hookrightarrow L(G) \bar{\otimes} \cdots \bar{\otimes} L(G)$. $L(H_1 \cap \cdots \cap H_n)$ is clearly sent into $L(H_1) \bar{\otimes} \cdots \bar{\otimes} L(H_n)$, i.e., we have a commutative diagram,

\begin{center}
\begin{tikzcd}[node distance = 1.8cm]
    {L(G) \bar{\otimes} \cdots \bar{\otimes} L(G)} \arrow[hookleftarrow]{r}\arrow[hookleftarrow]{d}
        & {L(G)} \arrow[hookleftarrow]{d} \\
    {L(H_1) \bar{\otimes} \cdots \bar{\otimes} L(H_n)} \arrow[hookleftarrow]{r}
        & {L(H_1 \cap \cdots \cap H_n)}
\end{tikzcd}
\end{center}

To show this is a commuting square, it suffices to prove that whenever $g \in G$ is not in $H_1 \cap \cdots \cap H_n$, then $E_{L(G) \bar{\otimes} \cdots \bar{\otimes} L(G), L(H_1) \bar{\otimes} \cdots \bar{\otimes} L(H_n)}(g) = 0$. But the image of $g$ in $L(G) \bar{\otimes} \cdots \bar{\otimes} L(G)$ is $g \otimes \cdots \otimes g$, so,
\begin{equation*}
\begin{split}
    E_{L(G) \bar{\otimes} \cdots \bar{\otimes} L(G), L(H_1) \bar{\otimes} \cdots \bar{\otimes} L(H_n)}(g) &= E_{L(G) \bar{\otimes} \cdots \bar{\otimes} L(G), L(H_1) \bar{\otimes} \cdots \bar{\otimes} L(H_n)}(g \otimes \cdots \otimes g)\\
    &= E_{L(G), L(H_1)}(g) \otimes \cdots \otimes E_{L(G), L(H_n)}(g)
\end{split}
\end{equation*}

As $g \notin H_1 \cap \cdots \cap H_n$, $g$ does not belong to at least one of $H_i$ and therefore at least one of $E_{L(G), L(H_i)}(g)$ is zero. This proves the commutative diagram above is indeed a commuting square. Composing it with the commuting square in the definition of RE/$N$ concludes the proof.
\end{proof}

Combining this with the Downward Limit Approximation Theorem gives the infinitary version of this result:

\begin{col}
Let $G$ be a group, $H_1, H_2, \cdots$ be its subgroups. If $L(H_1) \bar{\otimes} \cdots \bar{\otimes} L(H_n) \subseteq L(G) \bar{\otimes} \cdots \bar{\otimes} L(G)$ is RE/$N$ for some finite von Neumann algebra $N$ and for all $n \in \mathbb{N}$, then $L(\cap_i H_i) \subseteq L(G)$ is RE/$N$.
\end{col}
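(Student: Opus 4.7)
The plan is to reduce the corollary to the finitary statement, Theorem 4.1, via a routine application of the Downward Limit Approximation Theorem (Theorem 2.3). The whole strategy is essentially dictated by the hint preceding the corollary, so no new ideas are needed, only bookkeeping.

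First, for each $n \in \mathbb{N}$, the hypothesis supplies that $L(H_1) \bar{\otimes} \cdots \bar{\otimes} L(H_n) \subseteq L(G) \bar{\otimes} \cdots \bar{\otimes} L(G)$ is RE/$N$, so Theorem 4.1 directly yields that $L(H_1 \cap \cdots \cap H_n) \subseteq L(G)$ is RE/$N$. Set $M_n = L(G)$ (a constant, hence trivially decreasing sequence) and $N_n = L(H_1 \cap \cdots \cap H_n)$. Since $H_1 \cap \cdots \cap H_{n+1} \subseteq H_1 \cap \cdots \cap H_n$, the sequence $\{N_n\}$ is a decreasing sequence of weakly closed subalgebras of $L(G)$, and each inclusion $N_n \subseteq M_n$ is RE/$N$ by the previous sentence.

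To apply Theorem 2.3, I need to identify $\bigcap_n N_n$. I claim $\bigcap_n L(H_1 \cap \cdots \cap H_n) = L(\bigcap_i H_i)$. This is a standard Fourier-analytic fact: for any subgroup $K \leq G$, an element $x \in L(G)$ lies in $L(K)$ if and only if its Fourier coefficients $\widehat{x}(g) := \tau(x u_g^*)$ vanish for all $g \notin K$, equivalently if $E_{L(G), L(K)}(x) = x$. Applying this to each $K_n := H_1 \cap \cdots \cap H_n$, an element $x$ lies in all $L(K_n)$ iff $\widehat{x}(g) = 0$ for every $g \notin K_n$ and every $n$, iff $\widehat{x}(g) = 0$ for every $g \notin \bigcap_n K_n = \bigcap_i H_i$, iff $x \in L(\bigcap_i H_i)$.

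With $\bigcap_n M_n = L(G)$ and $\bigcap_n N_n = L(\bigcap_i H_i)$, the Downward Limit Approximation Theorem immediately gives that $L(\bigcap_i H_i) \subseteq L(G)$ is RE/$N$, which is the desired conclusion. There is no genuine obstacle; the only mild subtlety is the identification of the intersection of the $L(K_n)$ with $L(\bigcap_n K_n)$, which is settled by the Fourier-coefficient argument above.
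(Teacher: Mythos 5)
Your proposal is correct and follows exactly the route the paper intends: apply Theorem 4.1 for each $n$ and then feed the resulting decreasing sequence $L(H_1 \cap \cdots \cap H_n) \subseteq L(G)$ into the Downward Limit Approximation Theorem. The Fourier-coefficient identification of $\bigcap_n L(H_1 \cap \cdots \cap H_n)$ with $L(\bigcap_i H_i)$ is a correct and welcome filling-in of a detail the paper leaves implicit.
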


\begin{rmk}
We observe that the following is a commuting square,

\begin{center}
\begin{tikzcd}[node distance = 1.8cm]
    {L(G)^{\bar{\otimes} \infty}} \arrow[hookleftarrow]{r}\arrow[hookleftarrow]{d}
        & {L(G)^{\bar{\otimes} n}} \arrow[hookleftarrow]{d} \\
    {\bar{\otimes}_{i=1}^\infty L(H_i)} \arrow[hookleftarrow]{r}
        & {\bar{\otimes}_{i=1}^n L(H_i)}
\end{tikzcd}
\end{center}
for all $n$. As such, the assumption in the previous corollary can be changed to $\bar{\otimes}_{i=1}^\infty L(H_i) \subseteq L(G)^{\bar{\otimes} \infty}$ is RE/$N$. We also note that this can be used in conjunction with part 2 of Proposition 2.3. For example, if $L(H_i) \subseteq L(G)$ is RE/$\mathbb{C}$ for all $i$, then by part 2 of Proposition 2.3 we have $\bar{\otimes}_{i=1}^\infty L(H_i) \subseteq L(G)^{\bar{\otimes} \infty}$ is RE/$\mathbb{C}$. In such cases, then, $L(\cap_i H_i) \subseteq L(G)$ is RE/$\mathbb{C}$.
\end{rmk}

To state the main result of this section, we need some definitions first:

\begin{defn}
Let $H < G$ be a pair of groups. We say $H$ is \textit{co-hyperlinear} in $G$ if there exist two decreasing sequences $G_i$ and $H_i$ of subgroups of $G$ s.t. $\cap_i G_i = H$, $H_i < G_i$, $H_i \triangleleft G$, $G/H_i$ is hyperlinear, and $G_i/H_i$ is amenable for all $i$.
\end{defn}

\begin{ex}
A trivial example of co-hyperlinearity is when $H \triangleleft G$ and $G/H$ is hyperlinear. In such cases one may simply choose $G_i = H_i = H$ for all $i$. The definition of co-hyperlinearity should be seen as a generalization of this example. However, the motivating example for this relatively complex notion is the concept of a group being separable over a subgroup, which is defined as follows:
\end{ex}

\begin{defn}[{[LR08]}]
Let $H < G$ be a pair of groups. We say $G$ is \textit{separable} over $H$ if there exists a sequence $G_i$ of subgroups of $G$ s.t. $\cap_i G_i = H$, and furthermore $|G:G_i| < \infty$, $\forall i \in \mathbb{N}$.
\end{defn}

When $H \triangleleft G$, it is clear that $G$ is separable over $H$ iff $G/H$ is residually finite.

\begin{lemma}
If $G$ is separable over $H$, then $H$ is co-hyperlinear in $G$.
\end{lemma}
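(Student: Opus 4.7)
The plan is to build the two sequences $G_i$ and $H_i$ required by the definition of co-hyperlinearity by taking normal cores of the given separability sequence. Specifically, starting from the sequence $G_i$ witnessing separability (so $\cap_i G_i = H$ and $[G:G_i]<\infty$), I would first replace it, without loss of generality, by the decreasing sequence $G_1 \cap \cdots \cap G_i$; this remains a sequence of finite-index subgroups with intersection $H$, since finite intersections of finite-index subgroups are finite-index.

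Next I would set $H_i := \bigcap_{g \in G} g G_i g^{-1}$, the normal core of $G_i$ in $G$. By the standard fact that the normal core of a finite-index subgroup is itself of finite index (with $[G:H_i] \le [G:G_i]!$, via the conjugation action of $G$ on $G/G_i$), each $H_i$ is a finite-index normal subgroup of $G$ contained in $G_i$. Monotonicity is immediate: if $G_{i+1} \subseteq G_i$, then $g G_{i+1} g^{-1} \subseteq g G_i g^{-1}$ for every $g \in G$, so $H_{i+1} \subseteq H_i$. This gives decreasing sequences $H_i \le G_i$ with $H_i \triangleleft G$ and $\cap_i G_i = H$, as required.

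It remains to verify the hyperlinearity and amenability conditions. Since $H_i$ has finite index in $G$, the quotient $G/H_i$ is a finite group; every finite group embeds in some $U(n)$ and is therefore hyperlinear. The quotient $G_i/H_i$ is a subgroup of the finite group $G/H_i$, hence finite, hence amenable. Thus all items in the definition of co-hyperlinearity are satisfied, completing the proof.

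There is no serious obstacle here: once one recognizes that normal cores of finite-index subgroups are again of finite index, everything falls out by inspection, and the only care needed is to ensure the sequence is genuinely decreasing (handled by the initial WLOG reduction) and that the normality is with respect to all of $G$ (handled by taking the core rather than some ad hoc normal subgroup of $G_i$).
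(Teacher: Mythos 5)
Your proof is correct and follows essentially the same route as the paper: take the separability sequence as the $G_i$'s and let $H_i$ be the normal core of $G_i$ in $G$, using the standard fact that normal cores of finite-index subgroups have finite index. Your explicit WLOG step replacing $G_i$ by $G_1 \cap \cdots \cap G_i$ to guarantee the sequence is decreasing is a small point of care the paper leaves implicit, but it is not a different argument.
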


\begin{proof} The sequence $G_i$ of subgroups of $G$ in the definition of separability directly translates to the corresponding sequence of subgroups in the definition of co-hyperlinearity. We still need to construct the sequence $H_i$ in the definition of co-hyperlinearity. To do so, we note that for any subgroup $K$ of $G$, we may define its normal core $K_G = \cap_{g \in G} gKg^{-1}$. It is easy to see that $K_G < K$ and that $K_G$ is a normal subgroup of $G$. Now suppose $|G:K| = n < \infty$. We may write $G/K = \{\widetilde{g_1}K, \cdots, \widetilde{g_n}K\}$. Then we have $K_G = \cap_{i=1}^n \widetilde{g_i}K\widetilde{g_i}^{-1}$. As $|G: \widetilde{g_i}K\widetilde{g_i}^{-1}| = |G:K| < \infty$ for all $i$, $K_G$ is a finite intersection of finite index subgroups of $G$, and is therefore of finite index in $G$. Applying this to the sequence of subgroups of $G_i$, we define $H_i = (G_i)_G$. It is then easy to see that the sequences $G_i$ and $H_i$ satisfy the definition of co-hyperlinearity.
\end{proof}

\begin{rmk}
Unfortunately, this argument does not generalize to conditions defined using co-amenability instead of finite index subgroups. The problem is that $H < G$, $H$ is co-amenable in $G$ does not imply $H_G$ is co-amenable in $G$ (or in $H$). See [MP03, Theorem 1].
\end{rmk}

\begin{ex}
It is known that when $G = \mathbb{F}_r$, $2 \leq r \leq \infty$, and $H$ is a finitely generated subgroup of $G$, then $G$ is separable over $H$ [Hall49, Theorem 5.1]. However, this does not apply to cases where $H$ is not finitely generated. This includes most cases where $H \triangleleft G$ as it is known that when $H \triangleleft G$ and $|G:H| = \infty$, $H$ cannot be finitely generated [Hat05, Exercise 1.A.7]. In order to solve this issue, we introduce the following definition:
\end{ex}

\begin{defn}
Let $H < G$ be a pair of groups. We say $H$ is \textit{$\sigma$-co-hyperlinear} in $G$ if there exist two increasing sequences $G_i$ and $H_i$ of subgroups of $G$ s.t. $\cup_i G_i = G$, $\cup_i H_i = H$, $H_i < G_i$, and $H_i$ is co-hyperlinear in $G_i$ for all $i$.
\end{defn}

The main theorem of this section is as follows:

\begin{thm}
Let $N$ be a finite von Neumann algebra, $G$ be a group acting through trace-preserving automorphisms on $N$, $H < G$, and $H$ is $\sigma$-co-hyperlinear in $G$, then $N \rtimes_{vN} H \subseteq N \rtimes_{vN} G$ is RE/$N \rtimes_{vN} G$.
\end{thm}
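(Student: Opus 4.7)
The plan is to reduce the statement to the case where $H$ is co-hyperlinear in $G$ via the Upward Limit Approximation Theorem (Theorem 2.2), and then to handle the co-hyperlinear case using a comultiplication-type construction in the spirit of Theorem 4.1 and the arguments of the preceding section.

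For the reduction, let $G_i$ and $H_i$ be the increasing sequences of subgroups supplied by $\sigma$-co-hyperlinearity, so that $\cup_i G_i = G$, $\cup_i H_i = H$, and $H_i$ is co-hyperlinear in $G_i$ for every $i$. Assuming the co-hyperlinear case is already in hand, one obtains that $N \rtimes_{vN} H_i \subseteq N \rtimes_{vN} G_i$ is RE/$N \rtimes_{vN} G_i$ for each $i$. Because $N \rtimes_{vN} G_i$ embeds canonically into $N \rtimes_{vN} G$, Proposition 2.2 upgrades this to RE/$N \rtimes_{vN} G$. Then Theorem 2.2 applied to the increasing sequences $N \rtimes_{vN} H_i \subseteq N \rtimes_{vN} G_i$, whose weak closures are $N \rtimes_{vN} H$ and $N \rtimes_{vN} G$ respectively, delivers the desired RE condition for the limit.

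For the co-hyperlinear case, let $G_j \supseteq H_j$ be the decreasing sequences from the definition: $\cap_j G_j = H$, $H_j \triangleleft G$, $G/H_j$ hyperlinear, and $G_j/H_j$ amenable. For each $k$, I would define the comultiplication-type $\ast$-homomorphism
\begin{equation*}
    \Delta_k : N \rtimes_{vN} G \to L(G/H_1) \bar{\otimes} \cdots \bar{\otimes} L(G/H_k) \bar{\otimes} (N \rtimes_{vN} G)
\end{equation*}
by sending $x \in N$ to $1 \otimes \cdots \otimes 1 \otimes x$ and $g \in G$ to $[g]_1 \otimes \cdots \otimes [g]_k \otimes g$, where $[g]_j$ denotes the image of $g$ in $G/H_j$. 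The covariance calculation $(g \otimes [g]_j)(x \otimes 1)(g^{-1} \otimes [g]_j^{-1}) = \alpha_g(x) \otimes 1$ performed factor by factor shows $\Delta_k$ is well-defined, and trace preservation is immediate. Set $A_k = L(G/H_1) \bar{\otimes} \cdots \bar{\otimes} L(G/H_k)$, which is QWEP since each factor is QWEP (the group $G/H_j$ being hyperlinear) and QWEP is preserved under finite tracial tensor products; and set $B_k = L(HH_1/H_1) \bar{\otimes} \cdots \bar{\otimes} L(HH_k/H_k)$, which is hyperfinite because each $HH_j/H_j \subseteq G_j/H_j$ is amenable. The aggregated map $\pi(y) = (\Delta_k(y))^\circ$ is then a tracial embedding $N \rtimes_{vN} G \to \prod_\omega A_k \bar{\otimes} (N \rtimes_{vN} G)$, and it sends $N \rtimes_{vN} H$ into $\prod_\omega B_k \bar{\otimes} (N \rtimes_{vN} G)$ because $[h]_j \in HH_j/H_j$ whenever $h \in H$.

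The main obstacle is verifying the commuting square condition, and this is precisely where the hypothesis $\cap_j G_j = H$ is used. It suffices to show that for every $g \in G \setminus H$ and every $x \in N$, the image $\pi(xg)$ has trivial conditional expectation onto $\prod_\omega B_k \bar{\otimes} (N \rtimes_{vN} G)$. Coordinate-wise this reduces to computing $E_{L(G/H_j),\,L(HH_j/H_j)}([g]_j)$, which equals $[g]_j$ when $g \in HH_j$ and vanishes otherwise. Since $HH_j$ is a subgroup of $G_j$ (both $H$ and the normal subgroup $H_j$ lie in $G_j$) and $g \notin H = \cap_j G_j$, there exists some $j_0$ with $g \notin G_{j_0} \supseteq HH_{j_0}$; consequently the $j_0$-th factor of $E_{A_k, B_k}([g]_1 \otimes \cdots \otimes [g]_k)$ vanishes for every $k \geq j_0$, so the ultraproduct limit along any nonprincipal $\omega$ is zero. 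This establishes the commuting square and completes the proof of the co-hyperlinear case, which combined with the reduction step finishes the argument.
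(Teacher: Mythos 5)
Your proof is correct and follows essentially the same route as the paper's: reduce to the co-hyperlinear case via the Upward Limit Approximation Theorem, then apply the comultiplication into $L(G/H_j) \bar{\otimes} (N \rtimes_{vN} G)$ and use the fact that $g \notin H = \cap_j G_j$ forces the coordinate-wise conditional expectations to vanish eventually. The only cosmetic differences are that you accumulate all the factors $L(G/H_1) \bar{\otimes} \cdots \bar{\otimes} L(G/H_k)$ in the $k$-th ultraproduct coordinate where the paper uses just $L(G/H_k)$, and that you make explicit the appeal to Proposition 2.2 needed to align the target algebras before invoking the limit theorem, a point the paper leaves implicit.
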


\begin{proof} By the Upward Limit Approximation Theorem, it suffices to consider the case where $H$ is co-hyperlinear in $G$. Let $G_i$ and $H_i$ be the sequences of subgroups of $G$ as in the definition of co-hyperlinearity. We have a comultiplication map $N \rtimes_{vN} G \hookrightarrow \prod_\omega L(G/H_i) \bar{\otimes} N \rtimes_{vN} G$ by sending $ng$ to $(gH_i \otimes ng)^\circ$. Then $N \rtimes_{vN} H$ is sent into $\prod_\omega L(G_i/H_i) \bar{\otimes} N \rtimes_{vN} G$. Thus, we have a commutative diagram:

\begin{center}
\begin{tikzcd}[node distance = 1.8cm]
    {\prod_\omega L(G/H_i) \bar{\otimes} N \rtimes_{vN} G} \arrow[hookleftarrow]{r}\arrow[hookleftarrow]{d}
        & {N \rtimes_{vN} G} \arrow[hookleftarrow]{d} \\
    {\prod_\omega L(G_i/H_i) \bar{\otimes} N \rtimes_{vN} G} \arrow[hookleftarrow]{r}
        & {N \rtimes_{vN} H}
\end{tikzcd}
\end{center}

To verify this is a commuting square, it suffices to show $E_{\prod_\omega L(G/H_i) \bar{\otimes} N \rtimes_{vN} G, \prod_\omega L(G_i/H_i) \bar{\otimes} N \rtimes_{vN} G}(ng) = 0$ when $g \notin H$. As $\cap_i G_i = H$, $g \notin G_i$ for sufficiently large $i$, so $gH_i \notin G_i/H_i$ for large $i$. For large $i$, then, $E_{L(G/H_i) \bar{\otimes} N \rtimes_{vN} G, L(G_i/H_i) \bar{\otimes} N \rtimes_{vN} G}(gH_i \otimes ng) = 0$. But the image of $ng$ in $\prod_\omega L(G/H_i) \bar{\otimes} N \rtimes_{vN} G$ is $(gH_i \otimes ng)^\circ$, so this establishes that we have a commuting square. As $G/H_i$ are hyperlinear and $G_i/H_i$ are amenable, $L(G/H_i)$ are QWEP and $L(G_i/H_i)$ are hyperfinite, so this concludes the proof.
\end{proof}

\begin{rmk}
When applying this to the case of finite index subgroups, we obtain that $N \rtimes_{vN} H \subseteq N \rtimes_{vN} G$ is RE/$N \rtimes_{vN} G$ when $H < G$, $|G:H| < \infty$. By Remark 3.1, we have $N \rtimes_{vN} G$ tracially embeds into $\mathbb{M}_{|G:H|} \otimes N \rtimes_{vN} H$, so $N \rtimes_{vN} H \subseteq N \rtimes_{vN} G$ is RE/$N \rtimes_{vN} H$. This is a generalization of Lemma 3.1. More generally, suppose $H < G$ and $H$ is both $\sigma$-co-hyperlinear and co-amenable in $G$, then combining Theorem 4.2 and Remark 3.2 gives $N \rtimes_{vN} H \subseteq N \rtimes_{vN} G$ is RE/$N \rtimes_{vN} H$. We observe here that when $H \triangleleft G$, co-amenability implies co-hyperlinearity, so this is a generalization of Theorem 3.1. It is clear that co-hyperlinearity does not imply co-amenability, even with the assumption that $H$ is normal in $G$. Conversely, assuming the hyperlinear conjecture is false, then co-amenability does not imply co-hyperlinearity in general either. Indeed, consider the construction in [MP03, Theorem 1]. If we choose $Q$ to be a non-hyperlinear group in the construction there, we have $K$ is co-amenable in $G$. But if $K$ is co-hyperlinear in $G$, then the intersection of the sequence $H_i$ in the definition of co-hyperlinearity would give a normal subgroup of $G$ contained in $K$. The only such group is the trivial group, so $\cap_i H_i =\{e\}$. But $G/H_i$ is hyperlinear for any $i$, so $G = G/\{e\} = G/\cap_i H_i$ is hyperlinear. But $Q < G$ is not hyperlinear, which is a contradiction.
\end{rmk}

\begin{ex}
Thanks to Lemma 4.1 and Example 4.2, we have that $H$ is $\sigma$-co-hyperlinear in $G$ whenever $H < G$ and $G$ is a free group. Thus, $L(H) \subseteq L(G)$ is RE/$L(G)$. Free groups are hyperlinear, so $L(G)$ is QWEP. Therefore, $L(H) \subseteq L(G)$ is RE/$\mathbb{C}$. This implies groups of the form $\ast_H G$ are hyperlinear whenever $G$ is a free group and $H < G$. More generally, we always have $N \rtimes_{vN} H \subseteq N \rtimes_{vN} G$ is RE/$\mathbb{C}$ whenever $N$ is hyperfinite, $G$ is a free group, and $H < G$. This follows from first observing that we have $N \rtimes_{vN} H \subseteq N \rtimes_{vN} G$ is RE/$N \rtimes_{vN} G$. We then note that $N \rtimes_{vN} G = N \rtimes_{vN} \mathbb{F}_r$ is naturally isomorphic to $\ast_N N \rtimes_{vN} \mathbb{Z}$, which is QWEP by Remark 2.3. We therefore get $N \rtimes_{vN} H \subseteq N \rtimes_{vN} G$ is RE/$\mathbb{C}$.
\end{ex}

\medskip

\section{\sc Approximate Retraction and Approximable Actions}

\medskip

While the results in the last section are interesting, one shortcoming is that Theorem 4.2 is of the form $N \subseteq M$ is RE/$M$ as opposed to $N \subseteq M$ is RE/$N$ in Theorem 3.1. If for instance we are interested in proving $\ast_N M$ is QWEP but it is only known that $N$ is QWEP, then Theorem 4.2 would be of little use. In this section, we impose some extra conditions to obtain results that would be useful in the above scenario.

Again, the examples we are concerned with are inclusions of algebras of the form $N \rtimes_{vN} H \subseteq N \rtimes_{vN} G$. Therefore, the conditions we are to define would relate to either the pair of groups $H < G$ or the action of $G$ on $N$.

\begin{defn}
Let $H < G$ be a pair of groups. We say $G$ \textit{approximately retracts} onto $H$ if there exists $G' < G$ s.t. $G'$ is co-amenable in $G$ and $H < G'$, and a group homomorphism $\phi: G' \rightarrow \widetilde{G}$ where $\widetilde{G}$ is a group containing $H$ as a co-amenable subgroup and $\phi$ restricts to the identity on $H$.
\end{defn}

An important example and the inspiration for this definition is the concept of virtual retraction.

\begin{defn}[{[LR08, Definition 1.5]}]
Let $H < G$ be a pair of groups. We say $G$ \textit{virtually retracts} onto $H$ if there exists $G' < G$ s.t. $|G:G'| < \infty$ and $H < G'$, and a group homomorphism $\phi: G' \rightarrow H$ which restricts to the identity on $H$.
\end{defn}

One may easily see that approximate retraction is a generalization of virtual retraction.

\begin{ex}
It is known that for a finitely generated free group $G = \mathbb{F}_r$, $2 \leq r < \infty$, and a finitely generated subgroup $H$ of $G$, $G$ virtually retracts onto $H$. This follows from the proof of [Hall49, Theorem 5.1].
\end{ex}

It is useful to understand the concept of virtual retraction as a condition on the ``action" of $G$ on $H$. Indeed, if $H \triangleleft G$, then the existence of the group homomorphism $\phi$ as in the definition implies the conjugation action of $G'$ on $H$ is inner. This follows from,
\begin{equation*}
    ghg^{-1} = \phi(ghg^{-1}) = \phi(g)\phi(h)\phi(g)^{-1} = \phi(g)h\phi(g)^{-1}
\end{equation*}
and $\phi(g) \in H$. So $G$ virtually retracts onto $H$ can be understood as meaning that the ``action" of $G$ on $H$ is inner up to some finite index subgroup. Analogously, $G$ approximately retracts onto $H$ can be understood as meaning that the ``action" of $G$ on $H$ is inner up to some co-amenability. Conversely, we may obtain examples of approximate retraction by assuming $H \triangleleft G$ and the conjugation action of $G$ on $H$ is nearly inner:

\begin{prop} Let $G$ be a group, $G' < G$ co-amenable, and $H \triangleleft G'$. Suppose further that $Z(H) = \{e\}$. Then if one of the following holds, we would have $G$ approximately retracts onto $H$,

\medskip

1. $\phi(G')/Inn(H)$ is amenable, where $\phi: G' \rightarrow Aut(H)$ is defined by $\phi(g)(h) = ghg^{-1}$; or,

\medskip

2. $Out(H) = Aut(H)/Inn(H)$ is amenable; or,

\medskip

3. $H$ is a complete group, i.e., $Z(H) = \{e\}$ and all automorphisms of $H$ are inner.
\end{prop}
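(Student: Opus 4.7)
The plan is to prove the chain of implications $(3) \Rightarrow (2) \Rightarrow (1)$ and then verify the conclusion under condition $(1)$. For $(3) \Rightarrow (2)$: completeness of $H$ forces $\textrm{Out}(H)$ to be trivial, hence amenable. For $(2) \Rightarrow (1)$: since $H \triangleleft G'$, the image $\phi(H) = \textrm{Inn}(H)$ is a normal subgroup of $\phi(G')$, so $\phi(G')/\textrm{Inn}(H)$ embeds into $\textrm{Out}(H)$, and amenability passes to subgroups.

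It therefore suffices to handle case $(1)$. I would verify the definition of approximate retraction by taking, in the notation of that definition, the intermediate subgroup to be $G'$ itself (which is co-amenable in $G$ by hypothesis), the target group to be $\widetilde{G} := \phi(G') \subseteq \textrm{Aut}(H)$, and the retraction homomorphism to be $\phi: G' \to \widetilde{G}$ itself. By hypothesis, $\widetilde{G}/\textrm{Inn}(H) = \phi(G')/\textrm{Inn}(H)$ is amenable.

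The essential role of $Z(H) = \{e\}$ is to canonically identify $H$ with $\textrm{Inn}(H) \subseteq \widetilde{G}$: since the kernel of the map $h \mapsto \textrm{Ad}(h)$ is $Z(H)$, triviality of the center makes this an isomorphism of groups. Under this identification, $H$ sits inside $\widetilde{G}$ as a co-amenable subgroup, and for any $h \in H$ one has $\phi(h) = \textrm{Ad}(h)$, which is precisely $h$ viewed inside $\textrm{Inn}(H) \cong H \subseteq \widetilde{G}$; hence $\phi|_H = \textrm{id}_H$.

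I do not foresee a substantive obstacle beyond carefully bookkeeping this identification. The only delicate point is that without the centerless hypothesis, $\phi|_H$ would factor through the quotient $H/Z(H)$ rather than being the identity, so the assumption $Z(H) = \{e\}$ — stated outside the three numbered conditions — is genuinely needed for the construction to close.
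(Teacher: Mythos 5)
Your proposal is correct and follows essentially the same route as the paper: reduce to case 1 via $3 \Rightarrow 2 \Rightarrow 1$, use $Z(H)=\{e\}$ to identify $H$ with $\mathrm{Inn}(H)$, and then take $G'$, $\widetilde{G}=\phi(G')$, and $\phi$ itself as the data witnessing the approximate retraction. The paper's write-up is terser (it dismisses the implications between the three conditions as immediate), but your additional bookkeeping — in particular noting that normality of $H$ in $G'$ makes $\mathrm{Inn}(H)$ normal in $\phi(G')$ so that co-amenability reduces to amenability of the quotient — is exactly the content the paper leaves implicit.
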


\begin{proof} Certainly $3 \Rightarrow 2 \Rightarrow 1$. To prove 1, note that as $Z(H) = \{e\}$, $\phi$ when restricted to $H$ is an isomorphism between $H$ and its inner automorphism group $Inn(H)$. We may therefore identify $H$ with $Inn(H)$. Now we simply take $G'$ and $\phi$ in the definition of approximate retraction to be the corresponding objects in the statement of the proposition to conclude the proof.
\end{proof}

By analogy with this understanding of approximate retraction by actions, we may define the following condition on the action $G \curvearrowright N$:

\begin{defn}
Let $N$ be a finite von Neumann algebra, $G$ be a group acting through trace-preserving automorphisms on $N$, $H$ be a subgroup of $G$. We say the action $\alpha: G \rightarrow Aut_\tau(N)$ is \textit{amenably $H$-inner} if there exists $G'$, a co-amenable subgroup of $G$, s.t. $G' < U(N \rtimes_{vN} H) \cap \mathcal{N}(N)$ and $\alpha$ restricted to $G'$ is the conjugation action of $U(N \rtimes_{vN} H) \cap \mathcal{N}(N)$ on $N$. Here $\mathcal{N}(N)$ is the normalizer of $N$, i.e., $U(N \rtimes_{vN} H) \cap \mathcal{N}(N) = \{u \in U(N \rtimes_{vN} H): uNu^{-1} = N\}$. In case $H = \{e\}$, we shall simply say the action is \textit{amenably inner}.
\end{defn}

A generalization of this condition is as follows:

\begin{defn}
Let $N$ be a finite von Neumann algebra, $G$ be a group acting through trace-preserving automorphisms on $N$, $H$ be a subgroup of $G$. We say the action $\alpha: G \rightarrow Aut_\tau(N)$ is \textit{amenably $H$-approximable} if there exists a sequence of groups $G_i$ containing $H$ as a subgroup, a sequence of maps (not necessarily group homomorphisms) $\beta_i: G \rightarrow G_i$, and a sequence of amenably $H$-inner actions $\alpha_i: G_i \rightarrow Aut_\tau(N)$, s.t. the maps $\beta_i$ are approximately multiplicative, i.e., for any $g_1, g_2 \in G$, $\beta_i(g_1)\beta_i(g_2) = \beta_i(g_1g_2)$ eventually as $i \rightarrow \omega$; $\alpha_i\restriction_H = \alpha\restriction_H$; and there exists an ultrafilter $\omega$ s.t. for any $g \in G$, $n \in N$, $\alpha_i \circ \beta_i(g)(n)$ converges to $\alpha(g)(n)$ in $L^2(N)$ as $i \rightarrow \omega$. In case $H = \{e\}$, we shall simply say the action is \textit{amenably approximable}.
\end{defn}

\begin{ex}
An example and the motivation for generalizing amenably $H$-inner actions to amenably $H$-approximable actions comes from the concept of profinite actions, defined as follows:

\begin{defn}[{[Ioa11]}]
Given a standard probability space $(X, \mu)$ (i.e., $(X, \mu)$ is isomorphic to $([0, 1], dt)$, a p.m.p. (probability measure preserving) action $\alpha: G \curvearrowright X$ is said to be \textit{profinite} if it is an inverse limit of a sequence of p.m.p. actions $\alpha_i: G \curvearrowright X_i$ with $X_i$ finite probability spaces.
\end{defn}

That is, there are surjective maps $q_{ij}: X_j \rightarrow X_i$ for all $i \leq j$ s.t. $q_{ii} = Id_{X_i}$ for all $i$, $q_{ij} = q_{ik} \circ q_{kj}$ for all $i \leq k \leq j$, and $q_{ij} \circ \alpha_j(g) = \alpha_i(g) \circ q_{ij}$ for all $i \leq j$ and $g \in G$. And $X$ is isomorphic to $\varprojlim X_i$ with the action $\alpha$ being isomorphic to the action $\alpha(g)(x_1, x_2, \cdots) = (\alpha_1(g)(x_1), \alpha_2(g)(x_2), \cdots)$. Note that there are natural surjective maps $q_i: X \rightarrow X_i$ s.t. $q_i = q_{ij} \circ q_j$ and $q_i \circ \alpha(g) = \alpha_i(g) \circ q_i$.

Since the action $\alpha_i: G \curvearrowright X_i$ simply permutes elements of the finite set $X_i$, if we write $\alpha_i: G \rightarrow Sym(X_i)$, then $G_i = \alpha_i(G)$ is a finite group. Now, dualizing $q_i$ gives inclusions of von Neumann algebras $L^\infty(X_i) \hookrightarrow L^\infty(X, \mu)$. We note that $X \simeq \varprojlim X_i$ implies $L^\infty(X, \mu)$ is the weak closure of the union of the increasing sequence of algebras $L^\infty(X_i)$. Also observe that each atom $a_{i, 1}, \cdots, a_{i, |X_i|}$ of $X_i$ corresponds to a measurable subset $A_{i, k} = q_i^{-1}(\{a_{i, k}\})$ of $X$. Since $X$ is a standard probability space, it is without atoms. Thus, $A_{i, k}$ is without atoms as well, so $L^\infty(A_{i, k}, \frac{\mu}{\mu(A_{i, k})})$ is isomorphic to $L^\infty([0, 1])$. Fixing such isomorphisms gives an identification $L^\infty(X, \mu) \simeq L^\infty(X_i) \bar{\otimes} L^\infty([0, 1])$. Thus, the induced actions $G \rightarrow G_i \curvearrowright L^\infty(X_i)$ can be extended to actions $G_i \curvearrowright L^\infty(X_i) \bar{\otimes} L^\infty([0, 1]) \simeq L^\infty(X, \mu)$, which we shall write as $\alpha'_i$. The maps $G \rightarrow G_i$ shall be denoted by $\beta_i$. One may then verify that $\alpha'_i \circ \beta_i(g)(n) \rightarrow \alpha(g)(n)$ in $L^2$ for all $g \in G$ and $n \in L^\infty(X, \mu)$. Here, $\alpha$ is also used to denote the action $G \curvearrowright L^\infty(X, \mu)$ induced by $\alpha$. (This assertion clearly holds for $n \in \cup_i L^\infty(X_i)$. The weak density of $\cup_i L^\infty(X_i)$ in $L^\infty(X, \mu)$ and a standard approximation argument then show it holds in general.) It is then clear that $\alpha'_i$ is amenably inner and hence $\alpha$ is amenably approximable.

To summarize, we have,

\begin{prop}
Given a standard probability space $(X, \mu)$ and a p.m.p. profinite action $G \curvearrowright X$, the induced action $G \curvearrowright L^\infty(X, \mu)$ is amenably approximable.
\end{prop}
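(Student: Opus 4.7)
The plan is to translate the inverse limit structure $(X,\mu) = \varprojlim(X_i,\mu_i)$ with its associated p.m.p.\ actions $\alpha_i: G \curvearrowright X_i$ directly into the data required by Definition 5.4 with $H = \{e\}$. Concretely, set $G_i := \alpha_i(G) \subseteq \mathrm{Sym}(X_i)$, which is a finite group since $X_i$ is finite, and let $\beta_i: G \to G_i$ be the surjection obtained from $\alpha_i$. Because these are genuine group homomorphisms, approximate multiplicativity of $\beta_i$ holds as exact equality, and the condition $\alpha_i|_H = \alpha|_H$ is vacuous when $H = \{e\}$.

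The main construction is the extension $\alpha_i': G_i \to \mathrm{Aut}_\tau(L^\infty(X,\mu))$. Here I would invoke the standard-probability-space hypothesis: since $(X,\mu)$ is atomless, each fiber $A_{i,k} = q_i^{-1}(\{a_{i,k}\})$ is atomless, and the classification of atomless standard probability spaces furnishes measure-preserving isomorphisms $L^\infty(A_{i,k}, \mu|_{A_{i,k}}/\mu(A_{i,k})) \cong L^\infty([0,1])$. Gluing these over the atoms $a_{i,k}$ yields the identification $L^\infty(X,\mu) \cong L^\infty(X_i) \bar\otimes L^\infty([0,1])$. I then define $\alpha_i'$ as the tautological permutation action of $G_i$ on $L^\infty(X_i)$ tensored with the identity on $L^\infty([0,1])$.

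To verify $\alpha_i'$ is amenably inner in the sense of Definition 5.3, note that $N = L^\infty(X,\mu)$ is abelian, so conjugation by any element of $U(N)$ is trivial. Hence taking $G' = \{e\} < G_i$, which is finite-index and therefore co-amenable, satisfies the definition with the trivial embedding $\{e\} \hookrightarrow U(N)$, $e \mapsto 1$. The remaining content is the $L^2$-convergence $\alpha_i' \circ \beta_i(g)(n) \to \alpha(g)(n)$. For $n \in L^\infty(X_j)$ and $i \geq j$, the inverse-limit compatibility $q_i \circ \alpha(g) = \alpha_i(g) \circ q_i$ together with the definition of $\alpha_i'$ yields the exact equality $\alpha_i' \circ \beta_i(g)(n) = \alpha(g)(n)$; the general case then follows by a standard $\varepsilon/3$ argument using the $L^2$-density of $\cup_j L^\infty(X_j)$ in $L^2(X,\mu)$ and the fact that each $\alpha_i' \circ \beta_i(g)$ and $\alpha(g)$ is an $L^2$-isometry.

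I do not anticipate a serious obstacle. The only step requiring genuine care is the tensor decomposition $L^\infty(X,\mu) \cong L^\infty(X_i) \bar\otimes L^\infty([0,1])$, which must be performed $i$ by $i$ (the identifications need not be compatible across different $i$, and indeed the original $\alpha(g)$ is \emph{not} a tensor product action under these identifications --- only its restriction to the finite-dimensional subalgebra $L^\infty(X_i)$ matches $\alpha_i'(\beta_i(g))$). That $\alpha_i'$ only approximates $\alpha$ in $L^2$ rather than equaling it is precisely the reason amenable approximability, rather than amenable innerness, is the correct notion here.
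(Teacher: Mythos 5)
Your proposal is correct and follows essentially the same route as the paper: the per-$i$ identification $L^\infty(X,\mu) \simeq L^\infty(X_i) \bar{\otimes} L^\infty([0,1])$ via atomlessness of the fibers $q_i^{-1}(\{a_{i,k}\})$, the extensions $\alpha_i'$ of the permutation actions of $G_i = \alpha_i(G)$, and the density-plus-isometry approximation argument for the $L^2$-convergence. Your explicit justification that each $\alpha_i'$ is amenably inner (taking $G' = \{e\}$, co-amenable since $G_i$ is finite) merely fills in a step the paper leaves as ``clear.''
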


Another example of approximable actions follows from the above example. When $G \curvearrowright X$ is profinite, as the induced action $G \curvearrowright L^\infty(X, \mu)$ is approximable, it is easy to see that the action $G \curvearrowright L^\infty(X, \mu) \bar{\otimes} N$ is approximable for any finite von Neumann algebra $N$, where the action of $G$ on $L^\infty(X, \mu) \bar{\otimes} N$ is the action $G \curvearrowright L^\infty(X, \mu)$ tensoring with the trivial action on $N$. While this is not an interesting example in itself, we may consider taking the algebra associated with the noncommutative Poisson random measure on $(L^\infty(X, \mu) \bar{\otimes} N, \tau)$, as defined in [Jun21]. Given a finite von Neumann algebra $N$ with a faithful tracial state $\tau$, this algebra is given by,
\begin{equation*}
    \mathcal{M}(N) = \oplus_{k = 0}^\infty \bar{\otimes}_s^k N
\end{equation*}
where $\bar{\otimes}_s^k N$ is the $k$-th symmetric tensor product of $N$, i.e., the subalgebra of $N^{\bar{\otimes} k}$ invariant under the permutation action of $S_k$. The trace on $\mathcal{M}(N)$ is given by,
\begin{equation*}
    \sigma((x_k)) = \sum_{k = 0}^\infty \frac{e^{-1}}{k!} \tau^{\otimes k}(x_k)
\end{equation*}

One easily sees that given a tracial embedding $\pi: N \hookrightarrow M$, it naturally induces a tracial embedding $\mathcal{M}(\pi): \mathcal{M}(N) \hookrightarrow \mathcal{M}(M)$. Also, an trace-preserving action $\alpha: G \curvearrowright N$ naturally induces an action $\alpha': G \curvearrowright \mathcal{M}(N)$ defined by $\alpha'((x_k)) = (\alpha^{\otimes k}(x_k))$. Furthermore, if we have an increasing sequence of subalgebras $N_i \subseteq M$ s.t. their union is weakly dense in $M$, then $\mathcal{M}(N_i)$ is an increasing sequence of subalgebras of $\mathcal{M}(M)$ and their union is weakly dense in $\mathcal{M}(M)$ as well.

Combining the above observations and essentially repeating the proof of Proposition 5.2 give,

\begin{prop}
Given a standard probability space $(X, \mu)$ and a p.m.p. profinite action $G \curvearrowright X$, the induced action $G \curvearrowright \mathcal{M}(L^\infty(X, \mu) \bar{\otimes} N)$ is amenably approximable for any finite von Neumann algebra $N$.
\end{prop}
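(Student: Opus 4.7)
The plan is to follow the proof of Proposition 5.2 and then functorially extend to $\mathcal{M}$. From the profinite action $G \curvearrowright X$, I extract the finite quotients $X_i$, the finite groups $G_i = \alpha_i(G)$, the maps $\beta_i: G \rightarrow G_i$, and the extended actions $\alpha'_i: G_i \curvearrowright L^\infty(X, \mu)$ constructed through the identification $L^\infty(X, \mu) \simeq L^\infty(X_i) \bar{\otimes} L^\infty([0, 1])$ (acting trivially on the second factor). Tensoring with the trivial action on $N$ gives actions $\widetilde{\alpha}_i := \alpha'_i \otimes \mathrm{id}_N : G_i \curvearrowright L^\infty(X, \mu) \bar{\otimes} N$, each of which is amenably inner because $G_i$ is finite: the trivial subgroup is co-amenable in $G_i$, lies in the unitary group of $L^\infty(X, \mu) \bar{\otimes} N$, and the restricted action is the (trivial) conjugation action by $1$.

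Next, I apply the $\mathcal{M}$ construction. Since each $\widetilde{\alpha}_i(g)$ is a trace-preserving automorphism, its $k$-fold tensor power $\widetilde{\alpha}_i(g)^{\otimes k}$ commutes with the symmetric group action and therefore restricts to an automorphism of $\bar{\otimes}_s^k (L^\infty(X, \mu) \bar{\otimes} N)$. Assembled across $k$, this produces an induced trace-preserving action $\mathcal{M}(\widetilde{\alpha}_i): G_i \curvearrowright \mathcal{M}(L^\infty(X, \mu) \bar{\otimes} N)$, which is again amenably inner for the same finite-group reason. Similarly one obtains $\mathcal{M}(\widetilde{\alpha}): G \curvearrowright \mathcal{M}(L^\infty(X, \mu) \bar{\otimes} N)$.

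The main step is to verify the required $L^2$ convergence: for every $x \in \mathcal{M}(L^\infty(X, \mu) \bar{\otimes} N)$ and $g \in G$, I need $\mathcal{M}(\widetilde{\alpha}_i)(\beta_i(g))(x) \rightarrow \mathcal{M}(\widetilde{\alpha})(g)(x)$ with respect to the Poisson trace $\sigma$. Writing $x = (x_k)_{k \geq 0}$, the squared $L^2$-distance is
\begin{equation*}
    \sum_{k=0}^\infty \frac{e^{-1}}{k!} \bigl\| \widetilde{\alpha}_i(\beta_i(g))^{\otimes k}(x_k) - \widetilde{\alpha}(g)^{\otimes k}(x_k) \bigr\|_2^2.
\end{equation*}
For each fixed $k$, the $k$-th term tends to $0$: by density of elementary tensors in $L^2$ of the symmetric tensor power and a standard telescoping estimate, this reduces to the $L^2$ convergence $\widetilde{\alpha}_i(\beta_i(g))(y) \rightarrow \widetilde{\alpha}(g)(y)$ on $L^\infty(X, \mu) \bar{\otimes} N$, which follows from Proposition 5.2 (the trivial action on the $N$-factor poses no issue). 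Each summand is dominated by $\frac{4 e^{-1}}{k!} \|x_k\|_2^2$ since every automorphism involved is trace-preserving, and $\sum_k \frac{4 e^{-1}}{k!} \|x_k\|_2^2 = 4 \sigma(x^* x) < \infty$. Dominated convergence then yields the sum tending to $0$.

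The main technical obstacle is precisely this interchange of the $L^2$-limit with the infinite direct sum defining $\mathcal{M}$; it is handled uniformly by the $L^2$-isometry property of each $\widetilde{\alpha}_i(\beta_i(g))$ and $\widetilde{\alpha}(g)$, which supplies the summable dominating function used in the dominated convergence step above.
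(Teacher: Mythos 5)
Your proof is correct and follows essentially the same route as the paper: run the Proposition 5.2 construction, tensor with the trivial action on $N$, push the resulting finite-group (hence amenably inner) actions through the functor $\mathcal{M}$, and then handle the $L^2$ approximation. The only (harmless) difference is that you control the infinite direct sum defining $\mathcal{M}$ by an explicit dominated-convergence estimate against the Poisson trace, whereas the paper leans on the weak density of $\cup_i \mathcal{M}(L^\infty(X_i) \bar{\otimes} N)$ in $\mathcal{M}(L^\infty(X, \mu) \bar{\otimes} N)$; your version simply supplies more detail than the paper's one-line appeal to ``essentially repeating the proof of Proposition 5.2.''
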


We can obtain another interesting example of amenably approximable actions by slightly altering the above situation. Now, instead of understanding $X_i$ as finite probability spaces, we regard them as finite discrete topological spaces. Correspondingly, we understand $X \simeq \varprojlim X_i$ as a compact topological space and the isomorphism between $X$ and $\varprojlim X_i$ as a homeomorphism. Again, we let the action $\alpha: G \curvearrowright X$ (through homeomorphisms) be the inverse limit of a sequence of actions $\alpha_i: G \curvearrowright X_i$. In this case, we shall still call the action $\alpha$ \textit{profinite}.

Given any discrete group $H$, we note that the group $C(X_i, H)$ defined with pointwise operations is naturally isomorphic to $X_i^{|X_i|}$, so $L(C(X_i, H)) \simeq L(H)^{\bar{\otimes}|X_i|}$. We shall write this algebra as $L(H)^{\bar{\otimes}X_i}$. By an abuse of notation, we write $L(C(X, H))$ as $L(H)^{\bar{\otimes}X}$, where we regard $C(X, H)$ as a discrete group. We observe that the projection maps $q_i: X \rightarrow X_i$ dualize to group inclusions $\iota_i: C(X_i, H) \hookrightarrow C(X, H)$. Similarly, $q_{ij}: X_j \rightarrow X_i$ dualizes to a group inclusion $\iota_{ij}: C(X_i, H) \hookrightarrow C(X_j, H)$ for all $i \leq j$. Obviously, there maps are compatible, i.e., $\iota_{ij} = \iota_{kj} \circ \iota_{ik}$ and $\iota_i = \iota_j \circ \iota_{ij}$ for all $i \leq k \leq j$. This means $\iota_i(C(X_i, H))$ forms an increasing sequence of subgroups of $C(X, H)$. If we also use $\iota_i$ and $\iota_{ij}$ to denote the induced tracial inclusions of group von Neumann algebras, then we also have $\iota_i(L(H)^{\bar{\otimes}X_i})$ forms an increasing of subalgebras of $L(H)^{\bar{\otimes}X}$. We have the following proposition:

\begin{lemma}
$\cup_i \iota_i(C(X_i, H)) = C(X, H)$. Consequently, $\cup_i \iota_i(L(H)^{\bar{\otimes}X_i})$ is weakly dense in $L(H)^{\bar{\otimes}X}$.
\end{lemma}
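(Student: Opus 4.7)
The plan is to establish the first equality $\bigcup_i \iota_i(C(X_i, H)) = C(X, H)$ directly, from which the weak density statement will follow essentially for free. The inclusion $\subseteq$ is already built into the definition of $\iota_i$, so the content is in showing $\supseteq$, namely that every continuous $f \colon X \to H$ factors through some $q_i$.

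First I would exploit the discreteness of $H$. Since $X$ is compact (as an inverse limit of finite discrete spaces) and $f$ is continuous, $f(X)$ is a compact subset of the discrete space $H$, hence finite, say $f(X) = \{h_1, \ldots, h_n\}$. The fibers $A_k = f^{-1}(h_k)$ then partition $X$ into finitely many clopen sets. The goal is therefore reduced to showing that each $A_k$ has the form $q_{i}^{-1}(S_k)$ for some $S_k \subseteq X_i$, at a common level $i$ of the inverse system.

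The key topological fact is the standard characterization of clopens in a profinite space: since $X \simeq \varprojlim X_j$ with each $X_j$ finite discrete, the sets $\{q_j^{-1}(\{s\}) : j,\ s \in X_j\}$ form a basis of clopens for $X$. Any open subset of $X$ is a union of such basic clopens, and any clopen subset, being compact, is a finite such union. Using the directedness of the inverse system and compatibility $\iota_i = \iota_j \circ \iota_{ij}$ for $i \leq j$, I may pull all the basic clopens appearing in finitely many of the $A_k$ up to a common level $i$. This produces subsets $S_k \subseteq X_i$ with $A_k = q_i^{-1}(S_k)$. Defining $\tilde{f} \colon X_i \to H$ to take value $h_k$ on $S_k$, we obtain $f = \tilde{f} \circ q_i = \iota_i(\tilde{f})$, which gives the first claim.

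For the weak density, recall that $L(H)^{\bar{\otimes} X} = L(C(X, H))$ by definition, and that the group elements of any discrete group generate its von Neumann algebra in the weak operator topology. By the first claim, every group element $f \in C(X, H) \subseteq L(H)^{\bar{\otimes} X}$ lies in some $\iota_i(C(X_i, H)) \subseteq \iota_i(L(H)^{\bar{\otimes} X_i})$. Hence $\bigcup_i \iota_i(L(H)^{\bar{\otimes} X_i})$ contains all group elements of $C(X, H)$, and is therefore weakly dense in $L(H)^{\bar{\otimes} X}$. The main (but still routine) obstacle is the compactness/directedness manipulation needed to show that the finitely many clopen fibers of $f$ can be pulled back to a common finite level $X_i$; everything else is bookkeeping.
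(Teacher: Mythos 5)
Your proposal is correct, and its overall skeleton matches the paper's: reduce to the finiteness of $f(X)$ via compactness of $X$ and discreteness of $H$, observe the fibers are clopen, and reduce everything to the claim that each clopen subset of $X$ is $q_i^{-1}(S)$ for some $i$ and some $S \subseteq X_i$. Where you diverge is in the proof of that key claim. The paper writes a clopen set simultaneously as $A_1 \vee A_2 \vee \cdots$ (a union of coordinate preimages, i.e.\ open) and $B_1 \wedge B_2 \wedge \cdots$ (an intersection of coordinate preimages, i.e.\ closed), assumes the open presentation never stabilizes at a finite level, and derives a contradiction by constructing a point of $B_1 \wedge B_2 \wedge \cdots$ avoiding all the $A_n$ via an inverse limit of nonempty finite sets (a K\H{o}nig's-lemma-type argument). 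You instead invoke the standard fact that the sets $q_j^{-1}(\{s\})$ form a basis of clopens for the inverse limit topology, note that a clopen set is a compact open set and hence a finite union of such basic clopens, and then use the directedness of the index set to push everything to a single level $i$. Your route is shorter and uses compactness only through the finite subcover property, at the cost of quoting (rather than reproving) the basis fact; the paper's argument is self-contained but considerably longer. One small point worth making explicit in your version: to see that the resulting sets $S_k \subseteq X_i$ partition $X_i$ (so that $\tilde f$ is defined everywhere), you should use the surjectivity of $q_i$, which the paper notes. Your derivation of weak density from the group-level statement is the intended one.
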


\begin{proof} We need to show that given any continuous map $f: X \rightarrow H$, there exists an $i$ and a continuous map $\widetilde{f}: X_i \rightarrow H$ s.t. $f = \widetilde{f} \circ q_i$. We claim that any clopen subset of $X$ is of the form $q_i^{-1}(A_i)$ where $A_i \subseteq X_i$. Granted the claim, we note that as $X$ is compact and $H$ is discrete, $f(X)$ is finite and thus can be written as $f(X) = \{h_1, \cdots, h_n\}$. Then each $f^{-1}(h_j)$ is clopen in $X$ and thus $f^{-1}(h_j) = q_{i_j}^{-1}(A_{i_j})$ for some $A_{i_j} \subseteq X_{i_j}$. By compatibility between $q_i$'s, if, for instance $i_{j_1} \geq i_{j_2}$, then $q_{i_{j_2}}^{-1}(A_{i_{j_2}}) = q_{i_{j_1}}^{-1}(q_{i_{j_2}i_{j_1}}^{-1}(A_{i_{j_2}}))$. Thus, we may assume all $i_j$ equal some fixed $i$, i.e., $f^{-1}(h_j) = q_i^{-1}(A_j)$ for some $A_j \subseteq X_i$. However, $\cup_j f^{-1}(h_j) = X$, so $\cup_j A_j = X_i$. We also clearly have $A_j$'s are pairwise disjoint. Hence, we may define $\widetilde{f}$ by letting $\widetilde{f}(A_j) = \{h_j\}$. This concludes the proof apart from the claim.

We now prove the claim. Noting that $X \simeq \varprojlim X_i$ is a subspace of the Cartesian product $\prod_i X_i$ and using the compatibility between $q_i$'s, we see that open subsets of $X$ are of the form $\{(x_i) \in X: x_1 \in A_1,\textrm{ or }x_2 \in A_2,\textrm{ or }\cdots\}$ where $A_i \subseteq X_i$. For simplicity, we write this set as $A_1 \vee A_2 \vee \cdots$. Similarly, closed subsets of $X$ are of the form $\{(x_i) \in X: x_1 \in B_1,\textrm{ and }x_2 \in B_2,\textrm{ and }\cdots\}$ where $B_i \subseteq X_i$, and we shall write this set as $B_1 \wedge B_2 \wedge \cdots$. Given $K \subseteq X$ clopen, then $K = A_1 \vee A_2 \vee \cdots = B_1 \wedge B_2 \wedge \cdots$ for some $A_i, B_i \subseteq X_i$. By taking $\widetilde{A_i} = \cup_{j \leq i} q_{ji}^{-1}(A_j)$ if necessary, we may assume $A_i \supseteq q_{ji}^{-1}(A_j)$ for all $i \geq j$. Similarly, by taking $\widetilde{B_i} = \cap_{j \leq i} q_{ji}^{-1}(B_j)$ if necessary, we may assume $B_i \subseteq q_{ji}^{-1}(B_j)$ for all $i \geq j$. Clearly, it suffices to show there exists $N$ s.t. for all $n \geq N$, $A_n = q_{Nn}^{-1}(A_N)$.

To do so, assume to the contrary that for all $N$, there exists $n \geq N$ s.t. $A_n \supsetneqq q_{Nn}^{-1}(A_N)$. Now, $A_1 \vee A_2 \vee \cdots = B_1 \wedge B_2 \wedge \cdots$ implies that $q_{ij}(A_j) \subseteq B_i$ for all $i \leq j$. Define,
\begin{equation*}
    C_n = (\cup_{m > n} q_{nm}(A_m)) \backslash A_n
\end{equation*}

By our assumptions $C_n \neq \varnothing$ and $C_n \subseteq B_n$. We then let $D_n = \cap_{m \geq n} q_{nm}(C_m)$. Now, for any fixed $n$, for any $m_1 \geq m_2 \geq n$, we have,
\begin{equation*}
\begin{split}
    q_{m_2m_1}(C_{m_1}) &= q_{m_2m_1}(\cup_{k > m_1} q_{m_1k}(A_k) \backslash A_{m_1})\\
    &\subseteq q_{m_2m_1}(\cup_{k > m_1} q_{m_1k}(A_k))\\
    &= \cup_{k > m_1} q_{m_2k}(A_k)\\
    &\subseteq \cup_{k > m_2} q_{m_2k}(A_k)
\end{split}
\end{equation*}
where in the third line we have used $q_{m_2m_1} \circ q_{m_1k} = q_{m_2k}$. And,
\begin{equation*}
\begin{split}
    q_{m_2m_1}(C_{m_1}) &= q_{m_2m_1}(\cup_{k > m_1} q_{m_1k}(A_k) \backslash A_{m_1})\\
    &\subseteq q_{m_2m_1}(X_{m_1} \backslash A_{m_1})\\
    &\subseteq q_{m_2m_1}(X_{m_1} \backslash q_{m_2m_1}^{-1}(A_{m_2}))\\
    &\subseteq X_{m_2} \backslash A_{m_2}
\end{split}
\end{equation*}
where in the third line we have used $q_{m_2m_1}^{-1}(A_{m_2}) \subseteq A_{m_1}$. Thus, $q_{m_2m_1}(C_{m_1}) \subseteq \cup_{k > m_2} q_{m_2k}(A_k) \backslash A_{m_2} = C_{m_2}$, so $q_{nm_1}(C_{m_1}) = q_{nm_2} \circ q_{m_2m_1}(C_{m_1}) \subseteq q_{nm_2}(C_{m_2})$, i.e., for any fixed $n$, the sequence $(q_{nm}(C_m))_{m \geq n}$ is decreasing. Note that $q_{nm}(C_m) \neq \varnothing$ for all $m \geq n$ and they are all subsets of the finite set $X_n$. Thus, $D_n = \cap_{m \geq n} q_{nm}(C_m)$ is nonempty. In fact, $D_n = q_{n\alpha(n)}(C_{\alpha(n)})$ for some $\alpha(n) \geq n$. Also observe that $D_n \subseteq C_n \subseteq B_n$.

Now, we have,
\begin{equation*}
    q_{n(n+1)}(D_{n+1}) = q_{n(n+1)} \circ q_{(n+1)\alpha(n+1)}(C_{\alpha(n+1)}) = q_{n\alpha(n+1)}(C_{\alpha(n+1)}) \supseteq D_n
\end{equation*}
for all $n$. We may then inductively define $(x_i) \in X$ by letting $x_1$ be an arbitrary element of $D_1$. Then, for all $i$, let $x_{i+1}$ be an element of $D_{i+1}$ s.t. $q_{i(i+1)}(x_{i+1}) = x_i$. Since $x_i \in D_i \subseteq B_i$, $(x_i) \in B_1 \wedge B_2 \wedge \cdots$. However, $x_i \in D_i \subseteq C_i \subseteq X_i \backslash A_i$, i.e., $x_i \notin A_i$ for all $i$, so $(x_i) \notin A_1 \vee A_2 \vee \cdots$. But $A_1 \vee A_2 \vee \cdots = B_1 \wedge B_2 \wedge \cdots$, a contradiction.
\end{proof}

Again, we write for the action $\alpha_i: G \curvearrowright X_i$, $G_i = \alpha_i(G)$, which is a finite group. We wish to show the induced actions $G \rightarrow G_i \curvearrowright L(H)^{\bar{\otimes}X_i}$ can be extended to actions $G_i \curvearrowright L(H)^{\bar{\otimes}X}$. Let $G_i X_i = \{G_i x: x \in X_i\}$ be the collection of all orbits of the action $G_i \curvearrowright X_i$. Given any $\sigma \in G_i X_i$ and $x, y \in \sigma$, we claim that $q_i^{-1}(x)$ is homeomorphic to $q_i^{-1}(x)$. Indeed, $y = \widetilde{g}x$ for some $\widetilde{g} \in G_i$. We may then lift $\widetilde{g}$ to some $g \in G$, so $y = gx$. But then the homeomorphism may simply be defined by the restriction of $\alpha(g)$ restricted to $q_i^{-1}(x)$. Now, for each $\sigma \in G_i X_i$, fix some $x_\sigma \in \sigma$ and homeomorphisms between $q_i^{-1}(x_\sigma)$ and $q_i^{-1}(x)$ for all other $x \in \sigma$. These homeomorphisms induce isomorphisms between $C(q_i^{-1}(x_\sigma), H)$ and $C(q_i^{-1}(x), H)$. We observe that $(q_i^{-1}(x))_{x \in X_i}$ is a finite collection of disjoint open subsets of $X$ which covers $X$, so we have,
\begin{equation*}
\begin{split}
    C(X, H) &\simeq \oplus_{x \in X_i} C(q_i^{-1}(x), H)\\
    &\simeq \oplus_{\sigma \in G_i X_i} \oplus_{x \in \sigma} C(q_i^{-1}(x), H)\\
    &\simeq \oplus_{\sigma \in G_i X_i} \oplus_{x \in \sigma} C(q_i^{-1}(x_\sigma), H)\\
    &\simeq \oplus_{\sigma \in G_i X_i} C(q_i^{-1}(x_\sigma), H)^{|\sigma|}
\end{split}
\end{equation*}

At the level of group von Neumann algebras, this gives $L(H)^{\bar{\otimes}X} \simeq \bar{\otimes}_{\sigma \in G_i X_i} (L(H)^{\bar{\otimes} q_i^{-1}(x_\sigma)})^{\bar{\otimes} \sigma}$. Since the action $G_i \curvearrowright X_i$ only acts within each $\sigma$, this allows an extension of $G_i \curvearrowright L(H)^{\bar{\otimes}X_i}$ to $\alpha'_i: G_i \curvearrowright L(H)^{\bar{\otimes}X}$. Again, we let the maps $G \rightarrow G_i$ be denoted by $\beta_i$ and then it is easy to verify that $\alpha'_i \circ \beta_i$ approximates $\alpha: G \curvearrowright L(H)^{\bar{\otimes}X}$. Thus, $\alpha$ is amenably approximable.

To summarize, we have,

\begin{prop}
Given a compact topological space $X$ and a profinite action $G \curvearrowright X$, the induced action $G \curvearrowright L(H)^{\bar{\otimes}X}$ is amenably approximable for any discrete group $H$.
\end{prop}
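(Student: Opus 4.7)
The plan is to verify directly that the triple $(G_i, \beta_i, \alpha_i')$ constructed in the paragraphs immediately preceding the statement satisfies the definition of an amenably approximable action. The data are already in hand: $G_i = \alpha_i(G)$ is a finite group, $\beta_i : G \to G_i$ is a genuine group homomorphism (so approximate multiplicativity holds on the nose), and $\alpha_i' : G_i \curvearrowright L(H)^{\bar\otimes X}$ is the extension built from the orbit decomposition
\begin{equation*}
L(H)^{\bar\otimes X} \;\simeq\; \bar\otimes_{\sigma \in G_i X_i}\bigl(L(H)^{\bar\otimes q_i^{-1}(x_\sigma)}\bigr)^{\bar\otimes \sigma},
\end{equation*}
in which $G_i$ acts by permuting the $|\sigma|$ tensor factors within each orbit via the fixed homeomorphisms $q_i^{-1}(x_\sigma) \simeq q_i^{-1}(x)$.

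First I would check that $\alpha_i'$ genuinely extends the given action of $G_i$ on the subalgebra $L(H)^{\bar\otimes X_i} \subseteq L(H)^{\bar\otimes X}$. Under the decomposition above, $L(H)^{\bar\otimes X_i}$ embeds by placing a canonical copy of $L(H)$ at each $x \in X_i$; within each orbit $\sigma$ these $|\sigma|$ factors are permuted by $G_i$ exactly as $G_i$ permutes the points of $\sigma$. The one spot requiring real care is matching the base-point conventions: the fixed points $x_\sigma$ and chosen homeomorphisms must be organized so that the extension is well-defined on $L(H)^{\bar\otimes X}$ and restricts on $L(H)^{\bar\otimes X_i}$ to exactly the original $G_i$-action, independent of the choices.

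Once this is established, amenable innerness of each $\alpha_i'$ comes essentially for free: since $G_i$ is finite and hence amenable, the trivial subgroup is co-amenable in $G_i$, so taking $G' = \{e\}$ in the definition of an amenably inner action suffices (the restriction to $\{e\}$ is the identity, trivially conjugation by the identity unitary). For the convergence $\alpha_i'(\beta_i(g))(n) \to \alpha(g)(n)$ in $\|\cdot\|_2$, I would invoke Lemma 5.1 to obtain $\|\cdot\|_2$-density of $\bigcup_i \iota_i(L(H)^{\bar\otimes X_i})$ in $L(H)^{\bar\otimes X}$. For $n \in \iota_j(L(H)^{\bar\otimes X_j})$ and any $i \geq j$, one has $n \in \iota_i(L(H)^{\bar\otimes X_i})$, on which $\alpha_i'(\beta_i(g))$ coincides with $\alpha(g)$ exactly — not merely asymptotically — because the $G$-action on $L(H)^{\bar\otimes X_i}$ factors through $\beta_i$ and $\alpha_i$ by construction. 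A standard $3\varepsilon$ approximation, using that $\alpha(g)$ and each $\alpha_i'(\beta_i(g))$ are trace-preserving and hence $\|\cdot\|_2$-isometric, then extends convergence to all of $L(H)^{\bar\otimes X}$.

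The main obstacle is the bookkeeping in the first step: carefully organizing the orbit-by-orbit extension so that the restriction to $L(H)^{\bar\otimes X_i}$ is unambiguously the original $G_i$-action. Once that is handled, amenable innerness is immediate from finiteness of $G_i$, and convergence reduces to eventually-exact equality on a dense subspace, mirroring closely the proof of Proposition 5.2 in the probability-measure-preserving case.
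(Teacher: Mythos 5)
Your proposal is correct and follows essentially the same route as the paper: the paper's proof of this proposition is precisely the construction of $(G_i,\beta_i,\alpha_i')$ via the orbit decomposition $L(H)^{\bar{\otimes}X} \simeq \bar{\otimes}_{\sigma \in G_i X_i} (L(H)^{\bar{\otimes} q_i^{-1}(x_\sigma)})^{\bar{\otimes}\sigma}$ in the paragraphs preceding the statement, with the verification that $\alpha_i' \circ \beta_i$ approximates $\alpha$ left as ``easy to verify.'' Your filling in of the details --- exact multiplicativity of $\beta_i$, amenable innerness from finiteness of $G_i$, exact agreement on the dense union $\cup_i \iota_i(L(H)^{\bar{\otimes}X_i})$ via Lemma 5.1, and the standard $\|\cdot\|_2$-approximation --- is exactly what the paper intends.
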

\end{ex}

\begin{ex}
Another set of examples of amenably approximable actions also concerns approximating the action by finite permutation groups. Here, we consider a countable discrete set $X$ and an action $\alpha: G \curvearrowright X$ through permutations. Given any finite von Neumann algebra $N$, let $N^{\bar{\otimes} X}$ be $N^{\bar{\otimes}\infty}$ with tensor components indexed by elements of $X$. We denote the induced action $G \curvearrowright N^{\bar{\otimes} X}$ also by $\alpha$.

We have the following,
\begin{prop}
Suppose there exists an increasing sequence $G_i$ of subgroups of $G$ s.t. $\cup_i G_i = G$ and all orbits of the restricted action $G_i \curvearrowright X$ are finite, then $\alpha$ is amenably approximable.
\end{prop}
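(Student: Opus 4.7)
The plan is to build finite-group approximations of $\alpha$ exploiting the finite-orbit structure of the chain $(G_i)$, directly analogous to the profinite strategy of Proposition 5.4. Since each $G_i$-orbit on $X$ is finite, we have a decomposition $X = \bigsqcup_{j \in J_i} O_{i,j}$ and a corresponding tensor factorization $N^{\bar{\otimes}X} \simeq \bar{\otimes}_{j \in J_i} N^{\bar{\otimes} O_{i,j}}$, on which $G_i$ acts by permuting tensor factors within each finite orbit.

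The approximating data is indexed by pairs $(i, F)$ with $F \subseteq J_i$ finite. Set $X_{i,F} = \bigsqcup_{j \in F} O_{i,j}$, and let $K_{i,F}$ denote the image of $G_i$ in $\prod_{j \in F} \textrm{Sym}(O_{i,j})$, a finite group. Define $\alpha_{i,F}: K_{i,F} \curvearrowright N^{\bar{\otimes}X}$ by acting naturally on $N^{\bar{\otimes} X_{i,F}}$ and by the identity on $N^{\bar{\otimes}(X \setminus X_{i,F})}$. Define $\beta_{i,F}: G \to K_{i,F}$ to agree with the natural map $G_i \to K_{i,F}$ on $G_i$ and by an arbitrary default elsewhere. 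Each $\alpha_{i,F}$ is amenably inner in the sense of Definition 5.3 (with $H = \{e\}$) because $K_{i,F}$ is finite: the trivial subgroup $\{e\}$ is co-amenable in $K_{i,F}$ (the coset space $K_{i,F}/\{e\} = K_{i,F}$ being finite and hence admitting the normalized counting measure as an invariant mean), embeds into $U(N^{\bar{\otimes}X})$ as the singleton $\{1\}$, and its action is implemented by conjugation by $1$.

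For the $L^2$-convergence, fix $g \in G$ and $n$ in the algebraic tensor subalgebra $\odot_{x \in X} N$, which is weakly dense in $N^{\bar{\otimes}X}$. Such $n$ is supported on finitely many tensor positions; for sufficiently large $i$ we have $g \in G_i$, and these positions lie in only finitely many $G_i$-orbits, so for any $F \subseteq J_i$ containing all those orbits, $\alpha_{i,F}(\beta_{i,F}(g))(n) = \alpha(g)(n)$ holds exactly. Combined with uniform $L^\infty$-bounds on the $\alpha_{i,F}$, a standard density argument extends this to $L^2$-convergence for all $n \in N^{\bar{\otimes}X}$. Approximate multiplicativity of $\beta_{i,F}$ follows similarly from $\bigcup_i G_i = G$, since for any fixed $g_1, g_2 \in G$ both lie in $G_i$ eventually.

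The main bookkeeping task is collapsing the two-parameter family $\{(\alpha_{i,F}, \beta_{i,F})\}_{(i,F)}$ into a single sequence satisfying Definition 5.4. This is handled either by a diagonal selection of pairs $(i_k, F_k)$ with $i_k \to \infty$ and $F_k$ eventually absorbing any prescribed finite subset of $J_{i_k}$, or more cleanly by an iterated ultrafilter construction on $\mathbb{N}^2$ in the spirit of the $(\omega')^2$ construction used in the proof of Proposition 2.1. The resulting sequence meets all conditions of Definition 5.4, establishing the amenable approximability of $\alpha$.
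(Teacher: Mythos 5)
Your proposal is correct and follows essentially the same approach as the paper: decompose $X$ into finite orbits, let a finite permutation group act on a finite union of orbits and trivially on the complement, note that finiteness of the approximating group gives amenable innerness via the trivial subgroup, and verify convergence on finite tensors before applying a density argument. The only difference is that the paper reduces to the case where all orbits of the full $G$-action are finite and dismisses the general case as "an easy approximation argument," whereas you carry out that reduction explicitly via the two-parameter family and the reindexing step.
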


\begin{proof} We shall only prove the case where all orbits of the full action $G \curvearrowright X$ are finite. The general case follows by an easy approximation argument. Under the assumption, we may let $X_i$ be an increasing sequence of finite subsets of $X$, each one of which is a union of orbits of $G \curvearrowright X$, and moreover $\cup_i X_i = X$. Let $\beta_i: G \rightarrow Sym(X_i)$ be the group homomorphism associated with the restricted action $G \curvearrowright X_i$. Since $X_i$ is finite, $Sym(X_i)$ is a finite group. It acts on $N^{\bar{\otimes} X_i}$ naturally. Since $N^{\bar{\otimes} X} = N^{\bar{\otimes} X_i} \bar{\otimes} N^{\bar{\otimes} X \backslash X_i}$, the action extends to an action $\alpha_i: Sym(X_i) \curvearrowright N^{\bar{\otimes} X}$ by simply tensoring the natural action on $N^{\bar{\otimes} X_i}$ with the trivial action on $N^{\bar{\otimes} X \backslash X_i}$. One may then easily verify that $\alpha_i \circ \beta_i$ approximates $\alpha$ by first verifying this for finite tensors $n_{x_1} \otimes n_{x_2} \otimes \cdots n_{x_k} \otimes 1 \otimes 1 \otimes \cdots \in N^{\bar{\otimes} X}$ and then applying a standard approximation argument.
\end{proof}

It may be somewhat surprising that the above argument can be applied without any assumption on the orbits if we instead assume $G$ is a free group. Indeed, because finite linear combinations of finite tensors are weakly dense in $N^{\bar{\otimes} X}$, even if we just pick $X_i$ to be an arbitrary increasing sequence of finite subsets of $X$ whose union is $X$ and then for some $g \in G$, $gX_i \neq X_i$, as long as we define $\beta_i$ so that $\beta_i(g)(x) = gx$ whenever both $x \in X_i$ and $gx \in X_i$, then $\alpha_i \circ \beta_i$ would still approximate $\alpha$. The only issue is that $\beta_i$ may not be multiplicative or even approximately so anymore. However, this is not a problem if $G$ is a free group, in which case we only need to define $\beta_i$ according to the standard above for free generators of $G$ and then extend it to a group homomorphism $G \rightarrow Sym(X_i)$. Hence, we have the following result.

\begin{prop}
Suppose $G$ is a free group acting on a countable discrete set $X$, then the induced action $G \curvearrowright N^{\bar{\otimes} X}$ is amenably approximable.
\end{prop}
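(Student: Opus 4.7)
The plan is to adapt the proof of Proposition 5.5, replacing the finite-orbit hypothesis by the freeness of $G$. Fix a free generating set $S$, so $G = F_S$, and choose any increasing sequence of finite subsets $X_1 \subseteq X_2 \subseteq \cdots$ of $X$ with $\bigcup_i X_i = X$. For each generator $s \in S$, let $Y_i^+(s) = \{x \in X_i : sx \in X_i\}$ and $Y_i^-(s) = \{x \in X_i : s^{-1}x \in X_i\}$; the action of $s$ restricts to a bijection $Y_i^+(s) \to Y_i^-(s)$, which I extend to a permutation $\pi_i(s) \in \mathrm{Sym}(X_i)$ by choosing any bijection between the complements $X_i \setminus Y_i^+(s)$ and $X_i \setminus Y_i^-(s)$ (which have the same cardinality). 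By the universal property of the free group, the assignment $s \mapsto \pi_i(s)$ extends uniquely to a group homomorphism $\beta_i : G \to G_i := \mathrm{Sym}(X_i)$.

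The remainder of the construction follows Proposition 5.5. Let $\alpha_i : G_i \to \mathrm{Aut}_\tau(N^{\bar{\otimes} X})$ be the natural permutation action on $N^{\bar{\otimes} X_i}$ tensored with the trivial action on $N^{\bar{\otimes} X \setminus X_i}$. Since $G_i$ is finite and hence amenable, $\alpha_i$ is amenably inner as in the proof of Proposition 5.5; the $\beta_i$ are honest group homomorphisms and hence exactly (so in particular approximately) multiplicative; and the requirement $\alpha_i|_{\{e\}} = \alpha|_{\{e\}}$ is trivially met since $H = \{e\}$.

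What remains is to verify that $\alpha_i(\beta_i(g))(n) \to \alpha(g)(n)$ in $L^2$ for every $g \in G$ and $n \in N^{\bar{\otimes} X}$. Because both maps are $L^2$-isometries and finite linear combinations of finite tensors are dense in $L^2(N^{\bar{\otimes} X})$, a standard approximation reduces this to finite tensors $n = n_{x_1} \otimes \cdots \otimes n_{x_m}$ supported at finitely many coordinates. Writing $g = s_1^{\epsilon_1} \cdots s_k^{\epsilon_k}$ as a reduced word in the generators, for each $x_j$ consider the finite trajectory $x_j,\, s_k^{\epsilon_k} x_j,\, s_{k-1}^{\epsilon_{k-1}} s_k^{\epsilon_k} x_j,\, \ldots,\, g x_j$. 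For $i$ large enough, every such trajectory lies entirely in $X_i$, and a short induction using the agreement of $\pi_i(s^{\pm 1})$ with $s^{\pm 1}$ on $Y_i^{\pm}(s)$ gives $\beta_i(g)(x_j) = g x_j$ for each $j$. Hence $\alpha_i(\beta_i(g))(n) = \alpha(g)(n)$ for all sufficiently large $i$, which finishes the verification.

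The only real subtlety is ensuring that the arbitrary extensions of the partially-defined bijections off the ``compatible'' sets $Y_i^{\pm}(s)$ do not spoil the approximation. Freeness of $G$ removes this concern: there are no relations among the generators that the $\pi_i(s)$ must honor, so the ambiguous part of each $\pi_i(s)$ is harmless, while for a fixed $g$ and fixed finite tensor $n$ only the trajectories of finitely many coordinates enter the argument, and these are eventually absorbed into $X_i$.
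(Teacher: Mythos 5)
Your proposal is correct and follows essentially the same route as the paper's proof: the same choice of arbitrary increasing finite sets $X_i$, the same partial bijections on generators extended arbitrarily off the compatible sets and then to homomorphisms $\beta_i$ via freeness, and the same trajectory argument on finite tensors for the $L^2$-convergence. The only cosmetic difference is your explicit naming of the sets $Y_i^{\pm}(s)$, which the paper describes in words.
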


\begin{proof}
Fix $X_i$, an increasing sequence of finite subsets of $X$ whose union is $X$. Again, we let $\alpha_i: Sym(X_i) \curvearrowright N^{\bar{\otimes} X}$ be the natural action $Sym(X_i) \curvearrowright N^{\bar{\otimes} X_i}$ extended to $N^{\bar{\otimes} X}$ as in the proof of Proposition 5.5. We need to define $\beta_i: G \rightarrow Sym(X_i)$. To do so, for each free generator $g$ of $G$, we let $\beta_i(g)$ be any permutation of $X_i$ which satisfies $\beta_i(g)(x) = gx$ whenever both $x \in X_i$ and $gx \in X_i$. This is always possible as the sets $X_i \backslash \{x \in X_i: gx \in X_i\}$ and $X_i \backslash g\{x \in X_i: gx \in X_i\}$ have the same cardinality, so we may let $\beta_i(g)$ acts on $X_i \backslash \{x \in X_i: gx \in X_i\}$ as any bijective map from $X_i \backslash \{x \in X_i: gx \in X_i\}$ to $X_i \backslash g\{x \in X_i: gx \in X_i\}$. We then extend $\beta_i$ to a group homomorphism from $G$ to $Sym(X_i)$.

Now, we need to show that $\alpha_i \circ \beta_i$ approximates $\alpha$. We only need to check this for finite tensors $n_{x_1} \otimes n_{x_2} \otimes \cdots n_{x_k} \otimes 1 \otimes 1 \otimes \cdots \in N^{\bar{\otimes} X}$. Given any $g \in G$, we write $g = g_1^{\epsilon_1} g_2^{\epsilon_2} \cdots g_l^{\epsilon_l}$ where each $g_m$ is a free generator of $G$ and $\epsilon_m \in \{\pm 1\}$. For sufficiently large $i$, $X_i$ contains all $x_j$ as well as all $g_m^{\epsilon_m} g_2^{\epsilon_2} \cdots g_l^{\epsilon_l}(x_j)$ for all $1 \leq j \leq k$ and $1 \leq m \leq l$. One may then easily verify that $\beta_i(g)(x_j) = gx_j$ for all $j$. Thus, $\alpha_i \circ \beta_i(g)(n_{x_1} \otimes n_{x_2} \otimes \cdots n_{x_k} \otimes 1 \otimes 1 \otimes \cdots) = \alpha(g)(n_{x_1} \otimes n_{x_2} \otimes \cdots n_{x_k} \otimes 1 \otimes 1 \otimes \cdots)$ for large $i$. This concludes the proof.
\end{proof}
\end{ex}

A concept similar to amenably $H$-approximable action can be defined by changing the form of ``approximation" allowed from co-amenability to matrices.

\begin{defn}
Let $N$ be a finite von Neumann algebra, $G$ be a group acting through trace-preserving automorphisms on $N$, $H$ be a subgroup of $G$. We say the action $\alpha: G \rightarrow Aut_\tau(N)$ is \textit{matricially $H$-approximable} if there exists a sequence of maps (not necessarily group homomorphisms) $\beta_i: G \rightarrow U(\mathbb{M}_{n(i)} \otimes N \rtimes_{vN} H)$ and an ultrafilter $\omega$ s.t. the maps $\beta_i$ are approximately multiplicative in $L^2$ and under $\omega$, i.e., for any $g_1, g_2 \in G$, $\|\beta_i(g_1)\beta_i(g_2) - \beta_i(g_1g_2)\|_2 \rightarrow 0$ as $i \rightarrow \omega$; and for any $g \in G$, $n \in N$, $\beta_i(g)n\beta_i(g)^{-1}$ converges to $\alpha(g)(n)$ in $L^2$ as $i \rightarrow \omega$. In case $H = \{e\}$, we shall simply say the action is \textit{matricially approximable}.
\end{defn}

The motivation for this definition will be discussed later.

The main theorem of this section is as follows:

\begin{thm}
Let $N$ be a finite von Neumann algebra, $G$ be a group acting through trace-preserving automorphisms on $N$, $H, H_1 < G$, $H$ is $\sigma$-co-hyperlinear in $G$, $G$ approximately retracts onto $H$, and the action of $G$ on $N$ is either amenably $H_1$-approximable or matricially $H_1$-approximable, then $N \rtimes_{vN} H \subseteq N \rtimes_{vN} G$ is RE/$N \rtimes_{vN} H_1 \bar{\otimes} L(H)$.
\end{thm}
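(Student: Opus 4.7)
The plan is to combine Theorem 4.2 with Proposition 2.2 to reduce the theorem to a single tracial-embedding problem. By Theorem 4.2, the $\sigma$-co-hyperlinearity hypothesis already gives that $N \rtimes_{vN} H \subseteq N \rtimes_{vN} G$ is RE/$N \rtimes_{vN} G$. Proposition 2.2 then upgrades RE/$N \rtimes_{vN} G$ to RE/$N \rtimes_{vN} H_1 \bar{\otimes} L(H)$ as soon as one exhibits a tracial embedding $N \rtimes_{vN} G \hookrightarrow (R \bar{\otimes} (N \rtimes_{vN} H_1 \bar{\otimes} L(H)))^\omega$, which is what the remaining two hypotheses are designed to produce.

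I would construct this tracial embedding in two independent pieces. The first piece uses $H_1$-approximability. In the matricial case, taking $\beta_i : G \to U(\mathbb{M}_{n(i)} \otimes N \rtimes_{vN} H_1)$ as in the definition, the map $ng \mapsto (n\beta_i(g))^\circ \otimes g$ is a tracial embedding $N \rtimes_{vN} G \hookrightarrow \prod_\omega (\mathbb{M}_{n(i)} \otimes N \rtimes_{vN} H_1) \bar{\otimes} L(G)$: trace-preservation is forced by the $\otimes g$ factor, while multiplicativity in the ultraproduct follows from approximate multiplicativity of $\beta_i$ and the $L^2$-convergence $\beta_i(g) n \beta_i(g)^{-1} \to \alpha(g)(n)$. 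In the amenable case one factors through $\prod_\omega N \rtimes_{\alpha_i} G_i$, then uses a F\o lner sequence for $G_i'$ co-amenable in $G_i$ together with the inclusion $G_i' \subseteq U(N \rtimes_{vN} H_1)$ from the amenably $H_1$-inner hypothesis to embed each $N \rtimes_{\alpha_i} G_i$ into a matrix ultraproduct over $N \rtimes_{vN} H_1 \bar{\otimes} L(G_i')$. The second piece uses the approximate retraction together with two applications of Remark 3.2 to tracially embed $L(G) \hookrightarrow \prod_\omega \mathbb{M}_{m(k)} \otimes L(G') \bar{\otimes} \prod_\omega \mathbb{M}_{p(j)} \otimes L(H)$ via the chain: first, $L(G) \hookrightarrow \prod_\omega \mathbb{M}_{m(k)} \otimes L(G')$ using co-amenability of $G'$ in $G$; second, the diagonal embedding $g \mapsto g \otimes \phi(g)$ gives $L(G') \hookrightarrow L(G') \bar{\otimes} L(\widetilde{G})$ (tracial because $\phi(e) = e$ and the left coordinate alone enforces $\delta_{g,e}$); third, $L(\widetilde{G}) \hookrightarrow \prod_\omega \mathbb{M}_{p(j)} \otimes L(H)$ using co-amenability of $H$ in $\widetilde{G}$.

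Composing these two pieces and invoking the iterated-ultrafilter manipulation from the proof of Proposition 2.1 lands $N \rtimes_{vN} G$ inside an ultrapower of $R \bar{\otimes} L(G') \bar{\otimes} N \rtimes_{vN} H_1 \bar{\otimes} L(H)$, and Proposition 2.2 will finish the argument once the extraneous $L(G')$ factor has been absorbed into the $R$-coordinate.

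The main obstacle is precisely this absorption of $L(G')$: one needs $L(G')$ to be QWEP, i.e., $G'$ to be hyperlinear. I would expect to resolve this by showing that $\sigma$-co-hyperlinearity of $H$ in $G$ forces $G$ itself to be hyperlinear — indeed, the maps $G_\ell \to G_\ell/H_{\ell,i}$ to hyperlinear groups together with a refinement of the defining sequences ensuring $\cap_i H_{\ell,i} = \{e\}$ (which holds in all the motivating examples such as $G$ separable over $H$) realize each $G_\ell$, and hence $G = \cup_\ell G_\ell$, as hyperlinear, so the subgroup $G' \leq G$ is hyperlinear and $L(G')$ is QWEP. The analogous extra factor $L(G_i')$ appearing in the amenable case of $H_1$-approximability is comparatively benign: because $G_i' \subseteq U(N \rtimes_{vN} H_1)$ by hypothesis, $L(G_i')$ sits as a subalgebra of $N \rtimes_{vN} H_1$ and can be telescoped into the $N \rtimes_{vN} H_1$ coordinate via a routine ultrafilter argument rather than requiring a separate hyperlinearity input.
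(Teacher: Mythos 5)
Your overall architecture matches the paper's: reduce via Theorem 4.2 and Proposition 2.2 to producing a tracial embedding of $N \rtimes_{vN} G$ into an ultrapower of $R \bar{\otimes} N \rtimes_{vN} H_1 \bar{\otimes} L(H)$; use the comultiplication factor $\otimes\, g$ to force trace-preservation globally (so that the approximability data only needs to give a $*$-homomorphism on $C_c(G,N)$ that is tracial on $N$, exactly as in the paper's Lemma 5.3); and use the retraction plus F\o lner compressions to move $L(G)$ toward $L(H)$. The divergence, and the genuine gap, is in your second piece. Your chain $L(G) \hookrightarrow \prod_\omega \mathbb{M}_{m(k)} \otimes L(G')$, followed by the diagonal $g' \mapsto g' \otimes \phi(g')$, leaves a full copy of $L(G')$ in the target, and your plan to absorb it into the $R$-coordinate requires $G'$ to be hyperlinear. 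Your claimed resolution --- that $\sigma$-co-hyperlinearity of $H$ in $G$ forces $G$ to be hyperlinear --- is false: nothing in Definition 4.1 makes $\cap_j H_{ij}$ trivial (it is only a normal subgroup of $G_i$ contained in $H_i$), and indeed any group $G$ whatsoever is $\sigma$-co-hyperlinear over $H = G$ by taking $G_i = H_i = G$, while $G$ trivially approximately retracts onto itself. So the hypotheses of the theorem do not yield hyperlinearity of $G$ or $G'$, and adding ``$\cap_i H_{\ell,i} = \{e\}$'' as in your proposed refinement would prove a strictly weaker statement than the one asserted. Note also that you cannot simply drop the diagonal $L(G')$ coordinate and apply $\phi$ entrywise, because $\phi$ may have a large kernel, and conjugates $\widetilde{g_s}^{-1} g \widetilde{g_s}$ landing in $\ker\phi$ would produce identity entries and destroy trace-preservation.

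The paper's Lemma 5.2 circumvents exactly this by using the $\sigma$-co-hyperlinearity data itself as the trace-detecting witness, rather than a copy of $L(G')$. Concretely, it precedes the retraction step by a comultiplication $g \mapsto gH_{ij} \otimes g$ into $L(G_i/H_{ij}) \bar{\otimes}\, \mathbb{C}[G]$, where $G_i/H_{ij}$ is hyperlinear by hypothesis (so this factor is QWEP and can be absorbed into $R$), and then splits the trace computation by the normal core $H_G$: for $g \notin H_G$ one shows $g \notin H_{ij}$ eventually, so the quotient coordinate $gH_{ij}$ already has trace zero; for $e \neq g \in H_G$ all conjugates $\widetilde{g_t}^{-1} g \widetilde{g_t}$ lie in $H$, where $\phi$ is the identity (hence injective), so after the final F\o lner compression into $\mathbb{M}_{m(l)} \odot \mathbb{C}[H]$ the diagonal entries are non-identity elements of $H$ and the trace is again zero. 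In other words, the hyperlinear quotients handle everything outside $H_G$, and injectivity of $\phi$ on $H$ handles $H_G$ itself; your proof uses the $\sigma$-co-hyperlinearity only through Theorem 4.2 and never in the embedding construction, which is precisely why you are left with an unabsorbable $L(G')$. Your first piece (the approximability part) is essentially sound, and the extraneous $L(G_i')$ there can indeed simply be omitted as the paper does, since traciality is enforced by the global $L(G)$ coordinate.
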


We split some parts of the proof into two lemmas for clarity.

\begin{lemma}
Let $H < G$ be a pair of groups. If $H$ is $\sigma$-co-hyperlinear in $G$ and $G$ approximately retracts onto $H$, then $L(H) \subseteq L(G)$ is RE/$L(H)$.
\end{lemma}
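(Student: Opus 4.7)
The plan is to directly construct a matrix model witnessing RE/$L(H)$ by combining the comultiplication underlying the proof of Theorem 4.2 (which uses $\sigma$-co-hyperlinearity) with a double Følner construction coming from the two co-amenabilities in the approximate retraction, interpolated by the retraction homomorphism $\phi$. By the Upward Limit Approximation Theorem I would first reduce to the case where $H$ is co-hyperlinear in $G$, producing sequences $G_i \searrow H$ and $H_i \triangleleft G$ with $G/H_i$ hyperlinear and $G_i/H_i$ amenable. From the approximate retraction I take $G' < G$ co-amenable and $\phi\colon G' \to \widetilde{G}$ with $\phi|_H = \mathrm{id}$, together with Følner sequences $F_k \subseteq G/G'$ and $E_j \subseteq \widetilde{G}/H$ supplied by the two co-amenabilities.

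For each triple $(i,k,j)$ I would define a ucp map $\Theta_{i,k,j}\colon L(G) \to L(G/H_i) \,\bar{\otimes}\, \mathbb{M}_{|F_k|} \otimes \mathbb{M}_{|E_j|} \,\bar{\otimes}\, L(H)$ sending $g \in G$ to $gH_i \otimes M_{k,j}(g)$, where the $((s,t),(s',t'))$-entry of the double-twisted matrix $M_{k,j}(g)$ is
\[
\mathbf{1}_{\widetilde{g_s}^{-1} g \widetilde{g_{s'}} \in G'} \cdot \mathbf{1}_{\widetilde{w_t}^{-1} \phi(\widetilde{g_s}^{-1} g \widetilde{g_{s'}}) \widetilde{w_{t'}} \in H} \cdot \widetilde{w_t}^{-1} \phi(\widetilde{g_s}^{-1} g \widetilde{g_{s'}}) \widetilde{w_{t'}},
\]
with $\widetilde{g_s}$ and $\widetilde{w_t}$ denoting chosen coset representatives in $F_k$ and $E_j$. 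Conceptually, $\Theta_{i,k,j}$ simultaneously applies the comultiplication $g \mapsto gH_i$, the twisted-matrix construction of Lemma 3.2 for $G' < G$ co-amenable, the retraction $\phi$, and the twisted-matrix construction of Lemma 3.2 for $H < \widetilde{G}$ co-amenable. Taking the ultraproduct over $(i,k,j)$ should produce a tracial embedding $\pi\colon L(G) \to \prod_\omega A_{i,k,j} \,\bar{\otimes}\, L(H)$ with $A_{i,k,j} = L(G/H_i) \,\bar{\otimes}\, \mathbb{M}_{|F_k||E_j|}$ QWEP and with $L(H)$ landing in $\prod_\omega B_{i,k,j} \,\bar{\otimes}\, L(H)$ for the hyperfinite subalgebras $B_{i,k,j} = L(G_i/H_i) \,\bar{\otimes}\, \ell_\infty^{|F_k||E_j|}$, which is exactly the RE/$L(H)$ commuting square.

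Three items need verification. Approximate multiplicativity in the ultraproduct will follow from the Følner properties of $F_k$ and $E_j$ together with the fact that $\phi$ is a group homomorphism, by an argument analogous to the proof of Theorem 3.1. The commuting square will follow because for $g \in G \setminus H$, either $gH_i \notin G_i/H_i$ or $M_{k,j}(g)$ has only off-diagonal entries, analogous to Lemma 3.1 and Lemma 3.2. The main obstacle I expect is trace-preservation: for $g \neq e$, the trace $\tau(\Theta_{i,k,j}(g))$ factors as $\tau(gH_i) \cdot \tau(M_{k,j}(g))$, where the first factor vanishes outside $H_i$. The hard part will be showing that the trace contributions to $\tau(M_{k,j}(g))$ coming from conjugates $\widetilde{g_s}^{-1} g \widetilde{g_s}$ landing in $H_i \cap \ker\phi$ become negligible in the ultraproduct limit; here one exploits $\cap_i H_i \subseteq \cap_i G_i = H$ together with $H \cap \ker\phi = \{e\}$ (which holds because $\phi|_H = \mathrm{id}$) to argue that the problematic conjugates are eventually confined to a proportion of $F_k$ tending to zero, yielding vanishing trace in the iterated limit.
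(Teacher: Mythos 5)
Your combined map $\Theta_{i,k,j}$ is, up to reindexing, the same embedding $\widetilde{\pi}$ that the paper builds (the paper carries four ultraproduct indices because it does not first reduce to the co-hyperlinear case via the Upward Limit Approximation Theorem, but that is cosmetic), and your multiplicativity argument is sound. The genuine gap is the claim that $\pi(L(H))$ lands in $\prod_\omega L(G_i/H_i) \bar{\otimes} l_\infty^{|F_k||E_j|} \bar{\otimes} L(H)$. For $h \in H$ the matrix $M_{k,j}(h)$ is in general \emph{not} diagonal: the entry at $((s,t),(s',t'))$ with $s \neq s'$ is nonzero precisely when $\widetilde{g_s}^{-1} h \widetilde{g_{s'}} \in G'$, which happens unless $\widetilde{g_{s'}}^{-1} h \widetilde{g_{s'}} \in G'$, i.e. unless $h \in \widetilde{g_{s'}} G' \widetilde{g_{s'}}^{-1}$ --- and the definition of approximate retraction does not make $G'$ normal in $G$, nor $H$ normal in $\widetilde{G}$ (which you would need for the $t$-index to stay diagonal as well). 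This is exactly the obstruction recorded in Remarks 3.1 and 3.2: without normality the F\o lner construction still yields a tracial embedding, but the twisted-diagonal picture, and with it the containment of $\pi(L(H))$ in the diagonal corner, is lost. Moreover, even on the diagonal the entries are $\widetilde{w_t}^{-1}\phi(\widetilde{g_s}^{-1} h \widetilde{g_s})\widetilde{w_t}$ with $\widetilde{g_s}^{-1} h \widetilde{g_s}$ generally outside $H$, so they do not sit in $L(H)$ in the canonical way. So the commuting square you assert fails for the corner you chose.

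The repair is to observe that, since $G_i/H_i$ is amenable, the \emph{full} algebra $L(G_i/H_i) \otimes \mathbb{M}_{|F_k||E_j|}$ is already hyperfinite; taking $B_{i,k,j}$ to be that makes the containment of $\pi(L(H))$ trivial and reduces the commuting-square condition to the comultiplication tensor factor alone, where $H_i \triangleleft G$ saves you exactly as in Theorem 4.2. This is how the paper organizes the proof: Theorem 4.2 already gives that $L(H) \subseteq L(G)$ is RE/$L(G)$, and Proposition 2.2 then replaces the coefficient algebra $L(G)$ by $L(H)$ using only a tracial embedding $L(G) \hookrightarrow (R \bar{\otimes} L(H))^\omega$ --- which is all your F\o lner-plus-retraction machinery needs to supply, with no commuting-square burden attached to it. A smaller point: the ``hard part'' of trace preservation is settled by an exact case split on the normal core $H_G = \cap_{g \in G} gHg^{-1}$ rather than a vanishing-proportion estimate. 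If $g \notin H_G$ then $g \notin \cap_i H_i$ (a normal subgroup of $G$ contained in $H$, hence in $H_G$), so $\tau(gH_i) = 0$ for large $i$ and the first tensor factor kills the trace outright; if $g \in H_G \setminus \{e\}$ then every conjugate $\widetilde{g_s}^{-1} g \widetilde{g_s}$ lies in $H \setminus \{e\}$, is fixed by $\phi$, and the final F\o lner matrix has trace exactly zero. In particular $\ker\phi$ never enters the argument.
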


\begin{proof} By Theorem 4.2 and Proposition 2.2, it suffices to show that $L(G)$ tracially embeds into $(R \bar{\otimes} L(H))^\omega$. By definition of $\sigma$-co-hyperlinearity, we may pick increasing sequences $G_i$ and $H_i$ of subgroups of $G$ s.t. $\cup_i G_i = G$, $\cup_i H_i = H$, $H_i < G_i$, and $H_i$ is co-hyperlinear in $G_i$ for all $i$. We then define,
\begin{equation*}
    \pi_1: \mathbb{C}[G] \rightarrow \oplus_i \mathbb{C}[G_i], G \ni g \mapsto (1_{g \in G_i}g)_i \in \oplus_i \mathbb{C}[G_i]
\end{equation*}

Fix an $i$. By definition of co-hyperlinearity, we may pick decreasing sequences $G_{ij}$ and $H_{ij}$ of subgroups of $G_i$ s.t. $\cap_j G_{ij} = H_i$, $H_{ij} < G_{ij}$, $H_{ij} \triangleleft G_i$, $G_i/H_{ij}$ is hyperlinear, and $G_{ij}/H_{ij}$ is amenable for all $j$. Applying the comultiplication map, we first define,
\begin{equation*}
    \pi_{2, i}: \mathbb{C}[G_i] \rightarrow \oplus_j L(G_i/H_{ij}) \odot \mathbb{C}[G], G_i \ni g \mapsto (gH_{ij} \otimes g)_j \in \oplus_j L(G_i/H_{ij}) \odot \mathbb{C}[G]
\end{equation*}

By definition of approximate retraction, there exists $G' < G$ s.t. $G'$ is co-amenable in $G$ and $H < G'$, and a group homomorphism $\phi: G' \rightarrow \widetilde{G}$ where $\widetilde{G}$ is a group containing $H$ as a co-amenable subgroup and $\phi$ restricts to the identity on $H$. Using methods in the proof of Theorem 3.1 gives a sequence of maps $\mathbb{C}[G] \rightarrow \mathbb{M}_{n(k)} \odot \mathbb{C}[G']$ that would be multiplicative after taking the ultraproduct. We write this sequence of maps in the form,
\begin{equation*}
    \pi_3: \mathbb{C}[G] \rightarrow \oplus_k \mathbb{M}_{n(k)} \odot \mathbb{C}[G']
\end{equation*}

Applying $\phi$ then sends $\mathbb{C}[G']$ to $\mathbb{C}[\widetilde{G}]$. Since $\widetilde{G}$ is a group containing $H$ as a co-amenable subgroup, using the above procedures again sends $\mathbb{C}[\widetilde{G}]$ to $\oplus_l \mathbb{M}_{m(l)} \odot \mathbb{C}[H]$, i.e., we obtain the following maps,
\begin{equation*}
    \pi_4: \mathbb{C}[\widetilde{G}] \rightarrow \oplus_l \mathbb{M}_{m(l)} \odot \mathbb{C}[H]
\end{equation*}

We may then embed $\mathbb{C}[H]$ into the corresponding group von Neumann algebra $L(H)$. Now, combining the maps above yields,
\begin{equation*}
    \pi: \mathbb{C}[G] \rightarrow \oplus_i \oplus_j \oplus_k \oplus_l L(G_i/H_{ij}) \odot \mathbb{M}_{n(k)} \odot \mathbb{M}_{m(l)} \odot L(H)
\end{equation*}

Changing direct sums into ultraproducts and algebraic tensor products into von Neumann algebra tensor products gives a map,
\begin{equation*}
    \widetilde{\pi}: \mathbb{C}[G] \rightarrow \prod_{i \rightarrow \omega} \prod_{j \rightarrow \omega} \prod_{k \rightarrow \omega} \prod_{l \rightarrow \omega} L(G_i/H_{ij}) \bar{\otimes} \mathbb{M}_{n(k)} \bar{\otimes} \mathbb{M}_{m(l)} \bar{\otimes} L(H)
\end{equation*}

As $G_i/H_{ij}$ is hyperlinear for all $i$ and $j$, $L(G_i/H_{ij})$ embeds into $R^\omega$. Therefore, the RHS tracially embeds into $(R \bar{\otimes} L(H))^{\omega'}$ for some ultrafilter $\omega'$. As such, it suffices to show that $\widetilde{\pi}$ is tracial and multiplicative, and hence extends to a tracial embedding $L(G) \hookrightarrow \prod_{i \rightarrow \omega} \prod_{j \rightarrow \omega} \prod_{k \rightarrow \omega} \prod_{l \rightarrow \omega} L(G_i/H_{ij}) \bar{\otimes} \mathbb{M}_{n(k)} \bar{\otimes} \mathbb{M}_{m(l)} \bar{\otimes} L(H)$. Multiplicativity is easy to verify as all maps used in defining $\widetilde{\pi}$ are either multiplicative or approximately so in the limit. ($\pi_1$ is easily seen to be approximately multiplicative. $\pi_{2, i}$ and $\phi$ are multiplicative. The arguments in the proof of Theorem 3.1 can be used to show that $\pi_3$ and $\pi_4$ are approximately multiplicative.) It now remains to demonstrate $\widetilde{\pi}$ is trace-preserving. To do so, we only need to calculate the trace of $\widetilde{\pi}(g)$ for $g \in G$.

Recall that $H_G = \cap_{g \in G} gHg^{-1}$ is the normal core of $H$ in $G$. Suppose $g \notin H_G$. Then $g \notin g'Hg'^{-1}$ for some $g' \in G$. As $\cup_i G_i = G$, for sufficiently large $i$, both $g$ and $g'$ belongs to $G_i$. But then as $H_i < H$, we have,
\begin{equation*}
    {H_i}_{G_i} \subseteq g'H_ig'^{-1} \subseteq gHg^{-1}
\end{equation*}

So $g \notin {H_i}_{G_i}$ for sufficiently large $i$. Then, for any fixed such $i$, we recall that $\cap_j G_{ij} = H_i$, $H_{ij} < G_{ij}$, and $H_{ij} \triangleleft G_i$. Thus, $\cap_j H_{ij} \triangleleft G_i$ and furthermore $\cap_j H_{ij} < \cap_j G_{ij} = H_i$. Therefore, by definition of normal core we see that $\cap_j H_{ij} < {H_i}_{G_i}$. As $H_{ij}$ is a decreasing sequence, we have $g \notin H_{ij}$ for sufficiently large $j$. Now recall that $\widetilde{\pi}(g) \in \prod_{i \rightarrow \omega} \prod_{j \rightarrow \omega} \prod_{k \rightarrow \omega} \prod_{l \rightarrow \omega} L(G_i/H_{ij}) \bar{\otimes} \mathbb{M}_{n(k)} \bar{\otimes} \mathbb{M}_{m(l)} \bar{\otimes} L(H)$ is of the form $(gH_{ij} \otimes x_{ijkl})^\circ$ for some $x_{ijkl} \in \mathbb{M}_{n(k)} \bar{\otimes} \mathbb{M}_{m(l)} \bar{\otimes} L(H)$. Since for any fixed sufficiently large $i$, and then for any large enough $j$, $gH_{ij} \neq H_{ij}$, the element above has trace 0, as expected.

Now suppose $g \in H_G$ but $g \neq e$. $\pi_1$ and $\pi_{2, i}$ send $g$ to $(gH_{ij} \otimes g)_{i, j} \in \oplus_i \oplus_j L(G_i/H_{ij}) \odot \mathbb{C}[G]$ for large $i$. We focus on the second part $g \in \mathbb{C}[G]$. We note that it is sent by $\pi_3: \mathbb{C}[G] \rightarrow \oplus_k \mathbb{M}_{n(k)} \odot \mathbb{C}[G']$ to a sequence of diagonal matrices with entries in $H$. Indeed, the matrices in the sequence $\pi_3(g)$ are of the form $(1_{\widetilde{g_s}^{-1}g\widetilde{g_t} \in G'}\widetilde{g_s}^{-1}g\widetilde{g_t})_{1 \leq s, t \leq n(k)}$ where $\{\widetilde{g_1}, \cdots, \widetilde{g_{n(k)}}\}$ are representatives of cosets in some Følner subsets of $G/G'$. Now, for any $1 \leq t \leq n(k)$, as $H_G$ is normal in $G$, $\widetilde{g_t}^{-1}g\widetilde{g_t} \in H_G$. Since $H_G < H < G'$, $\widetilde{g_t}^{-1}g\widetilde{g_t} \in G'$, i.e., the matrices in the sequence $\pi_3(g)$ are diagonal with entries of the form $\widetilde{g_t}^{-1}g\widetilde{g_t}$. All these entries are in $H$ and, as $g \neq e$, none of the entries would be $e$.

Now, $\phi$ sends matrices in $\mathbb{C}[G']$ to matrices in $\mathbb{C}[\widetilde{G}]$. Since $\phi$ restricts to the identity on $H$, after applying $\phi$ we still have a sequence of diagonal matrices with non-identity elements of $H$ as entries. This implies $\pi_4: \mathbb{C}[\widetilde{G}] \rightarrow \oplus_l \mathbb{M}_{m(l)} \odot \mathbb{C}[H]$ would send these matrices to sequences of matrices with zero trace. Indeed, we only need to observe that for the matrices to have nonzero trace, there would need to be group identities on the diagonal, but that would mean that $\phi\circ\pi_3(g)$ has group identities on the diagonal, contradicting what we have already established. This shows that $\widetilde{\pi}(g)$ indeed has zero trace.

Finally, when $g = e$, by similar arguments as in the previous case, we see that $\widetilde{\pi}(g)$ is the identity in $\prod_{i \rightarrow \omega} \prod_{j \rightarrow \omega} \prod_{k \rightarrow \omega} \prod_{l \rightarrow \omega} L(G_i/H_{ij}) \bar{\otimes} \mathbb{M}_{n(k)} \bar{\otimes} \mathbb{M}_{m(l)} \bar{\otimes} L(H)$. This concludes the proof.
\end{proof}

We let $C_c(G, N)$ denote the span of $ng \in N \rtimes_{vN} G$ where $n \in N$ and $g \in G$. Clearly $C_c(G, N)$ is a *-algebra and weakly dense in $N \rtimes_{vN} G$.

\begin{lemma}
Let $N$ be a finite von Neumann algebra, $G$ be a group acting through trace-preserving automorphisms on $N$, $H_1 < G$, and the action of $G$ on $N$ is either amenably $H_1$-approximable or matricially $H_1$-approximable, then there exists a *-homomorphism $C_c(G, N) \rightarrow (R \bar{\otimes} N \rtimes_{vN} H_1)^\omega$ which restricts to a tracial embedding $N \hookrightarrow (R \bar{\otimes} N \rtimes_{vN} H_1)^\omega$.
\end{lemma}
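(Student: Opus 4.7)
The plan is to treat the two cases separately. In both, we construct a sequence of approximately multiplicative and approximately tracial $*$-maps $\widetilde{\pi}_i : C_c(G, N) \to \mathbb{M}_{n(i)} \otimes N \rtimes_{vN} H_1$ whose restrictions to $N$ are honest tracial embeddings, then pass to the ultraproduct and use the tracial embedding $\mathbb{M}_{n(i)} \hookrightarrow R$ to land in $(R \bar{\otimes} N \rtimes_{vN} H_1)^\omega$.

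The matricially $H_1$-approximable case is essentially tautological. Let $\beta_i : G \to U(\mathbb{M}_{n(i)} \otimes N \rtimes_{vN} H_1)$ be the given approximating unitaries, and view $N \subseteq N \rtimes_{vN} H_1 \subseteq \mathbb{M}_{n(i)} \otimes N \rtimes_{vN} H_1$ via $n \mapsto 1 \otimes n$. I would define $\widetilde{\pi}_i(ng) = (1 \otimes n)\beta_i(g)$. Approximate multiplicativity in the ultralimit follows from the identity
\[ (1 \otimes n_1)\beta_i(g_1)(1 \otimes n_2)\beta_i(g_2) = (1 \otimes n_1)\bigl[\beta_i(g_1)(1 \otimes n_2)\beta_i(g_1)^{-1}\bigr]\beta_i(g_1)\beta_i(g_2) \]
combined with the $L^2$-convergences $\beta_i(g_1)(1 \otimes n_2)\beta_i(g_1)^{-1} \to 1 \otimes \alpha(g_1)(n_2)$ and $\beta_i(g_1)\beta_i(g_2) \to \beta_i(g_1g_2)$ granted by the definition. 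The restriction to $N$ is the canonical embedding $n \mapsto 1 \otimes n$, which is trivially tracial and injective.

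The amenably $H_1$-approximable case combines the Følner technique of Theorem 3.1 with the outer approximation supplied by $\beta_i$. The data are $G_i \supseteq H_1$, approximately multiplicative maps $\beta_i : G \to G_i$, amenably $H_1$-inner actions $\alpha_i : G_i \curvearrowright N$ with $\alpha_i \circ \beta_i \to \alpha$ in $L^2$, together with a co-amenable subgroup $G_i' < G_i$ contained in $U(N \rtimes_{vN} H_1) \cap \mathcal{N}(N)$ implementing $\alpha_i|_{G_i'}$ by conjugation. Fix a Følner sequence $F_{i,k}$ for the action $G_i \curvearrowright G_i/G_i'$ (as in Remark 3.2) and coset representatives $\widetilde{g}_{i,k,s}$. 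I would define $\varphi_{F_{i,k}} : C_c(G_i, N) \to \mathbb{M}_{|F_{i,k}|} \otimes N \rtimes_{vN} H_1$ by sending $n \in N$ to the diagonal matrix with $s$-th entry $\alpha_i(\widetilde{g}_{i,k,s}^{-1})(n)$, and sending $g \in G_i$ to the matrix whose $(s,t)$-entry equals $\widetilde{g}_{i,k,s}^{-1} g\, \widetilde{g}_{i,k,t}$ (regarded as a unitary in $N \rtimes_{vN} H_1$) when this element lies in $G_i'$, and zero otherwise. The Følner calculation of Theorem 3.1 then shows $\varphi_{F_{i,k}}$ is approximately multiplicative and approximately tracial as $k \to \infty$. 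Finally, set $\widetilde{\pi}_i(ng) = \varphi_{F_{i,k_i}}(n\beta_i(g))$ for a diagonal sequence $k_i \to \infty$ chosen fast enough to dominate the approximation errors on any fixed countable set of relations. Approximate multiplicativity of $\widetilde{\pi}_i$ in the limit then reduces to the three ingredients: approximate multiplicativity of $\varphi_{F_{i,k_i}}$, approximate multiplicativity of $\beta_i$, and the convergence $\alpha_i(\beta_i(g))(n) \to \alpha(g)(n)$. Trace preservation and injectivity on $N$ follow because $\varphi_{F_{i,k}}|_N$ is a twisted inclusion in the sense of Proposition 3.1.

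The main obstacle is the diagonal bookkeeping in the amenably $H_1$-approximable case: one must interleave an inner Følner limit $k \to \infty$ with an outer ultrafilter limit $i \to \omega$, and ensure the resulting single sequence genuinely witnesses a $*$-homomorphism in the ultraproduct. Since $C_c(G, N)$ is countably generated and the defining relations of the crossed product involve only finitely many elements at a time, standard diagonal-sequence arguments suffice; the remaining computations parallel the proof of Theorem 3.1 so closely that, as in Remark 3.1, they can be carried out without further comment.
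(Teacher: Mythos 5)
Your proposal is correct and follows essentially the same route as the paper: for the amenably $H_1$-approximable case the paper likewise composes the map $ng \mapsto (n\beta_i(g))_i$ with the Følner compression of Theorem 3.1/Remark 3.2 relative to the co-amenable subgroup $G_i' < U(N \rtimes_{vN} H_1) \cap \mathcal{N}(N)$ and then identifies $C_c(G_i', N)$ with a subalgebra of $N \rtimes_{vN} H_1$ via the inner implementation of $\alpha_i\restriction_{G_i'}$ (you merge these last two steps into one explicit matrix formula and replace the paper's iterated ultraproduct with a diagonal sequence, which is an equivalent bookkeeping device). Your explicit treatment of the matricially $H_1$-approximable case via $ng \mapsto (1 \otimes n)\beta_i(g)$ is exactly the argument the paper waves off as ``relatively easy,'' and it is sound.
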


\begin{proof} We will only prove the lemma for the case where the action is amenably $H_1$-approximable. The case where the action is matricially $H_1$-approximable follows similar ideas and is relatively easy. Now, by definition there exists a sequence of maps $\beta_i: G \rightarrow G_i$ which is approximately multiplicative and actions $\alpha_i: G_i \rightarrow Aut_\tau(N)$ s.t. $\alpha_i \circ \beta_i(g)(n) \rightarrow \alpha(g)(n)$ in $L^2(N)$ as $i \rightarrow \omega$. One may then see that the following map is approximately a *-homomorphism as $i \rightarrow \omega$,
\begin{equation*}
    \pi_1: C_c(G, N) \rightarrow \oplus_i C_c(G_i, N), C_c(G, N) \ni ng \mapsto (n\beta_i(g))_i \in \oplus_i C_c(G_i, N)
\end{equation*}

The actions $\alpha_i$ are amenably $H_1$-inner, so there exists co-amenable subgroups $G_i' < G_i$ s.t. $G_i' < U(N \rtimes_{vN} H_1) \cap \mathcal{N}(N)$ and $\alpha_i\restriction_{G_i'}$ is the conjugation action of $U(N \rtimes_{vN} H_1) \cap \mathcal{N}(N)$ on $N$. Note here that $N \rtimes_{vN} H_1$ is a subalgebra of $N \rtimes_{vN} G_i$, with $H_1$ acting on $N$ through $\alpha_i\restriction_{H_1}$, but, as $\alpha_i\restriction_{H_1} = \alpha\restriction_{H_1}$, it is also a subalgebra of $N \rtimes_{vN} G$ with $H_1$ acting on $N$ through $\alpha\restriction_{H_1}$. Thus, it is the correct $N \rtimes_{vN} H_1$ as in the statement of the lemma. Now, using methods in the proof of Theorem 3.1 gives a map,
\begin{equation*}
    \pi_{2, i}: C_c(G_i, N) \rightarrow \oplus_j \mathbb{M}_{n(j)} \odot C_c(G_i', N)
\end{equation*}
which is multiplicative after taking the ultraproduct. Then, there exists a natural map from $C_c(G_i', N)$ to $N \rtimes_{vN} H_1$ defined by,
\begin{equation*}
    \pi_{3, i}: C_c(G_i', N) \rightarrow N \rtimes_{vN} H_1, C_c(G_i', N) \ni ng \mapsto ng \in N \rtimes_{vN} H_1
\end{equation*}

Here, on the RHS, as $g \in G_i'$ and $G_i' < U(N \rtimes_{vN} H_1) \cap \mathcal{N}(N)$, $g$ is interpreted as a unitary in $N \rtimes_{vN} H_1$. Since $\alpha_i\restriction_{G_i'}$ is the conjugation action of $U(N \rtimes_{vN} H_1) \cap \mathcal{N}(N)$ on $N$, this map is clearly a *-homomorphism.

Finally, combining the above maps together and changing direct sums into ultraproducts gives,
\begin{equation*}
    \widetilde{\pi}: C_c(G, N) \rightarrow \prod_{i \rightarrow \omega} \prod_{j \rightarrow \omega} \mathbb{M}_{n(j)} \otimes N \rtimes_{vN} H_1
\end{equation*}

It is easy to verify that this map satisfies the requirements of the lemma.
\end{proof}

\begin{proof}[Proof of Theorem 5.1] By Theorem 4.2 and Proposition 2.2, it suffices to show that $N \rtimes_{vN} G$ tracially embeds into $(R \bar{\otimes} N \rtimes_{vN} H_1 \bar{\otimes} L(H))^\omega$. We begin by applying the comultiplication map,
\begin{equation*}
    \pi: C_c(G, N) \rightarrow C_c(G, N) \odot L(G), C_c(G, N) \ni ng \mapsto ng \otimes g \in C_c(G, N) \odot L(G)
\end{equation*}

By Lemma 5.2, $L(G)$ tracially embeds into $(R \bar{\otimes} L(H))^\omega$. By Lemma 5.3, there is a *-homomorphism sending $C_c(G, N)$ to $(R \bar{\otimes} N \rtimes_{vN} H_1)^\omega$. Combining the maps together gives,
\begin{equation*}
    \widetilde{\pi}: C_c(G, N) \rightarrow (R \bar{\otimes} N \rtimes_{vN} H_1)^\omega \bar{\otimes} (R \bar{\otimes} L(H))^\omega
\end{equation*}

This is a *-homomorphism. To show it is tracial, we note that given $ng \in C_c(G, N)$, if $g \neq e$, $ng$ is sent to $ng \otimes g$ by $\pi$ and $g$, having zero trace in $L(G)$, is then sent to some traceless element of $(R \bar{\otimes} L(H))^\omega$, so $\widetilde{\pi}(ng)$ has trace zero. If $g = e$, then $n$ is sent to $n \otimes 1$ by $\pi$. Since the map defined in Lemma 5.3 preserves the trace on $N$, we then see that $\widetilde{\pi}(n)$ indeed has trace $\tau(n)$. Thus, $\widetilde{\pi}$ is a tracial *-homomorphism and therefore extends to a tracial embedding $N \rtimes_{vN} G \hookrightarrow (R \bar{\otimes} N \rtimes_{vN} H_1)^\omega \bar{\otimes} (R \bar{\otimes} L(H))^\omega$.
\end{proof}

In case where $N = \mathbb{C}$, as all actions on $\mathbb{C}$ is amenably approximable, the theorem reduces to Lemma 5.2. On the other hand, suppose $G$ is hyperlinear and $H = \{e\}$, then $H$ is co-hyperlinear in $G$ and $G$ retracts onto $H$. If we furthermore take $H_1 = \{e\}$, then the theorem reduces to,

\begin{col}
Let $N$ be a finite von Neumann algebra, $G$ be a hyperlinear group acting through trace-preserving automorphisms on $N$, and the action of $G$ on $N$ is either amenably approximable or matricially approximable, then $N \subseteq N \rtimes_{vN} G$ is RE/$N$.
\end{col}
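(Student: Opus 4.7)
The plan is to derive this as a direct specialization of Theorem 5.1 by taking $H = \{e\}$ and $H_1 = \{e\}$. With these choices the conclusion of Theorem 5.1 reads $N = N \rtimes_{vN} \{e\} \subseteq N \rtimes_{vN} G$ is RE/$N \rtimes_{vN} \{e\} \bar{\otimes} L(\{e\}) \cong N$, which is precisely the statement to be proved. All that remains is to check that every hypothesis of Theorem 5.1 is satisfied in this specialization, and each of these checks reduces to a trivial choice of witnesses.

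First I would check that $\{e\}$ is $\sigma$-co-hyperlinear in $G$. Using the constant sequences $G_i = G$ and $H_i = \{e\}$ in Definition 4.3, this reduces to showing that $\{e\}$ is co-hyperlinear in $G$ in the sense of Definition 4.1; and for the latter, the constant decreasing sequences $G_{ij} = H_{ij} = \{e\}$ work, since $G/H_{ij} = G$ is hyperlinear by hypothesis while $G_{ij}/H_{ij}$ is trivially amenable. Second, I would verify that $G$ approximately retracts onto $\{e\}$: take $G' = G$ (which is trivially co-amenable in itself, the quotient being a one-point set), take $\widetilde{G} = \{e\}$ (which contains $\{e\}$ as a co-amenable subgroup), and let $\phi : G \to \{e\}$ be the trivial homomorphism, which automatically restricts to the identity on $\{e\}$.

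Finally, the condition that the action is amenably $\{e\}$-approximable (respectively matricially $\{e\}$-approximable) coincides by the convention stated at the end of Definitions 5.4 and 5.6 with the action being amenably approximable (respectively matricially approximable), which is the hypothesis of the corollary. Invoking Theorem 5.1 with these choices of $H$ and $H_1$ then produces the RE/$N$ conclusion directly.

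I do not anticipate any genuine obstacle here: the corollary is a pure bookkeeping consequence of Theorem 5.1 once one notices that all three auxiliary structural hypotheses on the pair $H < G$ in that theorem become vacuous in the trivial case $H = \{e\}$ under the standing assumption that $G$ itself is hyperlinear. The only conceptual point to flag is the identification $N \rtimes_{vN} \{e\} \bar{\otimes} L(\{e\}) \cong N$, but this is immediate from the fact that both $\rtimes_{vN} \{e\}$ and $L(\{e\})$ are trivial operations.
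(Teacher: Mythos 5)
Your proposal is correct and is exactly the route the paper takes: the corollary is obtained by specializing Theorem 5.1 to $H = H_1 = \{e\}$, with the trivial witnesses you describe verifying $\sigma$-co-hyperlinearity (via the hyperlinearity of $G = G/\{e\}$) and approximate retraction. The paper states these verifications only in passing, so your more explicit check of each hypothesis is a faithful expansion of the same argument.
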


\begin{ex}
By the Corollary 5.1 and Proposition 5.2, we have $L^\infty(X, \mu) \subseteq L^\infty(X, \mu) \rtimes_{vN} G$ is RE/$L^\infty(X, \mu)$ whenever $(X, \mu)$ is a standard probability space, $G$ is a hyperlinear group, and the p.m.p. action $G \curvearrowright X$ is profinite. Since $L^\infty(X, \mu)$ is QWEP, the result can be strengthened to $L^\infty(X, \mu) \subseteq L^\infty(X, \mu) \rtimes_{vN} G$ is RE/$\mathbb{C}$. By Proposition 5.3, we also have $\mathcal{M}(L^\infty(X, \mu) \bar{\otimes} N) \subseteq \mathcal{M}(L^\infty(X, \mu) \bar{\otimes} N) \rtimes_{vN} G$ is RE/$\mathcal{M}(L^\infty(X, \mu) \bar{\otimes} N)$ under the above assumptions. Note that when $N$ is QWEP, it is easy to see that $\mathcal{M}(L^\infty(X, \mu) \bar{\otimes} N)$ is also QWEP and thus $\mathcal{M}(L^\infty(X, \mu) \bar{\otimes} N) \subseteq \mathcal{M}(L^\infty(X, \mu) \bar{\otimes} N) \rtimes_{vN} G$ is RE/$\mathbb{C}$ in this case.

Similarly, using Proposition 5.4, we have $L(H)^{\bar{\otimes} X} \subseteq L(H)^{\bar{\otimes} X} \rtimes_{vN} G$ is RE/$L(H)^{\bar{\otimes} X}$ whenever $X$ is a compact topological space, $G$ is a hyperlinear group, the action $G \curvearrowright X$ is profinite, and $H$ is a discrete group. By Lemma 5.1, $L(H)^{\bar{\otimes} X}$ has an increasing sequence of subalgebras isomorphic to $L(H)^{\bar{\otimes} X_i}$ whose union is weakly dense in $L(H)^{\bar{\otimes} X}$. Each $X_i$ is a finite set, so all such $L(H)^{\bar{\otimes} X_i}$ are embedded in $L(H)^{\bar{\otimes}\infty}$. Therefore, $L(H)^{\bar{\otimes} X}$ tracially embeds into $(L(H)^{\bar{\otimes}\infty})^\omega$, so $L(H)^{\bar{\otimes} X} \subseteq L(H)^{\bar{\otimes} X} \rtimes_{vN} G$ is RE/$L(H)^{\bar{\otimes}\infty}$.

By [Ioa11], an example of profinite actions arises when we consider a residually finite group acting on its profinite completion, a compact group equipped with the Haar measure. We also note that residually finite groups are hyperlinear. Therefore, if we let $G$ be a residually finite group, $G_i$ be a decreasing sequence of normal subgroups of $G$ whose intersection is trivial, and $\mu$ be the Haar measure on the profinite completion $\varprojlim G/G_i$ of $G$, then $L^\infty(\varprojlim G/G_i, \mu) \subseteq L^\infty(\varprojlim G/G_i, \mu) \rtimes_{vN} G$ is RE/$\mathbb{C}$ where the action of $G$ on $L^\infty(\varprojlim G/G_i, \mu)$ is induced by the left multiplication action of $G$ on $\varprojlim G/G_i$. Moreover, we also have $\mathcal{M}(L^\infty(X, \mu) \bar{\otimes} N) \subseteq \mathcal{M}(L^\infty(X, \mu) \bar{\otimes} N) \rtimes_{vN} G$ is RE/$\mathcal{M}(L^\infty(X, \mu) \bar{\otimes} N)$ under the above assumptions or RE/$\mathbb{C}$ when we further assume $N$ is QWEP. We also have, similarly, that $L(H)^{\bar{\otimes} \varprojlim G/G_i} \subseteq L(H)^{\bar{\otimes} \varprojlim G/G_i} \rtimes_{vN} G$ is RE/$L(H)^{\bar{\otimes}\infty}$ for any discrete group $H$.

Free groups are residually finite, so the above applies in particular to the case where $G$ is a free group. (This is a special case of [Hall49, Theorem 5.1].) In this case, we can build upon the example obtained from Proposition 5.3 somewhat. Indeed, in [Jun21], the algebra associated with the noncommutative Poisson random variable is not just defined for finite von Neumann algebras but more generally for semifinite von Neumann algebra. The Poisson algebra thus defined is always finite even if the original algebra is not. See [Jun21, Theorem 2.11]. In particular, we may consider the action of $G$ on $M = L^\infty(\varprojlim G/G_i, \mu) \bar{\otimes} L(G) \bar{\otimes} \mathbb{B}(l_2(G))$, which is obtained from tensoring three actions: the action of $G$ on $L^\infty(\varprojlim G/G_i, \mu)$ induced by the left multiplication action of $G$ on $\varprojlim G/G_i$, the inner action of $G$ on $L(G)$ induced by the conjugation action of $G$ on itself, and the inner action of $G$ on $\mathbb{B}(l_2(G))$ given by the left regular representation $G \rightarrow U(\mathbb{B}(l_2(G)))$. The corresponding action of $G$ on $\mathcal{M}(M)$ is amenably approximable. Indeed, by [Jun21, Theorem 2.11], $\mathcal{M}(M)$ is a weak closure of the direct limit of $\mathcal{M}(e_n M e_n)$, where we may choose $e_n$ to be an increasing sequence of finite-rank projections in $\mathbb{B}(l_2(G))$ strongly converging to the identity. For a unitary $u$ in $\mathbb{B}(l_2(G))$, we may approximate it strongly by unitaries $u_n$ in $\mathbb{B}(l_2(G))$ of the form $v_n + (1 - e_n)$ where $v_n \in U(e_n \mathbb{B}(l_2(G)) e_n)$. Indeed, this can be done by taking $a = -i \ln(u)$, which is self-adjoint. We may then let $u_n = e^{i e_n a e_n}$. One easily verifies that $u_n$ has the correct form and, as the exponential function is strongly continuous, $u_n = e^{i e_n a e_n}$ approximates $e^{ia} = u$ strongly. We also observe that $u_n^* = e^{-i e_n a e_n}$ approximates $e^{-ia} = u^*$ strongly as well. In particular, for each free generator $x_m$ of $G$, we may approximate the corresponding unitaries $u_m \in U(\mathbb{B}(l_2(G)))$ by unitaries $u_{mn} \in U(\mathbb{B}(l_2(G)))$ of the form $v_{mn} + (1 - e_n)$ where $v_{mn} \in U(e_n \mathbb{B}(l_2(G)) e_n)$. Let $\widetilde{u_{mn}} = x_m \otimes u_{mn} \in U(L(G) \bar{\otimes} \mathbb{B}(l_2(G)))$. $\widetilde{u_{mn}}$ induces an inner automorphism $Ad_{\widetilde{u_{mn}}}$ of $L(G) \bar{\otimes} \mathbb{B}(l_2(G))$. Tensoring with the trivial action on $L^\infty(\varprojlim G/G_i, \mu)$, it in turn induces an automorphism of $\mathcal{M}(M)$. We claim this automorphism is inner. Indeed, let $a_{mn} = -i e_n \ln(x_m \otimes u_m) e_n$. Then as we have seen, $\widetilde{u_{mn}} = e^{i a_{mn}}$. Recall that,
\begin{equation*}
    \mathcal{M}(e_n M e_n) = \oplus_{k=0}^\infty \bar{\otimes}_s^k e_n M e_n = \oplus_{k=0}^\infty \bar{\otimes}_s^k (L^\infty(\varprojlim G/G_i, \mu) \bar{\otimes} L(G) \bar{\otimes} e_n \mathbb{B}(l_2(G)) e_n)
\end{equation*}

Then $a_{mn}$, regarded as a self-adjoint operator in $e_n M e_n$, gives a (possibly unbounded) self-adjoint operator $\lambda(a_{mn}) = ((a_{mn})^{\otimes k})$ associated with $\mathcal{M}(e_n M e_n)$. $\mathcal{M}(e_n M e_n) \subseteq \mathcal{M}(M)$, so we may consider $U_{mn} = e^{i\lambda(a_{mn})} \in U(\mathcal{M}(M))$. One may then verify that the inner automorphism given by this unitary is the same as the automorphism induced by $Ad_{\widetilde{u_{mn}}}$. (It is easy to check this on $\mathcal{M}(e_{n'} M e_{n'})$ when $n' \geq n$. Since $\mathcal{M}(M)$ is a weak closure of the direct limit of $\mathcal{M}(e_{n'} M e_{n'})$, this gives the result on $\mathcal{M}(M)$.) We also note that, 

Let $\beta_{1i}$ denote the action $G \curvearrowright G/G_i$, $\alpha'_i$ be the action of $\beta_{1i}(G)$ on $L^\infty(\varprojlim G/G_i, \mu)$ as in Proposition 5.2, and $\beta_{2n}: G \rightarrow U(\mathcal{M}(M))$ be the group homomorphism that sends $x_m$ to $U_{mn}$. We consider two parts of the action of $G$ on $M$. The first is induced by the action of $G$ on $L^\infty(\varprojlim G/G_i, \mu)$ tensoring with the trivial action on $L(G) \bar{\otimes} \mathbb{B}(l_2(G))$. If we let $\alpha_i: \beta_{1i}(G) \curvearrowright \mathcal{M}(M)$ be the action induced by $\alpha'_i: \beta_{1i}(G) \curvearrowright L^\infty(\varprojlim G/G_i, \mu)$, then this action is approximated by $\alpha_i \circ \beta_{1i}$. The second part of the action is induced by the action of $G$ on $L(G) \bar{\otimes} \mathbb{B}(l_2(G)))$ tensoring with the trivial action on $L^\infty(\varprojlim G/G_i, \mu)$. We note that this is an inner action. We have the following group homomorphism:
\begin{equation*}
    \beta_{in}: G \rightarrow \beta_{1i}(G) \oplus \beta_{2n}(G), \beta_{in}(g) = (\beta_{1i}(g), \beta_{2n}(g))
\end{equation*}

We also have an action of $\beta_{1i}(G) \oplus \beta_{2n}(G)$ on $\mathcal{M}(M)$. Indeed, on the first component, this is given by $\alpha_i: \beta_{1i}(G) \curvearrowright \mathcal{M}(M)$, while on the second component, since $\beta_{2n}(G) < U(\mathcal{M}(M))$, the action is simply given by the restriction of the conjugation action. As the first action is induced by an action on $L^\infty(\varprojlim G/G_i, \mu)$ and the second action is induced by an action on $L(G) \bar{\otimes} \mathbb{B}(l_2(G))$, they commute. Thus, the following action is well-defined:
\begin{equation*}
    \alpha_{in}: \beta_{1i}(G) \oplus \beta_{2n}(G) \rightarrow tpAut(\mathcal{M}(M)), \alpha_{in}(g_1, g_2) = \alpha_i(g_1) \circ Ad_{g_2}
\end{equation*}

Since $\beta_{1i}(G)$ is a finite group and $\beta_{2n}(G) < U(\mathcal{M}(M))$, $\beta_{1i}(G) \oplus \beta_{2n}(G)$ contains a co-amenable subgroup which can be identified with a subgroup of $U(\mathcal{M}(M))$ and whose action is given bu the restriction of the conjugation action. One may also easily verify that $\alpha_{in} \circ \beta_{in}(g)(m) \rightarrow \alpha(g)(m)$ for all $g \in G$ and $m \in \mathcal{M}(M)$ in $L^2$ as $i \rightarrow \infty$ and then $n \rightarrow \infty$. This shows that the action is indeed amenably approximable.

As $M = L^\infty(\varprojlim G/G_i, \mu) \bar{\otimes} L(G) \bar{\otimes} \mathbb{B}(l_2(G))$ is QWEP, $\mathcal{M}(M)$ is QWEP as well. Hence, by Corollary 5.1, $\mathcal{M}(M) \subseteq \mathcal{M}(M) \rtimes_{vN} G$ is RE/$\mathbb{C}$. 
\end{ex}

\begin{ex}
By Corollary 5.1 and Proposition 5.5, if a hyperlinear group $G$ acts on a countable discrete set $X$ and if there exists an increasing sequence $G_i$ of subgroups of $G$ s.t. $\cup_i G_i = G$ and all orbits of the restricted action $G_i \curvearrowright X$ are finite, then $N^{\bar{\otimes} X} \subseteq N^{\bar{\otimes} X} \rtimes_{vN} G$ is RE/$N^{\bar{\otimes} X}$. $N^{\bar{\otimes} X} = N^{\bar{\otimes}\infty}$, so we could write this as $N^{\bar{\otimes} X} \subseteq N^{\bar{\otimes} X} \rtimes_{vN} G$ is RE/$N^{\bar{\otimes}\infty}$. Similarly, using Proposition 5.6, $N^{\bar{\otimes} X} \subseteq N^{\bar{\otimes} X} \rtimes_{vN} G$ is always RE/$N^{\bar{\otimes}\infty}$ whenever $X$ is a countable discrete set, $G$ is a free group, and the action of $G$ on $N^{\bar{\otimes} X}$ is induced by an action $G \curvearrowright X$.
\end{ex}

It is reasonable to question at this point whether the approximate retraction assumption and the condition on the action $G \curvearrowright N$ introduced in this section are necessary to obtain the results here. The author is currently not aware whether a converse exists in general or whether we would actually have $N \rtimes_{vN} H \subseteq N \rtimes_{vN} G$ is RE/$N \rtimes_{vN} H$ whenever $H$ is $\sigma$-co-hyperlinear in $G$. However, there are indications of obstacles to obtaining such results without some conditions on the retraction and the action. We will first provide an example to illustrate some difficulties involved if one wishes to remove the retraction assumption. After that, we will prove that conditions on the action $G \curvearrowright N$ are necessary in case $N = R$.

\begin{ex}
At the beginning of Section 4, we mentioned that it is possible to apply the method in Section 3 of constructing explicit matrix models to the case where residually finite groups are involved. In particular, we consider this situation here: Suppose $H \triangleleft G$ and $G/H$ is residually finite, whether it is possible to show without any extra assumption that $L(H) \subseteq L(G)$ is RE/$L(H)$. We begin by noting that, since residually finite groups are hyperlinear, applying Theorem 4.2 immediately gives $L(H) \subseteq L(G)$ is RE/$L(G)$. Alternatively, one may consider the following approach. Since $G/H$ is residually finite, we have a decreasing sequence $G_i$ of normal subgroups of $G$ s.t. $\cap_i G_i = H$. Then we naturally have a map $L(G) \hookrightarrow \mathbb{M}_{|G: G_i|} \otimes L(G_i)$ as in Lemma 3.1. Taking the ultraproduct gives a map $L(G) \hookrightarrow \prod_\omega \mathbb{M}_{|G: G_i|} \otimes L(G_i)$. One then easily verifies that we have a commuting square

\begin{center}
\begin{tikzcd}[node distance = 1.8cm]
    {\prod_\omega \mathbb{M}_{|G: G_i|} \otimes L(G_i)} \arrow[hookleftarrow]{r}\arrow[hookleftarrow]{d}
        & {L(G)} \arrow[hookleftarrow]{d} \\
    {\prod_\omega l_\infty^{|G: G_i|} \otimes L(G_i)} \arrow[hookleftarrow]{r}
        & {L(H)}
\end{tikzcd}
\end{center}

However, this only gives RE/$L(G)$. But it is reasonable to consider whether the following is possible: We replace $L(G) \hookrightarrow \prod_\omega \mathbb{M}_{|G: G_i|} \otimes L(G_i)$ by the map $\mathbb{C}[G] \rightarrow \oplus_i \mathbb{M}_{|G: G_i|} \odot \mathbb{C}[G_i]$ and construct some maps $\pi_i: \mathbb{C}[G_i] \rightarrow L(H)$ s.t. after combining all maps together and taking the ultraproduct, we can obtain a tracial embedding $L(G) \hookrightarrow \prod_\omega \mathbb{M}_{|G: G_i|} \otimes L(H)$. Specifically, it might seem reasonable to conjecture that taking $\pi_i = E_{L(G_i), L(H)}$ would work. We now demonstrate that this is not the case.

We begin by defining, for a group $G$, the group $G^\infty = \{f: \mathbb{N} \rightarrow G\}$ with pointwise operations. Let $S_3$ be the symmetric group of degree 3 and $A_3$ be the alternating group of degree 3. Note that $A_3 \simeq \mathbb{Z}/3$ so we may write $A_3 = \{e, g, g^2\}$. (It does not matter for latter arguments which specific element we choose for $g$. Both $g = (1 2 3)$ and $g = (1 3 2)$ would work.) Then we may write $S_3 = \{e, g, g^2, a, ag, ag^2\}$. (Again, it does not matter which specific element we choose for $a$. It can either be $(1 2)$, $(1 3)$, or $(2 3)$.) Note here that $A_3$ is abelian, $a^{-1} = a$, and $aga = g^2$.

We then have $H = A_3^\infty$ is a normal subgroup of $G = S_3^\infty$. Furthermore, $G/H$ is residually finite. Indeed, one may choose,
\begin{equation*}
    G_i = \{f: \mathbb{N} \rightarrow S_3 | f(q) \in A_3, \forall q \leq i\}
\end{equation*}
which is a decreasing sequence of normal subgroups of $G$ containing $H$ s.t. $|G: G_i| = 2^i$ and $\cap_i G_i = H$. We fix here the representatives of the cosets in $G/G_i$ as the set,
\begin{equation*}
    \widetilde{G/G_i} = \{f: \mathbb{N} \rightarrow S_3 | f(q) = e, \forall q > i; f(q) = e\textrm{ or }a, \forall q \leq i\}    
\end{equation*}

We now have the following proposition,

\begin{prop}
Given $G$, $H$, $G_i$, $\widetilde{G/G_i}$ as defined above, we then have a map $\mathbb{C}[G] \rightarrow \oplus_i \mathbb{M}_{2^i} \odot \mathbb{C}[G_i]$. Suppose we also have a sequence of maps $\pi_i: \mathbb{C}[G_i] \rightarrow L(H)$. Combining the maps together and changing the direct sum into an ultraproduct gives a map $\pi: \mathbb{C}[G] \rightarrow \prod_\omega \mathbb{M}_{2^i} \otimes L(H)$. Assuming $\pi_i$ restricts to the natural inclusion $\mathbb{C}[H] \hookrightarrow L(H)$, then $\pi$ is not a tracial *-homomorphism.
\end{prop}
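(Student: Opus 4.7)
The plan is to contradict the multiplicativity of $\pi$ by means of two infinite-support group elements in $G = S_3^\infty$: the constant-$a$ element $a_\infty$ and the constant-$g$ element $g_\infty \in H$. These are available because $G^\infty$ is defined as the full Cartesian product $\{f:\mathbb{N}\to G\}$. The key identity is $a_\infty g_\infty a_\infty^{-1} = g_\infty^{-1}$, coming from $aga = g^{-1}$ in $S_3$; if $\pi$ were a tracial $*$-homomorphism one would need
\begin{equation*}
\pi(a_\infty)\,\pi(g_\infty)\,\pi(a_\infty)^{-1} \;=\; \pi(g_\infty^{-1})
\end{equation*}
in $\prod_\omega \mathbb{M}_{2^i} \otimes L(H)$, and the goal is to produce a uniform lower bound of $\sqrt{2}$ on the $L^2$-discrepancy between the two sides at each level $i$.

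I would start by writing down the images at level $i$. Since $g_\infty,g_\infty^{-1} \in H$ and $\pi_i$ is required to restrict to the identity on $\mathbb{C}[H]$, both $\pi(g_\infty)_i$ and $\pi(g_\infty^{-1})_i$ are the diagonal matrices in $\mathbb{M}_{2^i} \otimes L(H)$ with entries $\widetilde{g_s}^{-1} g_\infty^{\pm 1} \widetilde{g_s}$ at position $s$. For $a_\infty$, a short coset analysis---testing when $\widetilde{g_s}(q)^{-1} a \widetilde{g_t}(q) \in A_3$ for each $q \le i$---shows that the matrix of $a_\infty$ has nonzero entries exactly at pairs $(s,\bar s)$, where $\bar s$ is the flip of $s$ in $(\mathbb{Z}/2)^i$, with common value the single element $\xi_i \in G_i\setminus H$ defined by $\xi_i(q) = e$ for $q \le i$ and $\xi_i(q) = a$ for $q > i$. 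Setting $v_i := \pi_i(\xi_i) \in L(H)$, one obtains $\pi(a_\infty)_i = P_i \otimes v_i$, where $P_i$ is the permutation matrix of the flip-all involution; $v_i$ is the only parameter that the unconstrained values of $\pi_i$ on $\mathbb{C}[G_i]\setminus\mathbb{C}[H]$ can affect.

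The decisive step, and the main obstacle to make rigorous, is to show that $v_i$ is irrelevant. Under the assumption that $\pi$ is a $*$-homomorphism, $\pi(a_\infty)^2 = \pi(e) = 1$ forces $\|v_i^2 - 1\|_2 \to 0$ along $\omega$; combined with the abelianness of $L(H)$, a direct matrix-unit computation gives $(P_i \otimes v_i)\,D\,(P_i \otimes v_i) = D'\cdot v_i^2$ for any diagonal $D$ with entries in $L(H)$, where $D'$ is the diagonal whose entry at $s$ is $d_{\bar s}$. Applied to $D = \pi(g_\infty)_i$, the entry at position $s$ of the conjugation is $\widetilde{g_{\bar s}}^{-1} g_\infty \widetilde{g_{\bar s}}\cdot v_i^2$. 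A coordinate-by-coordinate comparison with the entry $\widetilde{g_s}^{-1} g_\infty^{-1}\widetilde{g_s}$ of $\pi(g_\infty^{-1})_i$ shows that the two agree at every $q \le i$ (the flip reproduces the needed inversion there) but disagree at every $q > i$, yielding $g$ in place of $g^{-1}$. Up to the vanishing $v_i^2 - 1$ error, the two entries therefore differ by multiplication by $\eta_i \in H$, the element with $\eta_i(q) = g^2$ for $q > i$ and $\eta_i(q) = e$ for $q \le i$; this is a non-identity group element, so $\|\eta_i - 1\|_{L^2(L(H))} = \sqrt{2}$. Averaging over the $2^i$ diagonal positions gives $\|\pi(a_\infty)_i\pi(g_\infty)_i\pi(a_\infty)_i^{-1} - \pi(g_\infty^{-1})_i\|_2 \to \sqrt{2}$ along $\omega$, contradicting the putative $*$-homomorphism identity.
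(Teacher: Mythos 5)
Your proposal is correct and is essentially the paper's own argument: both hinge on the constant elements $\widetilde a=(a,a,\dots)$ and $\widetilde g=(g,g,\dots)$, the relation $a g a = g^2$, and the observation that at level $i$ the coset representatives only conjugate the first $i$ coordinates, so the putative image of $\widetilde{g^2}$ is forced to coincide with that of the hybrid element ($g^2$ on $q\le i$, $g$ on $q>i$), two distinct elements of $H$ at $L^2$-distance $\sqrt 2$. Your explicit form $\phi_i(\widetilde a)=P_i\otimes v_i$ together with $\|v_i^2-1\|_2\to 0$ is just a more concrete rendering of the paper's step $\pi(\widetilde{g^2})=\gamma\,\pi(\widetilde a)^2=\gamma$, and all the supporting computations check out.
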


\begin{proof} Assume to the contrary that choosing some maps $\pi_i: \mathbb{C}[G_i] \rightarrow L(H)$ can result in $\pi$ being a tracial *-homomorphism. Consider the elements $\widetilde{a} = (a, a, \cdots) \in G$, $\widetilde{g} = (g, g, \cdots) \in G$, and $\widetilde{g^2} = (g^2, g^2, \cdots) \in G$. Now, $\widetilde{a}\widetilde{g}\widetilde{a} = \widetilde{g^2}$, so $\pi(\widetilde{a})\pi(\widetilde{g})\pi(\widetilde{a}) = \pi(\widetilde{g^2})$. Consider first the part $\pi(\widetilde{a})\pi(\widetilde{g})$. The composition $\phi_i$ of the natural map $\mathbb{C}[G] \rightarrow \mathbb{M}_{2^i} \odot \mathbb{C}[G_i]$ and $Id_{\mathbb{M}_{2^i}} \otimes \pi_i$ sends $\widetilde{g}$ to a diagonal matrix,
\begin{equation*}
    (g_{\alpha\beta})_{\alpha, \beta \in \widetilde{G/G_i}} = (\delta_{\alpha\beta} \pi_i(\beta^{-1}\widetilde{g}\beta))_{\alpha, \beta \in \widetilde{G/G_i}}
\end{equation*}

Similarly, $\phi_i(\widetilde{a})$ is of the form,
\begin{equation*}
    (a_{\alpha\beta})_{\alpha, \beta \in \widetilde{G/G_i}} = (1_{\widetilde{a}\beta \in \alpha G_i} \pi_i(\alpha^{-1}\widetilde{a}\beta))_{\alpha, \beta \in \widetilde{G/G_i}}
\end{equation*}

Noting that the range of $\pi_i$ is in $L(H)$, which is commutative, we have,
\begin{equation*}
\begin{split}
    (\phi_i(\widetilde{a})\phi_i(\widetilde{g}))_{\alpha\beta} &= 1_{\widetilde{a}\beta \in \alpha G_i} \pi_i(\alpha^{-1}\widetilde{a}\beta)\pi_i(\beta^{-1}\widetilde{g}\beta)\\
    &= 1_{\widetilde{a}\beta \in \alpha G_i}\pi_i(\beta^{-1}\widetilde{g}\beta)\pi_i(\alpha^{-1}\widetilde{a}\beta)\\
    &= 1_{\widetilde{a}\beta \in \alpha G_k}\pi_i(\alpha^{-1}(\beta\alpha^{-1})^{-1}\widetilde{g}(\beta\alpha^{-1})\alpha)\pi_i(\alpha^{-1}\widetilde{a}\beta)
\end{split}
\end{equation*}

Note here that $\beta \in \widetilde{G/G_i}$ is of the form $\beta(q) = e$ for all $q > i$ and $\beta(q) = e$ or $a$ for all $q \leq i$. Hence, $(\widetilde{a}\beta)(q) = a$ for $q > i$; and for $q \leq i$, if $\beta(q) = e$ then $(\widetilde{a}\beta)(q) = a$, and if $\beta(q) = a$ then $(\widetilde{a}\beta)(q) = e$. As $G_i = \{f: \mathbb{N} \rightarrow S_3 | f(q) \in A_3, \forall q \leq i\}$ and $\widetilde{a}\beta \in \alpha G_i$, we must have $\alpha(q) = e$ for $q > i$; and for $q \leq i$, if $\beta(q) = e$ then $\alpha(q) = a$, and if $\beta(q) = a$ then $\alpha(q) = e$. Hence, $(\beta\alpha^{-1})(q) = e$ if $q > i$ and $(\beta\alpha^{-1})(q) = a$ if $q \leq i$. For simplicity, we write $\widetilde{g_i}$ for the element of $H$ s.t. $\widetilde{g_i}(q) = g^2$ for $q \leq i$ and $\widetilde{g_i}(q) = g$ for $q > i$, then $(\beta\alpha^{-1})^{-1}\widetilde{g}(\beta\alpha^{-1}) = \widetilde{g_i}$ and,
\begin{equation*}
\begin{split}
    (\phi_i(\widetilde{a})\phi_i(\widetilde{g}))_{\alpha\beta} &= 1_{\widetilde{a}\beta \in \alpha G_i}\pi_i(\alpha^{-1}(\beta\alpha^{-1})^{-1}\widetilde{g}(\beta\alpha^{-1})\alpha)\pi_i(\alpha^{-1}\widetilde{a}\beta)\\
    &= 1_{\widetilde{a}\beta \in \alpha G_i}\pi_i(\alpha^{-1}\widetilde{g_i}\alpha)\pi_i(\alpha^{-1}\widetilde{a}\beta)\\
    &= (\phi_i(\widetilde{g_i})\phi_i(\widetilde{a}))_{\alpha\beta}
\end{split}
\end{equation*}

Write $\gamma = (\phi_i(\widetilde{g_i}))^\circ \in \prod_\omega \mathbb{M}_{2^i} \otimes L(H)$. We then have $\pi(\widetilde{g^2}) = \pi(\widetilde{a})\pi(\widetilde{g})\pi(\widetilde{a}) = \gamma\pi(\widetilde{a})\pi(\widetilde{a}) = \gamma$. Therefore, $\|\phi_i(\widetilde{g^2}) - \phi_i(\widetilde{g_i})\|_2 \rightarrow 0$ along the ultrafilter. But now, as $\pi_i$ restricts to the identity on $H$ and $\widetilde{g^2} \neq \widetilde{g_k}$,
\begin{equation*}
\begin{split}
    \|\phi_i(\widetilde{g^2}) - \phi_i(\widetilde{g_i})\|^2_2 &= \frac{1}{2^i} \sum_{\alpha \in \widetilde{G/G_i}} \|\pi_i(\alpha^{-1}\widetilde{g^2}\alpha) - \pi_i(\alpha^{-1}\widetilde{g_i}\alpha)\|^2_2\\
    &= \frac{1}{2^i} \sum_{\alpha \in \widetilde{G/G_i}} \|\widetilde{g^2} - \widetilde{g_i}\|^2_2\\
    &= 2
\end{split}
\end{equation*}

This gives the desired contradiction.
\end{proof}
\end{ex}

We now demonstrate that some assumptions on the action $G \curvearrowright N$ are necessary to obtain $N \subseteq N \rtimes_{vN} G$ is RE/$N$ when $N = R$. Clearly, this is equivalent to $R \rtimes_{vN} G$ being QWEP. We have the following:

\begin{thm}
Let $G$ be a hyperlinear group acting through trace-preserving automorphisms on $R$, then $R \rtimes_{vN} G$ is QWEP iff the action $\alpha: G \rightarrow tpAut(R)$ is matricially approximable.
\end{thm}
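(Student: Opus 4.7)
The plan is to prove the two implications separately. For the easy direction $(\Leftarrow)$, I would feed matricial approximability of $\alpha$ (together with the assumed hyperlinearity of $G$) directly into Corollary 5.1 (applied with $N = R$, $H_1 = \{e\}$) to obtain that $R \subseteq R \rtimes_{vN} G$ is RE/$R$. Since $R$ is itself QWEP, Proposition 2.2 then upgrades this to RE/$\mathbb{C}$, and Proposition 2.1 gives a tracial embedding $R \rtimes_{vN} G \hookrightarrow R^\omega$, establishing that $R \rtimes_{vN} G$ is QWEP.

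For the harder direction $(\Rightarrow)$, the strategy is to reverse-engineer a matricial approximation directly from a QWEP embedding. Starting from a tracial embedding $\pi: R \rtimes_{vN} G \hookrightarrow R^\omega$, I would first invoke the classical fact that any two trace-preserving unital embeddings of the hyperfinite $\textrm{II}_1$ factor $R$ into the $\textrm{II}_1$ factor $R^\omega$ are unitarily conjugate inside $R^\omega$; this reduces to approximate unitary conjugacy of the finite-dimensional layers $\mathbb{M}_{2^n}$ (trivial by dimension counting) followed by a standard reindexing/diagonal argument along $\omega$. Using this, choose $U \in U(R^\omega)$ so that, after replacing $\pi$ by $U \pi(\cdot) U^*$, the restriction $\pi|_R$ becomes the canonical diagonal embedding $r \mapsto (r, r, r, \ldots)^\circ$. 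With this normalization, for each $g \in G$ lift the unitary $\pi(u_g) \in U(R^\omega)$ to a representing sequence $(u_g^{(i)})_i$ of unitaries in $R$. The relations $u_{g_1} u_{g_2} = u_{g_1 g_2}$ and $u_g r u_g^{-1} = \alpha_g(r)$ in $R \rtimes_{vN} G$ then translate, as $i \to \omega$, into
\begin{equation*}
    \|u_{g_1}^{(i)} u_{g_2}^{(i)} - u_{g_1 g_2}^{(i)}\|_2 \to 0 \quad \textrm{and} \quad \|u_g^{(i)} r (u_g^{(i)})^* - \alpha_g(r)\|_2 \to 0 \textrm{ for every fixed } r \in R.
\end{equation*}
Taking $n(i) = 1$ and $\beta_i(g) = u_g^{(i)} \in U(R) = U(\mathbb{M}_1 \otimes R)$ immediately witnesses matricial approximability of $\alpha$.

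The main obstacle is justifying cleanly the unitary conjugacy of $R$-embeddings into $R^\omega$: this is classical but slightly technical to set up. If one wished to avoid this appeal entirely, an alternative would be to keep $\pi|_R$ in its original, possibly non-diagonal form $r \mapsto (r^{(i)})^\circ$ and build $\beta_i(g) \in U(\mathbb{M}_{n(i)} \otimes R)$ from both $u_g^{(i)}$ and an additional matrix piece designed to absorb the discrepancy between $r$ and its lift $r^{(i)}$ at each finite stage; this would be a messier explicit construction but would bypass any uniqueness theorem for embeddings of $R$.
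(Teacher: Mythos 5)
Your proof is correct, and while the $(\Leftarrow)$ direction coincides exactly with the paper's (Corollary 5.1 plus Propositions 2.2 and 2.1), your $(\Rightarrow)$ direction takes a genuinely different route. The paper does not normalize the embedding $\pi\restriction_R$ by unitary conjugation; instead it proves a dedicated Lemma 5.4, which, given any tracial embedding $R \hookrightarrow R^\omega$, produces a \emph{further} tracial embedding $R^\omega \hookrightarrow \prod_{\omega'} \mathbb{M}_{n(i)} \otimes R$ (built by splitting off the matrix layers $\mathbb{M}_{2^i} \subseteq R$ and their relative commutants) so that the composite carries $r$ to $(1_{\mathbb{M}_{n(i)}} \otimes r)^\circ$; the unitaries $\pi(u_g)$ are then lifted in the amplified algebra. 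This is essentially your ``messier fallback'' option made precise, and it is why the definition of matricial approximability allows the matrix amplification $\mathbb{M}_{n(i)}$ at all. Your primary route instead invokes the classical uniqueness, up to unitary conjugacy in $R^\omega$, of trace-preserving unital embeddings of $R$ into $R^\omega$; this is a correct and standard fact (finite-dimensional layers are conjugate by dimension counting, and a reindexing/saturation argument along the free ultrafilter assembles a single conjugating unitary), and granting it, your argument goes through and in fact yields the formally stronger conclusion that one may always take $n(i) = 1$, i.e., the approximating unitaries live in $U(R)$ itself with no matrix amplification. What the paper's approach buys is self-containedness (Lemma 5.4 is proved from scratch and is reused elsewhere, e.g.\ in Example 5.5); what yours buys is brevity and a sharper form of the conclusion, at the cost of importing an external uniqueness theorem whose reindexing step you would still need to write out carefully.
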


This provides the motivation for defining matricially approximable actions in the first place. To prove the theorem, we need the following lemma:

\begin{lemma}
Given a tracial embedding $\pi: R \hookrightarrow R^\omega$, there exists a tracial embedding $R^\omega \hookrightarrow \prod_{\omega'} \mathbb{M}_{n(i)} \otimes R$ s.t. the composite embedding $R \hookrightarrow R^\omega \hookrightarrow \prod_{\omega'} \mathbb{M}_{n(i)} \otimes R$ is of the form,
\begin{equation*}
    R \ni r \mapsto (1_{\mathbb{M}_{n(i)}} \otimes r)^\circ \in \prod_{\omega'} \mathbb{M}_{n(i)} \otimes R
\end{equation*}

Furthermore, the choice of $\omega'$ only depends on $\omega$.
\end{lemma}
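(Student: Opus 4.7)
\medskip

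\noindent\textbf{Proof Plan.} The plan is to reduce to the canonical embedding $\iota: R \hookrightarrow R^\omega$, $r \mapsto (r)^\circ$, and then to handle that case via the McDuff decomposition $R \cong R \bar{\otimes} R$. For the reduction I would invoke the classical fact that any two tracial embeddings of the hyperfinite $\textrm{II}_1$ factor $R$ into a $\textrm{II}_1$ factor $N$ are unitarily conjugate in $N$, obtained by building up the conjugating unitary inductively along the finite-dimensional matrix subalgebras exhausting $R$. Applied to $N = R^\omega$, this produces a unitary $u \in R^\omega$ with $\pi = \textrm{Ad}(u) \circ \iota$. Given an embedding $\Psi_0: R^\omega \hookrightarrow \prod_{\omega'} \mathbb{M}_{n(i)} \otimes R$ handling the canonical case, I would set $\Psi := \Psi_0 \circ \textrm{Ad}(u^{-1})$; since $\textrm{Ad}(u^{-1})$ is a trace-preserving automorphism of $R^\omega$, $\Psi$ is a tracial embedding, and $\Psi(\pi(r)) = \Psi_0(u^{-1}\pi(r)u) = \Psi_0(\iota(r))$ automatically has the required form.

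For the canonical case, I would fix a McDuff identification $R \cong R \bar{\otimes} R$ and consider the trace-preserving embedding $\beta: R \hookrightarrow R \bar{\otimes} R$, $r \mapsto 1 \otimes r$. This extends levelwise to a tracial embedding $\beta^\omega: R^\omega \hookrightarrow (R \bar{\otimes} R)^\omega$, $(x_k)^\circ \mapsto (1 \otimes x_k)^\circ$. Fix an increasing sequence of matrix subalgebras $\mathbb{M}_{n(i)}$ of the first tensor factor of $R \bar{\otimes} R$ with dense union, and let $E_i$ denote the trace-preserving conditional expectation of $R \bar{\otimes} R$ onto $\mathbb{M}_{n(i)} \otimes R$. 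Density forces $E_i(z) \to z$ in $L^2$ for every $z \in R \bar{\otimes} R$, and a standard $L^\infty$--$L^2$ estimate then gives $\|E_i(xy) - E_i(x) E_i(y)\|_2 \to 0$, so $x \mapsto (E_i(x))^\circ$ is a tracial embedding $R \bar{\otimes} R \hookrightarrow \prod_\omega \mathbb{M}_{n(i)} \otimes R$. Crucially $E_i(1 \otimes r) = 1_{\mathbb{M}_{n(i)}} \otimes r$ for every $i$, so composing with $\beta^\omega$ sends $(r)^\circ \in R^\omega$ to the constant sequence $(1_{\mathbb{M}_{n(i)}} \otimes r)^\circ$ at every level. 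Finally, collapsing the iterated ultraproduct $(\prod_\omega \mathbb{M}_{n(i)} \otimes R)^\omega \cong \prod_{\omega^2} \mathbb{M}_{n(i)} \otimes R$ via the iterated ultrafilter $\omega^2$ from the proof of Proposition 2.1 and a fixed bijection $\mathbb{N} \to \mathbb{N}^2$ produces the desired $\Psi_0$, with $\omega' := \omega^2$ depending only on $\omega$.

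The hard step will be the unitary conjugacy claim used in the reduction. The Murray--von Neumann uniqueness theorem only provides a conjugating unitary level by level on each $\mathbb{M}_{2^n} \subseteq R$; producing a single unitary $u \in R^\omega$ valid for all of $R$ requires an inductive selection of partial unitaries $u_n$ in which the successive corrections $u_{n+1} u_n^*$ are taken from the relative commutant $\iota(\mathbb{M}_{2^n})' \cap R^\omega$, and one has to exploit the McDuff-type richness of these commutants to force $\|\cdot\|_2$-convergence of $u_n$ to a genuine unitary in $R^\omega$. This is the one step where the hyperfinite structure of $R$ really enters the argument.
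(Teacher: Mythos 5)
Your argument reaches the right conclusion but by a genuinely different route from the paper. The paper never conjugates the whole embedding $\pi$ into standard position. Instead, for each fixed $i$ it uses only the elementary fact that the unital copy $\pi(\mathbb{M}_{2^i})$ inside the factor $R^\omega$ induces a tensor decomposition $R^\omega \simeq \hat{R} \otimes \mathbb{M}_{2^i}$; re-embedding $\hat{R}$ into $R^\omega$ then straightens $\pi$ \emph{exactly} on $\mathbb{M}_{2^i}$, and an outer ultraproduct over $i$ upgrades this to all of $R$ by weak density. Your proof instead straightens $\pi$ globally via a conjugating unitary $u \in R^\omega$ and then handles the canonical embedding through the McDuff identification $R \cong R \bar{\otimes} R$ and conditional expectations onto $\mathbb{M}_{n(i)} \otimes R$; that second half is correct and is essentially the paper's mechanism in different clothing (the paper also passes through $(R \bar{\otimes} R)^\omega$ and then into $\prod_\omega \mathbb{M}_{2^j} \otimes R$). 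The trade-off is that the paper's route needs nothing beyond conjugacy of finite-dimensional matrix subalgebras in a factor, at the cost of one extra layer of ultraproduct, whereas yours buys a cleaner statement ($\pi$ literally becomes the diagonal embedding) at the cost of importing a nontrivial uniqueness theorem.

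That imported theorem is the soft spot. As stated --- ``any two tracial embeddings of $R$ into a $\textrm{II}_1$ factor $N$ are unitarily conjugate in $N$'' --- the claim is false: in $N = R \bar{\otimes} L(\mathbb{F}_2)$ the copy $R \otimes 1$ has relative commutant $L(\mathbb{F}_2)$, while an irreducible hyperfinite subfactor of $N$ has trivial relative commutant, so the two copies cannot be unitarily conjugate. The statement is true for $N = R^\omega$ (the easy direction of Jung's theorem), which is the only case you use, so the proof survives; but the mechanism you sketch for it does not. The inductive scheme in which successive corrections $u_{n+1}u_n^*$ are drawn from $\iota(\mathbb{M}_{2^n})' \cap R^\omega$ gives no control on $\|u_{n+1} - u_n\|_2$: a unitary in $\iota(\mathbb{M}_{2^n})' \cap R^\omega$ need not be $\|\cdot\|_2$-close to any unitary in the smaller relative commutant $\iota(\mathbb{M}_{2^{n+1}})' \cap R^\omega$, which is exactly why the statement fails in a general $\textrm{II}_1$ factor. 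The correct argument for $R^\omega$ is not a norm-convergent correction scheme but an ultrafilter diagonalization: for each $n$ find an exact conjugating unitary $u_n$ for $\pi|_{\mathbb{M}_{2^n}}$, lift each $u_n$ to a sequence of unitaries in $R$, and choose a diagonal sequence along $\omega$ that works simultaneously for a countable dense subset of $R$. If you either supply that argument or simply cite the uniqueness-of-embeddings-into-$R^\omega$ theorem correctly, your proof is complete.
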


\begin{proof} Let $R$ be the union of an increasing sequence of matrix subalgebras $\mathbb{M}_{2^i}$. Then the inclusion $\mathbb{M}_{2^i} \hookrightarrow R$ composed with $\pi$ gives an embedding $\mathbb{M}_{2^i} \hookrightarrow R^\omega$. This induces an isomorphism $R^\omega \simeq \hat{R} \otimes \mathbb{M}_{2^i}$ for some algebra $\hat{R}$. As $\hat{R} \simeq \hat{R} \otimes 1_{\mathbb{M}_{2^i}}$ tracially embeds into $R^\omega$, we obtain an embedding $R^\omega \hookrightarrow R^\omega \otimes \mathbb{M}_{2^i}$. $R^\omega \otimes \mathbb{M}_{2^i}$ is canonically isomorphic to $(R \otimes \mathbb{M}_{2^i})^\omega$. Therefore, we have,
\begin{equation*}
    \mathbb{M}_{2^i} \hookrightarrow R \hookrightarrow R^\omega \hookrightarrow (R \otimes \mathbb{M}_{2^i})^\omega
\end{equation*}
s.t. the composite embedding sends $r \in \mathbb{M}_{2^i}$ to $(1_R \otimes r)^\circ \in (R \otimes \mathbb{M}_{2^i})^\omega$. Since $\mathbb{M}_{2^i} \hookrightarrow R$, $(R \otimes \mathbb{M}_{2^i})^\omega$ embeds into $(R \bar{\otimes} R)^\omega$. We then take the ultraproduct of all composite embeddings
\begin{equation*}
    R \hookrightarrow R^\omega \hookrightarrow (R \otimes \mathbb{M}_{2^i})^\omega \hookrightarrow (R \bar{\otimes} R)^\omega
\end{equation*}
to obtain an embedding,
\begin{equation*}
    R \hookrightarrow R^\omega \hookrightarrow \prod_{i \rightarrow \omega} (R \bar{\otimes} R)^\omega
\end{equation*}

By the way this embedding is defined, we see that for any $i$, any $r \in \mathbb{M}_{2^i} \subseteq R$, $r$ is sent to $(1_R \otimes r) \in \prod_{i \rightarrow \omega} (R \bar{\otimes} R)^\omega$. As $\cup_i \mathbb{M}_{2^i}$ is weakly dense in $R$, the same must holds for all $r \in R$ as well. Finally, observe that $R$ naturally embeds into $\prod_\omega \mathbb{M}_{2^j}$. So we may embed $\prod_{i \rightarrow \omega} (R \bar{\otimes} R)^\omega$ into $\prod_{i \rightarrow \omega} (\prod_{j \rightarrow \omega} \mathbb{M}_{2^j} \otimes R)^\omega$. It is then clear that the composite embedding,
\begin{equation*}
    R \hookrightarrow R^\omega \hookrightarrow \prod_{i \rightarrow \omega} (R \bar{\otimes} R)^\omega \hookrightarrow \prod_{i \rightarrow \omega} (\prod_{j \rightarrow \omega} \mathbb{M}_{2^j} \otimes R)^\omega
\end{equation*}
satisfies the desired properties.
\end{proof}

\begin{proof}[Proof of Theorem 5.2] ($\Leftarrow$) This follows directly from Corollary 5.1.

($\Rightarrow$) Since $R \rtimes_{vN} G$ is QWEP, we have $R \hookrightarrow R \rtimes_{vN} G \hookrightarrow R^\omega$. Applying Lemma 5.4 gives,
\begin{equation*}
    R \hookrightarrow R \rtimes_{vN} G \hookrightarrow R^\omega \hookrightarrow \prod_{\omega'} \mathbb{M}_{n(i)} \otimes R
\end{equation*}
s.t. the composite embedding sends $r \in R$ to $(1_{\mathbb{M}_{n(i)}} \otimes r)^\circ \in \prod_{\omega'} \mathbb{M}_{n(i)} \otimes R$. Write the composite embedding $R \rtimes_{vN} G \hookrightarrow R^\omega \hookrightarrow \prod_{\omega'} \mathbb{M}_{n(i)} \otimes R$ as $\pi$. Then given any $g \in G$, $\pi(g)$ is a unitary in $\prod_{\omega'} \mathbb{M}_{n(i)} \otimes R$ and therefore can be lifted to a sequence of unitaries $g_i \in \mathbb{M}_{n(i)} \otimes R$. Let $\beta_i: G \rightarrow U(\mathbb{M}_{n(i)} \otimes R)$ be defined by $\beta_i(g) = g_i$. It is then easy to verify that this sequence of maps, together with the ultrafilter $\omega'$, satisfies the requirements of the definition of matricially approximable actions.
\end{proof}

\begin{ex}
As mentioned in Example 4.3, $R \rtimes_{vN} G$ is QWEP whenever $G$ is a free group. Thus, any action of a free group on $R$ is matricially approximable. This can also be proved more directly. We only consider here the case where $G$ is a finitely generated free group. The case where $G = \mathbb{F}_\infty$ can then be proved using an approximation argument. Now, as $G$ is finitely generated, we write its free generators as $\{x_1, \cdots, x_n\}$. We let $A_i = R \rtimes_{vN} \mathbb{Z}$ where the action of $\mathbb{Z}$ on $R$ is given by $\alpha(x_i) \in tpAut(R)$. $A_i$ is hyperfinite, so in particular QWEP. Fix some embedding $A_i \hookrightarrow R^\omega$. Since $R \hookrightarrow A_i \hookrightarrow R^\omega$, we may choose a tracial embedding $R^\omega \hookrightarrow \prod_{j \rightarrow \omega'} \mathbb{M}_{n(i, j)} \otimes R$ as in Lemma 5.4. Naturally, $\prod_{j \rightarrow \omega'} \mathbb{M}_{n(i, j)} \otimes R \hookrightarrow \prod_{j \rightarrow \omega'} (\otimes_{i = 1}^n \mathbb{M}_{n(i, j)}) \otimes R$. Now, we have,
\begin{equation*}
    R \hookrightarrow A_i \hookrightarrow R^\omega \hookrightarrow \prod_{\omega'} \mathbb{M}_{n(i, j)} \otimes R \hookrightarrow \prod_{j \rightarrow \omega'} (\otimes_{i = 1}^n \mathbb{M}_{n(i, j)}) \otimes R
\end{equation*}
s.t. the composite embedding is of the form,
\begin{equation*}
   R \ni r \mapsto (1_{\otimes_{i = 1}^n \mathbb{M}_{n(i, j)}} \otimes r)^\circ \in \hookrightarrow \prod_{j \rightarrow \omega'} (\otimes_{i = 1}^n \mathbb{M}_{n(i, j)}) \otimes R
\end{equation*}

Write $u_i$ for the unitary in $A_i = R \rtimes_{vN} \mathbb{Z}$ corresponding to $1 \in \mathbb{Z}$. Then $u_i$ can be regarded as a unitary in $\prod_{j \rightarrow \omega'} (\otimes_{i = 1}^n \mathbb{M}_{n(i, j)}) \otimes R$. Lift it to a sequence of unitaries $(u_{ij})$ in $(\otimes_{i = 1}^n \mathbb{M}_{n(i, j)}) \otimes R$ and define $\beta_j: G \rightarrow U((\otimes_{i = 1}^n \mathbb{M}_{n(i, j)}) \otimes R)$ by sending $x_i$ to $u_{ij}$. It is then easy to verify that this sequence of maps, together with the ultrafilter $\omega'$, satisfies the requirements of the definition of matricially approximable actions.
\end{ex}

It is possible to generalize this philosophy. Indeed, here, to show $R \rtimes_{vN} G$ is QWEP whenever $G$ is a free group and regardless of how $G$ acts on $R$, we have essentially fixed some finitely many $\alpha_1, \cdots, \alpha_n \in tpAut(R)$ (i.e., $\alpha(x_i)$'s) and construct an embedding $\pi: R \hookrightarrow R^\omega$ in which all $\alpha_i$ become inner. More precisely, there exists $u_i \in U(R^\omega)$ s.t. $u_i \pi(r) u_i^* = \pi(\alpha_i(r))$ for all $r \in R$. We do not need these $u_i$ to be freely independent. This observation leads to the following proposition:

\begin{prop}
Let $N$ be a QWEP finite von Neumann algebra. Then the following are equivalent:

\medskip

1. $N \rtimes G$ is QWEP whenever $G$ is a free group and regardless of how $G$ acts on $N$;

\medskip

2. $N \subseteq N \rtimes G$ is RE/$\mathbb{C}$ whenever $G$ is a free group and regardless of how $G$ acts on $N$;

\medskip

3. For any finitely many $\alpha_1, \cdots, \alpha_n \in tpAut(N)$, there exists an embedding $\pi: N \hookrightarrow R^\omega$ and $u_i \in U(R^\omega)$ s.t. $u_i \pi(r) u_i^* = \pi(\alpha_i(r))$ for all $i$ and $r \in R$.
\end{prop}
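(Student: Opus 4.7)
The plan is to establish the cycle $(2) \Rightarrow (1) \Rightarrow (3) \Rightarrow (1) \Rightarrow (2)$, with the three implications other than $(3) \Rightarrow (1)$ being fairly quick consequences of the machinery already developed. For $(2) \Rightarrow (1)$: take $N_1 = \mathbb{C}$ in Proposition 2.1 to get $N \rtimes G \hookrightarrow (R \bar{\otimes} \mathbb{C})^\omega = R^\omega$. For $(1) \Rightarrow (2)$: any free group is residually finite, so by Definition 4.4 and Lemma 4.1 the trivial subgroup $\{e\}$ is co-hyperlinear in $G$, hence $\sigma$-co-hyperlinear; Theorem 4.2 then gives $N \subseteq N \rtimes G$ is RE/$N \rtimes G$, and since $N \rtimes G$ is QWEP by (1), Proposition 2.2 downgrades this to RE/$\mathbb{C}$. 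For $(1) \Rightarrow (3)$: given $\alpha_1, \ldots, \alpha_n \in \mathrm{Aut}_\tau(N)$, let $G = \mathbb{F}_n$ act on $N$ with free generators $x_i$ acting as $\alpha_i$; by (1), $N \rtimes G$ tracially embeds into $R^\omega$, and restriction to $N$ and to the generators yields $\pi$ and the unitaries $u_i$ with the required conjugation property.

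The main work is $(3) \Rightarrow (1)$. First reduce to finitely generated $G = \mathbb{F}_n$: the case $G = \mathbb{F}_\infty$ follows once we have the finite-rank case, since $N \rtimes \mathbb{F}_\infty$ tracially embeds into $\prod_\omega N \rtimes \mathbb{F}_k$ via conditional expectations (cf.\ the proof of Theorem 2.2), and QWEP is preserved by ultraproducts. For finite $n$, apply (3) to the generating automorphisms to obtain $\pi: N \hookrightarrow R^\omega$ and $u_i \in U(R^\omega)$ with $u_i \pi(a) u_i^* = \pi(\alpha_i(a))$ for all $a \in N$. The naive assignment $n \mapsto \pi(n)$, $x_i \mapsto u_i$ respects the crossed product relations but generally fails to be trace preserving, because the $u_i$'s need not be freely independent from each other or from $\pi(N)$. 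The fix is a \emph{free-product twist}: let $v_1, \ldots, v_n$ be the canonical generators of $L(\mathbb{F}_n)$ and define
\begin{equation*}
    \widetilde{\pi}: N \rtimes_{vN} \mathbb{F}_n \to R^\omega \bar{\otimes} L(\mathbb{F}_n), \quad \widetilde{\pi}(n) = \pi(n) \otimes 1, \quad \widetilde{\pi}(x_i) = u_i \otimes v_i.
\end{equation*}
Since $v_i$ commutes with $\pi(n) \otimes 1$ across the tensor product, we get $(u_i \otimes v_i)(\pi(n)\otimes 1)(u_i \otimes v_i)^* = \pi(\alpha_i(n)) \otimes 1$, so $\widetilde{\pi}$ is a well-defined $*$-homomorphism from the crossed product. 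For trace preservation, a reduced-word element $n \cdot x_{i_1}^{\epsilon_1} \cdots x_{i_k}^{\epsilon_k}$ is sent to $\pi(n) u_{i_1}^{\epsilon_1}\cdots u_{i_k}^{\epsilon_k} \otimes v_{i_1}^{\epsilon_1}\cdots v_{i_k}^{\epsilon_k}$, whose trace factors over the two tensor components; the $L(\mathbb{F}_n)$ factor vanishes unless the word is empty, in which case the expression reduces to $\tau_N(n)$. Thus $\widetilde{\pi}$ extends to a tracial embedding, and since $L(\mathbb{F}_n)$ is hyperlinear we have $R^\omega \bar{\otimes} L(\mathbb{F}_n) \hookrightarrow R^\omega \bar{\otimes} R^\omega \hookrightarrow (R \bar{\otimes} R)^{\omega^2} \cong R^{\omega^2}$ (by the iterated-ultrafilter argument from Proposition 2.1). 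Hence $N \rtimes G$ is QWEP.

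The main obstacle is precisely the trace-preservation issue in $(3) \Rightarrow (1)$: the hypothesis only supplies unitaries $u_i$ conjugating $\pi(N)$ correctly, with no control over mixed moments, so the induced algebraic $*$-homomorphism out of $N \rtimes G$ could fail to be tracial. The free-product twist is the key device that simultaneously preserves the crossed-product relations (because the extra factors $v_i$ lie in a commuting copy) and kills all non-trivial group moments (because the $v_i$ are free). This is also, in retrospect, the structural reason why free groups appear in the statement: freeness of the generators of $L(\mathbb{F}_n)$ is what absorbs the arbitrary mixed moments of $u_1, \ldots, u_n$, and nothing analogous is available for general hyperlinear $G$ unless stronger approximation hypotheses on the action are assumed (as in Theorem 5.2 for $N = R$).
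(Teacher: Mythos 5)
Your proposal is correct and follows essentially the same route as the paper: $1\Leftrightarrow 2$ via Theorem 4.2 and Proposition 2.2, $1\Rightarrow 3$ by restriction, and for $3\Rightarrow 1$ a reduction to finitely generated $G$ followed by the comultiplication-style map $n\mapsto \pi(n)\otimes 1$, $x_i\mapsto u_i\otimes v_i$ into $R^\omega\bar{\otimes}L(\mathbb{F}_n)$, which is exactly the paper's $\widetilde{\pi}(g)=\phi(g)\otimes g$. Your explicit verification that freeness of the $v_i$ kills all nontrivial group moments is precisely the detail the paper leaves as ``easy to verify.''
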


\begin{proof} The equivalence between 1 and 2 follows from Theorem 4.2 and Proposition 2.2. $1 \Rightarrow 3$ is obvious. For $3 \Rightarrow 1$, by Upward Limit Approximation Theorem, it suffices to consider the case where $G$ is finitely generated. Let $\{x_1, \cdots, x_n\}$ be the free generators of $G$ and let $\alpha_i = \alpha(x_i)$. Then by our assumption there exists an embedding $\pi: N \hookrightarrow R^\omega$ and $u_i \in U(R^\omega)$ s.t. $u_i \pi(r) u_i^* = \pi(\alpha_i(r))$ for all $i$ and $r \in R$. Define $\phi: G \rightarrow U(R^\omega)$ by sending $x_i$ to $u_i$. Then define $\widetilde{\pi}: N \rtimes_{vN} G \rightarrow R^\omega \bar{\otimes} L(G)$ by $\widetilde{\pi}(n) = \pi(n) \otimes 1_{L(G)}$ for all $n \in N$ and $\widetilde{\pi}(g) = \phi(g) \otimes g$ for all $g \in G$. It is then easy to verify that this is a tracial embedding. Since $L(G)$ is QWEP, $N \rtimes_{vN} G$ is thus QWEP.
\end{proof}

To apply this proposition, we need to understand $tpAut(N)$ well. However, this group is not well-understood for most interesting algebras. On the other hand, there are certain groups whose automorphism groups are well-understood, and if $G$ acts on the group instead of the group algebra, then we do not need to care about all automorphisms of the group algebra. Only those induced by automorphisms of the group are relevant. This observation yields the following proposition, whose proof is essentially the same as that of Proposition 5.8,

\begin{prop}
Let $H$ be a hyperlinear group. Then the following are equivalent:

\medskip

1. $H \rtimes G$ is hyperlinear whenever $G$ is a free group and regardless of how $G$ acts on $H$;

\medskip

2. $L(H) \subseteq L(H) \rtimes G$ is RE/$\mathbb{C}$ whenever $G$ is a free group and the action by $G$ on $L(H)$ is induced by an action of $G$ on $H$;

\medskip

3. For any finitely many $\alpha_1, \cdots, \alpha_n \in Aut(H)$, there exists an embedding $\pi: L(H) \hookrightarrow R^\omega$ and $u_i \in U(R^\omega)$ s.t. $u_i \pi(h) u_i^* = \pi(\alpha_i(h))$ for all $i$ and $h \in H$.
\end{prop}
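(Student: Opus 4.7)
The plan is to replicate the structure of the proof of Proposition 5.8 essentially verbatim, with $L(H)$ playing the role of $N$ and exploiting the fact that the permitted automorphisms come from $Aut(H)$ so that statement 3 furnishes exactly the unitaries needed in a covariance construction over $H \rtimes G$.

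First I would establish $1 \Leftrightarrow 2$. Since $H$ is hyperlinear, and since hyperlinearity of $H \rtimes G$ coincides with QWEP of $L(H \rtimes G) = L(H) \rtimes_{vN} G$, it suffices to observe that the trivial subgroup $\{e\}$ is $\sigma$-co-hyperlinear in any free group $G$ (the same subgroup chain used in Example 4.3 works with $H_i = \{e\}$), and then to apply Theorem 4.2 to conclude that $L(H) \subseteq L(H) \rtimes_{vN} G$ is RE/$L(H) \rtimes_{vN} G$. Feeding this into Proposition 2.2 with the hypothesis that $L(H) \rtimes_{vN} G$ is QWEP yields RE/$\mathbb{C}$, i.e.\ statement 2; conversely, Proposition 2.1 applied to RE/$\mathbb{C}$ gives $L(H) \rtimes_{vN} G \hookrightarrow R^\omega$, hence statement 1.

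For $1 \Rightarrow 3$, given $\alpha_1,\dots,\alpha_n \in Aut(H)$, I would form the semidirect product $H \rtimes \mathbb{F}_n$ with free generators $x_i$ acting on $H$ via $\alpha_i$. Statement 1 then yields a trace-preserving embedding $\rho: L(H) \rtimes_{vN} \mathbb{F}_n \hookrightarrow R^\omega$; setting $\pi := \rho\restriction_{L(H)}$ and $u_i := \rho(x_i)$ transports the defining covariance relation $x_i h x_i^{-1} = \alpha_i(h)$ inside the crossed product to the desired identity $u_i \pi(h) u_i^* = \pi(\alpha_i(h))$ in $R^\omega$.

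For $3 \Rightarrow 1$, I would first reduce to finitely generated $G$ by invoking the Upward Limit Approximation Theorem (Theorem 2.3) applied to the equivalent RE formulation (statement 2). Given $G = \mathbb{F}_n$ with generators $x_1,\dots,x_n$ acting on $H$, set $\alpha_i = \alpha(x_i)$, apply statement 3 to get $\pi: L(H) \hookrightarrow R^\omega$ and $u_i \in U(R^\omega)$ satisfying the covariance relation, and use freeness to extend to a group homomorphism $\phi: G \to U(R^\omega)$ with $\phi(x_i) = u_i$. Then build
\begin{equation*}
\widetilde{\pi}: L(H) \rtimes_{vN} G \to R^\omega \bar{\otimes} L(G), \quad \widetilde{\pi}(h) = \pi(h) \otimes 1, \quad \widetilde{\pi}(g) = \phi(g) \otimes g.
\end{equation*}
Routine checks confirm this is a tracial $\ast$-homomorphism (covariance holds generator-by-generator and propagates by multiplicativity; trace-preservation holds because the $L(G)$-factor annihilates any term whose group component is nontrivial). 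Since $L(G)$ is QWEP for a free group $G$, the right-hand side is QWEP, so $L(H) \rtimes_{vN} G$ is as well, giving statement 1. The only delicate point is verifying that the relation $u_i \pi(h) u_i^* = \pi(\alpha_i(h))$, stated on generators $h \in H$, propagates to all of $L(H)$, which is immediate from normality of both sides; otherwise the construction poses no real obstacle given the machinery already developed.
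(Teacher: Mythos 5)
Your proposal is correct and follows essentially the same route as the paper, which proves this proposition by declaring its proof "essentially the same as that of Proposition 5.8": the equivalence of 1 and 2 via Theorem 4.2 (trivial subgroup is $\sigma$-co-hyperlinear in a free group) together with Propositions 2.1 and 2.2, and $3 \Rightarrow 1$ via reduction to finitely generated $G$ and the covariant representation $\widetilde{\pi}(h) = \pi(h) \otimes 1$, $\widetilde{\pi}(g) = \phi(g) \otimes g$ into $R^\omega \bar{\otimes} L(G)$. Your explicit treatment of $1 \Rightarrow 3$ via the auxiliary semidirect product $H \rtimes \mathbb{F}_n$ merely fills in what the paper labels "obvious."
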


\begin{rmk}
In both Proposition 5.8 and Proposition 5.9, it is clear that we only need to choose $\alpha_1, \cdots, \alpha_n$ among a generating set of either $tpAut(N)$ or $Aut(H)$ instead of the full group. In particular, if $tpAut(N)$ or $Aut(H)$ is finitely generated, then we only need to consider the case where $\{\alpha_1, \cdots, \alpha_n\}$ is some given generating set of $tpAut(N)$ or $Aut(H)$.
\end{rmk}

As an application of this proposition, we prove the following:

\begin{thm}
$\mathbb{F}_n \rtimes G$ is hyperlinear for all $2 \leq n < \infty$ whenever $G$ is a free group and regardless of how $G$ acts on $\mathbb{F}_n$.
\end{thm}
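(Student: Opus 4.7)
The plan is to invoke Proposition 5.9 with $H = \mathbb{F}_n$. Since $\mathbb{F}_n$ is residually finite and hence hyperlinear, the hypothesis is met, so by the equivalence (1)$\Leftrightarrow$(3) it suffices to verify condition (3) of Proposition 5.9: for any finitely many $\alpha_1, \ldots, \alpha_m \in \mathrm{Aut}(\mathbb{F}_n)$, produce an embedding $\pi: L(\mathbb{F}_n) \hookrightarrow R^\omega$ and unitaries $u_i \in U(R^\omega)$ with $u_i \pi(h) u_i^* = \pi(\alpha_i(h))$ for all $h \in \mathbb{F}_n$. I would prove something substantially stronger: the holomorph $\mathbb{F}_n \rtimes \mathrm{Aut}(\mathbb{F}_n)$ is residually finite, hence sofic, hence hyperlinear. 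Once that is in hand, any tracial embedding $L(\mathbb{F}_n \rtimes \mathrm{Aut}(\mathbb{F}_n)) \hookrightarrow R^\omega$ restricts to the required $\pi$, and the images of the chosen $\alpha_i$ (viewed as elements of the holomorph) furnish the unitaries $u_i$; the semidirect product relation immediately yields the covariance $u_i \pi(h) u_i^* = \pi(\alpha_i(h))$.

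For the residual finiteness of $\mathbb{F}_n \rtimes \mathrm{Aut}(\mathbb{F}_n)$, I use the characteristic finite-index subgroups $N_k = \bigcap_{[\mathbb{F}_n:H] \leq k} H \triangleleft \mathbb{F}_n$, which satisfy $\bigcap_k N_k = \{e\}$ since $\mathbb{F}_n$ is residually finite. Each $N_k$ being characteristic, there is a well-defined homomorphism $\mathrm{Aut}(\mathbb{F}_n) \to \mathrm{Aut}(\mathbb{F}_n/N_k)$; let $A_k$ denote its kernel, a finite-index normal subgroup of $\mathrm{Aut}(\mathbb{F}_n)$ (since the target is finite). Any $\phi \in A_k$ satisfies $\phi(x)x^{-1} \in N_k$ for all $x \in \mathbb{F}_n$, so $\bigcap_k A_k = \{e\}$. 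A direct computation using the semidirect product multiplication shows that $N_k \rtimes A_k$ is normal in $\mathbb{F}_n \rtimes \mathrm{Aut}(\mathbb{F}_n)$ with finite quotient $(\mathbb{F}_n/N_k) \rtimes (\mathrm{Aut}(\mathbb{F}_n)/A_k)$, and $\bigcap_k (N_k \rtimes A_k) = \{e\}$, establishing residual finiteness.

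I do not expect a major obstacle. The argument is essentially the packaging of the classical fact that $\mathrm{Aut}(\mathbb{F}_n)$ acts faithfully on the profinite completion of $\mathbb{F}_n$ together with the chain of implications residually finite $\Rightarrow$ LEF $\Rightarrow$ sofic $\Rightarrow$ hyperlinear (the last step due to Elek--Szab\'o), delivered through the machinery of Proposition 5.9. The only mildly subtle bookkeeping is verifying normality of $N_k \rtimes A_k$ inside $\mathbb{F}_n \rtimes \mathrm{Aut}(\mathbb{F}_n)$, which reduces to $N_k$ being characteristic (hence stable under every automorphism of $\mathbb{F}_n$) together with $A_k$ being normal in $\mathrm{Aut}(\mathbb{F}_n)$.
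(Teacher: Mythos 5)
Your argument is correct, but it reaches condition (3) of Proposition 5.9 by a genuinely different route than the paper. The paper also funnels the theorem through Proposition 5.9 (using Remark 5.1 to reduce to the four Magnus--Karrass--Solitar generators $\alpha_1,\dots,\alpha_4$ of $\mathrm{Aut}(\mathbb{F}_n)$), but then verifies (3) by an explicit random-matrix construction: it embeds $L(\mathbb{F}_n)$ into $\prod_{k\rightarrow\omega}\mathbb{M}_k\otimes((A_1\bar{\otimes}\cdots\bar{\otimes}B_n)\rtimes_{vN}\mathbb{F}_4)$ by sending the free generators to words $u_iwv_i^*$ in Haar random unitaries, invokes asymptotic freeness [VDN92, Theorem 4.3.2] to see that this is a trace-preserving embedding, and engineers the $\mathbb{F}_4$-action on the commutative core so that its generators implement $\alpha_1,\dots,\alpha_4$ by conjugation. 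You instead prove that the holomorph $\mathbb{F}_n\rtimes\mathrm{Aut}(\mathbb{F}_n)$ is residually finite --- your subgroups $N_k\rtimes A_k$ are indeed normal of finite index with trivial intersection, by the characteristicity of $N_k$, the normality of $A_k$, and Baumslag's observation that $\phi\in\bigcap_k A_k$ forces $\phi(x)x^{-1}\in\bigcap_k N_k=\{e\}$ --- and then restrict a tracial embedding of $L(\mathbb{F}_n\rtimes\mathrm{Aut}(\mathbb{F}_n))$ into $R^\omega$; the covariance $u_i\pi(h)u_i^*=\pi(\alpha_i(h))$ is then just the semidirect product relation, which is all that the proof of Proposition 5.8/5.9 needs (no freeness of the $u_i$ is required there, since $G$ free lets one extend $x_i\mapsto u_i$ to a homomorphism). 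Your route is shorter, needs no generating set for $\mathrm{Aut}(\mathbb{F}_n)$, and proves something stronger: a single embedding in which all of $\mathrm{Aut}(\mathbb{F}_n)$ becomes inner simultaneously. What the paper's construction buys in exchange is independence from the sofic-implies-hyperlinear step and, more importantly, a technique that operates entirely at the von Neumann algebra level, so it has a chance of adapting to automorphisms in $tpAut(N)$ that are not induced by group automorphisms --- the setting of Proposition 5.8 and of the open problems in Section 6, where your group-theoretic shortcut is unavailable.
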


\begin{proof} By [MKS76, Section 3.5, Corollary N1], $Aut(\mathbb{F}_n)$ is generated by four elements $\alpha_1, \alpha_2, \alpha_3, \alpha_4$ defined by,
\begin{equation*}
\begin{split}
    &\alpha_1(x_1) = x_2, \alpha_1(x_2) = x_1, \alpha(x_i) = x_i \forall i \geq 3\\
    &\alpha_2(x_i) = x_{i+1} \forall 1 \leq i \leq n-1, \alpha_2(x_n) = x_1\\
    &\alpha_3(x_1) = x_1^{-1}, \alpha(x_i) = x_i \forall i \geq 2\\
    &\alpha_4(x_1) = x_1x_2, \alpha(x_i) = x_i \forall i \geq 2
\end{split}
\end{equation*}
where $\{x_1, \cdots, x_n\}$ are the free generators of $\mathbb{F}_n$. By Remark 5.1, it suffices to construct an embedding $L(\mathbb{F}_n) \hookrightarrow R^\omega$ in which $\alpha_1, \alpha_2, \alpha_3, \alpha_4$ become inner. To do so, we consider an embedding,
\begin{equation*}
    \pi: L(\mathbb{F}_n) \hookrightarrow \prod_{k \rightarrow \omega} \mathbb{M}_k \otimes ((A_1 \bar{\otimes} \cdots \bar{\otimes} A_n \bar{\otimes} B_1 \bar{\otimes} \cdots \bar{\otimes} B_n) \rtimes_{vN} \mathbb{F}_4)
\end{equation*}
where $A_i = B_i = L^\infty(U(k), \mu)$ and $\mu$ is the Haar measure on $U(k)$.

We shall define the action of $\mathbb{F}_4$ on $A_1 \bar{\otimes} \cdots \bar{\otimes} A_n \bar{\otimes} B_1 \bar{\otimes} \cdots \bar{\otimes} B_n$ later. For now, we simply observe that as $A_1 \bar{\otimes} \cdots \bar{\otimes} A_n \bar{\otimes} B_1 \bar{\otimes} \cdots \bar{\otimes} B_n$ is hyperfinite, regardless of how $\mathbb{F}_4$ acts, the RHS algebra tracially embeds into $R^{\omega'}$ for some ultrafilter $\omega'$. Thus, it suffices to define this embedding and show that all $\alpha_i$ become inner in the RHS algebra.

Fix $k$. For each $A_i$, let $u_i \in \mathbb{M}_k \otimes A_i$ be the Haar random unitary. Similarly, for each $B_i$, let $v_i \in \mathbb{M}_k \otimes B_i$ be the Haar random unitary. Permutation matrices in $U(k)$ form a subgroup isomorphic to $S_k$. Naturally, $\mathbb{Z}_k$ is a subgroup of $S_k$. Let $w = [1] \in \mathbb{Z}_k \subseteq S_k \subseteq U(k)$ as a unitary in $\mathbb{M}_k$. We may regard all these unitaries $u_i, v_i, w$ as unitaries in $\mathbb{M}_k \otimes ((A_1 \bar{\otimes} \cdots \bar{\otimes} A_n \bar{\otimes} B_1 \bar{\otimes} \cdots \bar{\otimes} B_n) \rtimes_{vN} \mathbb{F}_4)$. By [VDN92, Theorem 4.3.2], $(u_1wv_1^*)^\circ, \cdots, (u_nwv_n^*)^\circ \in \prod_{k \rightarrow \omega} \mathbb{M}_k \otimes ((A_1 \bar{\otimes} \cdots \bar{\otimes} A_n \bar{\otimes} B_1 \bar{\otimes} \cdots \bar{\otimes} B_n) \rtimes_{vN} \mathbb{F}_4)$ are freely independent unitaries, each with trace zero. So we may define the embedding $\pi$ by $\pi(x_i) = (u_iwv_i^*)^\circ$.

We now define the action of $\mathbb{F}_4$ on $A_1 \bar{\otimes} \cdots \bar{\otimes} A_n \bar{\otimes} B_1 \bar{\otimes} \cdots \bar{\otimes} B_n$. Let the free generators of $\mathbb{F}_4$ be $\{y_1, y_2, y_3, y_4\}$. Then,

I. $\alpha(y_1)(a_1 \otimes \cdots \otimes a_n \otimes b_1 \otimes \cdots \otimes b_n) = a_2 \otimes a_1 \otimes a_3 \otimes \cdots \otimes a_n \otimes b_2 \otimes b_1 \otimes b_3 \otimes \cdots \otimes b_n$;

II. $\alpha(y_2)(a_1 \otimes \cdots \otimes a_n \otimes b_1 \otimes \cdots \otimes b_n) = a_n \otimes a_1 \otimes a_2 \otimes \cdots \otimes a_{n-1} \otimes b_n \otimes b_1 \otimes b_2 \otimes \cdots \otimes b_{n-1}$;

III. $\alpha(y_3)$ is defined by an automorphism of $A_1 \bar{\otimes} B_1$ tensoring with the identity operators on all other $A_i$ and $B_i$. To define this automorphism on $A_1 \bar{\otimes} B_1$, we first note that $A_1 \bar{\otimes} B_1 = L^\infty(U_1 \times U_2, \mu \times \mu)$ where $U_1 = U_2 = U(k)$ and $U_1$ and $U_2$ correspond to $A_1$ and $B_1$, resp. We also note that there exists a permutation matrix $z \in \mathbb{M}_k$ s.t. $zwz^* = w^*$, as both $w$ and $w^*$ correspond to cycles of length $n$. We then define the $\alpha(y_3)$ as the automorphism on $A_1 \bar{\otimes} B_1 = L^\infty(U_1 \times U_2, \mu \times \mu)$ induced by the measure-preserving homeomorphism of $U_1 \times U_2$ given by $(u, v) \mapsto (vz, uz)$;

IV. $\alpha(y_3)$ is defined by an automorphism of $B_1 \bar{\otimes} A_2 \bar{\otimes} B_2$ tensoring with the identity operators on all other $A_i$ and $B_i$. Again, let $B_1 \bar{\otimes} A_2 \bar{\otimes} B_2 = L^\infty(U_1 \times U_2 \times U_3, \mu \times \mu \times \mu)$ where $U_1 = U_2 = U_3 = U(k)$ and $U_1$, $U_2$, and $U_3$ correspond to $B_1$, $A_2$, and $B_2$, resp. Then $\alpha(y_3)$ is induced by the measure-preserving homeomorphism of $U_1 \times U_2 \times U_3$ given by $(u, v, \eta) \mapsto (\eta w^*v^*u, v, \eta)$.

One then easily checks that $y_1, y_2, y_3, y_4$, as unitaries in $\prod_{k \rightarrow \omega} \mathbb{M}_k \otimes ((A_1 \bar{\otimes} \cdots \bar{\otimes} A_n \bar{\otimes} B_1 \bar{\otimes} \cdots \bar{\otimes} B_n) \rtimes_{vN} \mathbb{F}_4)$, implement $\alpha_1, \alpha_2, \alpha_3, \alpha_4$, resp. This concludes the proof.
\end{proof}

\medskip

\section{\sc Open Problems}

\medskip

We list some open problems related to the topic of this paper that might be interesting for further study in this section.

\begin{prob}
In Lemma 2.3, we have shown that $\ast_B A$ is QWEP whenever $A$ is a QWEP finite von Neumann algebra and $B$ is a hyperfinite subalgebra of $A$. Does the conclusion still holds if we only assume $B$ is QWEP? 
\end{prob}

By the results in Appendix, it is clear that this is equivalent to the assertion that $N \subseteq M$ is RE/$\mathbb{C}$ whenever $M$ is QWEP. Unfortunately, we could not resolve the problem one way or the other at this stage.

\begin{prob}
In Remark 3.2 we observed that while $H < G$ being co-amenable in $G$ is sufficient to show that $N \rtimes_{vN} G$ tracially embeds into $(R \bar{\otimes} N \rtimes_{vN} H)^\omega$, it is not clear whether the stronger result that $N \rtimes_{vN} H \subseteq N \rtimes_{vN} G$ is RE/$N \rtimes_{vN} H$ holds. As observed in Remark 4.3, the issue is that co-amenability does not imply co-hyperlinear, at least when assuming the hyperlinear conjecture is false. However, does there exist other approaches that might show $N \rtimes_{vN} H \subseteq N \rtimes_{vN} G$ is RE/$N \rtimes_{vN} H$?
\end{prob}

\begin{prob}
While Theorem 5.2 shows that $R \rtimes_{vN} G$ is QWEP iff $G$ is hyperlinear and the action $\alpha: G \rightarrow tpAut(R)$ is matricially approximable, it is not known whether there exists any action by a hyperlinear group on $R$ that is not matricially approximable. Equivalently, is $R \rtimes_{vN} G$ QWEP whenever $G$ is hyperlinear? Does there exists a hyperlinear group $G$ and an action by $G$ on $R$ s.t. $R \rtimes_{vN} G$ is not QWEP? Two specific cases might be of particular interest. The first is when $G$ acts on a countable discrete set $X$ so we may consider the induced action $G \curvearrowright R^{\bar{\otimes} X}$. We know this is always matricially approximable when $G$ is either amenable or a free group. Is this action matricially approximable in other scenarios? This is interesting because $R^{\bar{\otimes} X} \rtimes_{vN} G$ is a group von Neumann algebra, so a negative answer to this question will provide a counterexample to the hyperlinear conjecture. Another interesting case is similar to the situation discussed in Example 5.4. This time, we consider the inner action of a hyperlinear group $G$ on $\mathbb{B}(l_2(G))$ given by the left regular representation $G \rightarrow U(\mathbb{B}(l_2(G)))$. We may then consider the induced action $G \curvearrowright \mathcal{M}(\mathbb{B}(l_2(G)))$. It is known that $\mathcal{M}(\mathbb{B}(l_2(G))) \simeq R$. See [Jun21]. When $G$ is a free group, by similar methods as in Example 5.4, we can show that $\mathcal{M}(\mathbb{B}(l_2(G))) \rtimes_{vN} G$ is QWEP. The same, of course, works for the case where $G$ is amenable. Does this work for other hyperlinear groups $G$ as well? If it does not, then what characterizes the class of groups $G$ for which $\mathcal{M}(\mathbb{B}(l_2(G))) \rtimes_{vN} G$ is QWEP?
\end{prob}

More generally, we could consider similar questions for algebras other than $R$:

\begin{prob}
Given a finite von Neumann algebra $N$, what characterizes the actions $\alpha: G \rightarrow tpAut(N)$ for which $N \subseteq N \rtimes_{vN} G$ is RE/$N$? In case $G$ is hyperlinear, by Theorem 4.2 this reduces to characterizing the actions $\alpha: G \rightarrow tpAut(N)$ for which $N \rtimes_{vN} G$ embeds into $(R \bar{\otimes} N)^\omega$. If we consider the case where $N$ is QWEP, then we could ask whether for all QWEP $N$ and hyperlinear $G$, we always have $N \rtimes_{vN} G$ is QWEP. Or does there exists some QWEP $N$, some hyperlinear $G$ acting through trace-preserving automorphisms on $N$ s.t. $N \rtimes_{vN} G$ is not QWEP? It might be useful to first consider this problem when $N$ is some relatively well-understood factor, such as a free group factor.
\end{prob}

We could also consider what conditions does this impose on the group $G$. Specifically,

\begin{prob}
What characterizes the groups $G$ for which $R \rtimes_{vN} G$ is always QWEP regardless of how $G$ acts on $R$? We have seen that amenable groups and free groups satisfy this property. Furthermore, it is not hard to prove using Theorem 5.2 that given groups $\{G_i\}_{i \in I}$ satisfying this property, then $\ast^{i \in I} G_i$ does so as well. It is also clear that this implies $G$ is hyperlinear. However, it is still open whether this property is strictly stronger than $G$ being hyperlinear or actually equivalent to it.

More generally, what characterizes the groups for which $N \subseteq N \rtimes_{vN} G$ is always RE/$\mathbb{C}$ whenever $N$ is QWEP and regardless of how $G$ acts on $N$? This clearly implies $G$ is hyperlinear, so by Theorem 4.2 it is equivalent to ask what characterizes the groups for which $N \rtimes_{vN} G$ is always QWEP whenever $N$ is QWEP and regardless of how $G$ acts on $N$. At this point we only know that amenable groups satisfy this property and it is not known whether any other group does so. Amenable groups even satisfy the stronger property that $N \subseteq N \rtimes_{vN} G$ is always RE/$N$ regardless of whether $N$ is QWEP. (See Theorem 3.1.) One could also ask what characterizes the groups satisfying this stronger property. Is it equivalent to $G$ being amenable or strictly weaker than that?
\end{prob}

\medskip

\section{\sc Appendix}

\medskip

The aim of this appendix is to prove the following theorem:

\begin{thm}
Given $N \subseteq M$, a pair of finite von Neumann algebras, and $N_1$ a finite factor. Then the following are equivalent:

\medskip

1. $N \subseteq M$ is RE/$N_1$;

\medskip

2. $(M \bar{\otimes} L^\infty(S^1)) \ast_N M$ tracially embeds into $(R \bar{\otimes} N_1)^\omega$.

\medskip

If in addition we also have that there exists a unitary $u \in M$ s.t. $E_{M, N}(u^n) = 0$ for all $n \neq 0$, then the above is also equivalent to:

\medskip

3. $M \ast_N M$ tracially embeds into $(R \bar{\otimes} N_1)^\omega$.
\end{thm}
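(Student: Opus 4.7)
The plan is to prove the three implications $1 \Rightarrow 2$, $2 \Rightarrow 1$, and then, under the extra assumption, $2 \Leftrightarrow 3$ (where $2 \Rightarrow 3$ is trivial and $3 \Rightarrow 2$ goes via $3 \Rightarrow 1 \Rightarrow 2$).

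For $1 \Rightarrow 2$, the plan is to upgrade RE/$N_1$ of $N \subseteq M$ to RE/$N_1$ of $N \subseteq M \bar{\otimes} L^\infty(S^1)$ by tensoring the defining commuting square $\prod_\omega A_k \bar{\otimes} N_1 \supseteq \prod_\omega B_k \bar{\otimes} N_1$ with $L^\infty(S^1)$. Since $L^\infty(S^1)$ is hyperfinite, $A_k \bar{\otimes} L^\infty(S^1)$ remains QWEP and $B_k \bar{\otimes} L^\infty(S^1)$ remains hyperfinite, and the commuting square condition is preserved. Corollary 2.1 applied to the upgraded RE condition then gives a tracial embedding $\ast_N (M \bar{\otimes} L^\infty(S^1)) \hookrightarrow (R \bar{\otimes} N_1)^\omega$, into which $(M \bar{\otimes} L^\infty(S^1)) \ast_N M$ embeds via the canonical inclusion $M \hookrightarrow M \bar{\otimes} L^\infty(S^1)$ on one factor. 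The implication $2 \Rightarrow 3$ is then immediate from the same canonical inclusion applied to one factor of $M \ast_N M \hookrightarrow (M \bar{\otimes} L^\infty(S^1)) \ast_N M$.

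The heart of the theorem is $2 \Rightarrow 1$. Given the tracial embedding $\pi: (M \bar{\otimes} L^\infty(S^1)) \ast_N M \hookrightarrow (R \bar{\otimes} N_1)^\omega$, consider the Haar unitary $U = \pi(1 \otimes u)$. The two key properties are: (i) $U$ commutes with $\pi(M_1)$, the image of the first copy of $M$ inside $M \bar{\otimes} L^\infty(S^1)$, since $1 \otimes u$ commutes with $M \otimes 1$; and (ii) $U$ is free over $\pi(N)$ from the second copy $\pi(M_2)$, by the amalgamated free product structure. The internal commuting square
\begin{center}
\begin{tikzcd}[node distance = 1.8cm]
{M_2 \ast_N (N \bar{\otimes} L^\infty(S^1))} \arrow[hookleftarrow]{r}\arrow[hookleftarrow]{d} & {M_2} \arrow[hookleftarrow]{d} \\
{N \bar{\otimes} L^\infty(S^1)} \arrow[hookleftarrow]{r} & {N}
\end{tikzcd}
\end{center}
inside the amalgamated free product (here $M_2$ and $N \bar{\otimes} L^\infty(S^1)$ are free over $N$ inside the full free product) transports under $\pi$ to a commuting square in $(R \bar{\otimes} N_1)^\omega$. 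The remaining step is to realize $\pi(N \bar{\otimes} L^\infty(S^1))$ as sitting inside a subalgebra of the form $\prod_\omega B_k \bar{\otimes} N_1$ for hyperfinite $B_k \subseteq R$. The plan is to lift $U$ to a sequence $(U_k)$ of unitaries in $R \bar{\otimes} N_1$ and spectrally discretize, approximating each $U_k$ by an atomic unitary with spectrum in the $n$-th roots of unity. The resulting finite-dimensional abelian subalgebras commute with the lift of $\pi(N)$ (since $U$ commutes with $\pi(N) \subseteq \pi(M_1)$) and in the ultraproduct assemble into a hyperfinite ambient; the freeness of $U$ from $\pi(M_2)$ over $\pi(N)$ then produces the commuting square against $\pi(M_2) \cong M$, yielding the RE/$N_1$ structure.

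For $3 \Rightarrow 1$ under the extra assumption (which combined with $1 \Rightarrow 2$ gives $3 \Rightarrow 2$), the internal Haar unitary $u \in M$ with $E_{M,N}(u^n) = 0$ takes over the role of $1 \otimes u$. In a tracial embedding $\pi: M \ast_N M \hookrightarrow (R \bar{\otimes} N_1)^\omega$, the image $\pi(u_1)$ of $u$ in the first copy is a Haar unitary free over $\pi(N)$ from $\pi(M_2)$, by the amalgamated free product structure combined with the vanishing moment condition. A variant of the spectral-discretization argument extracts the RE commuting square; the additional subtlety here is that $u$ need not commute with $N$ (so property (i) above is lost), and one instead exploits freeness-based averaging to produce the hyperfinite ambient. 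The main obstacle in the entire proof is this spectral-to-hyperfinite passage in $2 \Rightarrow 1$ and its modification in $3 \Rightarrow 1$: matching the finite-dimensional spectral approximations with the amalgamated free product structure to produce a hyperfinite $B_k \subseteq R$ inside which $\pi(N)$ sits with the commuting square condition against $\pi(M_2)$. The factoriality hypothesis on $N_1$ likely enters to supply the rigidity needed for this construction to close.
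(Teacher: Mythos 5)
Your overall architecture coincides with the paper's: $1\Rightarrow 2$ by upgrading RE/$N_1$ to $N\subseteq M\bar\otimes L^\infty(S^1)$ (the paper does this via Proposition 2.3(2) applied to $\mathbb{C}\subseteq L^\infty(S^1)$) and then invoking Corollary 2.1, and the converse directions by isolating a Haar unitary $u$ in the free product, observing that $M$ sits in a commuting square over a subalgebra of $\langle u\rangle'$ (your internal square over $N\bar\otimes L^\infty(S^1)$, verified by freeness with amalgamation, is a correct variant of the paper's square over $\langle u\rangle'\cap\hat M$, which is verified there by the ergodic theorem and the moment computation $\tau(u^ix^*u^{j-i}xu^{-j})=0$ for $i\neq j$), and then discretizing the spectrum of $u$. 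However, the step you yourself flag as ``the main obstacle'' is precisely where the content of the theorem lies, and your sketch of it would fail as stated. Lifting $U$ to unitaries $U_k\in R\bar\otimes N_1$ and discretizing each $U_k$ produces finite-dimensional abelian subalgebras of $R\bar\otimes N_1$ that are \emph{not} of the form $B_k\bar\otimes N_1$, and knowing that lifts of $\pi(N)$ asymptotically commute with them does not place $\pi(N)$ inside a subalgebra of the form $\prod_\omega B_k\bar\otimes N_1$, which is what the RE definition requires.

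The missing mechanism is the paper's Lemma 7.1, and it is exactly where factoriality of $N_1$ enters: one discretizes $U$ \emph{upstairs}, writing $u_m=\sum_{l=1}^m e^{2\pi il/m}p_l$ with the $p_l$ of equal trace $1/m$; since $(R\bar\otimes N_1)^\omega$ is then a factor, the $p_l$ are pairwise equivalent and form the diagonal of a unital copy of $\mathbb{M}_m$, yielding $(R\bar\otimes N_1)^\omega\simeq\mathbb{M}_m\otimes\hat N$ and hence a re-embedding $(R\bar\otimes N_1)^\omega\hookrightarrow(\mathbb{M}_m\otimes R\bar\otimes N_1)^\omega$ under which $u_m$ becomes the scalar diagonal unitary, so that $\langle u_{mn}\rangle'=l_\infty^m\otimes R\bar\otimes N_1$ is \emph{exactly} of the form (hyperfinite)$\,\bar\otimes N_1$. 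The quantitative estimate $\|x_n-E_{\langle u_{mn}\rangle'}(x_n)\|_2\le|e^{2\pi i/m}-1|^{-1}\|[u_{mn},x_n]\|_2$ then shows that anything commuting with $u_m$ lands in $\prod_n\langle u_{mn}\rangle'$, and a double ultraproduct over $m$ and $n$, combined with the ergodic-theorem identity $E_{\langle u\rangle'}(x)=\lim_n\frac1n\sum_{i<n}u^ixu^{-i}$ holding simultaneously in $\hat M$ and in the ambient algebra, is what transports the commuting square from the free product to the full $\prod_\omega A_k\bar\otimes N_1$ --- a transport step your sketch also omits (a commuting square against an intermediate subalgebra of $\hat M$ is not yet the RE square against the ambient). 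Finally, for $3\Rightarrow 1$ your phrase ``freeness-based averaging'' is not an argument; the paper's route is Corollary 7.1, which tensors with $L^\infty(S^1)$ so that $\langle u\rangle\bar\otimes L^\infty(S^1)$ is singly generated by a Haar unitary and the Lemma 7.2 machinery applies. So: right skeleton, but the core construction (Lemma 7.1 plus the conditional-expectation bookkeeping of Lemma 7.2) is absent rather than merely compressed.
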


\begin{proof}[Proof of $1 \Rightarrow 2$ and $1 \Rightarrow 3$] $1 \Rightarrow 3$ is immediate from Corollary 2.1. For $1 \Rightarrow 2$, we note that $\mathbb{C} \subseteq L^\infty(S^1)$ is RE/$\mathbb{C}$, so part 2 of Proposition 2.3 gives $N \subseteq M \bar{\otimes} L^\infty(S^1)$ is RE/$N_1$. Applying Corollary 2.1 then gives the result.
\end{proof}

To prove $2 \Rightarrow 1$ and $3 \Rightarrow 1$, we need the following lemmas:

\begin{lemma}
Suppose $\hat{M}$ and $N_1$ are finite von Neumann algebras and $N_1$ is in addition a factor. Suppose further that we have a tracial inclusion $\hat{M} \subseteq (R \bar{\otimes} N_1)^\omega$ where $R$ is the hyperfinite $\textrm{II}_1$ factor and $\omega$ is an ultrafilter. If there exists a unitary $u \in \hat{M}$ of the form $u = \sum_{l=1}^m e^{2i\pi l/m} p_l$, where $p_l$ are orthogonal projections of trace $\frac{1}{m}$, then there exists a tracial inclusion $(R \bar{\otimes} N_1)^\omega \subseteq (\mathbb{M}_m \otimes R \bar{\otimes} N_1)^\omega$ s.t. that composite inclusion $\hat{M} \subseteq (R \bar{\otimes} N_1)^\omega \subseteq (\mathbb{M}_m \otimes R \bar{\otimes} N_1)^\omega$ sends $u$ to some $(u_n)^\circ$ and $\langle u \rangle' \cap \hat{M}$ into $\prod_{n \rightarrow \omega} \langle u_n \rangle'$.
\end{lemma}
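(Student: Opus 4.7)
The plan is to construct the desired inclusion as an isomorphism $(R \bar{\otimes} N_1)^\omega \cong (\mathbb{M}_m \otimes R \bar{\otimes} N_1)^\omega$ arising from the absorption $R \cong R \bar{\otimes} \mathbb{M}_m$, chosen so that $u$ is explicitly identified with a constant-sequence diagonal unitary sitting in the $\mathbb{M}_m$ factor; the commutant condition then follows from standard Fourier averaging for a finite-order unitary. Throughout, the hypothesis that $N_1$ is a factor is used to ensure $R \bar{\otimes} N_1$ (and its ultrapower) is a $\textrm{II}_1$ factor, so that the usual projection-comparison arguments apply.

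First, using that $(R \bar{\otimes} N_1)^\omega$ is a $\textrm{II}_1$ factor, I would lift the spectral partition of unity $\{p_l\}_{l=1}^m$ of $u$ to a partition of unity $\{p_l^{(n)}\}_{l=1}^m$ in $R \bar{\otimes} N_1$ with $\tau(p_l^{(n)}) = 1/m$ --- by lifting each $p_l$ via functional calculus on an arbitrary lift and then orthogonalizing and normalizing traces using the continuous projection geometry of $R \bar{\otimes} N_1$. Since projections of equal trace in a $\textrm{II}_1$ factor are Murray--von Neumann equivalent, these extend to a full system of matrix units $\{e_{ij}^{(n)}\}_{1 \le i,j \le m}$ with $e_{ll}^{(n)} = p_l^{(n)}$, and then $u_n^0 := \sum_l e^{2\pi i l/m} p_l^{(n)}$ is a unitary in $R \bar{\otimes} N_1$ satisfying $(u_n^0)^\circ = u$.

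Each system of matrix units gives a decomposition $R \bar{\otimes} N_1 \cong \mathbb{M}_m \otimes A_n$, where $A_n = e_{11}^{(n)}(R \bar{\otimes} N_1) e_{11}^{(n)}$. The amplification $A_n$ of $R \bar{\otimes} N_1$ by the factor $1/m$ is itself isomorphic to $R \bar{\otimes} N_1$, because $R \cong R \bar{\otimes} \mathbb{M}_m$; fixing isomorphisms $\phi_n: A_n \to R \bar{\otimes} N_1$ produces isomorphisms $\psi_n: R \bar{\otimes} N_1 \to \mathbb{M}_m \otimes R \bar{\otimes} N_1$. A direct computation using the matrix-unit form of $u_n^0$ gives $\psi_n(u_n^0) = \omega_m \otimes 1$, where $\omega_m = \sum_l e^{2\pi i l/m} e_{ll} \in \mathbb{M}_m$ is the canonical diagonal unitary. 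Taking the ultraproduct, $\psi = (\psi_n)^\circ$ is the desired tracial inclusion (in fact an isomorphism) of $(R \bar{\otimes} N_1)^\omega$ into $(\mathbb{M}_m \otimes R \bar{\otimes} N_1)^\omega$, and $\psi(u) = (u_n)^\circ$ with $u_n = \omega_m \otimes 1$.

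Finally, for any $x \in \langle u \rangle' \cap \hat{M}$, choose any lift $(y_n)$ of $\psi(x)$ in $\mathbb{M}_m \otimes R \bar{\otimes} N_1$. Since $[x,u] = 0$, we have $\|[y_n, u_n]\|_2 \to 0$ along $\omega$, so the averaging $\tilde{y}_n := \tfrac{1}{m}\sum_{k=0}^{m-1} u_n^k y_n u_n^{-k}$ lies in $\langle u_n \rangle'$ and satisfies $\|y_n - \tilde{y}_n\|_2 \to 0$; hence $\psi(x) = (\tilde{y}_n)^\circ \in \prod_{n \to \omega} \langle u_n \rangle'$, as required. The main technical obstacle is the first step --- lifting the spectral partition of unity to honest projections of exactly the right trace in $R \bar{\otimes} N_1$ and then extending to matrix units --- but this is a standard exercise using the continuous projection geometry of a $\textrm{II}_1$ factor; once $u_n$ is known to have order exactly $m$, the averaging argument for the commutant step is routine.
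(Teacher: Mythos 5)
Your proof is correct and follows essentially the same strategy as the paper: split off a copy of $\mathbb{M}_m$ along the spectral projections of $u$ so that $u$ becomes the canonical diagonal unitary with constant representing sequence, then use an averaging/commutator estimate to push $\langle u \rangle' \cap \hat{M}$ into the levelwise commutants $\prod_{n \rightarrow \omega} \langle u_n \rangle'$. The only difference is one of implementation: the paper builds the matrix units directly inside $(R \bar{\otimes} N_1)^\omega$ and re-embeds the corner $\hat{N} \simeq 1_{\mathbb{M}_m} \otimes \hat{N}$ back into $(R \bar{\otimes} N_1)^\omega$ (yielding an embedding rather than an isomorphism, with no lifting of projections needed), whereas you lift the spectral projections to the finite levels and invoke $qRq \cong R$ to get levelwise isomorphisms --- both are valid.
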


\begin{proof} Since $N_1$ is a factor, so is $(R \bar{\otimes} N_1)^\omega$. Thus, we may construct partial isometries between $p_l$'s and obtain a matrix subalgebra $\mathbb{M}_m$ of $(R \bar{\otimes} N_1)^\omega$. We therefore obtain a natural isomorphism $(R \bar{\otimes} N_1)^\omega \simeq \mathbb{M}_m \otimes \hat{N}$ for some algebra $\hat{N}$ which sends $p_l$ to $e_{ll} \otimes 1_{\hat{N}}$. As $\hat{N} \simeq 1_{\mathbb{M}_m} \otimes \hat{N}$ tracially embeds into $(R \bar{\otimes} N_1)^\omega$, we obtain an embedding $(R \bar{\otimes} N_1)^\omega \hookrightarrow \mathbb{M}_m \otimes (R \bar{\otimes} N_1)^\omega$. $\mathbb{M}_m \otimes (R \bar{\otimes} N_1)^\omega$ is canonically isomorphic to $(\mathbb{M}_m \otimes R \bar{\otimes} N_1)^\omega$. Therefore, we have,
\begin{equation*}
    (R \bar{\otimes} N_1)^\omega \hookrightarrow (\mathbb{M}_m \otimes R \bar{\otimes} N_1)^\omega
\end{equation*}
where $p_l$ now becomes $(e_{ll} \otimes 1_R \otimes 1_{N_1})^\circ$ and therefore $u$ becomes,
\begin{equation*}
    u = (\sum_{l=1}^m e^{\frac{2i\pi l}{m}} e_{ll} \otimes 1_R \otimes 1_{N_1})^\circ
\end{equation*}

Let $\hat{N_1} = (\mathbb{M}_m \otimes R \bar{\otimes} N_1)^\omega$ and $u_n = \sum_{l=1}^m e^{2i\pi l/m} e_{ll} \otimes 1_R \otimes 1_{N_1} \in \mathbb{M}_m \otimes R \bar{\otimes} N_1$. Now, given any $x \in \langle u \rangle' \cap \hat{M}$, we let its image under the composite embedding $\hat{M} \hookrightarrow (R \bar{\otimes} N_1)^\omega \hookrightarrow \hat{N_1}$ be $(x_n)^\circ$. To show this is an element of $\prod_{n \rightarrow \omega} \langle u_n \rangle'$, it suffices to show $(E_{\mathbb{M}_m \otimes R \bar{\otimes} N_1, \langle u_n \rangle'}(x_n))^\circ = (x_n)^\circ$. We note that elements of $\mathbb{M}_m \otimes R \bar{\otimes} N_1$ can be written as $m \times m$-matrices with entries in $R \bar{\otimes} N_1$. Regarded as such, we see that $\langle u_n \rangle'$ is simply the diagonal subalgebra $l_\infty^m \otimes R \bar{\otimes} N_1$. For fixed $n$, we write $x_n$ in matrix form as $(x_{nij})_{1 \leq i,j \leq m}$. Thus,
\begin{equation*}
    x_n - E_{\mathbb{M}_m \otimes R \bar{\otimes} N_1, \langle u_n \rangle'}(x_n) = (\delta_{i \neq j}x_{nij})_{1 \leq i,j \leq m}
\end{equation*}

On the other hand,
\begin{equation*}
    [u_n, x_n] = ((e^{\frac{2i\pi i}{m}} - e^{\frac{2i\pi j}{m}})x_{nij})_{1 \leq i,j \leq m}
\end{equation*}

A calculation then yields,
\begin{equation*}
\begin{split}
    \|[u_n, x_n]\|_2^2 &= \frac{1}{m} \sum_{i, j} |e^{\frac{2i\pi i}{m}} - e^{\frac{2i\pi j}{m}}|^2 \|x_{nij}\|_2^2\\
    &= \frac{1}{m} \sum_{i \neq j} |e^{\frac{2i\pi (i - j)}{m}} - 1|^2 \|x_{nij}\|_2^2\\
    &\geq \frac{1}{m} \sum_{i \neq j} |e^{\frac{2i\pi}{m}} - 1|^2 \|x_{nij}\|_2^2\\
    &= |e^{\frac{2i\pi}{m}} - 1|^2 \frac{1}{m} \sum_{i \neq j} \|x_{nij}\|_2^2\\
    &= |e^{\frac{2i\pi}{m}} - 1|^2 \|x_n - E_{\mathbb{M}_m \otimes R \bar{\otimes} N_1, \langle u_n \rangle'}(x_n)\|_2^2
\end{split}
\end{equation*}

Therefore, $\|x_n - E_{\mathbb{M}_m \otimes R \bar{\otimes} N_1, \langle u_n \rangle'}(x_n)\|_2 \leq \frac{1}{|e^{\frac{2i\pi}{m}} - 1|} \|[u_n, x_n]\|_2$. As $x$ and $u$ commute, the RHS converges to 0 as $n \rightarrow \omega$, so the LHS tends to 0 as well. This shows that we indeed have $(E_{\mathbb{M}_m \otimes R \bar{\otimes} N_1, \langle u_n \rangle'}(x_n))^\circ = (x_n)^\circ$.
\end{proof}

\begin{lemma}
Suppose $N$, $M$, $\hat{M}$, and $N_1$ are finite von Neumann algebras and $N_1$ is in addition a factor. Suppose further that we have tracial inclusions $N \subseteq M \subseteq \hat{M} \subseteq (R \bar{\otimes} N_1)^\omega$ where $R$ is the hyperfinite $\textrm{II}_1$ factor and $\omega$ is an ultrafilter. If there exists a Haar unitary $u \in \hat{M}$ s.t. the following diagram is a commuting square,

\begin{center}
\begin{tikzcd}[node distance = 1.8cm]
    {\hat{M}} \arrow[hookleftarrow]{r}\arrow[hookleftarrow]{d}
        & M \arrow[hookleftarrow]{d} \\
    {\langle u \rangle' \cap \hat{M}} \arrow[hookleftarrow]{r}
        & N
\end{tikzcd}
\end{center}

Then we have $N \subseteq M$ is RE/$N_1$.
\end{lemma}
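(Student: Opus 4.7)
The plan is to approximate the Haar unitary $u$ by a sequence of finite-order unitaries $u^{(m_k)}$ obtained as spectral truncations, apply Lemma 7.1 to each approximation to embed $\hat{M}$ into iterated ultrapowers of the form $(\mathbb{M}_{m_k}\otimes R\bar{\otimes}N_1)^\omega$, and then assemble all these embeddings through a second ultrapower over $k$. The point that makes the whole construction work is that every $u^{(m_k)}$ lies in $\langle u\rangle$ and therefore commutes with $N$, so that at each stage $N$ sits inside $\langle u^{(m_k)}\rangle'\cap\hat{M}$ and hence lands in the hyperfinite diagonal part carved out by Lemma 7.1.

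In detail, take $m_k=k!$ and set $u^{(m_k)}=\sum_{l=1}^{m_k}e^{2\pi il/m_k}p_l^{(m_k)}$, where $p_l^{(m_k)}$ is the spectral projection of $u$ corresponding to the arc $[e^{2\pi i(l-1)/m_k},e^{2\pi il/m_k})$; since $u$ is Haar, $\tau(p_l^{(m_k)})=1/m_k$, and the divisibility $m_k\mid m_{k+1}$ gives $\langle u^{(m_k)}\rangle\subseteq\langle u^{(m_{k+1})}\rangle\subseteq\langle u\rangle$. Lemma 7.1 applied to $u^{(m_k)}$ then yields a tracial inclusion $\iota_k\colon(R\bar{\otimes}N_1)^\omega\hookrightarrow(\mathbb{M}_{m_k}\otimes R\bar{\otimes}N_1)^\omega$ sending $\langle u^{(m_k)}\rangle'\cap\hat{M}$ into $\prod_{n\to\omega}l_\infty^{m_k}\otimes R\bar{\otimes}N_1$. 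Fix an ultrafilter $\omega'$ on $\mathbb{N}$ and define
\[
\pi\colon M\to\prod_{k\to\omega'}\bigl(\mathbb{M}_{m_k}\otimes R\bar{\otimes}N_1\bigr)^\omega,\qquad\pi(x)=\bigl(\iota_k(x)\bigr)^\circ.
\]
Reindexing the iterated ultraproduct as in Proposition 2.1 realizes this as an embedding into $\prod_\sigma A_j\bar{\otimes}N_1$ with $A_j=\mathbb{M}_{m_{k(j)}}\otimes R$ hyperfinite (hence QWEP) and hyperfinite subalgebras $B_j=l_\infty^{m_{k(j)}}\otimes R\subseteq A_j$, and $\pi(N)\subseteq\prod_\sigma B_j\bar{\otimes}N_1$ because $N\subseteq\langle u^{(m_k)}\rangle'\cap\hat{M}$ for every $k$.

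It remains to verify the commuting-square condition of Definition 2.3, i.e.\ that $E_{\prod_\sigma A_j\bar{\otimes}N_1,\,\prod_\sigma B_j\bar{\otimes}N_1}(\pi(x))=\pi(E_{M,N}(x))$ for every $x\in M$. Reading off the proof of Lemma 7.1, $\iota_k(u^{(m_k)})$ is the canonical $m_k$-th root of unity generating $l_\infty^{m_k}\subseteq\mathbb{M}_{m_k}$, so the conditional expectation onto $\prod_{n\to\omega}l_\infty^{m_k}\otimes R\bar{\otimes}N_1$ is given by cyclic averaging against $\iota_k(u^{(m_k)})$; pulling back through $\iota_k$ this equals $\iota_k\circ E_{\hat{M},\langle u^{(m_k)}\rangle'\cap\hat{M}}$ on $\hat{M}$. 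Since $\iota_k$ is tracial, the required identity reduces to
\[
\bigl\|E_{\hat{M},\langle u^{(m_k)}\rangle'\cap\hat{M}}(x)-E_{M,N}(x)\bigr\|_2\to 0 \quad (k\to\infty),\qquad x\in M.
\]
By the commuting-square hypothesis, $E_{M,N}(x)=E_{\hat{M},\langle u\rangle'\cap\hat{M}}(x)$ for $x\in M$; and because $\{\langle u^{(m_k)}\rangle'\cap\hat{M}\}_k$ is a decreasing sequence of von Neumann subalgebras of $\hat{M}$ whose intersection is $\langle u\rangle'\cap\hat{M}$ (the nontrivial inclusion uses $u^{(m_k)}\to u$ strongly), the reverse martingale convergence theorem in the tracial setting delivers the required $L^2$-convergence. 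The only genuinely delicate step in the argument is precisely this identification of the matricial-ultrapower conditional expectation with the $\hat{M}$-level reverse-martingale limit and the passage through the $\omega'$-limit; the remaining bookkeeping is standard manipulation with iterated ultraproducts and the identities already packaged into Lemma 7.1 and Section 2.
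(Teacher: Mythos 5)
Your proof is correct and follows the same overall architecture as the paper's: approximate the Haar unitary by finite-spectrum truncations, feed each truncation into Lemma 7.1, and assemble the resulting embeddings through an iterated ultraproduct. The one place where you genuinely diverge is the verification of the commuting square. The paper keeps all moduli $m$ (no nesting needed), observes that $\langle u \rangle' \cap \hat{M}$ lands inside $\langle u \rangle' \cap \hat{N_1}$, and then verifies the square by applying the mean ergodic theorem to the single Haar unitary $u$ in both $\hat{M}$ and the big ultraproduct, composing the resulting commuting squares with the hypothesis. You instead force the truncations to be nested via $m_k = k!$, identify the matricial conditional expectation with $\iota_k \circ E_{\hat{M}, \langle u^{(m_k)} \rangle' \cap \hat{M}}$ by cyclic averaging, and invoke reverse martingale convergence for the decreasing sequence $\langle u^{(m_k)} \rangle' \cap \hat{M}$ with intersection $\langle u \rangle' \cap \hat{M}$. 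Both devices are sound; the paper's ergodic-theorem route avoids the divisibility bookkeeping and the (standard but not entirely free) fact that $\bigcap_k L^2(Q_k) = L^2(\bigcap_k Q_k)$ for decreasing subalgebras, while your route has the virtue of reducing the whole square to a single explicit $L^2$-limit computed entirely inside $\hat{M}$ before any ultraproduct is taken. Two small points worth making explicit if you write this up: the equality $E_{M,N}(x) = E_{\hat{M}, \langle u \rangle' \cap \hat{M}}(x)$ for $x \in M$ is exactly the hypothesis square restricted to $M$, and the reduction of the commuting-square condition to the single identity $E_{\hat{\mathcal{N}}}(\pi(x)) = \pi(E_{M,N}(x))$ for $x \in M$ deserves a sentence (it is the same reduction the paper uses in Lemma 3.1).
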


\begin{proof} Since $u$ is a Haar unitary, the subalgebra of $(R \bar{\otimes} N_1)^\omega$ generated by $u$ is isomorphic to $L^\infty(S^1)$ where $S^1$ is the unit circle and $u$ is sent to the unitary $(x \mapsto x) \in L^\infty(S^1)$. The inherited trace on $\langle u \rangle$ corresponds to the Haar measure on $S^1$. By partitioning $S^1$ into $m$ equal pieces, we may approximate $u$ by unitaries $u_m$ of the form $\sum_{l=1}^m e^{2i\pi l/m} p_{ml}$, where $p_{ml}$ are orthogonal projections with trace $\frac{1}{m}$, for each fixed $m$. $u_m$ converges to $u$ in $L^2$. By Lemma 7.1, for each fixed $m$, we have an embedding $(R \bar{\otimes} N_1)^\omega \hookrightarrow (\mathbb{M}_m \otimes R \bar{\otimes} N_1)^\omega$. Combining such embeddings for all $m$ together, we obtain,
\begin{equation*}
    (R \bar{\otimes} N_1)^\omega \hookrightarrow \prod_{m \rightarrow \omega} (\mathbb{M}_m \otimes R \bar{\otimes} N_1)^\omega
\end{equation*}

For simplicity, we shall write $\widetilde{N^m} = (\mathbb{M}_m \otimes R \bar{\otimes} N_1)^\omega$, so $\prod_{m \rightarrow \omega} (\mathbb{M}_m \otimes R \bar{\otimes} N_1)^\omega = \prod_{m \rightarrow \omega} \widetilde{N^m}$. Now, since $u_m$ approximates $u$ in $L^2$, we see that under the composite embedding $\hat{M} \hookrightarrow (R \bar{\otimes} N_1)^\omega \hookrightarrow \prod_{m \rightarrow \omega} \widetilde{N^m}$, $u$ is sent to $(u_m)^\circ$. By Lemma 7.1, for each fixed $m$, we write $u_m \in \widetilde{N^m}$ as $(u_{mn})^\circ$. Given any $x \in \langle u \rangle' \cap \hat{M}$, since $u_m \in \langle u \rangle$, we have $x$ and $u_m$ commutes, so Lemma 7.1 shows that $x$ is sent into $\prod_{n \rightarrow \omega} \langle u_{mn} \rangle' \subseteq \widetilde{N^m}$. Therefore, the composite embedding $\hat{M} \hookrightarrow (R \bar{\otimes} N_1)^\omega \hookrightarrow \prod_{m \rightarrow \omega} \widetilde{N^m} = \hat{N_1}$ sends $x$ into $\prod_{m \rightarrow \omega} \prod_{n \rightarrow \omega} \langle u_{mn} \rangle' \subseteq \hat{N_1}$. Hence, we have a commutative diagram,

\begin{center}
\begin{tikzcd}[node distance = 1.8cm]
    {\prod_{m \rightarrow \omega} \prod_{n \rightarrow \omega} \mathbb{M}_m \otimes R \bar{\otimes} N_1} \arrow[hookleftarrow]{r}\arrow[hookleftarrow]{d}
        & {\hat{M}} \arrow[hookleftarrow]{d} \\
    {\prod_{m \rightarrow \omega} \prod_{n \rightarrow \omega} \langle u_{mn} \rangle'} \arrow[hookleftarrow]{r}
        & {\langle u \rangle' \cap \hat{M}}
\end{tikzcd}
\end{center}

To show that this is a commuting square, we note that as $u = (u_{mn})^\circ$, $\prod_{m \rightarrow \omega} \prod_{n \rightarrow \omega} \langle u_{mn} \rangle'$ is contained in $\langle u \rangle' \cap \hat{N_1}$. By an easy application of the ergodic theorem (see [Lan76, Theorem 5.7]),
\begin{equation*}
    E_{\hat{N_1}, \langle u \rangle' \cap \hat{N_1}}(x) = \lim_{n \rightarrow \infty} \frac{1}{n} \sum_{i=0}^{n-1} u^i x {u^*}^i
\end{equation*}
and the same formula holds for $E_{\hat{M}, \langle u \rangle' \cap \hat{M}}$, where the convergences are both under the strong$^*$ topology. It is then clear that we have the following commuting square,

\begin{center}
\begin{tikzcd}[node distance = 1.8cm]
    {\hat{N_1}} \arrow[hookleftarrow]{r}\arrow[hookleftarrow]{d}
        & {\hat{M}} \arrow[hookleftarrow]{d} \\
    {\langle u \rangle' \cap \hat{N_1}} \arrow[hookleftarrow]{r}
        & {\langle u \rangle' \cap \hat{M}}
\end{tikzcd}
\end{center}

Thus,
\begin{equation*}
\begin{split}
    E_{\hat{N_1}, \prod_{m \rightarrow \omega} \prod_{n \rightarrow \omega} \langle u_{mn} \rangle'}E_{\hat{N_1}, \hat{M}} &= E_{\hat{N_1}, \prod_{m \rightarrow \omega} \prod_{n \rightarrow \omega} \langle u_{mn} \rangle'}E_{\hat{N_1}, \langle u \rangle' \cap \hat{N_1}}E_{\hat{N_1}, \hat{M}}\\
    &= E_{\hat{N_1}, \prod_{m \rightarrow \omega} \prod_{n \rightarrow \omega} \langle u_{mn} \rangle'}E_{\hat{N_1}, \langle u \rangle' \cap \hat{M}}\\
    &= E_{\hat{N_1}, \langle u \rangle' \cap \hat{M}}
\end{split}
\end{equation*}

This shows that the first commutative diagram above is indeed a commuting square. Composing it with the commuting square in the assumption gives the following commuting square,

\begin{center}
\begin{tikzcd}[node distance = 1.8cm]
    {\prod_{m \rightarrow \omega} \prod_{n \rightarrow \omega} \mathbb{M}_m \otimes R \bar{\otimes} N_1} \arrow[hookleftarrow]{r}\arrow[hookleftarrow]{d}
        & M \arrow[hookleftarrow]{d} \\
    {\prod_{m \rightarrow \omega} \prod_{n \rightarrow \omega} \langle u_{mn} \rangle'} \arrow[hookleftarrow]{r}
        & N
\end{tikzcd}
\end{center}

To conclude the proof, we simply observe that $\langle u_{mn} \rangle' = l_\infty^m \otimes R \bar{\otimes} N_1$.
\end{proof}

\begin{col}
In the above lemma, assuming $u$ is a unitary without necessarily being a Haar unitary is sufficient.
\end{col}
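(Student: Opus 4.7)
The plan is to reduce Corollary 7.1 to Lemma 7.2 by tensoring with $L^\infty(S^1)$ to turn the arbitrary unitary $u$ into a Haar unitary. Let $v \in L^\infty(S^1)$ denote the generating Haar unitary. Since $L^\infty(S^1)$ tracially embeds into $R$ and $R \bar{\otimes} R \simeq R$, tensoring the given embedding $\hat{M} \hookrightarrow (R \bar{\otimes} N_1)^\omega$ with the inclusion $L^\infty(S^1) \hookrightarrow R$ (and using the natural embedding $(R \bar{\otimes} N_1)^\omega \bar{\otimes} L^\infty(S^1) \hookrightarrow ((R \bar{\otimes} N_1) \bar{\otimes} L^\infty(S^1))^\omega$ from Proposition 2.1) produces a tracial embedding $\hat{M} \bar{\otimes} L^\infty(S^1) \hookrightarrow (R \bar{\otimes} N_1)^\omega$, so the ambient-algebra hypothesis of Lemma 7.2 is satisfied for the $L^\infty(S^1)$-enlarged setup.

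Next I would set $\tilde{u} = u \otimes v \in \hat{M} \bar{\otimes} L^\infty(S^1)$. Because $\tau(\tilde{u}^n) = \tau(u^n)\tau(v^n) = 0$ for $n \neq 0$, this is a Haar unitary. A short Fourier expansion in the $L^\infty(S^1)$-variable (expanding $x = \sum_n x_n \otimes v^n$ and computing $[\tilde{u}, x] = \sum_n [u, x_n] \otimes v^{n+1}$) identifies
\begin{equation*}
    \langle \tilde{u} \rangle' \cap (\hat{M} \bar{\otimes} L^\infty(S^1)) = (\langle u \rangle' \cap \hat{M}) \bar{\otimes} L^\infty(S^1).
\end{equation*}
Since conditional expectations factor over tensor products, the assumed commuting square for $u$ inside $\hat{M}$ tensors up to the commuting square
\begin{center}
\begin{tikzcd}[node distance = 1.8cm]
{\hat{M} \bar{\otimes} L^\infty(S^1)} \arrow[hookleftarrow]{r}\arrow[hookleftarrow]{d} & {M \bar{\otimes} L^\infty(S^1)} \arrow[hookleftarrow]{d}\\
{(\langle u \rangle' \cap \hat{M}) \bar{\otimes} L^\infty(S^1)} \arrow[hookleftarrow]{r} & {N \bar{\otimes} L^\infty(S^1)}
\end{tikzcd}
\end{center}
Lemma 7.2 then applies with $\tilde{u}$ as the Haar unitary and yields that $N \bar{\otimes} L^\infty(S^1) \subseteq M \bar{\otimes} L^\infty(S^1)$ is RE/$N_1$.

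Finally I would descend back to $N \subseteq M$. Let $\pi: M \bar{\otimes} L^\infty(S^1) \to \prod_\omega A_k \bar{\otimes} N_1$ be the RE witness, with $\pi(N \bar{\otimes} L^\infty(S^1)) \subseteq \prod_\omega B_k \bar{\otimes} N_1$. Restricting to $M \otimes 1 \simeq M$ produces a tracial embedding $\pi|_M$ sending $N$ into $\prod_\omega B_k \bar{\otimes} N_1$. The commuting square for the restricted data is verified by the identity
\begin{equation*}
    E_{\prod_\omega A_k \bar{\otimes} N_1,\, \prod_\omega B_k \bar{\otimes} N_1}(\pi(x \otimes 1)) = \pi(E_{M \bar{\otimes} L^\infty(S^1),\, N \bar{\otimes} L^\infty(S^1)}(x \otimes 1)) = \pi(E_{M,N}(x) \otimes 1) \in \pi|_M(N),
\end{equation*}
valid for all $x \in M$, which uses that the conditional expectation onto a tensor subalgebra factors as a tensor product of conditional expectations. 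The main subtlety I anticipate is precisely this last descent step: the $L^\infty(S^1)$-enlargement makes the Haar-unitary trick of Lemma 7.2 available, but one must still check that the resulting RE data genuinely restricts to the original inclusion rather than only to the enlarged one. The identity above is what makes this work, and everything else is routine bookkeeping.
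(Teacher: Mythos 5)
Your proof is correct and follows essentially the same route as the paper: tensor with $L^\infty(S^1)$ to replace $u$ by a Haar unitary whose relative commutant in $\hat{M} \bar{\otimes} L^\infty(S^1)$ is $(\langle u \rangle' \cap \hat{M}) \bar{\otimes} L^\infty(S^1)$, and then invoke Lemma 7.2. The only difference is organizational: the paper keeps $M$ and $N$ un-tensored and composes the commuting square with corners $\hat{M} \bar{\otimes} L^\infty(S^1) \supseteq \hat{M}$ and $(\langle u \rangle' \cap \hat{M}) \bar{\otimes} L^\infty(S^1) \supseteq \langle u \rangle' \cap \hat{M}$ with the assumed one, so it can apply Lemma 7.2 directly to $N \subseteq M$ and avoid your final descent step (which is nonetheless valid).
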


\begin{proof} Consider the algebra $\hat{\hat{M}} = \hat{M} \bar{\otimes} L^\infty(S^1)$. The subalgebra $\langle u \rangle \bar{\otimes} L^\infty(S^1)$ is commutative and non-atomic, and as such isomorphic to $L^\infty(S^1)$. It is therefore generated by a single Haar unitary $v \in \hat{\hat{M}}$. We claim we have a commuting square,

\begin{center}
\begin{tikzcd}[node distance = 1.8cm]
    {\hat{\hat{M}}} \arrow[hookleftarrow]{r}\arrow[hookleftarrow]{d}
        & {\hat{M}} \arrow[hookleftarrow]{d} \\
    {\langle v \rangle' \cap \hat{\hat{M}}} \arrow[hookleftarrow]{r}
        & {\langle u \rangle' \cap \hat{M}}
\end{tikzcd}
\end{center}

Since $\hat{\hat{M}} = \hat{M} \bar{\otimes} L^\infty(S^1)$ and $\langle v \rangle = \langle u \rangle \bar{\otimes} L^\infty(S^1)$, $\langle v \rangle' \cap \hat{\hat{M}}$ is simply $(\langle u \rangle' \cap \hat{M}) \bar{\otimes} L^\infty(S^1)$. Therefore, the above diagram is none other than,

\begin{center}
\begin{tikzcd}[node distance = 1.8cm]
    {\hat{M} \bar{\otimes} L^\infty(S^1)} \arrow[hookleftarrow]{r}\arrow[hookleftarrow]{d}
        & {\hat{M}} \arrow[hookleftarrow]{d} \\
    {(\langle u \rangle' \cap \hat{M}) \bar{\otimes} L^\infty(S^1)} \arrow[hookleftarrow]{r}
        & {\langle u \rangle' \cap \hat{M}}
\end{tikzcd}
\end{center}
which is clearly a commuting square. Composing it with the commuting square in the assumption gives the following commuting square,

\begin{center}
\begin{tikzcd}[node distance = 1.8cm]
    {\hat{\hat{M}}} \arrow[hookleftarrow]{r}\arrow[hookleftarrow]{d}
        & M \arrow[hookleftarrow]{d} \\
    {\langle v \rangle' \cap \hat{\hat{M}}} \arrow[hookleftarrow]{r}
        & N
\end{tikzcd}
\end{center}

We observe that as $\hat{M}$ embeds into $(R \bar{\otimes} N_1)^\omega$, $\hat{\hat{M}} = \hat{M} \bar{\otimes} L^\infty(S^1)$ naturally embeds into $(R \bar{\otimes} L^\infty(S^1) \bar{\otimes} N_1)^\omega$. Since $R \bar{\otimes} L^\infty(S^1)$ embeds into $R$, we have an embedding $\hat{\hat{M}} \hookrightarrow (R \bar{\otimes} N_1)^\omega$. Applying the lemma above concludes the proof.
\end{proof}

\begin{proof}[Proof of $2 \Rightarrow 1$ and $3 \Rightarrow 1$ of Theorem 7.1] In both cases, we will construct a commuting square as in Corollary 7.1. For $2 \Rightarrow 1$, we let $\hat{M}$ be $(M \bar{\otimes} L^\infty(S^1)) \ast_N M$ and for $3 \Rightarrow 1$ we let $\hat{M}$ be $M \ast_N M$. The inclusion $M \hookrightarrow \hat{M}$ is always the map that sends $M$ to the second component. In the case of $2 \Rightarrow 1$, let $u$ be the unitary $(x \mapsto x) \in L^\infty(S^1) \subseteq \hat{M}$. In the case of $3 \Rightarrow 1$, by assumption we have a unitary $u \in M$ s.t. $E_{M, N}(u^n) = 0$ for all $n \neq 0$. We shall simply choose this $u$, regarded as an element of the first component of $\hat{M} = M \ast_N M$.

Now, we need to verify the following commutative diagram is a commuting square,

\begin{center}
\begin{tikzcd}[node distance = 1.8cm]
    {\hat{M}} \arrow[hookleftarrow]{r}\arrow[hookleftarrow]{d}
        & M \arrow[hookleftarrow]{d} \\
    {\langle u \rangle' \cap \hat{M}} \arrow[hookleftarrow]{r}
        & N
\end{tikzcd}
\end{center}

Again by the ergodic theorem, $E_{\hat{M}, \langle u \rangle' \cap \hat{M}}(x) = \lim_{n \rightarrow \infty} \frac{1}{n} \sum_{i=0}^{n-1} u^i x {u^*}^i$ where the convergence is strong$^*$. Given any $x \in M$ with $E_{M, N}(x) = 0$, we need to show $E_{\hat{M}, \langle u \rangle' \cap \hat{M}}(x) = 0$. We have,
\begin{equation*}
\begin{split}
    \|E_{\hat{M}, \langle u \rangle' \cap \hat{M}}(x)\|_2^2 &= \tau(E_{\hat{M}, \langle u \rangle' \cap \hat{M}}(x)^*E_{\hat{M}, \langle u \rangle' \cap \hat{M}}(x))\\
    &= \lim_{n \rightarrow \infty} \frac{1}{n^2} \sum_{i,j=0}^{n-1} \tau(u^ix^*u^{j-i}xu^{-j})
\end{split}
\end{equation*}

Note here that $u$ and $x \in M$ belong to different components of $\hat{M} = (M \bar{\otimes} L^\infty(S^1)) \ast_N M$ or $M \ast_N M$. We have $E_{M, N}(x) = E_{M, N}(x^*) = 0$. Also, $E_{M \bar{\otimes} L^\infty(S^1), N}(u^n) = 0$ for all $n \neq 0$ in the $2 \Rightarrow 1$ case and $E_{M, N}(u^n) = 0$ for all $n \neq 0$ in the $3 \Rightarrow 1$ case. Hence, by free independence over $N$, $\tau(u^ix^*u^{j-i}xu^{-j}) = 0$ whenever $i \neq j$, so,
\begin{equation*}
\begin{split}
    \|E_{\hat{M}, \langle u \rangle' \cap \hat{M}}(x)\|_2^2 &= \lim_{n \rightarrow \infty} \frac{1}{n^2} \sum_{i=0}^{n-1} \tau(u^ix^*xu^{-i})\\
    &= \lim_{n \rightarrow \infty} \frac{1}{n^2} \sum_{i=0}^{n-1} \tau(x^*x)\\
    &= \lim_{n \rightarrow \infty} \frac{\tau(x^*x)}{n}\\
    &= 0
\end{split}
\end{equation*}

This proves the diagram before is indeed a commuting square. By our assumption $\hat{M} \subseteq (R \bar{\otimes} N_1)^\omega$, so the result follows from Corollary 7.1.
\end{proof}

\newpage

\begin{center}
    \textsc{References}
\end{center}

\medskip

\leftskip 0.25in
\parindent -0.25in
[BO08] Brown, N.P. and Ozawa, N. \textit{$C^*$-algebras and finite-dimensional approximations}. American Mathematical Society, 2008.

\smallskip

[Con76] Connes, A. \textit{Classification of injective factors: Cases} $\textrm{II}_1$, $\textrm{II}_\infty$, $\textrm{III}_\lambda$, $\lambda \neq 1$. Ann. Math. 104 (1976), pp. 73-115.

\smallskip

[Hall49] Hall, M. \textit{Coset representations in free groups}. Tran. Am. Math. Soc. 67 (1949), pp. 421-432.

\smallskip

[Hat05] Hatcher, A. \textit{Algebraic topology}. Cambridge University Press, 2005.

\smallskip

[Ioa11] Ioana, A. \textit{Cocycle superrigidity for profinite actions of property (T) groups}. Duke. Math. J. 157 (2011), pp. 337-367.

\smallskip

[JNV+20] Ji, Z., Natarajan, A., Vidick, T., Wright, J., and Yuen, H. \textit{$\textrm{MIP}^* = \textrm{RE}$}. arXiv preprint arXiv: 2001.04383, 2020.

\smallskip

[Jun05] Junge, M. \textit{Embedding of the operator space OH and the
logarithmic `little Grothendieck inequality'}. Invent. math. 161 (2005), pp. 225-286.

\smallskip

[Jun21] Junge, M. \textit{Noncommutative Poisson random measure}. Preprint.

\smallskip

[KT08] Kechris, A.S. and Tsankov, T. \textit{Amenable actions and almost invariant sets}. Proc. Am. Math. Soc. 136 (2008), pp. 687-697.

\smallskip

[Lan76] Lance, E.C. \textit{Ergodic theorems for convex sets and operator algebras}. Invent. math. 37 (1976), pp. 201-214.

\smallskip

[LR08] Long, D.D. and Reid, A.W. \textit{Subgroup separability and virtual retractions of groups}. Topology. 47 (2008), pp. 137-159.

\smallskip

[MKS76] Magnus, W., Karrass, A., and Solitar, D. \textit{Combinatorial group theory}. Dover Publications, 1976.

\smallskip

[MP03] Monod, N. and Popa, S. \textit{On co-amenability for groups and von Neumann algebras}. arXiv preprint arXiv: math/0301348v3, 2003.

\smallskip

[Tak03] Takesaki, M. \textit{Theory of operator algebras III}. Springer, 2003.

\smallskip

[VDN92] Voiculescu, D.V., Dykema, K.J., and Nica, A. \textit{Free random variables}. American Mathematical Society, 1992.

\bigskip\bigskip

\textsc{Department of Mathematics, University of Illinois, Urbana, IL 61801, USA}

\textit{E-mail address}, Weichen Gao: \texttt{wg6@illinois.edu}

\bigskip

\textsc{Department of Mathematics, University of Illinois, Urbana, IL 61801, USA}

\textit{E-mail address}, Marius Junge: \texttt{mjunge@illinois.edu}

\end{document}